\theoremstyle{plain}
\newtheorem{thm}{Theorem}[subsection]
\newtheorem{defn}[thm]{Definition}
\newtheorem{que}[thm]{Question}
\newtheorem{rem}[thm]{Remark}
\newtheorem{prop}[thm]{Proposition}
\newtheorem{lem}[thm]{Lemma}
\newtheorem{cor}[thm]{Corollary}
\newtheorem{exa}[thm]{Example}
\newtheorem{conj}[thm]{Conjecture}
\theoremstyle{plain}
\newtheorem*{thm*}{Theorem}
\newtheorem*{defn*}{Definition}
\newtheorem*{prop*}{Proposition}
\newtheorem*{lem*}{Lemma}
\def\cal{\mathcal}
\def\cA{{\cal A}}
\def\cD{{\cal D}}
\def\cF{{\cal F}}
\def\cU{{\cal U}}
\def\a{\alpha}
\def\b{\beta}
\def\d{\delta}
\def\ve{\varepsilon}
\def\f{\phi}
\def\k{\kappa}
\def\l{\lambda}
\def\L{\Lambda}
\def\r{\rho}
\def\s{\sigma}
\def\ta{\theta}
\def\t{\tau}
\def\w{\omega}
\def\O{\Omega}
\def\del{\partial}
\def\adel{\ol{\partial}}
\def\D{\Delta}
\def\G{\Gamma}
\def\io{\iota}
\def\p{\phi}
\def\m{\mu}
\def\e{\eta}
\def\bC{{\mathbb C}}
\def\bN{{\mathbb N}}
\def\bR{{\mathbb R}}
\def\ol{\overline}
\def\*{\ast}
\def\wdg{\wedge}
\def\isom{\cong}
\def\tens{\otimes}
\def\dsum{\oplus}
\def\<{\langle}
\def\>{\rangle}
\def\mto{\mapsto}
\def\to{\rightarrow}
\DeclareMathOperator{\vol}{vol}
\DeclareMathOperator{\flip}{flip}
\DeclareMathOperator{\proj}{proj}
\DeclareMathOperator{\Hom}{HOM}
\DeclareMathOperator{\id}{id}
\title{Generalized symmetry in noncommutative (complex) geometry}
\author{Suvrajit Bhattacharjee}
\address{Stat-Math Unit\\ Indian Statistical Institute\\ 203, B.T. Road\\ Kolkata-700108}
\email{suvra.bh@gmail.com}
\author{Indranil Biswas}
\address{School of Mathematics\\ Tata Institute of Fundamental Research\\ Homi Bhabha Road\\ Navy Nagar\\ Colaba\\ Mumbai-400005}
\email{indranil@math.tifr.res.in}
\author{Debashish Goswami}
\address{Stat-Math Unit\\ Indian Statistical Institute\\ 203, B.T. Road\\ Kolkata-700108}
\email{goswamid@isical.ac.in}
\numberwithin{equation}{section}
\begin{document}

\begin{abstract}
We introduce Hopf algebroid covariance on Woronowicz's differential calculus. Using it, we develop quite a general framework of noncommutative complex geometry that subsumes the one in \cite{MR3720811}. We present transverse complex and K\"ahler structures as examples and discuss several other examples. Relation with past literature is described. 
\end{abstract}

\maketitle

\section{Introduction}

Symmetry plays an important, often decisive, role in almost all areas of mathematics; especially in geometry and topology. Classically, symmetry means group action on spaces. However, one has to pass from groups to groupoids to capture local symmetry in an efficient way. For example, it is more natural to consider the (Lie)groupoid of isometries of a Riemannian manifold which is not globally symmetric or homogeneous. The natural domain for the characteristic classes of certain geometric structures are in fact the cohomology of the classifying spaces of (Lie)groupoids. One can go further, saying that groupoids provide a concept of generalized symmetry that is essential, as exemplified above and spectacularly apparent in the theory of foliations. In the realm of noncommutative geometry, symmetry is captured by action or coaction of Hopf algebras on (co)algebras, which is the noncommutative version of a space.

\medskip\noindent

The concept of Hopf algebroids generalizes that of groupoids, providing a way of considering generalized symmetry in noncommutative geometry. They can be thought of as Hopf algebras over noncommutative bases. Initially conceived by algebraic topologists, Hopf algebroids over commutative bases have been used extensively in geometry and topology. Problems appear when one considers generalizing the definition to noncommutative bases. The pre-Hopf algebroid level, i.e., the definition of a bialgebroid is usually accepted as the correct generalization of a bialgebra over a noncommutative base. The problem is with the addition of an antipode. A description of the various definitions is given in the introduction of \cite{MR2817646}. The definition used in this article also comes from that paper which was first given in \cite{MR2043373}.

\medskip\noindent

The theory of noncommutative complex geometry was initiated in \citelist{\cite{MR3428362}\cite{MR3720811}}, although there are precursors, see \citelist{\cite{ MR3073899}\cite{ MR1614993}\cite{ MR2773332}\cite{MR1977884}}. It attempts to provide a fresh insight into various aspects of noncommutative geometry, such as the construction of spectral triples for quantum groups, by considering ``complex structures". It also promises a fruitful interaction between noncommutative geometry and noncommutative projective algebraic geometry. Identifying ``differential forms" as the basic objects of study, the framework of noncommutative complex geometry is developed in the setting of Woronowicz's differential calculus, see \cite{ MR994499}. The classical complex geometry being the obvious example, the set-up in \cite{MR3720811} takes as its motivating example the family of quantum flag manifolds. One can go, as shown in there, as far as proving a version of Hard Lefschetz theorem.

\medskip\noindent

Singular spaces, such as the leaf space of a foliation, have been studied extensively in classical geometry as well as noncommutative geometry. These spaces provided the main impetus for the development of noncommutative geometry, see \cite{ MR679730}. Classically, ``transverse geometry" attempts to study such singular spaces using symmetry, which most of the time turns out to be a pseudogroup. This was exemplified in the beautiful paper \cite{MR614370}. It led to the systematic study of spaces with pseudogroup symmetry. It is natural to ask whether one can do complex geometry over such spaces. That one can, was done in a volume of works, \citelist{\cite{MR1151583}\cite{MR1042454}\cite{MR1042454}}, to name a few.

\medskip\noindent

Now, pseudogroups and groupoids are very much noncommutative in their nature. This led to Connes' construction of the highly noncommutative groupoid $C^*$-algebra of the holonomy groupoid of a foliation, which is successfully applied to the questions in index theory. However, the fact that groupoids consist of symmetries is lost in this construction. To take the symmetry into account, one is naturally led to the language of Hopf algebroids, as shown in \citelist{\cite{MR2853081}\cite{MR1722796}\cite{MR2311185}}.

\medskip\noindent

Thus, the study of complex geometry over such singular spaces consists of studying regular spaces with highly noncommutative symmetry, which are also generalized, in that they are not Hopf algebras. Let us mention a few of our results to exemplify what we have said above. For the convenience of the reader, let us recall the definition of a foliation first.

\begin{defn*}
	A foliation $\cF$ on $M$ is given by a cocycle $\cU=\{U_i,f_i,g_{ij}\}$ modeled on a manifold $N_0$ ($\bR^n$ or $\bC^n$), i.e.,
	\begin{enumerate}[i)]
	\item $\{U_i\}$ is an open covering of $M$;
	\item $f_i : U_i \to N_0$ are submersions with connected fibers defining $\cF$;
	\item $g_{ij}$ are local diffeomorphisms of $N_0$ and $g_{ij}f_j=f_i$ on $U_i \cap U_j$.
	\end{enumerate}
\end{defn*}
	
The manifold $N=\sqcup f_i(U_i)$ is called the transverse manifold of $\cF$ associated to
the cocycle $\cU$, and the pseudogroup $P$ generated by $g_{ij}$ is called the holonomy pseudogroup on the transverse manifold. To such a pseudogroup $P$, one can associate an \'etale (effective) groupoid $\G(P)$ over $N$, called the holonomy groupoid. Then a ``geometric structure'' on the foliated manifold $(M,\cF)$ is a ``structure" on the manifold $N$ equivariant under the action of the groupoid $\G(P)$. Now, it is known that (see Subsection \ref{subsec 4.2.2}) the space $C_c^{\infty}(\G(P))$ of compactly supported smooth functions on $\G(P)$ is a Hopf algebroid in the sense of Definition \ref{defn 4.2.8}. We need a conversion such as 
\[
	\G(P) \text{-equivariant structures on } N \equiv C_c^{\infty}(\G(P)) \text{-covariant structures on } C^{\infty}(N).
\]

The first candidate of such a structure is the differential forms. The translation in this case is more or less routine but what is hard is to identify the property that relates the groupoid action and the exterior derivative. One can see the difficulty from the fact that groupoids generalize both spaces and groups. In the case of spaces, the derivative satisfies a Leibniz rule and in the group case, it commutes with the action. Our first nontrivial result is the translation of this property to the Hopf algebroid language, more precisely, 

\begin{lem*}
	The differential $d$ on $N$ satisfies 
	\begin{equation}
	d(a \cdot \w)=d(\ve_l(a)) \wdg \w + a \cdot d(\w)
	\end{equation} for $a \in C_c^{\infty}(\G(P))$ and $\w \in \O(N)$. 
\end{lem*}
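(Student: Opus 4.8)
The plan is to separate the two classical phenomena the identity is meant to fuse — the Leibniz rule in the ``space'' direction and equivariance of $d$ in the ``group'' direction — by exploiting that $\G(P)$ is \'etale. First I would cover $\G(P)$ by open bisections and fix a subordinate partition of unity, so that any $a\in C_c^\infty(\G(P))$ becomes a finite sum of elements each supported on a single bisection $B$. As both sides of the asserted equation are linear in $a$ and local on $N$, it suffices to treat such an $a$. For $a$ supported on $B$, the groupoid action equips $B$ with a local diffeomorphism $\phi\colon U\to V$ between open subsets of $N$ (where $U$ is the image of $B$ under the source map) and with a function $\hat a\in C_c^\infty(U)$, and in these terms $a\cdot\w=\hat a\cdot\phi^{*}\w$ on $U$, extended by $0$ elsewhere.

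On $U$ the computation is then routine: by the ordinary Leibniz rule together with naturality of the exterior derivative one gets
\[
d(a\cdot\w)=d(\hat a\cdot\phi^{*}\w)=d\hat a\wdg\phi^{*}\w+\hat a\cdot\phi^{*}(d\w)=d\hat a\wdg\phi^{*}\w+a\cdot d\w ,
\]
so everything reduces to recognizing the ``anomaly'' $d\hat a\wdg\phi^{*}\w$ as $d(\ve_l(a))\wdg\w$. On the part of $B$ lying over the unit space the germ $\phi$ is the identity, so there $\phi^{*}\w=\w$ and $\hat a$ agrees with the restriction $a|_{\G(P)^{(0)}}=\ve_l(a)$: that piece is exactly $d(\ve_l(a))\wdg\w$ and reproduces the Leibniz rule. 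Over the complement of the units $\ve_l(a)$ vanishes, and one must check that what is left reassembles into the equivariance relation $d(a\cdot\w)=a\cdot d\w$ — equivalently, that the part of $a$ killed by $\ve_l$ acts on $\O(N)$ by an operator commuting with $d$. Summing the bisection pieces back up and using that $d$ and $\ve_l$ are local then gives the formula.

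The hard part will be this last cancellation, and it is precisely where one has to use the correct covariant $C_c^\infty(\G(P))$-module structure on $\O(N)$ rather than a na\"ive one. What makes it work is the comodule picture: the coaction $\rho_{\O}$ on $\O(N)$ satisfies the counit law $(\id\otimes\ve)\circ\rho_{\O}=\id$ and intertwines $d$ with $d\otimes\id$, so under the dual module action the only ``$d$ of a coefficient'' term that survives is the one produced by the unit component of $a$, namely $d(\ve_l(a))$. In other words, the real content of the lemma is that $\O(N)$, as a covariant module over the Hopf algebroid $C_c^\infty(\G(P))$, simultaneously realizes ``$d$ is a Leibniz derivation'' (the pure space case, where $\ve_l$ is the identity) and ``$d$ commutes with the action'' (the pure group case, where $\ve_l$ takes values in the scalars); the bookkeeping in the first two steps is routine, and the work lies in arranging the module structure so that this interpolation holds on the nose.
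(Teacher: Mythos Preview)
Your reduction to a single bisection via a partition of unity is the right first move and matches the paper's approach. The genuine gap is in your handling of $\ve_l$ and of the resulting ``anomaly'' term.

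The left counit is the fiber sum along the target map, $\ve_l(a)(x)=\sum_{t(g)=x}a(g)$ (see \eqref{eq 4.2.24}); it is \emph{not} the restriction of $a$ to the unit space and it does \emph{not} vanish for $a$ supported away from the units. For $a$ supported on a bisection $B$ with associated local diffeomorphism $\phi\colon U\to V$ (where $U=s(B)$, $V=t(B)$), one has $\ve_l(a)=\hat a\in C_c^\infty(V)$, the transport of $a$ along $(t|_B)^{-1}$, and this is generically nonzero on all of $V$. Consequently your dichotomy ``over the units / off the units'' does not realise the splitting $a=s_l\ve_l(a)+(a-s_l\ve_l(a))$, and the closing appeal to a comodule counit law is out of place: $\O(N)$ carries a \emph{module} structure under $C_c^\infty(\G(P))$, not a comodule one.

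Once the action on a bisection is written correctly --- it is $(a\cdot\w)|_V=\hat a\,\phi_*\w$, supported on $V$, not on $U$ --- the Leibniz rule and naturality of $d$ give $d(a\cdot\w)-a\cdot d\w=d\hat a\wedge\phi_*\w=d(\ve_l(a))\wedge\phi_*\w$, which is \emph{not} $d(\ve_l(a))\wedge\w$ unless $\phi_*\w=\w$ on $\mathrm{supp}\,d\hat a$. Summing over bisections does not convert $\phi_*\w$ into $\w$. The paper's own proof is far shorter: it invokes the claim (borrowed from the proof of Lemma~\ref{lem 4.3.14}) that on a bisection $a\cdot\w=\ve_l(a)\,\w$ as a pointwise product, after which \eqref{eq 4.3.4} is immediate from the ordinary Leibniz rule. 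You should test that claim directly against the module structure of Proposition~\ref{prop 4.2.17}: already for the translation action of $\bZ$ on $\bR^2$ the two sides of \eqref{eq 4.3.4} differ, so either an additional hypothesis (e.g.\ invariance of $\w$) is implicitly in force or the intended action on $\O(N)$ is different from the one coming from Proposition~\ref{prop 4.2.17}. Your longer analysis in fact exposes exactly where this subtlety lies.
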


Denote by $\O(N)^{\G(P)}$ the $\G(P)$-invariant forms. Then forms on the ``orbit or leaf space" are captured as follows.

\begin{prop*}
The pair
$(\O(N),\,d)$ is a $C_c^{\infty}(\G(P))$-covariant differential calculus, and we have \[(\O(N)^{\G(P)},\,d)=(\O(N)_{C_c^{\infty}(\G(P))},\,d)\] as differential graded algebras, where the right-hand side denotes the invariant subalgebra under the action of the Hopf algebroid $C_c^{\infty}(\G(P))$.
\end{prop*}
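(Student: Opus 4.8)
The plan is to handle the two assertions separately: the first by combining the Lemma with standard facts about the de Rham complex, the second by a separation argument for the convolution algebra of an étale groupoid.

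\emph{First assertion.} I would begin by recalling that the étale groupoid $\G(P)$ acts on $N$ through the germs of the local diffeomorphisms generating $P$, and that this action is implemented on $\O(N)$ by pullback of forms. Passing to the convolution algebra $C_c^\infty(\G(P))$ — whose coproduct is of ``group-like'' (diagonal) type, so that pullback respects the wedge product — turns $(\O(N),\wdg)$ into a module algebra over the Hopf algebroid $C_c^\infty(\G(P))$ with base $A=C_c^\infty(N)$, the (symmetric) $A$-bimodule structure being compatible with the source and target embeddings. That $(\O(N),d)$ is a differential calculus in Woronowicz's sense is classical: $\O^0(N)=C^\infty(N)$, $d^2=0$, the graded Leibniz rule holds, and each $\O^k(N)$ is spanned by products $f_0\,df_1\wdg\cdots\wdg df_k$. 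The one remaining requirement — covariance of $d$ with respect to the $C_c^\infty(\G(P))$-action — is exactly the content of the Lemma, namely $d(a\cdot\w)=d(\ve_l(a))\wdg\w+a\cdot d\w$; unwinding the definition of a covariant differential calculus over a Hopf algebroid shows that this is precisely what must be checked.

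\emph{Second assertion.} Here the essential work is to identify the two invariant subalgebras as subsets of $\O(N)$. I would first make the $C_c^\infty(\G(P))$-action on a form explicit: for $a$ supported in a bisection $U$ of $\G(P)$, with $g_U$ the associated local diffeomorphism of $N$, the form $a\cdot\w$ equals, on $t(U)$, the pullback $(g_U^{-1})^*\w$ scaled by the function determined by $a|_U$, and vanishes outside $t(U)$; a general $a$ is a finite sum of such pieces, and correspondingly $\ve_l(a)(x)=\sum_{t(\gamma)=x}a(\gamma)$. If $\w\in\O(N)^{\G(P)}$, i.e.\ $\w$ is fixed by the germ of every arrow, then $a\cdot\w=\ve_l(a)\cdot\w$ for every $a$, so $\w\in\O(N)_{C_c^\infty(\G(P))}$. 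Conversely, given $\w\in\O(N)_{C_c^\infty(\G(P))}$ and an arbitrary arrow $\gamma_0$, I would choose $a$ supported in a small bisection through $\gamma_0$ and compare the germs at $t(\gamma_0)$ of $a\cdot\w$ and of $\ve_l(a)\cdot\w$; this forces $g_{\gamma_0}^*\w=\w$ near $s(\gamma_0)$, and since $\gamma_0$ was arbitrary, $\w\in\O(N)^{\G(P)}$. This step uses that $\G(P)$ is étale — so it is covered by bisections and $C_c^\infty$ of each bisection embeds into $C_c^\infty(\G(P))$, providing enough ``test elements'' — and effective. Finally, both sides are by construction graded subalgebras of $(\O(N),d)$, and $d$ preserves each: the left-hand side because $d$ commutes with pullback, the right-hand side by the computation $a\cdot d\w=d(a\cdot\w)-d(\ve_l(a))\wdg\w=d(\ve_l(a)\cdot\w)-d(\ve_l(a))\wdg\w=\ve_l(a)\cdot d\w$. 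Hence the set-theoretic equality upgrades to an equality of differential graded algebras.

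\emph{Main obstacle.} The crux is the converse inclusion in the second assertion, which rests on a careful description of how $C_c^\infty(\G(P))$ acts on forms together with the existence of enough elements of the convolution algebra to separate the germs of arrows — exactly where étaleness (and effectiveness) of $\G(P)$ enters essentially. A secondary, bookkeeping-type difficulty is keeping track of the various function spaces ($C^\infty(N)$ versus $C_c^\infty(N)$ versus germs) and the non-unitality of $C_c^\infty(\G(P))$ and of the base when verifying that the action is a genuine module-algebra action matching the general definition used in the paper.
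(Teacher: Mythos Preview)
Your proposal is correct and follows essentially the same route as the paper. The paper packages your bisection separation argument as a standalone lemma (valid for sections of any $G$-representation $E$, not just forms) and then simply cites it together with the Lemma you invoke; your direct argument for forms is the same computation, and your verification that $d$ preserves the invariant subalgebra is exactly the paper's reasoning that $[a-\ve_l(a),d]=0$ implies $d$ restricts to $\O_0$. One minor remark: effectiveness of $\G(P)$ is not actually needed for the converse inclusion---the argument only uses that an \'etale groupoid is covered by bisections and that one can choose $a$ supported in a single bisection with $a(\gamma_0)=1$---so you can drop that hypothesis.
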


The framework in which to talk about $C_c^{\infty}(\G(P))$-covariant differential calculus and such statements is our principal achievement in this paper. In fact, the goal of the present article is to introduce Hopf algebroid symmetry in noncommutative geometry and build such a framework. We formulate and study a quite general framework of Hopf algebroid-covariance of noncommutative complex and K\"ahler structures. We have been able to accommodate all the existing examples in our framework. Another notable and novel aspect of our work is a new definition of Hopf algebroid action or covariance on differential calculus which seems to work in a very general context. We present the Connes-Moscovici Hopf algebroid as one of the most interesting examples of our setup. Let us briefly describe the plan of the paper. In Section \ref{sec 4.2}, we recall the definition of a Hopf algebroid. Foliations and \'etale groupoids in general, are also discussed in some detail and is shown to provide examples. In the last subsection, we recall the already-existing Connes-Moscovici left bialgebroid and we prove that it is a Hopf algebroid by putting a right bialgebroid structure. Section \ref{sec 3} introduces modules and module-algebras over a Hopf algebroid. In the final subsection of this section, we put $\*$-structures on Hopf algebroids which is essential in order to view them as symmetry objects in noncommutative geometry. In Section \ref{sec 4.3}, our framework and applications to noncommutative K\"ahler structures are described. Hopf algebroid covariance is introduced. The necessary modifications of the framework in \cite{MR3720811} are described and along the way, examples coming from foliations are provided.  Finally, Section \ref{sec 4.6} discusses some future directions. We could prove a version of the Hodge decomposition theorem and the formality theorem, which generalizes the case for foliations but we decided to omit these two from the present article, owing to its length.

\medskip\noindent

Before ending this introduction, we would like to mention that it is not our intention to bring in Hopf algebroids and investigate such a general situation just for generality's sake. Even in the classical case, one cannot always avoid having symmetries encoded in a more general object than a group. One needs a framework to work with these situations, and the theory of foliations amply exemplify such a need. We are after such a framework where one can take the naturally-occurring Hopf algebroid symmetry into account. If one needs further justification of where indeed these objects occur, we refer to subfactor theory. The knowledgeable reader might immediately recall \citelist{\cite{MR1745634}\cite{MR1800792}\cite{MR1913440}} where Hopf algebroids appear in the form of weak Hopf algebras as the ``Galois group of automorphisms.'' If one forgets the analysis, there is no way one can escape Hopf algebroids as is wonderfully explained in \cite{MR2004729}. Further pointers are given at the end of this article (see \ref{pointer}).  

\section{Preliminaries}\label{sec 4.2}

\subsection{Hopf algebras over noncommutative base - Hopf algebroids} \label{subsec 4.2.1}

We recall the definition of Hopf algebroids from \cite{MR2817646}. See also \cites{MR2553659, MR2043373}. We begin by defining a generalization of bialgebras.

\begin{defn}\label{defn 4.2.1}
Let $A$ be a $\bC$-algebra. An $(s,t)$-ring over $A$ is a $\bC$-algebra $H$ with homomorphisms $s : A \to H$ and $t : A ^{op} \to H$ whose images commute in $H$.
\end{defn}

The functions $s$ and $t$ are referred to as the source and target maps respectively. An $(s,t)$-ring structure is equivalent to the structure of an $A^e$-algebra on $H$.

\begin{defn}\label{defn 4.2.2}
Let $H$ be an $(s,t)$-ring over $A$. The Takeuchi product is the subspace
\[H \times_A H:=\{\sum_i h_i \tens_A h'_i \in H \tens_A H \mid \sum_i h_it(a) \tens h'_i=\sum_i h_i \tens h'_is(a) \quad \forall a \in A\}
\]
of $H \tens_A H$, where the tensor product $\tens_A$ is defined with respect to the
following $(A,A)$-bimodule structure on $H$:
\begin{equation}\label{eq 4.2.1}
a_1 \cdot h \cdot a_2:=s(a_1)t(a_2)h, \qquad a_1,a_2 \in A, \quad h \in H.
\end{equation}
\end{defn}

This Takeuchi product becomes a unital algebra with factorwise multiplication as well as an 
$(s,t)$-ring. Before we go onto the definition of a bialgebroid, let us recall the definition of an $A$-coalgebra.

\begin{defn}
Let $A$ be a $\bC$-algebra. A coalgebra over $A$ is a triple $(C,\D,\ve)$ with $C$ an $(A,A)$-bimodule, $\D$ an $(A,A)$-bimodule morphism called the comultiplication, $\ve : C \to A$ an $(A,A)$-bimodule morphism called the counit, and such that 
\begin{equation}\label{eq 1.1.2}
(\D \tens_A \id)\D=(\id \tens_A \D)\D, \qquad (\id \tens_A \ve)\D=(\ve \tens_A \id)\D=\id.
\end{equation}
\end{defn}

A left bialgebroid over $A$ is then an algebra with a compatible coalgebra structure over $A$. More precisely,

\begin{defn}\label{defn 4.2.3}
Let $A_l$ be a $\bC$-algebra. A left bialgebroid over $A_l$ is an $(s_l,t_l)$-ring $H_l$ equipped with the structure of an $A_l$-coalgebra $(\D_l,\ve_l)$ with respect to the $(A_l,A_l)$-bimodule structure \eqref{eq 4.2.1}, subject to the following conditions:
\begin{enumerate}[i)]
\item the (left) coproduct $\D_l : H_l \to H_l \tens_{A_l} H_l$ maps into
the subset $H_l \times_{A_l} H_l$ and defines a morphism $\D_l : H_l \to H_l \times_{A_l} H_l$ of unital $\bC$-algebras;
\item the (left) counit has the property: 
\begin{equation}\label{eq 4.2.2}
\ve_l(hh')=\ve_l(hs_l(\ve_lh'))=\ve_l(ht_l(\ve_lh')) \qquad h,h' \in H_l.
\end{equation}
\end{enumerate}
\end{defn}

We denote the above left bialgebroid by $(H_l,A_l,s_l,t_l,\D_l,\ve_l)$ or simply by $H_l$.

\begin{rem}\label{rem 4.2.4}
From \eqref{eq 4.2.2} above and the fact that $\ve_l$ is an $(A_l,A_l)$-bimodule morphism, it follows
that $\ve_l(s_l(a)h)=a\ve_l(h)$, $\ve_l(t_l(a)h)=\ve_l(h)a$, and it also follows
that $\ve_l(1_{H_l})=1_{A_l}$. So we have that $\ve_ls_l=\ve_lt_l=\id_{A_l}$.
\end{rem}

\begin{lem}\label{lem 4.2.5} 
In a left bialgebroid, the left counit is unique. 
\end{lem}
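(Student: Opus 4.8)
The plan is to show that if $\ve_l$ and $\ve_l'$ are two maps making $H_l$ into a left bialgebroid with the \emph{same} underlying $(s_l,t_l)$-ring and the \emph{same} coproduct $\D_l$, then $\ve_l = \ve_l'$. The starting point is the counit axiom from \eqref{eq 1.1.2}, namely $(\ve_l \tens_{A_l} \id)\D_l = \id = (\id \tens_{A_l} \ve_l)\D_l$, together with Remark \ref{rem 4.2.4}, which tells us $\ve_l s_l = \ve_l t_l = \id_{A_l}$ (and likewise for $\ve_l'$). So both counits are genuine left inverses of $s_l$; the issue is pinning down their value on a general $h \in H_l$, not just on the image of $s_l$.

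First I would apply the counit identity for $\ve_l'$ to write, for any $h \in H_l$,
\[
h \;=\; (\id \tens_{A_l} \ve_l')\D_l(h) \;=\; \sum h_{(1)}\, t_l\!\bigl(\ve_l'(h_{(2)})\bigr),
\]
using Sweedler notation $\D_l(h) = \sum h_{(1)} \tens_{A_l} h_{(2)}$, where the right $A_l$-action on the first leg is through $t_l$ by \eqref{eq 4.2.1}. Now apply $\ve_l$ to both sides. On the left we get $\ve_l(h)$. On the right, using that $\ve_l$ is an $(A_l,A_l)$-bimodule map and that $\ve_l(h' t_l(a)) = \ve_l(h')a$ (again Remark \ref{rem 4.2.4}), we obtain $\sum \ve_l(h_{(1)})\,\ve_l'(h_{(2)})$. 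The key step is then to recognize this sum as $\ve_l'$ applied to something: by the other counit identity for $\ve_l'$, namely $(\ve_l' \tens_{A_l}\id)\D_l(h) = h$ after identifying $A_l \tens_{A_l} H_l \cong H_l$ via the left action $s_l$, we have $\sum s_l(\ve_l'(h_{(1)}))\,h_{(2)} = h$ — wait, I need the version with $\ve_l$ on the first leg. Let me instead use symmetry directly.

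The cleanest route: compute $\sum \ve_l(h_{(1)})\,\ve_l'(h_{(2)})$ in two ways. Collapsing the second leg first via $\ve_l'$ gives $\ve_l\bigl(\sum h_{(1)} s_l(\ve_l'(h_{(2)}))\bigr)$ — here I use $\ve_l(h' s_l(a)) = $ ... hmm, that is not one of the stated identities directly, but $\ve_l$ being a bimodule map for the bimodule structure \eqref{eq 4.2.1} gives $\ve_l(s_l(a)h') = a\,\ve_l(h')$ and $\ve_l(t_l(a)h') = \ve_l(h')a$; combined with commutativity of the images of $s_l,t_l$ and axiom \eqref{eq 4.2.2} one extracts what is needed. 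The point is that $\sum h_{(1)}\tens_{A_l} h_{(2)}$ lands in the Takeuchi product $H_l \times_{A_l} H_l$, so $\sum h_{(1)} s_l(a) \tens h_{(2)} = \sum h_{(1)} \tens t_l(a) h_{(2)}$; this is exactly the flexibility that lets us move the $A_l$-factor produced by one counit across the tensor and feed it to the other counit. Pushing $\ve_l'(h_{(2)})$ through to the first leg as a $t_l$-factor and collapsing with $(\id \tens_{A_l}\ve_l')\D_l(h) = h$ (if we collapse with $\ve_l'$) or with $(\id\tens_{A_l}\ve_l)\D_l(h)=h$ (if we collapse with $\ve_l$), we find both $\sum \ve_l(h_{(1)})\ve_l'(h_{(2)}) = \ve_l(h)$ and $\sum\ve_l(h_{(1)})\ve_l'(h_{(2)}) = \ve_l'(h)$, whence $\ve_l = \ve_l'$.

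I expect the main obstacle to be purely bookkeeping: the counit and multiplications are only $A_l$-balanced, so one must be careful that every manipulation respects the tensor-product relations over $A_l$ and the Takeuchi condition, and in particular that the expression $\sum \ve_l(h_{(1)})\,\ve_l'(h_{(2)})$ is well-defined (independent of the representative of $\D_l(h)$ in $H_l \tens_{A_l} H_l$) — this is where axiom \eqref{eq 4.2.2} and the bimodule-morphism property of the counits, as distilled in Remark \ref{rem 4.2.4}, do the real work. There is no conceptual difficulty; it is the standard ``counits of a coalgebra are unique'' argument transported to the bialgebroid setting, and the only genuine care needed is in handling the one-sidedness of the $A_l$-module structures.
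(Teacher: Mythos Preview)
Your proposal is correct and is essentially the same argument as the paper's: both compute the mixed expression $\sum \ve_l(h_{(1)})\,\ve_l'(h_{(2)})$ and collapse it two ways using the counit axioms for $\ve_l$ and $\ve_l'$ together with the bimodule identities $\ve_l(s_l(a)h')=a\,\ve_l(h')$ and $\ve_l(t_l(a)h')=\ve_l(h')a$ from Remark~\ref{rem 4.2.4}. The paper's one-line version is exactly $\ve_l^2(h)=\ve_l^2(s_l\ve_l^1(h_{(1)})h_{(2)})=\ve_l^1(h_{(1)})\ve_l^2(h_{(2)})=\ve_l^1(t_l\ve_l^2(h_{(2)})h_{(1)})=\ve_l^1(h)$; your detour through the Takeuchi condition is unnecessary, since only the bimodule-map property of the counits is used.
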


\begin{proof}
Indeed, if both $\ve_l^1$ and $\ve_l^2$ make $(H_l,A_l,s_l,t_l,\D_l,\ve_l^1)$ and
$(H_l,A_l,s_l,t_l,\D_l,\ve_l^2)$ left bialgebroids, then we have:
\[\ve_l^2(h)=\ve_l^2(s_l\ve_l^1(h_1)h_2)=\ve_l^1(h_1)\ve_l^2(h_2)=\ve_l^1(t_l\ve_l^2(h_2)h_1)=\ve_l^1(h).\]
\end{proof}

Given an $(s,t)$-ring $H$, there is another $(A,A)$-bimodule structure on $H$:
\begin{equation}\label{eq 4.2.3}
a_1 \cdot h \cdot a_2=ht(a_1)s(a_2), \qquad a_1,a_2 \in A \quad h \in H.
\end{equation}

With respect to this bimodule structure, the tensor product $\tens_A$ is defined. Inside $H \tens_A H$, there is the Takeuchi product: \[H \times^{A} H:=\{\sum_i h_i \tens_A h'_i \in H \tens_A H \mid \sum_i s(a)h_i \tens h'_i=\sum_i h_i \tens t(a)h'_i \quad \forall a \in A\}.
\] This again becomes a unital algebra with factorwise multiplication and also is an $(s,t)$-ring.

\begin{defn}\label{defn 4.2.6}
Let $A_r$ be a $\bC$-algebra. A right bialgebroid over $A_r$ is an $(s_r,t_r)$-ring $H_r$ equipped with the structure of an $A_r$-coalgebra $(\D_r,\ve_r)$ with respect to the $(A_r,A_r)$-bimodule structure \eqref{eq 4.2.3}, subject to the following conditions:
\begin{enumerate}[i)]
\item the (right) coproduct $\D_r : H_r \to H_r \tens_{A_r} H_r$ maps into $H_r \times^{A_r} H_r$ and defines a morphism $\D_r : H_r \to H_r \times^{A_r} H_r$ of unital $\bC$-algebras;
\item the (right) counit has the property: 
\begin{equation}\label{eq4.2.4}
\ve_r(hh')=\ve_r(s_r(\ve_rh)h')=\ve_r(t_r(\ve_rh)h') \qquad h,h' \in H_r.
\end{equation}
\end{enumerate}
\end{defn}

We denote a right bialgebroid by $(H_r,A_r,s_r,t_r,\D_r,\ve_r)$ or simply by $H_r$. Note that if $(H_l,A_l,s_l,t_l,$ $\D_l,\ve_l)$ is a left bialgebroid, then $(H_l^{op},A_l,t_l,s_l,\D_l,\ve_l)$ is a right bialgebroid.

\begin{rem}\label{rem 4.2.7}
As in Remark \ref{rem 4.2.4}, we have $\ve_rs_r=\ve_rt_r=\id_{A_r}$. Also as above, the right counit is unique.
\end{rem}

\subsubsection*{\bf Sweedler notation} We shall use Sweedler notation with subscripts $\D_l(h)=h_{(1)} \tens h_{(2)}$ for left 
comultiplication while the right comultiplication are indicated by superscripts: $\D_r(h)=h^{(1)} \tens h^{(2)}.$

\medskip\noindent

We now define a Hopf algebroid as an algebra endowed with a left and a right bialgebroid structure together 
with an antipode ``intertwining'' the left bialgebroid and the right bialgebroid structures. More precisely:

\begin{defn}\label{defn 4.2.8}
A Hopf algebroid is given by a triple $(H_l,H_r,S)$, where $H_l=(H_l,A_l,s_l,t_l,\D_l,\ve_l)$ is a left $A_l$-bialgebroid and $H_r=(H_r,A_r,s_r,t_r,\D_r,\ve_r)$ is a right $A_r$-bialgebroid on the same $\bC$-algebra $H$, and $S :H \to H$ is invertible $\bC$-linear. These structures are
subject to the following four conditions:
\begin{enumerate}[i)]
\item the images of $s_l$ and $t_r$ as well as those of $t_l$ and $s_r$, coincide:
\begin{equation}\label{eq 4.2.5}
s_l\ve_lt_r=t_r, \quad t_l\ve_ls_r=s_r, \quad s_r\ve_rt_l=t_l, \quad t_r\ve_rs_l=s_l;
\end{equation}
\item mixed coassociativity holds:
\begin{equation}\label{eq 4.2.6}
(\D_l \tens \id_H)\D_r=(\id_H \tens \D_r)\D_l, \qquad (\D_r \tens \id_H)\D_l=(\id_H \tens \D_l)\D_r;
\end{equation}
\item for all $a_1 \in A_l$, $a_2 \in A_r$ and $h \in H$, we have
\begin{equation}\label{eq 4.2.7}
S(t_l(a_1)ht_r(a_2))=s_r(a_2)S(h)s_l(a_1);
\end{equation}
\item the antipode axioms hold:
\begin{equation}\label{eq 4.2.8}
\m_H(S \tens \id_H)\D_l=s_r\ve_r, \qquad \m_H(\id_H \tens S)\D_r=s_l\ve_l.
\end{equation}
\end{enumerate}
\end{defn}

We apply $\ve_r$ to the first two and $\ve_l$ to the second pair of identities in \eqref{eq 4.2.5} and get that $A_l$ and $A_r$ are anti-isomorphic as $\bC$-algebras:
\begin{equation}\label{eq 4.2.9}
\begin{split}
\p:=\ve_rs_l : A_l^{op} \to A_r, \qquad \p^{-1}:=\ve_lt_r : A_r \to A_l^{op},\\
\ta:=\ve_rt_l : A_l \to A_r^{op}, \qquad \ta^{-1}:=\ve_ls_r : A_r^{op} \to A_l.
\end{split}
\end{equation}

The antipode is anti-algebra and anti-coalgebra morphism (between different coalgebras) and satisfies 
the equations
\begin{equation}\label{eq 4.2.10}
\flip  (S \tens S)\D_l=\D_rS, \qquad \flip  (S \tens S)\D_r=\D_lS,
\end{equation}
where $\flip : H \tens_{\bC} H \to H \tens_{\bC} H$ is the flip permuting two factors of the tensor product
(this becomes an $(A_l,A_l)$-respectively $(A_r,A_r)$-bimodule). Similar formulas hold for the inverse $S^{-1}$. The following identities
will be used:
\begin{equation}\label{eq 4.2.11}
\begin{matrix}
s_r\ve_rs_l=Ss_l, & s_l\ve_ls_r=Ss_r, & s_r\ve_rt_l=S^{-1}s_l, & s_l\ve_lt_r=S^{-1}s_r,\\
t_r\ve_rs_l=St_l, & t_l\ve_ls_r=St_r, & t_r\ve_rt_l=S^{-1}t_l, & t_l\ve_lt_r=S^{-1}t_r,\\
\ve_rs_l\ve_l=\ve_rS, & \ve_ls_r\ve_r=\ve_lS, & \ve_rt_l\ve_l=\ve_rS^{-1}, & \ve_lt_r\ve_r=\ve_lS^{-1}, 
\end{matrix}
\end{equation}
and
\begin{equation}\label{eq 4.2.12}
\begin{matrix}
\m_H(S \tens s_l\ve_l)\D_l=S, & \m_H(s_r\ve_r \tens S)\D_r=S,\\
\m_{H^{op}}(\id_H \tens S^{-1})\D_l=t_r\ve_r, & \m_{H^{op}}(S^{-1} \tens \id_H)\D_r=t_l\ve_l,\\
\m_{H^{op}}(t_l\ve_l \tens S^{-1})\D_l=S^{-1}, & \m_{H^{op}}(S^{-1} \tens t_r\ve_r)\D_r=S^{-1}.
\end{matrix}
\end{equation}

\begin{lem}\label{lem 4.2.9}
In a Hopf algebroid, the antipode is unique. 
\end{lem}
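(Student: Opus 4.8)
The plan is to imitate the classical proof that a convolution inverse is unique, while keeping track of the fact that here there are two bialgebroid structures and hence two one-sided ``units'' $s_r\ve_r$ and $s_l\ve_l$. Suppose $S$ and $S'$ are both antipodes, so that $(H_l,H_r,S)$ and $(H_l,H_r,S')$ are Hopf algebroids over the \emph{same} left and right bialgebroid data $H_l,H_r$, only the antipode varying. Then the antipode axioms \eqref{eq 4.2.8} and the auxiliary identities \eqref{eq 4.2.12} are available for each of $S$ and $S'$ separately, since each such identity involves only one of the two antipodes together with the fixed maps $s_l,t_l,\ve_l,\D_l,s_r,t_r,\ve_r,\D_r$.

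First I would expand $S'$ using the identity $\m_H(s_r\ve_r \tens S')\D_r=S'$, one of the identities \eqref{eq 4.2.12}, that is, $S'(h)=s_r\ve_r(h^{(1)})\,S'(h^{(2)})$ in Sweedler notation. Into the first factor I would substitute the antipode axiom $s_r\ve_r=\m_H(S\tens\id_H)\D_l$ from \eqref{eq 4.2.8} for $S$, obtaining $S'(h)=S\big((h^{(1)})_{(1)}\big)\,(h^{(1)})_{(2)}\,S'(h^{(2)})$ (products taken in $H$), which is the value on $h$ of $(S\tens\id_H\tens S')(\D_l\tens\id_H)\D_r$ followed by the two multiplications of $H$. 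Now I would use mixed coassociativity $(\D_l\tens\id_H)\D_r=(\id_H\tens\D_r)\D_l$ from \eqref{eq 4.2.6} to rewrite this as $S'(h)=S(h_{(1)})\,(h_{(2)})^{(1)}\,S'\big((h_{(2)})^{(2)}\big)$.

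To finish, I would regroup the last two factors and apply the other antipode axiom for $S'$, namely $\m_H(\id_H\tens S')\D_r=s_l\ve_l$, which collapses $(h_{(2)})^{(1)}\,S'\big((h_{(2)})^{(2)}\big)$ to $s_l\ve_l(h_{(2)})$; this leaves $S'(h)=S(h_{(1)})\,s_l\ve_l(h_{(2)})=\m_H(S\tens s_l\ve_l)\D_l(h)$, and by the identity $\m_H(S\tens s_l\ve_l)\D_l=S$ from \eqref{eq 4.2.12}, applied to $S$, the right-hand side is precisely $S(h)$. Hence $S'=S$.

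I expect the real work to be purely in the bookkeeping: one must check that each step above lives over the correct relative tensor product, so that applying $S$, $S'$, $s_r\ve_r$ and $s_l\ve_l$ to individual Sweedler legs, and re-associating via \eqref{eq 4.2.6}, are all well defined. This is where Remarks \ref{rem 4.2.4} and \ref{rem 4.2.7} (the bimodule-morphism properties of $\ve_l,\ve_r$), the Takeuchi-product containments in Definitions \ref{defn 4.2.3} and \ref{defn 4.2.6}, and the covariance identity \eqref{eq 4.2.7} for the antipode enter. If one prefers not to cite \eqref{eq 4.2.12} as a black box, the two identities from it used above, $\m_H(s_r\ve_r\tens S')\D_r=S'$ and $\m_H(S\tens s_l\ve_l)\D_l=S$, follow directly from the counit axioms \eqref{eq 1.1.2} together with \eqref{eq 4.2.7}.
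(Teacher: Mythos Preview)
Your proof is correct and follows essentially the same route as the paper's: start from $S'(h)=s_r\ve_r(h^{(1)})S'(h^{(2)})$, expand $s_r\ve_r$ via the left antipode axiom for $S$, apply mixed coassociativity, collapse with the right antipode axiom for $S'$, and finish with $\m_H(S\tens s_l\ve_l)\D_l=S$. The paper records exactly this chain in a single display (with $S_1,S_2$ in place of your $S,S'$), omitting your discussion of well-definedness over the relative tensor products.
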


\begin{proof}
Indeed, if both $S_1$ and $S_2$ make $(H_l, H_r, S_1)$ and $(H_l, H_r, S_2)$ Hopf algebroids then
we have
\[
\begin{aligned}
S_2(h)=s_r\ve_r(h^{(1)})S_2(h^{(2)})&=S_1(h^{(1)}_{(1)})h^{(1)}_{(2)}S_2(h^{(2)})\\
&=S_1(h_{(1)})h_{(2)}^{(1)}S_2(h_{(2)}^{(2)})=S_1(h_{(1)})s_l\ve_l(h_{(2)})=S_1(h).
\end{aligned}    
\] 
\end{proof}

Finally, note that if $(H_l,H_r,S)$ is a Hopf algebroid, then $(H_r^{op}, H_l^{op}, S^{-1})$ is also a Hopf algebroid.

\subsection{The main example - \'Etale groupoids}\label{subsec 4.2.2} We now introduce our main example besides Hopf algebras. A Hopf algebra is a Hopf algebroid with $A_l=A_r=\bC$. We follow \cite{MR2012261}. See also \cites{MR1303779, MR3448330, MR2853081}.

\begin{defn}\label{defn 4.2.10}
A groupoid $G$ is a small category in which each arrow is invertible. More explicitly, a groupoid consists of a
space of objects $G_0$, a space of arrows $G_1$ (often denoted by $G$ itself) and five structure maps relating the two:
\begin{enumerate}[i)]
\item source and target maps $s,t : G_1 \to G_0$, assigning to each arrow $g$ its source $s(g)$ and target $t(g)$; one says that $g$ is from $s(g)$ to $t(g)$;
\item a partially defined composition of arrows, that is, only for those arrows $g,h$ for which source and target match, that is $s(g)=t(h)$; in other words, a map $m : G_2:=G_1 ~^s\times^t_{G_0} G_1 \to G_1$, $(g,h) \mto gh$ that is associative whenever defined, producing the composite arrow going from $s(gh)=s(h)$ to $t(gh)=t(g)$;
\item a unit map $1 : G_0 \to G_1$, $x \mto 1_x$, that has the property $1_{t(g)}g=g1_{s(g)}= g$;
\item an inversion $inv : G_1 \to G_1$, $g \mto g^{-1}$ that produces the inverse arrow going from $s(g^{-1})=t(g)$ to $t(g^{-1})=s(g)$, fulfilling $g^{-1}g=1_{s(g)}$, $gg^{-1}=1_{t(g)}$.
\end{enumerate}
\end{defn}

These maps can be assembled into a diagram
\begin{equation}\label{eq 4.2.13}
\begin{tikzcd}
G_2 \arrow[r, "m"] 
& G_1 \arrow[r, "inv"] 
& G_1 \arrow[r, shift left,"s"]
\arrow[r, shift right,"t"']
& G_0 \arrow[r, "1"]
& G_1
\end{tikzcd}
\end{equation} 

An arrow may be denoted by $x \xrightarrow{g} y$ to indicate that $y=s(g)$ and $x=t(g)$.

\medskip\noindent

A topological groupoid is a groupoid in which both $G_1$ and $G_0$ are topological spaces and all the structure maps 
are continuous. Similarly one defines smooth groupoids, where in addition $s$ and $t$ are required to be 
surjective submersions in order to ensure that $G_2=G_1 ~^s\times^t_{G_0}G_1$ remains a manifold. A 
topological (or smooth) groupoid is called \'etale if the source map is a local homeomorphism (or local 
diffeomorphism); this condition implies that all structures maps are local homeomorphisms (or local diffeomorphisms, 
respectively). In the smooth case, this equivalently amounts to saying that $\dim G_1=\dim G_0$. In particular, 
an \'etale groupoid has zero-dimensional source and target fibers, and hence they are discrete. We shall only be 
dealing with smooth \'etale groupoids.

\medskip\noindent

We give some examples of \'etale groupoids below.

\begin{exa}\label{exa 4.2.11}\mbox{}
\begin{enumerate}[i)]
\item The unit groupoid has a single manifold $M$ as both its object and arrow space. All the maps are identity
functions.
\item A (discrete) group is a one-object groupoid (called the point groupoid).
\item The translation groupoid $\G \ltimes M$ of a smooth left action of a discrete group has as object space $M$ and arrow space $\G \times M$. The source is $(g,m) \mto m$, the target is $(g,m) \mto gm$ and the multiplication is $(g,m)(g',m')=(gg',m')$.
\item Orbifold groupoids or proper \'etale groupoids. We refer to \cites{MR2012261,MR3448330} for more details.
\item Let $(M,\cF)$ be a foliated manifold. Then the (reduced) holonomy groupoid is \'etale. 
\end{enumerate}
\end{exa}

As the last example is one of our main motivating examples, we shall describe it in a slightly greater details. See \cites{MR1151583,MR2012261,MR824240,MR1813430}. A foliation $\cF$ on $M$ is given by a cocycle $\cU=\{U_i,f_i,g_{ij}\}$ modeled on a manifold $N_0$ ($\bR^n$ or $\bC^n$), i.e.,
\begin{enumerate}[i)]
\item $\{U_i\}$ is an open covering of $M$;
\item $f_i : U_i \to N_0$ are submersions with connected fibers defining $\cF$;
\item $g_{ij}$ are local diffeomorphisms of $N_0$ and $g_{ij}f_j=f_i$ on $U_i \cap U_j$.
\end{enumerate}

The manifold $N=\sqcup f_i(U_i)$ is called the transverse manifold of $\cF$ associated to
the cocycle $\cU$, and the pseudogroup $P$ generated by $g_{ij}$ is called the holonomy pseudogroup on the transverse manifold. To any pseudogroup $P$ on some manifold $X$ we can associate an \'etale (effective) groupoid $\G(P)$ over $X$ as follows: for any $x,y \in X$ let 
\begin{equation}\label{eq 4.2.14}
\G(P)(x,y)=\{germ_xg \mid g \in P, x \in dom(g), g(x)=y\}.
\end{equation} The multiplication in $\G(P)$ is given by the composition of transitions. Equipped
with classical sheaf topology $\G(P)_1$ becomes a smooth manifold and $\G(P)$ becomes an \'etale groupoid. In our case, $\G(P)$ is called the reduced holonomy groupoid of $(M,\cF)$ and is denoted $Hol_N(M,\cF)$ (but
also we write $\G(P)$ sometimes).

\medskip\noindent

We now show one gets Hopf algebroids naturally from \'etale groupoids following \cites{MR2817646,MR2311185}. Before that we introduce the following.

\subsubsection*{\bf Fiber sum notation. } Let $E$ and $F$ are vector bundles over two manifolds $X$
and $Y$, respectively. Suppose $\p : X \to Y$ is an \'etale map (i.e., a local
homeomorphism) and $\a : E \isom \p^*F$ an isomorphism of vector bundles. Then the push-forward (or fiber sum) of $\p$, denoted by $\p_* : \G_c(X,E) \to \G_c(Y,F)$, is defined by 
\begin{equation}\label{eq 4.2.15}
(\p_*s)(y)=\sum_{\p(x)=y}\a(s(x)),
\end{equation} where $x \in X, y \in Y$ and $s \in \G_c(X,E)$. Here we identify the
fiber ${\p}^*F_z$ with $F_{\p(z)}$ using the definition of pullback.

\medskip\noindent

If $G$ is an \'etale groupoid over a compact Hausdorff $G_0$, the space $C_c^{\infty}(G)$ of smooth functions on $G=G_1$ with compact support carries a Hopf algebroid structure. Although $G=G_1$ often happens to be non-Hausdorff in examples, we assume this condition in this paper since the reduced holonomy groupoid of a Riemannian foliation is always Hausdorff. We have two $C^{\infty}(G_0)$-actions on $C_c^{\infty}(G)$ by left and right multiplication with respect to which we define the four tensor products denoted by $\tens^{ll}_{C^{\infty}(G_0)}$, $\tens^{rr}_{C^{\infty}(G_0)}$, $\tens^{rl}_{C^{\infty}(G_0)}$ and $\tens^{lr}_{C^{\infty}(G_0)}$. We need the following isomorphisms
\begin{equation}\label{eq 4.2.16} 
\begin{split}
\O_{s,t} : C_c^{\infty}(G) \tens^{rl}_{C^{\infty}(G_0)}C_c^{\infty}(G) \to C_c^{\infty}(G ~^s\times^t_{G_0}G)=C_c^{\infty}(G_2)\\
\O_{t,t} : C_c^{\infty}(G) \tens^{ll}_{C^{\infty}(G_0)}C_c^{\infty}(G)\to C_c^{\infty}(G ~^t\times^t_{G_0}G)=C_c^{\infty}(G_2)\\
\O_{s,s} : C_c^{\infty}(G) \tens^{rr}_{C^{\infty}(G_0)}C_c^{\infty}(G) \to C_c^{\infty}(G ~^s\times^s_{G_0}G)=C_c^{\infty}(G_2)\\
\O_{t,s} : C_c^{\infty}(G) \tens^{lr}_{C^{\infty}(G_0)}C_c^{\infty}(G) \to C_c^{\infty}(G ~^t\times^s_{G_0}G)=C_c^{\infty}(G_2)\\
\end{split}
\end{equation}
all given by the formulas
\begin{equation}\label{eq 4.2.17}
\O_{-.-}(u \tens^{--}_{C^{\infty}(G_0)} u')(g,g')=u(g)u(g'),
\end{equation} for $u,u' \in C_c^{\infty}(G)$ and $(g,g')$ in the respective pullback $G~^-\times^-_{G_0}G$. The maps are isomorphism, as it was shown in \cite{MR2311185}. We now give the Hopf algebroid structure maps
for $C_c^{\infty}(G)$ over $C^{\infty}(G_0)$:

\subsubsection*{Ring structure} On the base algebra $C^{\infty}(G_0)$ one has the commutative pointwise product, whereas the total algebra $C_c^{\infty}(G)$ is equipped with a convolution product, defined as the composition
\begin{equation}\label{eq 4.2.18}
\* : C_c^{\infty}(G) \tens^{rl}_{C^{\infty}(G_0)}C_c^{\infty}(G) \xrightarrow{\O_{s,t}} C_c^{\infty}(G_2) \xrightarrow{m_{\*}} C_c^{\infty}(G).
\end{equation} Explicitly,
\begin{equation}\label{eq 4.2.19}
(u \* v)(g):=\*(u \tens v)=(m_{\*}\O^{s,t}(u \tens v))(g)=\sum_{g=g_1g_2}u(g_1)u(g_2),
\end{equation} which can be used in showing associativity of the product.

\subsubsection*{Source and target maps} For $f \in C^{\infty}(G_0)$ and $u \in C_c^{\infty}(G)$,
\begin{equation}\label{eq 4.2.20}
(f \* u)(g)=f(t(g))u(g) \qquad \text{and} \qquad (u \* f)(g)=u(g)f(s(g)).
\end{equation} It can be shown that $C^{\infty}(G_0)$, identified with those functions in $C_c^{\infty}(G)$ having support on $1_{G_0} \subset G$, is a commutative subalgebra of $C_c^{\infty}(G)$. We put for the (left and right bialgebroid) source and target maps 
\begin{equation}\label{eq 4.2.21}
s_l\equiv s_r\equiv t_l\equiv t_r\equiv 1_{\*} : C^{\infty}(G_0) \to C_c^{\infty}(G),
\end{equation} i.e., the injection as subalgebra given by the fiber sum of the unit map $1 : G_0 \to G$.
More explicitly,
\begin{equation}\label{eq 4.2.22}
s_l : f \mto \ol{f}, \qquad \text{where} \qquad \ol{f}(g)= \begin{cases} f(x) & \text{if } g=1_x \text{ for some } x \in G_0\\ 0 & \text{otherwise.} \end{cases}
\end{equation}

\subsubsection*{Left and right comultiplications} Using the isomorphism $\O_{-,-}$, the left and right comultiplications are given as follows:
\begin{subequations}
\begin{equation}\label{eq 4.2.23a}
\begin{split}
\D_l : C_c^{\infty}(G) \to C_c^{\infty}(G~^t\times^t_{G_0}G)\isom C_c^{\infty}(G) \tens^{ll} C_c^{\infty}(G),\\
(\D_lu)(g,g')=\begin{cases} u(g) & \text{if } g=g',\\ 0 & \text{else},\end{cases}
\end{split}
\end{equation}
\begin{equation}\label{eq 4.2.23b}
\begin{split}
\D_r : C_c^{\infty}(G) \to C_c^{\infty}(G~^s\times^s_{G_0}G)\isom C_c^{\infty}(G) \tens^{rr} C_c^{\infty}(G),\\
(\D_ru)(g,g')=\begin{cases} u(g) & \text{if } g=g',\\ 0 & \text{else.}\end{cases}
\end{split}
\end{equation}
\end{subequations} Alternatively, $\D_l=d^l_{\*}$ and $\D_r=d^r_{\*}$ for the diagonal maps $d^l : G \to G~^t\times^t_{G_0}G$, $g \mto (g,g)$ and $d^r :G \to G~^s\times^s_{G_0}G$, $g \mto (g,g)$.

\subsubsection*{Left and right counits} Both left and right counits are respectively determined
by the fiber sum of the target and source maps of the groupoid. For any $x \in G_0$,
\begin{equation}\label{eq 4.2.24}
\begin{matrix}
\ve_l : C_c^{\infty}(G) \to C^{\infty}(G_0), & (\ve_l(u))(x)=\sum_{t(g)=x}u(g)\\
\ve_r : C_c^{\infty}(G) \to C^{\infty}(G_0), & (\ve_r(u))(x)=\sum_{s(g)=x}u(g).
\end{matrix}
\end{equation}

\subsubsection*{Antipode} The antipode is given by the groupoid inversion,
\begin{equation}\label{eq 4.2.25}
S : C_c^{\infty}(G) \to C_c^{\infty}(G), \qquad (S(u))(g)=u(g^{-1})=(inv_{\*}u)(g).
\end{equation}

\begin{thm}\label{thm 4.2.12}
With the above structure maps, $C_c^{\infty}(G)$ becomes a Hopf algebroid over $C^{\infty}(G_0)$.
\end{thm}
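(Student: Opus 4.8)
The plan is to reduce every axiom of Definition \ref{defn 4.2.8} to an elementary identity among the structure maps $s,t,m,1,inv$ of the groupoid $G$ of Definition \ref{defn 4.2.10}, by systematically exploiting the functoriality of the fiber sum. Concretely, I would first record that $\varphi\mapsto\varphi_*$ is functorial ($(\psi\varphi)_*=\psi_*\varphi_*$, $\id_*=\id$) and that for each of the four fibered products $G\,{}^{-}\!\times^{-}_{G_0}G$ the map $\O_{-,-}$ of \eqref{eq 4.2.16} identifies the relevant tensor product of the $C^\infty(G_0)$-module $C_c^\infty(G)$ with $C_c^\infty$ of that fibered product --- a base-change/K\"unneth statement for fiber sums along \'etale maps, established in \cite{MR2311185}. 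Granting this, every structure map in \eqref{eq 4.2.18}--\eqref{eq 4.2.25} is a fiber sum: $\*=m_*\O_{s,t}$, $s_l=s_r=t_l=t_r=1_*$, $\D_l=\O_{t,t}^{-1}d^l_*$, $\D_r=\O_{s,s}^{-1}d^r_*$, $\ve_l=t_*$, $\ve_r=s_*$, $S=inv_*$, and an algebraic relation between composites of these maps holds as soon as the corresponding diagram of groupoid maps commutes.

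With this dictionary I would run through the axioms in order. Associativity of $\*$ and the assertion that $C^\infty(G_0)$ (functions supported on the units) is a commutative subalgebra of $C_c^\infty(G)$ follow from associativity of $m$ and from $1_{t(g)}g=g=g1_{s(g)}$; since $(f\*u)(g)=f(t(g))u(g)$ and $(u\*f)(g)=u(g)f(s(g))$ are multiplications by scalars, the images of $s_l$ and $t_l$ (resp. $s_r$ and $t_r$) commute, giving the $(s,t)$-ring structures. Coassociativity and counitality of $\D_l,\D_r$ reduce to equality of the two iterated diagonals of $G$ and to $t\circ d^l=\mathrm{pr}$, $s\circ d^r=\mathrm{pr}$; one checks $(\ve_l\tens_{A_l}\id)\D_l=\id$ etc.\ pointwise from \eqref{eq 4.2.23a}--\eqref{eq 4.2.24}. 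That $\D_l$ lands in the Takeuchi product $H_l\times_{A_l}H_l$ and is an algebra map is where the bookkeeping of the four tensor products $\tens^{ll},\tens^{rr},\tens^{rl},\tens^{lr}$ and the bimodule structures \eqref{eq 4.2.1}, \eqref{eq 4.2.3} matters: on $\D_l u$, supported on the diagonal of $G\,{}^t\!\times^t_{G_0}G$, the Takeuchi relation becomes the pointwise tautology $u(g)f(t(g))=u(g)f(t(g))$, and multiplicativity with respect to convolution holds because $d^l$ on $G$ and $d^l$ on $G_2$ fit into a commuting square over $m$. The counit conditions \eqref{eq 4.2.2}, \eqref{eq4.2.4} are $s(g_1g_2)=s(g_2)$ and $t(g_1g_2)=t(g_1)$.

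For the four genuinely Hopf-algebroid conditions: \eqref{eq 4.2.5} is immediate because $s_l,t_l,s_r,t_r$ are literally the single map $1_*$, whose images coincide, and $\ve_l1_*=\ve_r1_*=\id$ (equivalently $t\circ1=s\circ1=\id_{G_0}$). Mixed coassociativity \eqref{eq 4.2.6} is again equality of two iterated diagonals of $G$, once the module structures are matched. The twisted linearity \eqref{eq 4.2.7} follows from $inv$ swapping source and target, $s(g^{-1})=t(g)$, $t(g^{-1})=s(g)$: both sides evaluate at $g$ to $f_1(s(g))u(g^{-1})f_2(t(g))$. The antipode axioms \eqref{eq 4.2.8} are the heart of the matter and encode $gg^{-1}=1_{t(g)}$ and $g^{-1}g=1_{s(g)}$: in $(\m_H(S\tens\id)\D_l u)(g)$ the factor $\D_l$ forces the two convolution factors to be mutually inverse, so the sum collapses to the case $g=1_x$ with value $\sum_{s(h)=x}u(h)=(\ve_ru)(x)=(s_r\ve_ru)(g)$, and symmetrically for the second identity. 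Finally $S=inv_*$ is invertible because $inv$ is an involution, so $S^2=\id$.

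I expect the only real obstacle to be organizational: keeping the two $(A,A)$-bimodule structures \eqref{eq 4.2.1}, \eqref{eq 4.2.3} and the four tensor products straight throughout, and verifying carefully that the $\O_{-,-}$ intertwine convolution with factorwise multiplication in the appropriate Takeuchi products --- this is what upgrades $\D_l,\D_r$ from mere coalgebra maps to algebra maps, and is the one place where one genuinely uses that $G$ is \'etale, so that the fiber sums are finite sums distributing correctly over products. None of the individual checks is deep; assembling them, and confirming the compatibility of the right bialgebroid structure with the left one through \eqref{eq 4.2.5}--\eqref{eq 4.2.7}, is the part that needs care. Alternatively, one may cite \cite{MR2311185} for the left bialgebroid structure and \cite{MR2817646} for the full statement, and present in detail only the verification of the antipode axioms, which is the geometrically most transparent part.
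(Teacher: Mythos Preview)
Your proposal is correct and is precisely the line of argument carried out in the references; the paper itself does not give an independent proof but simply cites \cite{MR2817646} (with further pointers to \cites{MR679730,MR823176,MR2456103,MR2530918}). Your reduction of each axiom to a groupoid identity via functoriality of fiber sums, together with the careful tracking of the four tensor products through the isomorphisms $\O_{-,-}$ of \cite{MR2311185}, is exactly how those references proceed, and your verification of the antipode axioms \eqref{eq 4.2.8} is accurate.
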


The proof is in \cite{MR2817646}. See also \cites{MR679730,MR823176,MR2456103,MR2530918}.

\subsection{More examples of Hopf algebroids} \label{sec 4.5}

So far we have focused on a single example, that of coming from \'etale groupoids. We have also mentioned Hopf algebras and weak Hopf algebras and built our framework using these as guiding examples. In this section we describe another example, namely, the Connes-Moscovici Hopf algebroid, which is over a noncommutative base, thus providing wider scope of our framework. Before we plunge into the Connes-Moscovici Hopf algebroid, we describe a special case, namely the following.

\subsubsection{The enveloping Hopf algebroid of an algebra}\label{subsec 4.5.1} Given an arbitrary $\bC$-algebra $A$, let $H=A \tens_{\bC} A^{op}$. The left bialgebroid structure over $A$ is given as 
\begin{subequations}
\begin{equation}\label{eq 4.5.1a}
s_l(a)=a \tens_{\bC} 1, \qquad t_l(b)=1 \tens_{\bC} b;
\end{equation}
\begin{equation}\label{eq 4.5.1b}
\D_l(a \tens b)=(a \tens_{\bC} 1) \tens_{A} (1 \tens_{\bC} b), \qquad \ve_l(a \tens_{\bC} b)=ab;
\end{equation}
\end{subequations} and the right bialgebroid structure over $A^{op}$ is given as
\begin{subequations}
\begin{equation}\label{eq 4.5.2a}
s_r(b)=1 \tens_{\bC} b, \qquad t_r(a)=a \tens_{\bC} 1;
\end{equation}
\begin{equation}\label{eq 4.5.2b}
\D_r(a \tens_{\bC} b)=(a \tens_{\bC} 1) \tens_{A^{op}} (1 \tens_{\bC} b), \qquad \ve_r(a \tens_{\bC} b)=ba;
\end{equation}
\end{subequations} for $a,b \in A$. Finally, the antipode
\begin{equation}\label{eq 4.5.3}
S(a \tens_{\bC} b)=b \tens_{\bC} a
\end{equation} makes $H$ into a Hopf algebroid. We now come to 

\subsubsection{The Connes-Moscovici Hopf algebroid}\label{subsec 4.5.2} Let $Q$ be a Hopf algebra over $\bC$ with antipode $T$ satisfying $T^2=\id$ and $A$ a $Q$-module algebra. Consider $H=A \tens_{\bC} Q \tens_{\bC} A$ with multiplication given by
\begin{equation}\label{eq 4.5.4}
(a \tens_{\bC} q \tens_{\bC} b)(a' \tens_{\bC} q' \tens_{\bC} b')=a(q_{(1)}a') \tens_{\bC} q_{(2)}q' \tens_{\bC} (q_{(3)}b')b.
\end{equation} for $a,b,a',b' \in A$ and $q,q' \in Q$. A left bialgebroid structure over $A$, known as the Connes-Moscovici bialgebroid, is given as 
\begin{subequations}
\begin{equation}\label{eq 4.5.5a}
s_l(a)=a \tens_{\bC} 1 \tens_{\bC} 1, \qquad t_l(b)=1 \tens_{\bC} 1 \tens_{\bC} b;
\end{equation}
\begin{equation}\label{eq 4.5.5b}
\D_l(a \tens_{\bC} q \tens_{\bC} b)=(a \tens_{\bC} q_{(1)} \tens_{\bC} 1) \tens_{A} (1 \tens_{\bC} q_{(2)} \tens_{\bC} b);
\end{equation}
\begin{equation}\label{eq 4.5.5c}
\ve_l(a \tens_{\bC} q \tens_{\bC} b)=a\ve(q)b;
\end{equation}
\end{subequations} for $a,b \in A$ and $q \in Q$. $\ve$ is the counit of $Q$ and we have used Sweedler notation for the coproduct of $Q$. This much is in the literature, see for example \cite{MR2553659}. We now put a right bialgebroid structure on $H$ over $A^{op}$ as
\begin{subequations}
\begin{equation}\label{eq 4.5.6a}
s_r(b)=1 \tens_{\bC} 1 \tens_{\bC} b, \qquad t_r(a)=a \tens_{\bC} 1 \tens_{\bC} 1;
\end{equation}
\begin{equation}\label{eq 4.5.6b}
\D_r(a \tens_{\bC} q \tens_{\bC} b)=(a \tens_{\bC} q_{(1)} \tens_{\bC} 1) \tens_{A^{op}} (1 \tens_{\bC} q_{(2)} \tens_{\bC} b);
\end{equation}
\begin{equation}\label{eq 4.5.6c}
\ve_r(a \tens_{\bC} q \tens_{\bC} b)=T(q)(ba);
\end{equation}
\end{subequations} for $a,b \in A$ and $q \in Q$. Observe that, for the above structure maps, \[a_1 \cdot (b \tens_{\bC} q \tens_{\bC} b')\cdot a_2=b(q_{(1)} \cdot a_1) \tens_{\bC} q_{(2)} \tens_{\bC} (q_{(3)} \cdot a_2)b',\] for $a_1,a_2,b,b' \in A$ and $q \in Q$. From this, it at once follows that $\D_r$ and $\ve_r$ are bimodule morphisms. Coassociativity of $\D_r$ and counitarity of $\ve_r$ are easy to verify. We now check the Takeuchi condition. Given $a,b,c \in A$ and $q \in Q$, we have
\[
\begin{aligned}
&s_r(a)(b \tens_{\bC} q_{(1)} \tens_{\bC} 1) \tens_{A^{op}} (1 \tens_{\bC} q_{(2)} \tens_{\bC} c)\\
=&((1 \tens_{\bC} 1 \tens_{\bC} a)(b \tens_{\bC} q_{(1)} \tens_{\bC} 1)) \tens_{A^{op}} (1 \tens_{\bC} q_{(2)} \tens_{\bC} c)\\
=&(b \tens_{\bC} q_{(1)} \tens_{\bC} a) \tens_{A^{op}} (1 \tens_{\bC} q_{(2)} \tens_{\bC} c)\\
=&((b \tens_{\bC} q_{(1)} \tens_{\bC} 1)(1 \tens_{\bC} 1 \tens_{\bC} T(q_{(2)})a)) \tens_{A^{op}} (1 \tens_{\bC} q_{(3)} \tens_{\bC} c)\\
=&(b \tens_{\bC} q_{(1)} \tens_{\bC} 1) \tens_{A^{op}} (1 \tens_{\bC} q_{(3)} \tens_{\bC} c)(T(q_{(2)})a \tens_{\bC} 1 \tens_{\bC} 1)\\
=&(b \tens_{\bC} q_{(1)} \tens_{\bC} 1) \tens_{A^{op}} (q_{(3)}T(q_{(2)})a \tens_{\bC} q_{(4)} \tens_{\bC} c)\\
=&(b \tens_{\bC} q_{(1)} \tens_{\bC} 1) \tens_{A^{op}} (a \tens_{\bC} q_{(2)} \tens_{\bC} c) \quad (\text{we use that } T^2=\id)\\
=&(b \tens_{\bC} q_{(1)} \tens_{\bC} 1) \tens_{A^{op}} ((a \tens_{\bC} 1 \tens_{\bC} 1)(1 \tens_{\bC} q_{(2)} \tens_{\bC} c))\\
=&(b \tens_{\bC} q_{(1)} \tens_{\bC} 1) \tens_{A^{op}} t_r(a)(1 \tens_{\bC} q_{(2)} \tens_{\bC} c), 
\end{aligned}
\] thus proving the Takeuchi condition. The verification of the character property of $\ve_r$ is left to the reader. So this proves that we indeed have a right bialgebroid. Now we define the antipode $S$ as
\begin{equation}\label{eq 4.5.7}
S(a \tens_{\bC} q \tens_{\bC} b)=T(q_{(3)})b \tens_{\bC} T(q_{(2)}) \tens_{\bC} T(q_{(1)})a.
\end{equation} Again, the antipode axioms are straightforward to check. As an example we show that $\m(S \tens \id_H)\D_l=s_r\ve_r$ holds:
\[
\begin{aligned}
&\m(S \tens \id_H)\D_l(a \tens_{\bC} q \tens_{\bC} b)\\
=&S(a \tens_{\bC} q_{(1)} \tens_{\bC} 1)(1 \tens_{\bC} q_{(2)} \tens_{\bC} b)\\
=&(T(q_{(3)})1 \tens_{\bC} T(q_{(2)}) \tens_{\bC} T(q_{(1)})a)(1 \tens_{\bC} q_{(4)} \tens_{\bC} b)\\
=&(1 \tens_{\bC} T(q_{(2)}) \tens_{\bC} T(q_{(1)})a)(1 \tens_{\bC} q_{(3)} \tens_{\bC} b)\\
=&T(q_{(4)})1 \tens_{\bC} T(q_{(3)})q_{(5)} \tens_{\bC} T(q_{(2)})bT(q_{(1)})a\\
=&1 \tens_{\bC} T(q_{(2)})q_{(3)} \tens_{\bC} T(q_{(1)})(ba)\\
=&1 \tens_{\bC} 1 \tens_{\bC} T(q)(ba)\\
=&s_r\ve_r(a \tens_{\bC} q \tens_{\bC} b).
\end{aligned}
\] 
Thus we have:

\begin{thm}\label{thm 4.5.1}
With the structures described above, $H$ becomes a Hopf algebroid, which we call the Connes-Moscovici Hopf algebroid.
\end{thm}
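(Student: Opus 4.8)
The plan is to assemble and complete the verifications already sketched above. The left bialgebroid $(H_l,A,s_l,t_l,\D_l,\ve_l)$ is the Connes--Moscovici bialgebroid, whose axioms are in the literature (see \cite{MR2553659}), and associativity of the multiplication \eqref{eq 4.5.4}, with unit $1\tens_\bC 1\tens_\bC 1$, is the routine computation using only that $Q$ is a bialgebra and $A$ a $Q$-module algebra. For the right bialgebroid $(H_r,A^{op},s_r,t_r,\D_r,\ve_r)$, the bimodule-morphism property of $\D_r$ and $\ve_r$, coassociativity of $\D_r$, counitality of $\ve_r$, and the Takeuchi condition have been dispatched above; what remains is the character property $\ve_r(hh')=\ve_r(s_r(\ve_rh)h')=\ve_r(t_r(\ve_rh)h')$, which I would obtain by expanding \eqref{eq 4.5.4}, \eqref{eq 4.5.6a} and \eqref{eq 4.5.6c} and using that $T$ is anti-multiplicative and $A$ a module algebra.

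It then remains to verify the four conditions of Definition \ref{defn 4.2.8}. Condition i) is immediate once one observes from \eqref{eq 4.5.5a} and \eqref{eq 4.5.6a} that $s_l=t_r$ and $t_l=s_r$ as maps $A\to H$; together with $\ve_l s_l=\id$ and $\ve_r s_r=\id$ (Remarks \ref{rem 4.2.4} and \ref{rem 4.2.7}) this gives $s_l\ve_l t_r=s_l\ve_l s_l=s_l=t_r$, and the remaining three identities of \eqref{eq 4.2.5} follow the same way. Condition ii) holds because $\D_l$ and $\D_r$ each act by the coproduct of $Q$ on the middle tensor leg and fix the outer legs, so coassociativity of $Q$ forces both sides of each identity in \eqref{eq 4.2.6} to equal $(a\tens_\bC q_{(1)}\tens_\bC 1)\tens(1\tens_\bC q_{(2)}\tens_\bC 1)\tens(1\tens_\bC q_{(3)}\tens_\bC b)$. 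For condition iv), the half $\m_H(S\tens\id_H)\D_l=s_r\ve_r$ is the content of the computation displayed just before the statement, and the other half $\m_H(\id_H\tens S)\D_r=s_l\ve_l$ is entirely parallel: expand $\D_r$ by \eqref{eq 4.5.6b}, apply $S$ by \eqref{eq 4.5.7}, multiply using \eqref{eq 4.5.4}, and collapse via the antipode and counit axioms of $Q$. Finally, invertibility of $S$ comes for free: a short calculation with \eqref{eq 4.5.7} --- expand $S\circ S$ using the module-algebra structure and the fact that $T$ is an anti-coalgebra morphism, then apply the identities $\sum p_{(1)}T(p_{(2)})=\ve(p)1_Q$ and $\sum p_{(2)}T(p_{(1)})=\ve(p)1_Q$ to the outer Sweedler legs of $q$ --- yields $S^2=\id_H$; the second of those identities is exactly where the hypothesis $T^2=\id$ (equivalently $T=T^{-1}=S_Q^{-1}$) is used.

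The step I expect to cost the most care is condition iii), $S(t_l(a_1)ht_r(a_2))=s_r(a_2)S(h)s_l(a_1)$. Writing $h=b\tens_\bC q\tens_\bC b'$, one first uses \eqref{eq 4.5.4}, \eqref{eq 4.5.5a} and \eqref{eq 4.5.6a} to bring the left-hand side to $S\bigl(b(q_{(1)}\cdot a_2)\tens_\bC q_{(2)}\tens_\bC b'a_1\bigr)$, and then applies $S$ via \eqref{eq 4.5.7}; on the right-hand side one simplifies $s_r(a_2)S(h)s_l(a_1)$ in the same way. To reconcile the two expressions one must push the elements $a_1,a_2\in A$ past the various factors $T(q_{(i)})$, which forces one to invoke repeatedly that $A$ is a $Q$-module algebra (so the $Q$-action on a product in $A$ splits through the coproduct of $Q$), that $T$ is an anti-coalgebra morphism, and, in one spot, that $T^2=\id$; coassociativity then aligns the Sweedler indices. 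There is no conceptual difficulty here --- once a consistent iterated Sweedler convention for $q$ is fixed, every term is determined --- but it is the most index-heavy of the checks, and one must take a little care since $Q$ is not assumed cocommutative. Together with the preceding paragraphs, this shows that $(H_l,H_r,S)$ is a Hopf algebroid, completing the proof.
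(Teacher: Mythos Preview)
Your proposal is correct and follows essentially the same approach as the paper: direct verification of the Hopf algebroid axioms, with the left bialgebroid taken from the literature, the right bialgebroid checked by hand (Takeuchi condition displayed, character property of $\ve_r$ left as an exercise), and one antipode identity spelled out while the rest are declared straightforward. You are in fact more explicit than the paper on two points it elides entirely---the invertibility of $S$ via $S^2=\id_H$, and the verification of condition iii)---and you correctly isolate the identity $T(q_{(2)})q_{(1)}=\ve(q)1$ (equivalent to $T^2=\id$) as the place where the involutivity hypothesis bites.
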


\begin{rem}\label{rem 4.5.2}
Observe that taking $Q=\bC$ gives the enveloping Hopf algebroid back and $A=\bC$ reduces $H$ to a Hopf algebra. Thus it is a simultaneous generalization of the cases discussed above.
\end{rem}

\begin{rem}\label{rem 4.5.3}
We have used $T^2=\id$ to make $H$ into a Hopf algebroid. We think that it is possible to remove this condition by introducing a ``modular pair in involution", that in turn produces a ``twisted antipode" for $Q$, hence for $H$. 
\end{rem}


\section{Hopf algebroids as symmetries} \label{sec 3}
In this section, we recall Hopf algebroids representation and introduce $\*$-structures, necessary for viewing Hopf algebroids as symmetry objects.

\subsection{Modules over Hopf algebroids}

Let $H=(H_l,H_r,S)$ be a Hopf algebroid. A left module over $H$ is simply a left module over the underlying $\bC$-algebra $H$. We denote the structure map by $(h,m) \mto h \cdot m$. The left bialgebroid structure $H_l$ induces an $(A_l,A_l)$-bimodule structure on each module and a monoidal structure on the category of modules. More explicitly, let $M$ be an $H$-module. Then the $(A_l,A_l)$-bimodule structure is given by 
\begin{equation}\label{eq 4.2.26}
a_1 \cdot m \cdot a_2=s_l(a_1) \cdot t_l(a_2) \cdot m,
\end{equation}
for all $a_1,a_2 \in A_l$ and $m \in M$. The left coproduct defines the monoidal structure $(M,N) \mto M \tens_A N$, where $M \tens_A N$ is equipped with the $H$-module structure 
\begin{equation}\label{eq 4.2.27}
h \cdot (m \tens n):=h_{(1)} \cdot m \tens h_{(2)} \cdot n, \qquad h \in H, m \in M, n \in N.
\end{equation}

The monoidal unit is given by $A_l$ with left $H$-action $h \cdot a=\ve_l(hs_l(a)).$ Note that $\ve_l(ht_l(a))=\ve_l(hs_l(\ve_l(t_l(a))))=\ve_l(hs_l(a)).$ Also $A_l$ being the monoidal unit it is an algebra in the category of $H$-modules, i.e., it is an $H$-module algebra. This structure will be important for us in the examples we consider.

\begin{rem}\label{rem 4.2.13}
We state the definition of an $H$-module algebra explicitly. It is a $\bC$-algebra and left $H$-module $B$ such that the multiplication in $B$ is $A_l$-balanced and 
\begin{enumerate}[i)]
\item $h \cdot 1_B=s_l\ve_l(h) \cdot 1_B$;
\item $h \cdot (bb')=(h_{(1)} \cdot b)(h_{(2)} \cdot b')$.
\end{enumerate} for $b,b' \in B$ and $h \in H$. Note that $B$ has a canonical $A_l$-ring structure. Its unit is the map $A_l \to B$, $a \mto s_l(a) \cdot 1_B=t_l(a) \cdot 1_B$.
\end{rem}

Similarly, one can consider right $H$-modules as modules over the $\bC$-algebra $H$. Such modules get the structure of an $(A_r,A_r)$-bimodule and the category becomes monoidal using the right coproduct. The monoidal unit is $A_r$. We now see some examples coming from the geometry of groupoids. We follow \cite{MR2853081}.

\begin{defn}\label{defn 4.2.14}
A smooth left action of a Lie groupoid $G$ on a smooth manifold $P$ along a smooth map $\pi : P \to G_0$ is a smooth map $\m : G_1 {}^s\times^{\pi}_{G_0}P \to P$, $(g,p) \mto g \cdot p$, which satisfies
the conditions $\pi(g \cdot p)=t(g)$, $1_{\pi(p)} \cdot p=p$ and $g' \cdot (g \cdot p)=(g'g) \cdot p$ for all $g',g \in G_1$ and $p \in P$ with $s(g')=t(g)$ and $s(g)=\pi(p)$.
\end{defn} 

We define right actions of \'etale groupoids on smooth manifolds in a similar way.

\begin{defn}\label{defn 4.2.15}
Let $G$ be an \'etale groupoid, and let $E$ be a smooth complex vector bundle over $G_0$. A representation of the groupoid $G$ on $E$ is a smooth left action $\r : G_1~^s\times^p_{G_0} E \to E$, denoted by
$\r(g,v)=g \cdot v$, of $G$ on $E$ along the bundle projection $p : E \to G_0$ such that for any arrow
$x \xrightarrow{g} y$ the induced map $g_* : E_x \to E_y$, $v \mto g \cdot v$, is a linear
isomorphism. A section $u : G_0 \to E$ is called $G$-invariant if for any arrow $x \xrightarrow{g} y$, it holds that $g \cdot u(x)=u(y)$. 
\end{defn}

Let us see what representations mean in the examples above.

\begin{exa}\label{exa 4.2.16}\mbox{}
\begin{enumerate}[i)]
\item Representations of the unit groupoid associated to a smooth manifold correspond precisely to complex vector bundles.
\item Representations of the point groupoid associated to a (discrete) group $\G$ correspond to representations of the group on finite dimensional complex vector spaces.
\item Representations of the translation groupoid $\G \ltimes M$ corresponds to $\G$-equivariant complex vector bundles over $M$.
\item Representations of the orbifold groupoid are the orbibundles.
\item Representations of the holonomy groupoid are the transversal vector bundles.
\item For an \'etale groupoid $G$ the complexified tangent bundle of $G_0$ becomes a representation of $G$. The cotangent bundle, exterior bundle all inherit this natural representation, so it makes sense to speak of vector fields, differential forms or Riemannian metrics etc. on \'etale groupoids (vector fields, differential forms or Riemannian metrics etc. on $G_0$, respectively, invariant under the action). Also note that the exterior derivative $d$ is invariant under the $G$-action. This follows from naturality of $d$ and a local argument.
\end{enumerate}
\end{exa}

\begin{prop}\label{prop 4.2.17}
Let $E$ be representation of the \'etale groupoid $G$. The space of smooth sections $\G^{\infty}(E)$
over $G_0$ becomes a module over $C_c^{\infty}(G)$ by the formulas
\begin{equation}\label{eq 4.2.28}
(a \cdot u)(x)=\sum_{t(g)=x}a(g)(g \cdot u(s(g))),
\end{equation}
for $a \in C_c^{\infty}(G)$ and $u \in \G^{\infty}(E)$.
\end{prop}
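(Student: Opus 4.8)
The plan is to verify, in this order, that the right-hand side of \eqref{eq 4.2.28} is a well-defined element of $E_x$, that the resulting section $a\cdot u$ is smooth, and that $(a,u)\mapsto a\cdot u$ obeys the axioms of a module over the $\bC$-algebra $C_c^{\infty}(G)$. Well-definedness is the quick part: since $t$ is a local diffeomorphism the fibre $t^{-1}(x)\subset G_1$ is discrete, so it meets the compact set $\operatorname{supp}a$ in only finitely many points, and hence the sum in \eqref{eq 4.2.28} has finitely many nonzero terms. Moreover $u(s(g))\in E_{s(g)}$, so the representation produces $g\cdot u(s(g))\in E_{t(g)}=E_x$; thus every summand lies in $E_x$, and $a\cdot u$ is a well-defined section of $E$ over $G_0$.

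For smoothness I would argue locally near a fixed $x_0\in G_0$. Let $g_1,\dots,g_n$ be the finitely many points of $t^{-1}(x_0)\cap\operatorname{supp}a$ and, using that $t$ is \'etale, choose pairwise disjoint opens $W_i\ni g_i$ with $t|_{W_i}$ a diffeomorphism onto an open neighbourhood of $x_0$. A compactness argument — using that $G_1$ is Hausdorff and $\operatorname{supp}a$ is compact — produces an open $V\ni x_0$ with $t^{-1}(V)\cap\operatorname{supp}a\subseteq W_1\cup\dots\cup W_n$. Writing $\sigma_i\colon V\to W_i$ for the local section of $t$ inverse to $t|_{W_i}$, one then has, for all $x\in V$,
\[
(a\cdot u)(x)=\sum_{i=1}^{n}a(\sigma_i(x))\,\bigl(\sigma_i(x)\cdot u(s(\sigma_i(x)))\bigr),
\]
a finite sum of sections of $E$ over $V$, each of which is smooth, being assembled from the smooth maps $a,\sigma_i,s,u$ and the action $\rho$, and landing in $E_{t(\sigma_i(x))}=E_x$. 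Hence $a\cdot u$ is smooth near $x_0$, so $a\cdot u\in\G^{\infty}(E)$. I expect this localization step — precisely, the claim that the part of $\operatorname{supp}a$ lying over a small neighbourhood of $x_0$ breaks into finitely many ``slices'', each carried diffeomorphically onto that neighbourhood by $t$ — to be the main obstacle; this is exactly where the \'etale and Hausdorff hypotheses genuinely enter, and it is otherwise routine.

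It remains to check the module axioms. $\bC$-bilinearity of $(a,u)\mapsto a\cdot u$ is immediate from \eqref{eq 4.2.28} together with linearity of each induced map $g_*$ (Definition \ref{defn 4.2.15}). For associativity I would expand
\[
\bigl(a\cdot(b\cdot u)\bigr)(x)=\sum_{t(g)=x}a(g)\,\Bigl(g\cdot\sum_{t(g')=s(g)}b(g')\,\bigl(g'\cdot u(s(g'))\bigr)\Bigr)=\sum_{t(g)=x}\ \sum_{t(g')=s(g)}a(g)b(g')\,\bigl((gg')\cdot u(s(g'))\bigr),
\]
using linearity of $g\cdot(-)$ and the identities $g\cdot(g'\cdot v)=(gg')\cdot v$ and $s(gg')=s(g')$ from Definition \ref{defn 4.2.14}. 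The pairs $(g,g')$ occurring here are exactly the composable pairs with $t(gg')=x$, so substituting $h=gg'$ and recognising the convolution product \eqref{eq 4.2.19}, $(a\* b)(h)=\sum_{h=g_1g_2}a(g_1)b(g_2)$, rewrites this as $\sum_{t(h)=x}(a\* b)(h)\,\bigl(h\cdot u(s(h))\bigr)=\bigl((a\* b)\cdot u\bigr)(x)$. Finally, since $G_0$ is compact Hausdorff the algebra $C_c^{\infty}(G)$ is unital with unit $\ol 1$, the extension by zero of the constant function $1$ on $1_{G_0}\subset G$ (see \eqref{eq 4.2.22}); in $(\ol 1\cdot u)(x)$ the only nonzero term is $g=1_x$, which contributes $1_x\cdot u(x)=u(x)$ by the unit axiom of the representation. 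This completes the verification.
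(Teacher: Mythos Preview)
Your proof is correct. The paper itself does not supply an argument here at all: immediately after the statement it writes ``The proof is in \cite{MR2853081}'' and moves on. What you have written is the expected direct verification --- finiteness of the sum from discreteness of the $t$-fibres and compact support, local smoothness via bisections over a small neighbourhood of $x_0$, associativity by matching the double sum against the convolution formula \eqref{eq 4.2.19}, and the unit axiom from \eqref{eq 4.2.22} --- and each step is sound. In particular your ``slicing'' claim for $\operatorname{supp}a$ over a small $V$ is the standard étale/Hausdorff argument and is exactly where those hypotheses are used, as you note.
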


The proof is in \cite{MR2853081}. Moreover, each module of finite type and constant rank appears in this way, giving a version of Serre-Swan theorem. See \cite{MR823176} for an example coming from Sobolev spaces.

\subsection{\texorpdfstring{$\*$}{}-structures and conjugate modules}

We introduce $\*$-structures on Hopf algebroids which will be needed in order to view them as symmetry objects. This is one of the main results of the present paper. We view the ensuing structures as the first step in defining a ``compact"-type Hopf algebroid in analogy with $CQG$-algebras \cite{MR1310296}, though we do not go in that direction here.

\medskip\noindent

Let $(H_l,H_r,S)$ be a Hopf algebroid such that $H$, $A_l$ and $A_r$ are $\*$-algebras, $s_l$ and $s_r$ are $\*$-preserving (the involutions for $H$, $A_r$ and $A_l$ are denoted by the same symbol $\*$). Assume that
\begin{equation}\label{eq 4.2.29}
\ve_lt_r(a_1^*)=(\ve_ls_r(a_1))^*, \qquad \ve_rt_l(a_2^*)=(\ve_rs_l(a_2))^*
\end{equation}
hold for all $a_1 \in A_r$, $a_2 \in A_l.$

\begin{lem}\label{lem 4.2. 18}
We have
\begin{equation}\label{eq 4.2.30}
h^*t_l(a)^* \tens_{A_r} h'^*=h^* \tens_{A_r} h'^*s_l(a)^*.
\end{equation}
\end{lem}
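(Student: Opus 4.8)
The plan is to observe that the asserted equality is, after rewriting, exactly the balancing relation defining the tensor product $\tens_{A_r}$. Recall that for the right bialgebroid $H_r$ the product $H\tens_{A_r}H$ is formed from the $(A_r,A_r)$-bimodule structure \eqref{eq 4.2.3}, so that the right $A_r$-action on the first leg is $h\cdot b=h\,s_r(b)$ and the left $A_r$-action on the second leg is $b\cdot h'=h'\,t_r(b)$; hence $h\,s_r(b)\tens_{A_r}h'=h\tens_{A_r}h'\,t_r(b)$ for every $b\in A_r$. So it is enough to produce one element $b\in A_r$ with $t_l(a)^*=s_r(b)$ and $s_l(a)^*=t_r(b)$; feeding these into the left-hand side and using the balancing relation immediately yields the right-hand side.

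To locate $b$, first I would use that $s_l$ is $\*$-preserving, so $s_l(a)^*=s_l(a^*)$, and then the identity $t_r\ve_rs_l=s_l$ from \eqref{eq 4.2.5} to write $s_l(a^*)=t_r\bigl(\ve_rs_l(a^*)\bigr)$. This forces the choice $b:=\ve_rs_l(a^*)\in A_r$, and it remains only to check $t_l(a)^*=s_r(b)$. For that I would combine the identity $s_r\ve_rt_l=t_l$ from \eqref{eq 4.2.5} with the $\*$-preservation of $s_r$ to get $t_l(a)^*=s_r\bigl((\ve_rt_l(a))^*\bigr)$, and then apply the standing hypothesis \eqref{eq 4.2.29}, not to $a$ but to $a^*\in A_l$: it gives $\ve_rt_l(a)=\bigl(\ve_rs_l(a^*)\bigr)^*$, hence $(\ve_rt_l(a))^*=\ve_rs_l(a^*)=b$. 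Thus $t_l(a)^*=s_r(b)$, as required.

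The computation then closes in a single line, $h^*t_l(a)^*\tens_{A_r}h'^*=h^*s_r(b)\tens_{A_r}h'^*=h^*\tens_{A_r}h'^*\,t_r(b)=h^*\tens_{A_r}h'^*s_l(a)^*$. The one place that needs attention — and it is really the heart of the matter rather than a genuine difficulty — is that among the structure maps only $s_l$ and $s_r$ are assumed $\*$-preserving, so the behaviour of $t_l$ under the involution is not given directly but must be recovered from \eqref{eq 4.2.29}; in particular one must remember to substitute $a^*$ there so that the involutions come out on the correct factor. Nothing else enters: coassociativity, the antipode axioms, and the Takeuchi conditions are not used.
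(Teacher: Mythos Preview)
Your proof is correct and is essentially the paper's own argument, just organized differently: you isolate the element $b=\ve_rs_l(a^*)$ first and then verify $t_l(a)^*=s_r(b)$ and $s_l(a)^*=t_r(b)$, whereas the paper writes the same identities as a single chain of equalities $h^*t_l(a)^*=h^*s_r(\ve_rt_l(a))^*=h^*s_r\ve_rs_l(a^*)=\cdots$. The ingredients used --- the identities $s_r\ve_rt_l=t_l$ and $t_r\ve_rs_l=s_l$ from \eqref{eq 4.2.5}, the $\*$-preservation of $s_l,s_r$, the hypothesis \eqref{eq 4.2.29} applied at $a^*$, and the balancing relation for $\tens_{A_r}$ --- are identical.
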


\begin{proof}
We compute
\begin{equation}\label{eq 4.2.31}
\begin{aligned}
h^*t_l(a)^* \tens_{A_r} h'^*&=h^*s_r(\ve_r(t_l(a)))^* \tens_{A_r} h'^*\\
&=h^*s_r((\ve_rt_l(a))^*) \tens_{A_r} h'^*\\
&=h^*s_r\ve_rs_l(a^*) \tens_{A_r} h'^*\\
&=h^* \cdot \ve_rs_l(a^*) \tens_{A_r} h'^*\\
&=h^* \tens_{A_r} \ve_rs_l(a^*) \cdot h'^*\\
&=h^* \tens_{A_r} h'^*t_r\ve_rs_l(a^*)\\
&=h^* \tens_{A_r} h'^*s_l(a^*)\\
&=h^* \tens_{A_r} h'^*s_l(a)^*.
\end{aligned}
\end{equation}
\end{proof}

Lemma \ref{lem 4.2. 18} says that the map $(\* \tens \*) : H_l \tens_{\bC} H_l \to H_r \tens_{A_r} H_r$ descends to an isomorphism $(\* \tens \*) : H_l \tens_{A_l} H_l \to H_r \tens_{A_r} H_r$. So we can make sense of
\begin{equation}\label{eq 4.2.32}
\D_r \*=(\* \tens \*)\D_l.
\end{equation}

In Sweedler notation,
\begin{equation}\label{eq 4.2.33}
(h^*)^{(1)} \tens (h^*)^{(2)}=(h_{(1)})^* \tens (h_{(2)})^*\, .
\end{equation}

\begin{defn}\label{defn 4.2.19}
Let $(H_l,H_r,S)$ be a Hopf algebroid such that $H$, $A_l$ and $A_r$ are $\*$-algebras while $s_l$ and $s_r$ are $\*$-preserving. Then $(H_l,H_r,S)$ is said to be a Hopf $\*$-algebroid if \eqref{eq 4.2.29} and \eqref{eq 4.2.32} hold.
\end{defn}

Some immediate corollaries of Definition \ref{defn 4.2.19} are:
\begin{enumerate}[i)]
\item $(\* \tens \*) : H_l \tens_{A_l} H_l \to H_r \tens_{A_r} H_r$ induces an isomorphism $H_l \times _{A_l} H_l \to H_r \times^{A_r} H_r$.
\item From \eqref{eq 4.2.5}, $t_l\*=s_r\ve_rt_l\*=s_r\*\ve_rs_l=\*s_r\ve_rs_l=\*Ss_l$, with
the last equality following from \eqref{eq 4.2.11}.
\item Similarly, $t_r\*=\*Ss_r.$
\end{enumerate}

\begin{prop}\label{prop 4.2.20}
Let $(H_l,H_r,S)$ be a Hopf $\*$-algebroid. Then the counits and the antipode satisfy
\begin{equation}\label{eq 4.2.34}
\ve_rS^{-1}\*=\*\ve_r, \qquad \ve_lS^{-1}\*=\*\ve_l, \qquad S\*S\*=\id_H
\end{equation}
and $A_l$ becomes an $H$-module $\*$-algebra, i.e., the $H$-action satisfies
\begin{equation}\label{eq 4.2.35}
(h \cdot a)^*=S(h)^* \cdot a^* \qquad h \in H, \quad a \in A_l.
\end{equation}
\end{prop}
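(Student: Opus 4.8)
The plan is to establish the three identities of \eqref{eq 4.2.34} first and then to read off \eqref{eq 4.2.35} in a line. The three identities turn out to be interlinked, so I would treat them as a single package. Throughout I would use the hypothesis \eqref{eq 4.2.29}, the twisting rule \eqref{eq 4.2.32} and its mirror $(\*\tens\*)\D_r=\D_l\*$ (obtained by composing \eqref{eq 4.2.32} with the inverse of the isomorphism of Lemma~\ref{lem 4.2. 18}, which is again ``$\*$ on each leg''), the consequences (i)--(iii) recorded after Definition~\ref{defn 4.2.19}, the identities \eqref{eq 4.2.10}, \eqref{eq 4.2.11}, \eqref{eq 4.2.12}, and the special cases $St_l=s_l$ and $St_r=s_r$ of \eqref{eq 4.2.7} (take $h=1$).

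Set $\s:=\*\circ S\circ\*$, so $\s(h)=S(h^*)^*$; being a composite of three anti-homomorphisms, $\s$ is an algebra anti-homomorphism. Feeding \eqref{eq 4.2.32} and its mirror into \eqref{eq 4.2.10} gives $\D_l\s=\flip(\s\tens\s)\D_r$ and $\D_r\s=\flip(\s\tens\s)\D_l$, and \eqref{eq 4.2.7} together with the consequences (ii), (iii) after Definition~\ref{defn 4.2.19} gives $\s(s_l(a_2)hs_r(a_1))=t_r(a_1)\s(h)t_l(a_2)$. These say precisely that $\s$ satisfies the structural conditions (i)--(iii) of Definition~\ref{defn 4.2.8} for an antipode of the Hopf algebroid $(H_r^{op},H_l^{op},-)$, whose genuine antipode is $S^{-1}$. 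One also checks \emph{directly} -- apply $\*$, rewrite $(t_l(c))^*=S(s_l(c^*))$ and $(t_r(c))^*=S(s_r(c^*))$ by (ii), (iii), use that $S$ reverses products, and substitute the $\*$ of the counit identities $s_l\ve_l(h_{(1)})h_{(2)}=h=h^{(1)}s_r\ve_r(h^{(2)})$ -- the ``regularity'' identities
\begin{equation}\label{eqreg}
\s(h)=\s(h_{(2)})\,t_l\ve_l(h_{(1)})=t_r\ve_r(h^{(2)})\,\s(h^{(1)}),
\end{equation}
the $(H_r^{op},H_l^{op})$-analogues of \eqref{eq 4.2.12}.

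What is still missing, in order to conclude $\s=S^{-1}$ from the uniqueness of the antipode (Lemma~\ref{lem 4.2.9} applied to $(H_r^{op},H_l^{op},-)$), are the two antipode axioms for $\s$,
\begin{equation}\label{eqaux}
\s(h_{(2)})\,h_{(1)}=t_r\ve_r(h),\qquad h^{(2)}\,\s(h^{(1)})=t_l\ve_l(h).
\end{equation}
Applying $\*$ to the antipode axioms \eqref{eq 4.2.8} of $S$ (after $h\mapsto h^*$, using \eqref{eq 4.2.32} and its mirror) rewrites the left-hand sides of \eqref{eqaux} as $s_l\big((\ve_l(h^*))^*\big)$ and $s_r\big((\ve_r(h^*))^*\big)$; applying $\ve_l$, resp.\ $\ve_r$, and using \eqref{eq 4.2.5} and \eqref{eq 4.2.11} then shows that \eqref{eqaux} is in fact equivalent to the pair $\ve_lS^{-1}\*=\*\ve_l$, $\ve_rS^{-1}\*=\*\ve_r$. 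So once \eqref{eqaux} is proved we obtain simultaneously the first two identities of \eqref{eq 4.2.34}, and then, by Lemma~\ref{lem 4.2.9}, also $\s=S^{-1}$, i.e.\ $S\*S\*=\id_H$. The whole of \eqref{eq 4.2.34} thus comes down to \eqref{eqaux}, and this is the main obstacle: unlike the structural identities and \eqref{eqreg}, which collapse after a single application of $\*$ because a target-map factor $(t_l(c))^*$ or $(t_r(c))^*$ gets absorbed by $S$, in \eqref{eqaux} the bare leg $h_{(1)}$ feeds, via \eqref{eq 4.2.32}, straight back into the antipode axiom and leaves one looking at $s_l\big((\ve_l(h^*))^*\big)$, which is not visibly $t_r\ve_r(h)$. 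My plan for \eqref{eqaux} would be to expand $S$ via \eqref{eq 4.2.12} \emph{before} applying $\*$ and then to push every $\*$ down to the base algebras using \eqref{eq 4.2.29} and the conversion table \eqref{eq 4.2.11}, closing the loop between its two halves.

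Granting \eqref{eq 4.2.34}, the module $\*$-algebra property \eqref{eq 4.2.35} is immediate. For $h\in H$ and $a\in A_l$, write $h\cdot a=\ve_l(hs_l(a))$; by $\ve_lS^{-1}\*=\*\ve_l$ (in the form $(\ve_l(k))^*=\ve_l(S^{-1}(k^*))$),
\[
(h\cdot a)^*=\ve_l\big(S^{-1}((hs_l(a))^*)\big)=\ve_l\big(S^{-1}(h^*)\,S^{-1}(s_l(a^*))\big)=\ve_l\big(S^{-1}(h^*)\,t_l(a^*)\big)=\ve_l\big(S^{-1}(h^*)\,s_l(a^*)\big),
\]
using $s_l(a)^*=s_l(a^*)$, that $S^{-1}$ reverses products, $S^{-1}s_l=t_l$ (from $St_l=s_l$), and $\ve_l(kt_l(b))=\ve_l(ks_l(b))$; since $S^{-1}(h^*)=S(h)^*$ by $S\*S\*=\id_H$, the last term equals $\ve_l(S(h)^*s_l(a^*))=S(h)^*\cdot a^*$, as desired.
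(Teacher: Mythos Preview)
Your reduction of the proposition to the pair of antipode axioms for $\s=\*S\*$ in the opposite Hopf algebroid, namely $\s(h_{(2)})h_{(1)}=t_r\ve_r(h)$ and $h^{(2)}\s(h^{(1)})=t_l\ve_l(h)$, is correct, and so is your observation that this pair is equivalent to the two counit identities in \eqref{eq 4.2.34}. But you do not actually prove the pair: you only offer a ``plan'' (expand $S$ via \eqref{eq 4.2.12}, push $\*$ down, etc.), and this plan does not close. The difficulty you yourself name---that $\s(h_{(2)})h_{(1)}$ only reduces to $s_l\big((\ve_l(h^*))^*\big)$, ``which is not visibly $t_r\ve_r(h)$''---is real: expanding $S$ by \eqref{eq 4.2.12} and conjugating just cycles you back through the same circle of identities, because the missing equality $s_l\big((\ve_l(h^*))^*\big)=t_r\ve_r(h)$ \emph{is} the counit identity $\*\ve_l=\ve_lS^{-1}\*$ (composed with $s_l\ve_lS^{-1}=t_r\ve_r$), and nothing in your toolkit breaks that circularity.

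The paper's way out is to invoke the \emph{uniqueness of the counit} (Lemma~\ref{lem 4.2.5}, Remark~\ref{rem 4.2.7}) rather than the uniqueness of the antipode. Starring the left counit identity $h^*=s_l\ve_l\big((h^*)_{(1)}\big)(h^*)_{(2)}$ and using \eqref{eq 4.2.32} gives $h=h^{(2)}\big(s_l\ve_l((h^{(1)})^*)\big)^*$; then one computes $\*s_l\ve_l\*=t_r\big(\*\ve_rt_l\ve_l\*\big)$ and $\*t_l\ve_l\*=s_r\big(\*\ve_rt_l\ve_l\*\big)$ via \eqref{eq 4.2.5} and \eqref{eq 4.2.29}. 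These two equations say precisely that the map $\ve':=\*\ve_rt_l\ve_l\*$ satisfies the right counit axioms for $H_r$, so $\ve'=\ve_r$ by uniqueness, i.e.\ $\ve_r=\*\ve_rS^{-1}\*$ (using $\ve_rt_l\ve_l=\ve_rS^{-1}$ from \eqref{eq 4.2.11}). The other counit identity is analogous; then $s_l\ve_l\*=\*t_r\ve_r$ and $s_r\ve_r\*=\*t_l\ve_l$ fall out, from which $\*S^{-1}\*$ is seen to satisfy the antipode axioms and $S\*S\*=\id_H$ follows from Lemma~\ref{lem 4.2.9}. In short: prove the counit identities \emph{first}, by uniqueness of the counit; your route, trying to get the antipode identity first, lacks the independent input needed to escape the equivalence you yourself established. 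Your derivation of \eqref{eq 4.2.35} from \eqref{eq 4.2.34} is fine.
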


\begin{proof}
We have \[h^*=s_l\ve_l((h^*)_{(1)})(h^*)_{(2)}=s_l\ve_l((h^{(1)})^*)(h^{(2)})^*,\] so \[h=h^{(2)}(s_l\ve_l((h^{(1)})^*))^*.\] Similarly, \[h=h^{(1)}(t_l\ve_l((h^{(2)})^*))^*.\] Now, \[\*s_l\ve_l\*=s_l\*\ve_l\*=t_r\ve_rs_l\*e_l\*=t_r\*\ve_rt_l\ve_l\*.\]
Similarly, 
\[\*t_l\ve_l\*=s_r\*\ve_rt_l\ve_l\*.\]
So we conclude that $\*\ve_rt_l\ve_l\*$ satisfies the right counit axioms. Hence $\ve_r=\*\ve_rt_l\ve_l\*=\*\ve_rS^{-1}\*$. Similarly, $\ve_l=\*\ve_lS^{-1}\*.$ From this we observe
that $s_l\ve_l\*=\*t_r\ve_r$ and $s_r\ve_r\*=\*t_l\ve_l$. Using the above observation and proceeding exactly as before, it follows that $\*S^{-1}\*$ satisfies the antipode axioms. By uniqueness,
we have $S=\*S^{-1}\*$, which implies $S\*S\*=\id_H$.

\medskip\noindent
Finally,
\[ 
\begin{aligned}
S(h)^* \cdot a^*&=\ve_l(S(h)^*s_l(a^*))\\
&=\ve_l(S(h)^*s_l(a)^*)\\
&=\ve_l\*(s_l(a)S(h))\\
&=\ve_l\*(St_l(a)S(h))\\
&=\ve_l\*S(ht_l(a))\\
&=\*\ve_l(ht_l(a))\\
&=(\ve_l(hs_l(a)))^*\\
&=(h \cdot a)^*.
\end{aligned}
\]
\end{proof}

Besides Hopf $\*$-algebras, the Hopf algebroid in Theorem \ref{thm 4.2.12} becomes a central example of Hopf $\*$-algebroids:

\begin{prop}\label{prop 4.2.21}
The space $C_c^{\infty}(G)$ becomes a Hopf $\*$-algebroid over $C^{\infty}(G_0)$ with $\*$-structure given by
\begin{equation}\label{eq 4.2.36}
u^*(g)=\ol{u(g^{-1})} \text{ for } u \in C_c^{\infty}(G) \text{ and } f^*(x)=\ol{f(x)} \text{ for } f \in C^{\infty}(G_0).
\end{equation}
\end{prop}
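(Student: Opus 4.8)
The plan is to verify the four requirements of Definition~\ref{defn 4.2.19} for the Hopf algebroid produced in Theorem~\ref{thm 4.2.12}: that $C_c^{\infty}(G)$ and $C^{\infty}(G_0)$ are $\*$-algebras, that $s_l$ and $s_r$ are $\*$-preserving, and that \eqref{eq 4.2.29} and \eqref{eq 4.2.32} hold for the structure \eqref{eq 4.2.36}.

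First I would confirm that \eqref{eq 4.2.36} really defines a $\*$-algebra structure. That $f \mto \ol f$ makes $C^{\infty}(G_0)$ a commutative $\*$-algebra, and that $u \mto u^*$ is conjugate-linear and involutive on $C_c^{\infty}(G)$, is immediate. The only point needing an argument is anti-multiplicativity for the convolution \eqref{eq 4.2.19}: starting from $(u \* v)^*(g)=\ol{(u \* v)(g^{-1})}=\sum_{g^{-1}=g_1g_2}\ol{u(g_1)}\,\ol{v(g_2)}$ and reindexing the sum by $h_1=g_2^{-1}$, $h_2=g_1^{-1}$ so that $g=h_1h_2$, one reads off $(v^* \* u^*)(g)$. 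This is the one place where the fact that groupoid inversion reverses composition enters.

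Next, since $s_l\equiv s_r\equiv t_l\equiv t_r\equiv 1_{\*}$ is the fiber sum of the unit map as in \eqref{eq 4.2.22} and $1_x^{-1}=1_x$, one gets $s_l(f)^*(g)=\ol{s_l(f)(g^{-1})}=s_l(f^*)(g)$ at once, both sides vanishing unless $g$ is a unit; hence $s_l=s_r$ is $\*$-preserving. For \eqref{eq 4.2.29} I would use that $\ve_l$ and $\ve_r$ are the fiber sums of the target and source maps \eqref{eq 4.2.24}, together with the fact that all four of $s_l,t_l,s_r,t_r$ coincide with $1_{\*}$: for $a\in C^{\infty}(G_0)$ the function $1_{\*}(a^*)$ is supported on units, so the only arrow contributing to the relevant fiber sum over a point $x$ is $1_x$, and one obtains $\ve_lt_r(a^*)(x)=\ol{a(x)}=\bigl(\ve_ls_r(a)(x)\bigr)^*$, and symmetrically $\ve_rt_l(a^*)=\bigl(\ve_rs_l(a)\bigr)^*$.

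Finally, for \eqref{eq 4.2.32} I would first note that once \eqref{eq 4.2.29} is in hand, Lemma~\ref{lem 4.2. 18} guarantees that $(\*\tens\*)$ descends to an isomorphism $H_l\tens_{A_l}H_l\to H_r\tens_{A_r}H_r$, so the desired identity even makes sense. Writing $\D_l=d^l_{\*}$ and $\D_r=d^r_{\*}$ for the diagonal pushforwards and using the isomorphisms $\O_{t,t}$ and $\O_{s,s}$ of \eqref{eq 4.2.16}, I would check on factorized tensors $u\tens u'$ that, under these identifications, $(\*\tens\*)$ corresponds to the map $F(g,g')\mto\ol{F(g^{-1},g'^{-1})}$, which sends $C_c^{\infty}(G~^t\times^t_{G_0}G)$ to $C_c^{\infty}(G~^s\times^s_{G_0}G)$ (consistently, since $t(g)=t(g')$ forces $s(g^{-1})=s(g'^{-1})$). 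Then both $\D_r(u^*)$ and $(\*\tens\*)\D_l(u)$ become the function equal to $\ol{u(g^{-1})}$ when $g=g'$ and to $0$ otherwise, because $g^{-1}=g'^{-1}$ if and only if $g=g'$; and this is exactly $d^r_{\*}(u^*)$. By Definition~\ref{defn 4.2.19} this finishes the proof, and as a consistency check Proposition~\ref{prop 4.2.20} then yields $S=\*S^{-1}\*$, which is the evident identity saying that $inv_{\*}$ and pointwise conjugation commute. I expect this last step, namely \eqref{eq 4.2.32}, to be the main obstacle --- not because it is deep, but because of the bookkeeping with the four $C^{\infty}(G_0)$-tensor products and keeping straight which bimodule structure the involution intertwines; in the end it collapses precisely because the diagonal in $G\times G$ is stable under groupoid inversion.
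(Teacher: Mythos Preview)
Your proposal is correct and is precisely the ``direct computations'' that the paper alludes to without spelling out: you verify the $\*$-algebra axioms, that $s_l=s_r=1_{\*}$ is $\*$-preserving, and that \eqref{eq 4.2.29} and \eqref{eq 4.2.32} hold, each by unwinding the explicit formulas \eqref{eq 4.2.19}--\eqref{eq 4.2.25}. There is nothing to add; the paper's own proof is the one-line ``This follows from direct computations.''
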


\begin{proof}
This follows from direct computations.
\end{proof}

If $A$ is a $\*$-algebra then $A \tens_{\bC}A^{op}$ as in Subsubsection \ref{subsec 4.5.1} is a Hopf $\*$-algebroid. Furthermore, if $Q$ is a Hopf $\*$-algebra and $A$ is a $Q$-module $\*$-algebra then the Connes-Moscovici Hopf algebroid in Theorem \ref{thm 4.5.1} $H$ becomes a Hopf $\*$-algebroid in our sense. 
Another class of examples, which we have not mentioned above, comes from weak Hopf algebras studied in \cite{MR1726707}. Our $\*$-structure is the same as $C^*$-structure mentioned in \cite{MR1726707}. Following this and the standard theory of $CQG$-algebras, leads to opening up a new direction of study, namely, (co)representation theory of Hopf $\*$-algebroids and the interplay of the $\*$-structure and (co)integrals.

\medskip\noindent

We shall systematically use the language of conjugate modules in order to keep track of various
aspects, see \cites{MR2501177,MR3073899}. Let $(H_l,H_r,S)$ be a Hopf $\*$-algebroid and $M$ an $H$-module. We define the conjugate module $\ol{M}$ by declaring that
\begin{enumerate}[i)]
\item $\ol{M}=M$ as abelian group;
\item we write $\ol{m}$ for an element $m \in M$ when we consider it as an element of $\ol{M}$;
\item the module operation for $\ol{M}$ is $h \cdot \ol{m}=\ol{S(h)^* \cdot m}$.
\end{enumerate}

Again, let $B$ be a $\*$-algebra and let $E$ be a $(B,B)$ bimodule. The conjugate bimodule $\ol{E}$ is defined
by the following three conditions:
\begin{enumerate}[i)]
\item $\overline{E}=E$ as abelian group;
\item We write $\ol{e}$ for an element $e \in E$ when we consider it as an element of $\ol{E}$;
\item The bimodule operations for $\ol{E}$ are $b \cdot \ol{e}=\overline{e \cdot b^*}$ and $\ol{e} \cdot b=\ol{b^* \cdot e}.$
\end{enumerate}

If $\ta : E \to F$ is any morphism, then we define $\ol{\ta} : \ol{E} \to \ol{F}$ by $\ol{\ta}(\ol{e})=\ol{\ta(e)}$. We make $\ol{B}$ an associative algebra by defining the multiplication $\ol{b}{}\ol{b'}:=\ol{b'b}$. As an $\bR$-algebra, $\ol{B}$ is isomorphic to $B^{op}$ via the map $b \mto \ol{b}$. We make $\ol{B}$ a $\bC$-algebra through the algebra homomorphism $\bC \to \ol{B}$, $\l \mto \ol{\l^*1_B}$. We now define $\# : B \to \ol{B}$, $b \mto \ol{b^*}$. Then $\#$ is an isomorphism of $\bC$ algebras. If $\ta : B \to B'$ is a morphism then we say that $\ta$ is $\*$-preserving if $\#\ta=\ol{\ta}\#$.

\medskip\noindent

So we see that the conclusion in \eqref{eq 4.2.35} that $A_l$ is an $H$-module $\*$ algebra, is nothing but the assertion that 
$\# : A_l \to \ol{A_l}$ is an $H$-module morphism. We also see that for an $H$-module $M$, the induced 
$(A_l,A_l)$-bimodule structure matches with the prescription above. Thus our $\*$-structure naturally produces 
examples of ``Bar categories" in the sense of \cite{MR2501177}.

\begin{lem}\label{lem 4.2.22}
Let $B$ be an $H$-module $\*$-algebra, and let the invariant subalgebra $B_H$ be defined as $B_H=\{b \in B \mid h \cdot b=s_l\ve_l(h) \cdot b\}$. Then $\# : B \to \ol{B}$ induces an isomorphism $\# : B_H \to \ol{B}_H$.
\end{lem}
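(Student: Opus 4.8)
The plan is to exploit the observation, recorded in the paragraph preceding the lemma, that $\# : B \to \ol{B}$ is already an isomorphism of $\bC$-algebras, and to note in addition that it is a morphism of $H$-modules; once both facts are in hand, the assertion about the invariant subalgebras is formal.

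First I would record that $\# : B \to \ol{B}$ is $H$-linear. This is essentially the definition of an $H$-module $\*$-algebra: the defining identity $(h \cdot b)^* = S(h)^* \cdot b^*$ (the analogue of \eqref{eq 4.2.35}, which for $A_l$ was noted to be exactly the statement that $\#$ is an $H$-module morphism) together with the prescription $h \cdot \ol{m} = \ol{S(h)^* \cdot m}$ for the conjugate action gives
\[
\#(h \cdot b) = \ol{(h \cdot b)^*} = \ol{S(h)^* \cdot b^*} = h \cdot \ol{b^*} = h \cdot \#(b).
\]
Since $\#$ is moreover a $\bC$-algebra isomorphism, transporting the $H$-module algebra axioms of Remark \ref{rem 4.2.13} along $\#$ shows that $\ol{B}$ is itself an $H$-module algebra and that $\#$ is an isomorphism of $H$-module algebras; in particular $\ol{B}_H$ is genuinely a subalgebra of $\ol{B}$, so that the statement of the lemma makes sense.

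Next I would check that $\#$ carries $B_H$ onto $\ol{B}_H$. For $b \in B$ and $h \in H$, the element $s_l\ve_l(h)$ again lies in $H$, so applying the bijective $H$-linear map $\#$ (and its inverse, which is likewise $H$-linear) to the defining relation $h \cdot b = s_l\ve_l(h) \cdot b$ shows that this holds in $B$ for every $h$ if and only if $h \cdot \#(b) = s_l\ve_l(h) \cdot \#(b)$ holds in $\ol{B}$ for every $h$; that is, $b \in B_H \iff \#(b) \in \ol{B}_H$. Hence $\#$ restricts to a bijection $B_H \to \ol{B}_H$, and, being a $\bC$-algebra homomorphism, its restriction to the subalgebra $B_H$ is a $\bC$-algebra isomorphism onto $\ol{B}_H$.

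I do not expect a serious obstacle here; the only points needing a little care are (i) verifying that $\ol{B}$ is really an $H$-module algebra, so that $\ol{B}_H$ is the subalgebra the statement refers to, which follows by transport of structure along the algebra isomorphism $\#$, and (ii) using correctly that a general $H$-module $\*$-algebra satisfies $(h \cdot b)^* = S(h)^* \cdot b^*$, equivalently that $\#$ is $H$-linear, which is built into the definition exactly as in \eqref{eq 4.2.35}.
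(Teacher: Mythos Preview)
Your proof is correct and takes essentially the same approach as the paper, which simply records that the result follows from $\#$ being an $H$-module morphism. You have just unpacked this one-line observation in detail, verifying explicitly the $H$-linearity of $\#$ and the formal consequence for the invariant subalgebras.
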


\begin{proof}
This follows from the fact that $\#$ is an $H$-module morphism.
\end{proof}

In fact, we can say more:

\begin{prop}\label{prop 4.2.23}
Let $B$ be an $H$-module $\*$-algebra. Then $B_H$ is also a $\*$-algebra. So that, by Lemma \ref{lem 4.2.22} we can identify $(\ol{B})_H=\ol{(B_H)}$ as algebras.
\end{prop}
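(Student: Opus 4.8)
The plan is to show that the involution $*$ on $B$ restricts to $B_H$; granting this, the identification $(\overline{B})_H=\overline{(B_H)}$ follows immediately by combining the restriction of $*$ with the isomorphism $\#:B_H\to\overline{B}_H$ of Lemma \ref{lem 4.2.22}, since on $B$ we have $\#(b)=\overline{b^*}$ and both sides are the same underlying abelian group with the conjugate structures. So the content is entirely in the first assertion.

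To see that $B_H$ is closed under $*$, take $b\in B_H$, so $h\cdot b=s_l\ve_l(h)\cdot b$ for all $h\in H$. I want $h\cdot b^*=s_l\ve_l(h)\cdot b^*$. The natural tool is \eqref{eq 4.2.35} from Proposition \ref{prop 4.2.20}, rewritten for a general $H$-module $\*$-algebra: since $\#:B\to\overline{B}$ is an $H$-module morphism (as noted just before Lemma \ref{lem 4.2.22}), the action on $B$ satisfies $(h\cdot b)^*=S(h)^*\cdot b^*$ for all $h\in H$, $b\in B$. Applying this with $S^{-1}(h^*)$ in place of $h$, and using $S*S*=\id_H$ (equivalently $S(S^{-1}(h^*))^*=h$), we get $h\cdot b^*=\bigl(S^{-1}(h^*)\cdot b\bigr)^*$. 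Since $b\in B_H$, the right side equals $\bigl(s_l\ve_l(S^{-1}(h^*))\cdot b\bigr)^*$. Now it remains to identify this with $s_l\ve_l(h)\cdot b^*$.

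For that last identification I would again use $(h'\cdot b)^*=S(h')^*\cdot b^*$ with $h'=s_l\ve_l(S^{-1}(h^*))$, giving $\bigl(s_l\ve_l(S^{-1}(h^*))\cdot b\bigr)^*=S\bigl(s_l\ve_l(S^{-1}(h^*))\bigr)^*\cdot b^*$, so everything reduces to the scalar identity
\begin{equation}
S\bigl(s_l\ve_l(S^{-1}(h^*))\bigr)^*=s_l\ve_l(h)\qquad (h\in H)
\end{equation}
as maps $H\to H$ (in fact landing in the image of $s_l$). This I would check from the structural identities already collected in the excerpt: $Ss_l=s_r\ve_rs_l$ from \eqref{eq 4.2.11}, the relation $s_l\ve_lS^{-1}*=*\ve_l$ — equivalently $S^{-1}*$ intertwines $\ve_l$ up to $*$ — from \eqref{eq 4.2.34}, together with the corollaries $t_l*=*Ss_l$, $s_l\ve_l*=*t_r\ve_r$ derived in the proof of Proposition \ref{prop 4.2.20}, and $\ve_ls_l=\id_{A_l}$ from Remark \ref{rem 4.2.4}. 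Chasing these, $\ve_l(S^{-1}(h^*))=(\ve_l(h))^*$ (up to applying $*$), so $s_l\ve_l(S^{-1}(h^*))=s_l((\ve_l h)^*)=s_l(\ve_l h)^*$ since $s_l$ is $*$-preserving, and then $S(s_l(\ve_l h)^*)^*=S s_l(\ve_l h)$ followed by... — this is where the one genuine computation lives.

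\textbf{Main obstacle.} The only nontrivial point is the bookkeeping in the scalar identity above: several of the relations in \eqref{eq 4.2.11} and \eqref{eq 4.2.34} must be combined in the right order, and one must be careful that $s_l$ and $s_r$ are $*$-preserving while $t_l,t_r$ are not, and that $*$ swaps $A_l$ and $A_r$ roles only after composing with counits. Everything else — the passage from $B_H$ closed under $*$ to $(\overline{B})_H=\overline{(B_H)}$ — is formal, using Lemma \ref{lem 4.2.22} and the definition of the conjugate algebra. I expect no difficulty beyond this, since $B_H$ inherits associativity and the $\bC$-algebra structure from $B$, and $*$ on $B_H$ automatically satisfies the involution axioms because it does on $B$.
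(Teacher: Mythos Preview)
Your approach is the same computation as the paper's, but the scalar identity you set as the target is \emph{false}, and this hides a genuine extra step you have not accounted for. Following your chain exactly --- $\ve_l S^{-1}\* = \*\ve_l$, then $s_l$ is $\*$-preserving, then $S(x^*)^* = S^{-1}(x)$ --- one finds
\[
S\bigl(s_l\ve_l(S^{-1}(h^*))\bigr)^* \;=\; S^{-1}\bigl(s_l\ve_l(h)\bigr) \;=\; t_l\ve_l(h),
\]
using $S^{-1}s_l = t_l$ (which follows from $St_l = t_r\ve_rs_l$ in \eqref{eq 4.2.11} combined with $t_r\ve_rs_l = s_l$ from \eqref{eq 4.2.5}). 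So what your argument actually yields is $h\cdot b^* = t_l\ve_l(h)\cdot b^*$, not $s_l\ve_l(h)\cdot b^*$. Since $B_H$ is defined via $s_l\ve_l$, you are not done.

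The paper's proof runs the same computation, lands at $h\cdot b^* = t_l\ve_l(h)\cdot b^*$, and then supplies the missing idea: specialize to $h = s_l(a)$ to obtain $s_l(a)\cdot b^* = t_l(a)\cdot b^*$ for every $a\in A_l$, whence $s_l\ve_l(h)\cdot b^* = t_l\ve_l(h)\cdot b^* = h\cdot b^*$ for all $h$. This bootstrap from the $t_l$-version to the $s_l$-version is the one point you are missing; once you see it, your proposal and the paper's proof coincide.
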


\begin{proof}
Let $b \in B_H$. We compute
\[
\begin{aligned}
(h \cdot b^*)^*&=S(h)^* \cdot b\\
&=s_l\ve_l(S(h)^*) \cdot b\\
&=s_l(\ve_l(h)^*) \cdot b\\
&=(s_l\ve_l(h))^* \cdot b
\end{aligned}
\] so that 
\[
\begin{aligned}
h \cdot b^*&=((s_l\ve_l(h))^* \cdot b)^*\\
&=(S(s_l\ve_l(h)^*))^* \cdot b^*\\
&=S^{-1}s_l\ve_l(h) \cdot b^*\\
&=t_l\ve_l(h) \cdot b^*.
\end{aligned}
\] Next observe that taking $h=s_l(a)$ for $a \in A_l$ in the last equality gives $s_l(a) \cdot b^*=t_l(a) \cdot b^*$. So that for all $h \in H$ we get \[s_l\ve_l(h) \cdot b^*=t_l\ve_l(h) \cdot b^*\] which in turn implies that $b^* \in B_H$.
\end{proof}

\section{Applications to noncommutative K\"ahler structures} \label{sec 4.3}
In this section, we introduce Hopf algebroid action on differential graded algebras, a not-so-straightforward generalization of the Hopf algebra case. Then we introduce noncommutative K\"ahler structures admitting Hopf algebroid equivariance, generalizing the case of foliations along the way.

\subsection{Noncommutative K\"ahler structures}
We begin with a brief review of noncommutative K\"ahler structures in the sense of \cite{MR3720811}.

\subsubsection{Complexes and Double Complexes}\label{subsec 3.1.2} For $(S,+)$ a commutative semigroup, an $S$-graded algebra is an algebra of the form $A=\bigoplus_{s \in S}A^s$, where each $A^s$ is a linear
subspace of $A$, and $A^sA^t \subset A^{s+t}$, for all $s,t
\in S$. If $a \in A^s$, then we say that $a$ is a homogeneous element of degree $s$.  A homogeneous mapping of degree $t$ on $A$ is a linear mapping $L : A \to A$ such that if $a \in
A^s$, then $L(a) \in A^{s+t}$. We say that a subspace $B$ of $A$ is  homogeneous if it admits a decomposition  $B=\oplus_{s \in S} B^s$, with $B^s \subset A^s$, for all $s \in S$.

\medskip\noindent

A pair $(A,d)$ is called a complex if $A$ is an $\bN_0$-graded algebra, and $d$ is a homogeneous mapping  of degree $1$ such that $d^2 = 0$. A triple $(A,\del,\adel)$ is called a double complex if $A$ is an $\bN^2_0$-graded algebra, $\del$ is homogeneous mapping of degree $(1,0)$, $\adel$ is homogeneous mapping of degree $(0,1)$, and
\begin{equation}\label{eq 3.1.1}
\del^2=\adel^2=0, \quad \del\adel=-\adel\del.
\end{equation} 





\subsubsection{Differential $*$-Calculi}\label{subsec 3.1.3}

A complex $(A,d)$ is called a differential graded algebra if $d$ is a graded derivation, which is to say, if it satisfies the graded Leibniz rule
\begin{equation}\label{eq 3.1.5}
d(\a\b)=d(\a)\b+(-1)^k \a d(\b),       
\end{equation} for all $\a \in A^k$, $\b \in A$. The operator $d$ is called the differential of the differential graded algebra. 

\begin{defn}\label{defn 3.1.2}
A differential calculus over an algebra $A$ is a differential graded
algebra $(\O,d)$ such that $\O^0=A$, and

\begin{equation}\label{eq 3.1.6}	
\O^k=span_{\bC}\{a_0da_1\wdg \dots \wdg da_k \mid a_0,\dots,a_k \in A\}.
\end{equation}
\end{defn}

We use $\wdg$ to denote the multiplication between elements of a differential calculus when both are of order greater than $0$. We call an element of a differential calculus a form. A differential map between two differential calculi $(\O,\d_{\O})$ and $(\G,d_{\G})$, defined over the same algebra $A$, is a bimodule map $\f: \O  \to \G$ such that $\f d_{\O}=d_{\G}\f$. 

\medskip\noindent

We call a differential calculus $(\O,d)$ over a $*$-algebra $A$ a differential $\*$-calculus if the involution of $A$ extends to an involutive conjugate-linear map on $\O$, for which $(d\w)^* = d\w^*$, for all $\w \in \O$, and
\begin{equation}\label{eq 3.1.7}
(\w \wdg \nu)^*=(-1)^{kl}\nu^* \wdg \w^*,
\end{equation} for all $\w \in
\O^k$, $\nu \in \O^l$. We say that a form $\w \in \O$ is real  if $\w^* = \w$.


\subsubsection{Orientability and Closed Integrals}\label{subsec 3.1.4} 

We say that a differential calculus has total dimension $n$ if $\O^k=0$, for all $k>n$, and $\O^n \neq 0$. If in addition there exists an $(A,A)$-bimodule isomorphism $\vol: \O^n \simeq A$,  then we say that $\O$ is orientable. We call a choice of such an isomorphism an orientation. 
If  $\O$ is a $\*$-calculus over a $\*$-algebra, then a $\*$-orientation is an orientation which is also a $*$-map. A $\*$-orientable calculus is one which  admits a $\*$-orientation.



\subsubsection{Complex structures}\label{subsec 3.2.1} We now recall the definition of a noncommutative complex structure.

\begin{defn}\label{defn 3.2.1}
An  almost complex structure for a differential $\*$-calculus  $\O$, over a $\*$-algebra $A$, is an $\bN^2_0$-algebra grading $\bigoplus_{(a,b)\in \bN^2_0} \O^{(a,b)}$ for $\O$ such that 
\begin{enumerate}[i)]
\item $\O^k=\bigoplus_{a+b=k}\O^{(a,b)}$, for all $k \in \bN_0$,
\item $(\O^{(a,b)})^*=\O^{(b,a)}$, for all $(a,b) \in \bN^2_0$.
\end{enumerate}
\end{defn}

We call an element of $\O^{(a,b)}$ an $(a,b)$-form. Let $\del$ and $\adel$ be the unique homogeneous operators  of order $(1,0)$, and $(0,1)$ respectively, defined by 
\begin{equation}\label{eq 3.2.1}
\del|_{\O^{(a,b)}}:=\proj_{\O^{(a+1,b)}}d, \quad \adel|_{\O^{(a,b)}}:=\proj_{\O^{(a,b+1)}}d,
\end{equation} where $\proj_{\O^{(a+1,b)}}$, and $\proj_{\O^{(a,b+1)}}$, are the projections from $\O^{a+b+1}$ onto $\O^{(a+1,b)}$, and $\O^{(a,b+1)}$, respectively. The proof of the following lemma carries over directly from the classical setting \cite{MR2093043}.

\begin{lem}\label{lem 3.2.2} 
If $\bigoplus_{(a,b)\in \bN^2_0}\O^{(a,b)}$ is an almost complex structure for a differential $\*$-calculus $\O$ over an algebra $A$, then the following two conditions are equivalent:
\begin{enumerate}[i)]
\item $d=\del+\adel$,
\item the triple $\big(\bigoplus_{(a,b)\in \bN^2}\O^{(a,b)}, \del,\adel\big)$ is a double complex.
\end{enumerate}
\end{lem}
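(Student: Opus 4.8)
The statement is Lemma \ref{lem 3.2.2}, the equivalence of $d = \del + \adel$ with the double-complex property of $\big(\bigoplus_{(a,b)} \O^{(a,b)}, \del, \adel\big)$. Since the paper says the proof carries over directly from the classical case, the plan is to exploit the bigrading together with the fact that $d$ is a degree-$1$ operator with $d^2 = 0$ and a graded derivation.

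First I would establish the implication (i) $\Rightarrow$ (ii). Assuming $d = \del + \adel$, expand $0 = d^2 = (\del + \adel)^2 = \del^2 + (\del\adel + \adel\del) + \adel^2$. The three summands are homogeneous of bidegrees $(2,0)$, $(1,1)$, and $(0,2)$ respectively; since $\O$ is $\bN_0^2$-graded and these bidegrees are distinct, each summand must vanish separately. This yields exactly $\del^2 = \adel^2 = 0$ and $\del\adel = -\adel\del$, i.e. \eqref{eq 3.1.1}, so the triple is a double complex. I should also note that $\del$ and $\adel$ are well-defined homogeneous operators of bidegree $(1,0)$ and $(0,1)$ by construction \eqref{eq 3.2.1}, so nothing further is needed on that front.

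For (ii) $\Rightarrow$ (i), I would argue that on $\O^{(a,b)}$ the operator $d$ lands in $\O^{a+b+1} = \bigoplus_{p+q = a+b+1} \O^{(p,q)}$, so $d = \sum_{p,q} \proj_{\O^{(p,q)}} d$ componentwise; the claim is that only the $(a+1,b)$ and $(a,b+1)$ components survive, i.e. $\del + \adel = d$. The classical way to see this is to use that $d$ is a graded derivation and that $\O$ is generated in degrees $0$ and $1$ (Definition \ref{defn 3.1.2}): on $\O^0 = A$ one has $d\colon A \to \O^1 = \O^{(1,0)} \oplus \O^{(0,1)}$, so there $d = \del + \adel$ automatically; on $\O^{(1,0)}$ and $\O^{(0,1)}$, write a general element as a sum of terms $a_0 \, da_1$ (with $da_1$ decomposed into its $(1,0)$ and $(0,1)$ parts), apply the Leibniz rule, and use the degree-$0$ case plus the double-complex relations to check that $d$ of such a term has components only in bidegrees $(a+1,b)$ and $(a,b+1)$; then propagate to all of $\O$ by the Leibniz rule and induction on form degree.

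The main obstacle is the (ii) $\Rightarrow$ (i) direction, and specifically ruling out ``anomalous'' components of $d$ of bidegree $(a+2, b-1)$, $(a-1,b+2)$, etc. In the purely classical setting this is handled by the Newlander--Nirenberg circle of ideas, but in this algebraic framework the cleaner route is the generation argument above: because $\O$ is generated as an algebra by $\O^0$ and $d\O^0$, and $d$ is determined on generators where it manifestly has only two bidegree components, the Leibniz rule forces the same on all of $\O$ — a harmless bidegree-counting induction. I would carry out that induction explicitly as the one nontrivial computation, and leave the rest as the routine bidegree bookkeeping indicated.
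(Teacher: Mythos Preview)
Your proposal is correct and follows exactly the approach the paper indicates: it defers to the classical argument (citing Huybrechts), and your sketch is precisely that argument transported to the algebraic setting. The (i) $\Rightarrow$ (ii) direction via bidegree separation of $d^2=0$ is standard, and for (ii) $\Rightarrow$ (i) your generation-plus-Leibniz induction is the right mechanism.

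One small sharpening worth making explicit when you write it out: in the degree-$1$ step, the vanishing of the ``anomalous'' component is not quite manifest on generators --- it genuinely uses the hypothesis. Concretely, for $a \in A$ one has $0=d^2a=d(\del a)+d(\adel a)$; projecting onto bidegree $(0,2)$ gives $\proj_{(0,2)}d(\del a)+\adel^2 a=0$, so the assumption $\adel^2=0$ is what kills the $(0,2)$-component of $d(\del a)$ (and dually $\del^2=0$ kills the $(2,0)$-component of $d(\adel a)$). After that, the Leibniz induction runs exactly as you describe. The Newlander--Nirenberg reference is a slight overreach --- that theorem concerns the further equivalence with local holomorphic charts, whereas what is needed here is only this elementary bidegree bookkeeping --- but your actual argument does not rely on it.
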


With this in hand,

\begin{defn}\label{defn 3.2.3}
When the conditions in Lemma \ref{lem 3.2.2} hold for an almost complex structure, then we say that it is  integrable. 
\end{defn}

We call an integrable almost complex structure a  complex structure, and the double complex  $(\bigoplus_{(a,b)\in \bN^2}\O^{(a,b)},\del,\adel)$  its  Dolbeault double complex. An easy consequence of integrability is that 
\begin{equation}\label{eq 3.2.2}
\del(\w^*) = (\adel \w)^*, \quad  \adel(\w^*) = (\del \w)^*,
\end{equation} for all $\w \in \O$.

\subsubsection{Hermitian and K\"ahler structures}\label{subsec 3.2.2} 

Let $\O$ denote a differential $\*$-calculus, over an algebra $A$, of total dimension $2n$. As a first step towards the definition  of a hermitian form, we present a direct noncommutative generalization of the classical definition of an almost symplectic form.

\begin{defn}\label{defn 3.2.4}
An  almost symplectic form for $\O$ is a central real $2$-form $\s$ such that, with respect to the  Lefschetz operator
\begin{equation}\label{eq 3.2.3}
L :\O \to \O,  \quad   \w \mto \s \wdg \w,
\end{equation}
isomorphisms are given by
\begin{equation} \label{eq 3.2.4}
L^{n-k}: \O^{k} \to  \O^{2n-k}, 
\end{equation} for all $1 \leq k < n$.
\end{defn}

Note that since $\s$ is a central real form, $L$ is an $(A,A)$-bimodule $*$-homomorphism. 

\begin{defn}\label{defn 3.2.5}
For $L$ the Lefschetz operator of any almost symplectic form,  the space of  primitive $k$-forms  is
\begin{equation}\label{eq 3.2.5}
P^k:=\{\a \in \O^{k} \mid L^{n-k+1}(\a)=0\},  \text{ ~ if } k \leq n,\quad \text{ and } \quad P^k := 0, \text{ ~ if } k>n.
\end{equation}
\end{defn}



Now we present the following noncommutative generalization of the classical notion of a symplectic form \cite{MR2093043}.

\begin{defn}\label{defn 3.2.7}
A  symplectic form is a $d$-closed almost symplectic form.
\end{defn}

We now introduce a hermitian structure for a differential $\*$-calculus, which is  essentially just a symplectic form interacting with a complex structure in a natural way.  In the commutative case each such form is the fundamental form of a uniquely identified hermitian metric  \cite{MR2093043}.

\begin{defn}\label{defn 3.2.8}
An  hermitian structure for a $\*$-calculus $\O$  is a pair $(\O^{(\cdot,\cdot)}, \s)$ where $\O^{(\cdot,\cdot)}$ is a complex structure  and  $\s$ is an almost symplectic form, called the  hermitian form, such that $\s \in \O^{(1,1)}$.
\end{defn}

\begin{defn}
A  K\"ahler structure for a differential $*$-calculus is a hermitian structure $(\O^{(\cdot,\cdot)},\k)$ such that the hermitian form $\k$ is $d$-closed. We call such a $\k$ a   K\"ahler form.
\end{defn}
	
Every $2$-form in a $*$-calculus with total dimension $2$ is obviously $d$-closed. Hence, just as in the classical case, with respect to any choice of complex structure, every $\k \in \O^{(1,1)}$ is a K\"ahler form. 

\subsection{Covariant noncommutative K\"ahler structures}
In this subsection, we introduce Hopf algebroid covariance into the setup of the previous subsection.

\subsubsection{Covariant differential calculi}
\label{subsec 4.3.1} Let $H=(H_l,H_r,S)$ be a Hopf $\*$-algebroid. We start by defining a differential calculus. 

\begin{defn}\label{defn 4.3.1}
An $\bN_0$-graded $H$-module is an $\bN_0$-graded $\bC$-vector space which is also an $H$-module such that the
$H$-action preserves the $\bN_0$-grading.
\end{defn}

\begin{defn}\label{defn 4.3.2}
An $\bN_0$-graded $H$-module algebra is an $\bN_0$-graded algebra
which is also an $H$-module algebra such that the $H$-action preserves the $\bN_0$-grading.
\end{defn}

\begin{defn}\label{defn 4.3.3}
A pair $(B,\,d)$ is called an $H$-covariant complex if $B$ is an $\bN_0$-graded $H$-module algebra, and $d$ is homogeneous of degree
one satisfying $d^2=0$, such that 
\begin{equation}\label{eq 4.3.1}
\text{$A_l$ and $H_0$ generate H as algebra,}
\end{equation}
where
\begin{equation}\label{eq 4.3.1n}
H_0:=\{h \in H \mid [h-s_l\ve_l(h),d]=[h-t_l\ve_l(h),d]=0\}\,.
\end{equation}
\end{defn}

\begin{defn}\label{defn 4.3.4} A triple $(B,\del,\adel)$ is called an $H$-covariant double complex if $B$ is an $\bN_0^2$-graded 
$H$-module algebra, $\del$ is homogeneous of degree $(1,0)$, and $\adel$ is homogeneous of degree $(0,1)$, such that $\del^2=0$, 
$\adel^2=0$, $\del \adel +\adel \del=0$ and they satisfy \eqref{eq 4.3.1}. \end{defn}

For any $H$-covariant complex $(B,\,d)$, we call an element $d$-closed if it is contained in $\ker(d)$ and $d$-exact if it is contained in $im(d)$. For an $H$-covariant double complex $(B,\del,\adel)$, we define $\del$-closed, $\adel$-closed, $\del$-exact and $\adel$-exact elements analogously.

\begin{defn}\label{defn 4.3.5}
An $H$-covariant complex $(B,\,d)$ is called an $H$-covariant differential graded algebra if $d$ satisfies
the graded Leibniz rule
\begin{equation}\label{eq 4.3.2}
d(bb')=d(b)b' + (-1)^kbd(b') \qquad b \in B^k, \quad b' \in B.
\end{equation}
\end{defn}

\begin{defn}\label{defn 4.3.6}
An $H$-covariant differential calculus over an $H$-module algebra $B$ (with unit map $\io_B$) is an $H$-covariant differential graded algebra $(\O,\,d)$ (with unit map $\io_{\O}$) such that $\O^0=B$, the two $H$-action on $B$ coming from $B$ itself and $\O^0$ coincide, and 
\begin{equation}\label{eq 4.3.3}
\O^k\,=\,{\rm span}_{\bC}\{b_0db_1\wdg\dots \wdg db_k \mid b_0,\dots,b_k \in B\}.
\end{equation}
\end{defn}

Observe that the coincidence of the two $H$-actions on $B$ implies that the two unit maps also coincide. Observe also that the induced $(A_l,A_l)$-bimodule structure on $\O$ coincide with the one coming from the unit map.

\begin{defn}\label{defn 4.3.7}
An $H$-covariant differential calculus $(\O,\,d)$ over an $H$-module $\*$-algebra $B$ is a $\*$-differential calculus if the involution of $B$ extends to a degree zero involutive conjugate linear map on $\O$, for which $(d\w)^*=d(\w^*)$ for all $\w \in \O$, and \[(\w \wdg \e)^*=(-1)^{kl}\e^* \wdg \w^*, \qquad \w \in \O^k, \quad \e \in \O^l\] making $\O$ an $H$-module $\*$-algebra. 
\end{defn}

Recall the Connes-Moscovici Hopf algebroid from Theorem \ref{thm 4.5.1}. We then have

\begin{prop}\label{prop 4.5.4}
Let $(\O,\,d)$ be a $Q$-covariant differential calculus on $A$, $Q$ being a Hopf algebra. Then $(\O,\,d)$ can be made into an $H$-covariant differential calculus on $A$ in the sense of Definition \ref{defn 4.3.6}. Furthermore, if $Q$ is a Hopf $\*$-algebra, $A$ is a $Q$-module $\*$-algebra and $(\O,\,d)$ is a $Q$-covariant $\*$-differential calculus then it can be made into an $H$-covariant $\*$-differential calculus in the sense of Definition \ref{defn 4.3.7}.
\end{prop}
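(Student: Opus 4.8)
The plan is to use the structure of the Connes-Moscovici Hopf algebroid $H = A \tens_{\bC} Q \tens_{\bC} A$ from Theorem \ref{thm 4.5.1} together with the embedding $Q \hookrightarrow H$, $q \mto 1 \tens_{\bC} q \tens_{\bC} 1$. First I would observe that this embedding is an algebra homomorphism (by \eqref{eq 4.5.4}) and that composing a $Q$-action with it gives an $H$-action on any $Q$-module, once one checks the $A$-module-algebra structure is respected. Concretely, given the $Q$-covariant differential calculus $(\O, d)$ on $A$, I would define the $H$-action on $\O^k$ by $(a \tens_{\bC} q \tens_{\bC} b) \cdot \w := a \,(q \cdot \w)\, b$, where $a, b$ act by left and right multiplication (using that $A = \O^0 \subset \O$ acts on $\O$ as a bimodule) and $q \cdot \w$ is the $Q$-action. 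The first task is to verify this is a well-defined left $H$-module structure: one must check compatibility with the multiplication \eqref{eq 4.5.4}, which reduces to the fact that $q \cdot (a' \w) = (q_{(1)} \cdot a')(q_{(2)} \cdot \w)$ and $q \cdot (\w b') = (q_{(1)} \cdot \w)(q_{(2)} \cdot b')$ — i.e., that $\O$ is a $Q$-module algebra with $A$ a subalgebra — combined with the coassociativity bookkeeping for $q_{(1)}, q_{(2)}, q_{(3)}$.

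Next I would check that $(\O, d)$ with this action is an $H$-covariant differential graded algebra in the sense of Definitions \ref{defn 4.3.5} and \ref{defn 4.3.6}. The grading is preserved since the $Q$-action preserves it and multiplication by elements of $A = \O^0$ does. The module-algebra axioms in Remark \ref{rem 4.2.13} — namely $h \cdot 1 = s_l\ve_l(h) \cdot 1$ and $h \cdot (\w \wdg \e) = (h_{(1)} \cdot \w) \wdg (h_{(2)} \cdot \e)$ — follow by unwinding $\ve_l(a \tens_{\bC} q \tens_{\bC} b) = a \ve(q) b$ and $\D_l$ from \eqref{eq 4.5.5b}, using again that $A$ sits inside $\O$ as a bimodule and the $Q$-module-algebra property. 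The graded Leibniz rule \eqref{eq 4.3.2} is inherited directly from $(\O, d)$ as a differential calculus on $A$; and \eqref{eq 4.3.3} holds because the underlying differential calculus is unchanged. That $\O^0 = A = B$ with matching $H$-actions is immediate from the definition of the action (when $k = 0$, $(a \tens_{\bC} q \tens_{\bC} b)\cdot a' = a(q \cdot a')b$, which is the $A$-module-algebra structure of Subsubsection \ref{subsec 4.5.2}).

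The subtle point, and the one I expect to be the main obstacle, is condition \eqref{eq 4.3.1}: that $A_l = A$ and $H_0$ generate $H$ as an algebra, where $H_0$ is the "commute-with-$d$-up-to-counit" subalgebra of \eqref{eq 4.3.1n}. Here I would argue that the image of $Q$ under the embedding lies in $H_0$: for $q \in Q$, we have $s_l\ve_l(1 \tens_{\bC} q \tens_{\bC} 1) = \ve(q)(1 \tens_{\bC} 1 \tens_{\bC} 1)$, so $q - s_l\ve_l(q) = (1 \tens_{\bC} q \tens_{\bC} 1) - \ve(q)1$ acts on $\O$ exactly as $q - \ve(q)$ does under the $Q$-action, and since $d$ is $Q$-covariant ($d(q \cdot \w) = q \cdot d\w$, hence $q$ commutes with $d$ for all $q$, in particular $q - \ve(q)$ does — and similarly for $t_l\ve_l$ since $t_l\ve_l(1 \tens_{\bC} q \tens_{\bC} 1) = \ve(q)1$ as well). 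Then since $H = A \tens_{\bC} Q \tens_{\bC} A$ is spanned by products $s_l(a)\, (1 \tens_{\bC} q \tens_{\bC} 1)\, t_l(b) = (a \tens_{\bC} 1 \tens_{\bC} 1)(1 \tens_{\bC} q \tens_{\bC} 1)(1 \tens_{\bC} 1 \tens_{\bC} b)$ — one checks this product equals $a \tens_{\bC} q \tens_{\bC} b$ using \eqref{eq 4.5.4} with the counit — we get that $A_l$ (via $s_l$ and $t_l$) together with the copy of $Q \subseteq H_0$ generate $H$. Hence \eqref{eq 4.3.1} holds. For the $\*$-statement, assuming $Q$ is a Hopf $\*$-algebra, $A$ a $Q$-module $\*$-algebra, and $(\O,d)$ a $Q$-covariant $\*$-calculus, Proposition \ref{prop 4.2.21}-style reasoning applies: $H$ is a Hopf $\*$-algebroid by the remark following Proposition \ref{prop 4.2.23}, and the conjugate-linear involution on $\O$ already satisfies $(d\w)^* = d(\w^*)$ and the graded anti-multiplicativity \eqref{eq 3.1.7}; what remains is to check $\O$ is an $H$-module $\*$-algebra, i.e. $(h \cdot \w)^* = S(h)^* \cdot \w^*$, which for $h = a \tens_{\bC} q \tens_{\bC} b$ follows from $S(a \tens_{\bC} q \tens_{\bC} b) = T(q_{(3)})b \tens_{\bC} T(q_{(2)}) \tens_{\bC} T(q_{(1)})a$ of \eqref{eq 4.5.7}, the $\*$-compatibility of the $Q$-action on $\O$ (that $(q \cdot \w)^* = (Tq)^* \cdot \w^*$, the Hopf $\*$-algebra analogue of \eqref{eq 4.2.35}), and the $\*$-algebra structure of $A \subset \O$. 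I would carry out this last verification by a direct computation paralleling the one in the proof of Proposition \ref{prop 4.2.20}.
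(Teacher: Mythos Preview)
Your proposal is correct and follows the same route as the paper: define the $H$-action by $(a \tens_{\bC} q \tens_{\bC} b)\cdot \w = a(q\cdot\w)b$, observe that the only nontrivial condition is \eqref{eq 4.3.1}, and verify it by showing $1 \tens_{\bC} Q \tens_{\bC} 1 \subseteq H_0$ so that $s_l(A)$, $t_l(A)$ and this copy of $Q$ together generate $H$. The paper's proof is much terser (it simply asserts that $H_0 \supseteq 1 \tens_{\bC} Q \tens_{\bC} 1$ and leaves the rest implicit), but your more detailed verification of the module-algebra axioms and the $\ast$-compatibility is entirely in line with what the paper intends.
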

	
\begin{proof}
We define the $H$-action on $\O$ as follows:
\begin{equation}\label{eq 4.5.8}
(a \tens_{\bC} q \tens_{\bC} b)\cdot \w=a(q \cdot \w)b.
\end{equation} The only non-trivial part to check is that \eqref{eq 4.3.1} holds. This is easy because $H_0$ contains $1 \tens_{\bC} Q \tens_{\bC} 1$.
\end{proof}


\begin{lem}\label{lem 4.3.8}
For an $H$-covariant $\*$-differential calculus $(\O,\,d)$, we have
\begin{enumerate}[i)]
\item $[h-s_l\ve_l(h),d]=0$ $\implies$ $[S^{-1}(h^*)-t_l\ve_l(S^{-1}(h^*)),d]=0$;
\item $[h-t_l\ve_l(h),d]=0$ $\implies$ $[S^{-1}(h^*)-s_l\ve_l(S^{-1}(h^*)),d]=0$;
\end{enumerate} for $h \in H$. Thus combining the two, we get that $h \in H_0$ if and only if $S^{-1}(h^*) \in H_0$.
\end{lem}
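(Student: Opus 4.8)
The plan is to prove statement (i) by a direct computation using the $\*$-compatibility of $d$ and the identities relating $S$, $\*$, and the counits established in Proposition \ref{prop 4.2.20}; statement (ii) will then follow by an entirely symmetric argument (swapping the roles of $s_l$ and $t_l$), and the final ``if and only if'' is immediate once both are in hand, since $H_0$ is defined by the conjunction of the two bracket conditions and $S^{-1}((S^{-1}(h^*))^*) = S^{-1}(\*S^{-1}\*(h)) = h$ by the relation $S = \*S^{-1}\*$ (equivalently $S\*S\* = \id_H$) from \eqref{eq 4.2.34}.

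For (i), I would start from the hypothesis $hd = dh + s_l\ve_l(h)\,d - d\,s_l\ve_l(h)$ acting on $\O$; more precisely, $[h - s_l\ve_l(h), d] = 0$ means $h$ acts on $\O$ in a way that commutes with $d$ up to the correction term, so that for $\w \in \O$ one has $h\cdot(d\w) - s_l\ve_l(h)\cdot(d\w) = d(h\cdot\w) - d(s_l\ve_l(h)\cdot\w)$. The key move is to apply this to conjugates: using that the $\*$-structure on $\O$ satisfies $(d\w)^* = d(\w^*)$ and that $\O$ is an $H$-module $\*$-algebra so $(h\cdot\w)^* = S(h)^*\cdot\w^*$ (the analogue of \eqref{eq 4.2.35}), I would conjugate the commutator identity and rewrite everything in terms of $S(h)^*$. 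Setting $k = S^{-1}(h^*)$, so that $S(k)^* = S(S^{-1}(h^*))^* = (h^*)^* = h$ — wait, more carefully $S(k)^* = h$ only if we track the $\*$ correctly; in fact $S(k) = S(S^{-1}(h^*)) = h^*$, hence $S(k)^* = (h^*)^* = h$. Then the conjugated identity becomes a commutator statement for $k = S^{-1}(h^*)$, and the correction term $s_l\ve_l(h)$ gets transformed: using $s_l\ve_l\* = \*t_r\ve_r$ and the Hopf $\*$-algebroid relations $t_r\ve_r = \*Ss_r$, etc., together with \eqref{eq 4.2.11} and \eqref{eq 4.2.34}, one computes that $(s_l\ve_l(h))^*$ corresponds precisely to $t_l\ve_l(S^{-1}(h^*))$. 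This is where I expect the bookkeeping to be delicate: one must chase $s_l \leftrightarrow t_l$ under $\*$ and under $S^{-1}$ simultaneously, and it is easy to land on the wrong one of the four maps $s_l, t_l, s_r, t_r$.

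Concretely, the identity I want is $s_l\ve_l\,\* = \*\,t_l\ve_l\,S^{-1}\,\* $ applied appropriately — or rather, I want to show $\bigl(s_l\ve_l(h)\bigr)^* = t_l\ve_l\bigl(S^{-1}(h^*)\bigr)$. To verify this: by Proposition \ref{prop 4.2.20}, $\ve_lS^{-1}\* = \*\ve_l$, so $\ve_l(S^{-1}(h^*)) = (\ve_l(h))^*$; then $t_l\ve_l(S^{-1}(h^*)) = t_l((\ve_l(h))^*)$, and since (from the corollaries following Definition \ref{defn 4.2.19}) $t_l\* = \*Ss_l$, we get $t_l((\ve_l h)^*) = \*Ss_l(\ve_l h) = \*\,S\,s_l\ve_l(h)$. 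Meanwhile $\bigl(s_l\ve_l(h)\bigr)^*$: I need $s_l\* $ in terms of $S$; from $t_r\* = \*Ss_r$ and $s_l = t_r\ve_rs_l$ (which is \eqref{eq 4.2.5}), one gets $s_l\* = t_r\ve_rs_l\* = t_r\*\ve_rt_l$ (using \eqref{eq 4.2.29}) $= \*Ss_r\ve_rt_l = \*S\,t_l$ by \eqref{eq 4.2.11}; hence $(s_l\ve_l(h))^* = \*\,S\,t_l\ve_l(h)$. So the two sides differ by $S t_l$ versus $S s_l$ inside $\*(\cdot)$, and $s_l\ve_l(h) = t_l\ve_l(h)$ is \emph{not} generally true — so I must instead have miscomputed, and the correct path is to observe that the correction operator $h \mapsto s_l\ve_l(h)$ acting on $\O^0 = B$ by the module structure only sees $\ve_l(h) \in A_l$, and on $\O$ the relevant action is $b \cdot \w$ for $b = \ve_l(h)$; the self-consistency of the two formulas (using $s_l(a)\cdot 1 = t_l(a)\cdot 1$ in an $H$-module algebra, Remark \ref{rem 4.2.13}) is exactly what resolves the $s_l$ vs $t_l$ discrepancy and makes the correction term transform correctly. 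Thus the real content is: conjugate the bracket identity, use $(h\cdot\w)^* = S(h)^*\cdot\w^*$ and $(d\w)^* = d(\w^*)$ to convert it into a bracket identity for $S^{-1}(h^*)$, and then use Proposition \ref{prop 4.2.20} plus \eqref{eq 4.2.11} to identify the new correction term as $t_l\ve_l(S^{-1}(h^*))$ when the original was $s_l\ve_l(h)$. Statement (ii) is identical with $s_l \leftrightarrow t_l$, and the final equivalence follows from $S^{-1}\bigl((S^{-1}(h^*))^*\bigr) = h$, which is the relation $S\*S\* = \id_H$ of \eqref{eq 4.2.34} rewritten.
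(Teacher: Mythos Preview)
Your overall strategy is the same as the paper's: conjugate the bracket identity on $\w^*$, use $(k\cdot\eta)^* = S(k)^*\cdot\eta^*$ and $(d\eta)^* = d(\eta^*)$, and then rewrite the correction term. But you mis-state the quantity that has to be computed, and this sends you down a spurious path.

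After conjugating $[h-s_l\ve_l(h),d](\w^*)=0$, the operator that appears is $\bigl(S(h-s_l\ve_l(h))\bigr)^*$, so the correction term you must identify is $\bigl(S(s_l\ve_l(h))\bigr)^*$, \emph{not} $(s_l\ve_l(h))^*$. In fact your own first computation already gives the right identity: you correctly derive $t_l\ve_l(S^{-1}(h^*)) = t_l((\ve_l h)^*) = \bigl(S\,s_l\ve_l(h)\bigr)^*$ via $t_l\* = \*Ss_l$ and $\ve_lS^{-1}\* = \*\ve_l$. That is exactly what is needed, and at that point (i) is done. The discrepancy you then worry about between $Ss_l$ and $St_l$ is manufactured by comparing $\bigl(S(s_l\ve_l(h))\bigr)^*$ against the wrong target $(s_l\ve_l(h))^*$; there is nothing to reconcile. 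Your attempted fix via $s_l(a)\cdot 1_B = t_l(a)\cdot 1_B$ is both unnecessary and invalid here, since that equality holds only on $1_B$ (Remark~\ref{rem 4.2.13}), not on arbitrary $\w\in\O$, so it cannot be used to collapse $s_l\ve_l(h)$ and $t_l\ve_l(h)$ as operators on $\O$. Once you drop this detour, (ii) follows by the symmetric computation (using $St_l = s_l$ from \eqref{eq 4.2.7} in place of $S^{-1}s_l = t_l$), and the final equivalence follows exactly as you say from $S\*S\* = \id_H$.
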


\begin{proof}
For $\w \in \O$, we compute 
\[
\begin{aligned}
0=([h-s_l\ve_l(h),d](\w^*))^*&=((h-s_l\ve_l(h)) \cdot d(\w^*)-d((h-s_l\ve_l(h)) \cdot \w^*))^*\\
&=((h-s_l\ve_l(h)) \cdot (d\w)^*)^*-d(((h-s_l\ve_l(h)) \cdot \w^*)^*)\\
&=(S(h-s_l\ve_l(h)))^*\cdot d\w-d(S(h-s_l\ve_l(h))^* \cdot \w)\\
&=[(S(h-s_l\ve_l(h)))^*,d](\w).
\end{aligned}
\] And similarly, $0=([h-t_l\ve_l(h),d](\w^*))^*=[(S(h-t_l\ve_l(h)))^*,d](\w)$. Now
\[
\begin{aligned}
(S(h-s_l\ve_l(h)))^*=S(h)^*-S(s_l\ve_l(h))^*&=S^{-1}(h^*)-S^{-1}(s_l(\ve_l(h)^*))\\
&=S^{-1}(h^*)-t_l\ve_l(S^{-1}(h^*))\\
\end{aligned}
\] and
\[
\begin{aligned}
(S(h-t_l\ve_l(h)))^*=S(h)^*-S(t_l\ve_l(h))^*&=S^{-1}(h^*)-s_l(\ve_l(h)^*)\\
&=S^{-1}(h^*)-s_l\ve_l(S^{-1}(h^*)).
\end{aligned}
\] Thus we get $S^{-1}(h^*) \in H_0$ if $h \in H_0$. The other direction follows from $(S \*)^2=\id$
\end{proof}

\begin{lem}\label{lem 4.3.9}
On $\ol{\O}$, defining the product as $\ol{\w} \wdg \ol{\e}=(-1)^{kl}\ol{\e \wdg \w}$ for $\w \in \O^k$, $\e \in \O^l$ makes $(\ol{\O},\ol{d})$ an $H$-covariant differential graded algebra. Then an $H$-covariant $\*$-differential calculus is an $H$-covariant differential calculus such that $\# : (\O,\,d) \to (\ol{\O},\ol{d})$ is $H$-linear and a differential graded algebra homomorphism.
\end{lem}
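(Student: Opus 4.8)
The plan is to establish the two sentences of the lemma in turn. For the first, I will verify the clauses of Definitions \ref{defn 4.3.3} and \ref{defn 4.3.5} for $(\ol{\O},\ol{d})$ one by one. The $H$-module structure on $\ol{\O}$ is the conjugate-module one, $h\cdot\ol{\w}=\ol{S(h)^*\cdot\w}$, built from the map $\s:=({\*})\circ S\colon H\to H$; since $\*$ and $S$ are both anti-multiplicative and bijective, $\s$ is an algebra automorphism of $H$, so this really is an $H$-module, and it preserves the $\bN_0$-grading because the $H$-action on $\O$ does. For the twisted product, the sign $(-1)^{kl}$ is precisely the Koszul sign that makes $\ol{\w}\wdg\ol{\e}=(-1)^{kl}\ol{\e\wdg\w}$ associative, graded, unital with unit $\ol{1_\O}$, and $A_l$-balanced (the last because the product of $\O$ is $A_l$-balanced). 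The one structural point is the module-algebra axiom (ii) of Remark \ref{rem 4.2.13}: unwinding $h\cdot(\ol{\w}\wdg\ol{\e})$ and $(h_{(1)}\cdot\ol{\w})\wdg(h_{(2)}\cdot\ol{\e})$ through the conjugation and the degree-reversal, it becomes the single identity
\[
\D_l\s=\flip(\s\tens\s)\D_l,
\]
saying $\s$ is an anti-coalgebra endomorphism of $(H,\D_l)$. I would prove this by composing the fact that $S$ carries $\D_l$ to $\D_r$ with a flip (\eqref{eq 4.2.10}) with the fact that $\*$ carries $\D_l$ to $\D_r$ \emph{without} a flip (\eqref{eq 4.2.32}--\eqref{eq 4.2.33}), so that the composite $\s$ picks up exactly one flip. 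Axiom (i) of Remark \ref{rem 4.2.13} reduces, via $\ve_l\s(h)=\ve_l(h)^*$ (Proposition \ref{prop 4.2.20}), to the identity $s_l(a)\cdot 1_\O=t_l(a)\cdot 1_\O$ that holds in every $H$-module algebra. Finally $\ol{d}(\ol{\w})=\ol{d\w}$ is evidently homogeneous of degree one with $\ol{d}^2=0$, and the graded Leibniz rule \eqref{eq 4.3.2} for $\ol{d}$ follows from the one for $d$ by a Koszul-sign computation.

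It remains to check the generation condition \eqref{eq 4.3.1} for $(\ol{\O},\ol{d})$. From $h\cdot\ol{\w}=\ol{\s(h)\cdot\w}$ and $\ol{d}(\ol{\w})=\ol{d\w}$ one obtains $[h,\ol{d}](\ol{\w})=\ol{[\s(h),d](\w)}$, and with $\s(s_l(a))=t_l(a^*)$, $\s(t_l(a))=s_l(a^*)$ (the corollaries after Definition \ref{defn 4.2.19}, together with $St_l=s_l$, which follows from \eqref{eq 4.2.5} and \eqref{eq 4.2.11}) and $\ve_l\s(h)=\ve_l(h)^*$, I would deduce
\[
[h-s_l\ve_l(h),\ol{d}]=0\iff[\s(h)-t_l\ve_l(\s(h)),d]=0,\qquad[h-t_l\ve_l(h),\ol{d}]=0\iff[\s(h)-s_l\ve_l(\s(h)),d]=0.
\]
So the subspace $H_0$ of Definition \ref{defn 4.3.3} attached to $\ol{d}$ equals $\s^{-1}(H_0)$, where $H_0$ is the one attached to $d$. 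Since $\s$ is an algebra automorphism of $H$ with $\s^2=\id$ (equivalently $S{\*}S{\*}=\id$ from Proposition \ref{prop 4.2.20}) which fixes the base subalgebra $s_l(A_l)t_l(A_l)$ of $H$ (it interchanges the images of $s_l$ and $t_l$ up to the involution, and those images commute), applying $\s$ to a generating pair ``$A_l$ and $H_0$'' of $H$ gives the generating pair ``$A_l$ and $\s^{-1}(H_0)$''; this is exactly \eqref{eq 4.3.1} for $(\ol{\O},\ol{d})$, finishing the first sentence.

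For the second sentence I would just match the three conditions of Definition \ref{defn 4.3.7} against the three requirements on $\#(\w)=\ol{\w^*}$. If $(\O,d)$ is an $H$-covariant $\*$-differential calculus, then $\#(\w\wdg\e)=\ol{(\w\wdg\e)^*}=(-1)^{kl}\ol{\e^*\wdg\w^*}=\ol{\w^*}\wdg\ol{\e^*}=\#(\w)\wdg\#(\e)$ and $\#(1_\O)=\ol{1_\O}$, so $\#$ is a unital homomorphism into the twisted product; $\#(d\w)=\ol{(d\w)^*}=\ol{d(\w^*)}=\ol{d}(\#\w)$, so $\#$ commutes with the differentials; and $\#(h\cdot\w)=\ol{(h\cdot\w)^*}=\ol{S(h)^*\cdot\w^*}=h\cdot\#(\w)$ by \eqref{eq 4.2.35} applied to $\O$, so $\#$ is $H$-linear. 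Conversely, given an $H$-covariant differential calculus $(\O,d)$ with a degree-zero conjugate-linear involutive extension of the involution of $B$ such that the resulting $\#$ is $H$-linear and a differential graded algebra homomorphism, reading those same three identities backwards yields $(\w\wdg\e)^*=(-1)^{kl}\e^*\wdg\w^*$, $(d\w)^*=d(\w^*)$, and $(h\cdot\w)^*=S(h)^*\cdot\w^*$; the last says exactly that $\O$ is an $H$-module $\*$-algebra, whence $(\O,d)$ is an $H$-covariant $\*$-differential calculus.

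The hard part is the first sentence, and within it the module-algebra structure of $\ol{\O}$: one must match the ``op'' built into the twisted product with the flip in $\D_l\s=\flip(\s\tens\s)\D_l$, and proving that identity means keeping careful track of the fact that $S$ is an anti-coalgebra morphism between the two \emph{distinct} bialgebroid coproducts while $\*$ exchanges them covariantly. A secondary subtlety is \eqref{eq 4.3.1}: ``$A_l$'' there has to be read as the subalgebra $s_l(A_l)t_l(A_l)\subseteq H$, since only that is preserved by $\s$. Everything else is sign- and definition-chasing.
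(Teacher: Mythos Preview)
Your proof is correct and considerably more detailed than the paper's. The paper's own proof does only the commutator computation
\[
[h-s_l\ve_l(h),\ol{d}](\ol{\w})=\ol{[S(h-s_l\ve_l(h))^*,d](\w)}
\]
(and the analogous $t_l$ version) and then writes ``Now the lemma follows from Lemma~\ref{lem 4.3.8}'', while the second sentence is dispatched by a reference to the discussion before Lemma~\ref{lem 4.2.22}. In particular, the paper does not spell out the $H$-module-algebra structure of $\ol{\O}$; your explicit verification via $\D_l\s=\flip(\s\tens\s)\D_l$, obtained by composing \eqref{eq 4.2.10} with \eqref{eq 4.2.32}, is a genuine addition.

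The one place your route diverges is the generation condition~\eqref{eq 4.3.1}. You show $H_0^{\ol{d}}=\s^{-1}(H_0^d)$ and then argue that $\s$ is an algebra automorphism of $H$ stabilising the base subalgebra $s_l(A_l)t_l(A_l)$, so it carries one generating pair to another. The paper instead invokes Lemma~\ref{lem 4.3.8}, which says $\s(H_0^d)=H_0^d$ (recall $\s(h)=S(h)^*=S^{-1}(h^*)$ by $S\*S\*=\id$), giving directly $H_0^{\ol{d}}\supseteq H_0^d$ and hence generation. Your argument has the mild advantage of not needing the $\*$-structure on $\O$ that the proof of Lemma~\ref{lem 4.3.8} uses; the paper's has the advantage of avoiding your caveat about reading ``$A_l$'' as $s_l(A_l)t_l(A_l)$. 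Either way the content is the same commutator identity, so both approaches are sound.
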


\begin{proof}
The second part follows from the discussion prior to Lemma \ref{lem 4.2.22}. For the first part, we observe that given $\w \in \O$
and $h \in H$,
\[
\begin{aligned}
{}[h-s_l\ve_l(h),\ol{d}](\ol{\w})&=(h-s_l\ve_l(h)) \cdot \ol{d}(\ol{\w})-\ol{d}((h-s_l\ve_l(h))\cdot \ol{\w})\\
&=(h-s_l\ve_l(h)) \cdot \ol{d\w}-\ol{d}(\ol{(S(h-s_l\ve_l(h)))^*\cdot \w})\\
&=\ol{S(h-s_l\ve_l(h))^*\cdot d\w}-\ol{d(S(h-s_l\ve_l(h))^*\cdot \w)}\\
&=\ol{[S(h-s_l\ve_l(h))^*,d](\w)}
\end{aligned}
\] and similarly, $[h-t_l\ve_l(h),\ol{d}](\ol{\w})=\ol{[S(h-t_l\ve_l(h))^*,d](\w)}$. Now the lemma follows from Lemma \ref{lem 4.3.8}.
\end{proof}

\begin{defn}\label{defn 4.3.10}
We define the space of invariant forms $\O_0$ of $\O$ as \[\O_0=\{\w \in \O \mid h \cdot \w =s_l\ve_l(h) \cdot \w=t_l\ve_l(h) \cdot \w \text{ for all } h \in H_0\}.\]
\end{defn}

Observe that we recover the usual definition of invariant subalgebra as in Lemma \ref{lem 4.2.22} if the differential $d$ is identically $0$.

\begin{prop}\label{prop 4.3.11}
For the space of invariant forms we have,
\begin{enumerate}[i)]
\item $(\O_0,d|_{\O_0})$ is a differential graded algebra;
\item $\O_0$ is a $\*$-algebra;
\item $d|_{\O_0}$ satisfies $d|_{\O_0}(\w^*)=(d|_{\O_0}\w)^*$ for all $\w \in \O_0$;
\item $\# : (\O_0,d|_{\O_0}) \to (\ol{\O}_0,\ol{d}|_{\ol{\O}_0})$ is a differential graded algebra homomorphism.
\end{enumerate}
\end{prop}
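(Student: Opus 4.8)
The plan is to verify each of the four assertions about the invariant forms $\O_0$ by reducing them to statements already established. For (i), the essential point is that $\O_0$ is a subalgebra of $\O$ closed under $d$. Closure under $d$ is immediate: if $\w \in \O_0$ and $h \in H_0$, then $[h - s_l\ve_l(h),d]=0$ on all of $\O$, so $h \cdot d\w = d(h \cdot \w) = d(s_l\ve_l(h) \cdot \w)$; since the $(A_l,A_l)$-bimodule structure on $\O$ is induced from the unit map and $d$ is a bimodule map, $d(s_l\ve_l(h)\cdot\w) = s_l\ve_l(h)\cdot d\w$, and similarly with $t_l$, so $d\w \in \O_0$. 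That $\O_0$ is a subalgebra follows exactly as in the proof that $B_H$ is a subalgebra in the Hopf-algebra-module situation: for $h \in H_0$, $\D_l(h)$ lands in the appropriate Takeuchi product, and one uses the module-algebra identity $h\cdot(\w\wdg\e)=(h_{(1)}\cdot\w)\wdg(h_{(2)}\cdot\e)$ together with the fact that $H_0$ is closed under the left coproduct up to $A_l$-linear corrections, as in Remark \ref{rem 4.2.13}; the grading is preserved since the $H$-action preserves it. Thus $(\O_0, d|_{\O_0})$ inherits the graded Leibniz rule from $(\O,d)$ and $d|_{\O_0}^2=0$, giving a differential graded algebra.

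For (ii), I would invoke Lemma \ref{lem 4.3.8} together with the argument of Proposition \ref{prop 4.2.23}. Given $\w\in\O_0$ and $h\in H_0$, Lemma \ref{lem 4.3.8} gives $S^{-1}(h^*)\in H_0$, hence also $h^* = S(S^{-1}(h^*))^{**}$-type manipulations; more directly, one computes $(h\cdot\w^*)^* = S(h)^*\cdot\w$ using \eqref{eq 4.2.35} extended to $\O$ (valid since $\O$ is an $H$-module $\*$-algebra), and since $S(h)^* = S^{-1}(h^*)$ composed with the $\*$-structure maps, one rewrites this in terms of the $H_0$-invariance of $\w$. Following Proposition \ref{prop 4.2.23} line by line — replacing $b$ by $\w$ and $B_H$ by $\O_0$ — yields $s_l\ve_l(h)\cdot\w^* = t_l\ve_l(h)\cdot\w^*$ for all $h\in H$ after specializing $h=s_l(a)$, and hence $\w^*\in\O_0$. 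Assertion (iii) is then automatic: $d|_{\O_0}$ is the restriction of $d$, which already satisfies $(d\w)^*=d(\w^*)$ by the $\*$-differential calculus axiom in Definition \ref{defn 4.3.7}, and by (i) and (ii) both sides lie in $\O_0$.

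For (iv), I would combine Lemma \ref{lem 4.3.9}, which says $\#:(\O,d)\to(\ol\O,\ol d)$ is an $H$-linear differential graded algebra homomorphism, with Lemma \ref{lem 4.2.22}: since $\#$ is $H$-linear and intertwines the $H_0$-actions appropriately (using again that $h\in H_0$ iff $S^{-1}(h^*)\in H_0$), it carries $\O_0$ into $(\ol\O)_0$ and restricts to an isomorphism $\#:\O_0 \to (\ol\O)_0$; because $\#$ was already a differential graded algebra homomorphism on all of $\O$ with $\#\circ d = \ol d\circ\#$, the restriction $\#:(\O_0, d|_{\O_0})\to(\ol\O_0,\ol d|_{\ol\O_0})$ is a differential graded algebra homomorphism. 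One should also note, as remarked after Proposition \ref{prop 4.2.23}, that $(\ol\O)_0 = \ol{(\O_0)}$, so the target is what is claimed.

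The main obstacle is item (i): showing $\O_0$ is closed under multiplication requires knowing that $H_0$ behaves well under the coproduct $\D_l$, i.e. that if $h\in H_0$ then the Sweedler components $h_{(1)}, h_{(2)}$ can be used so that $h\cdot(\w\wdg\e)$ decomposes into invariant-compatible pieces. This is not quite "$H_0$ is a coideal subalgebra" on the nose — one must handle the $s_l\ve_l$ versus $t_l\ve_l$ discrepancy and the fact that $[h-s_l\ve_l(h),d]=0$ is the defining condition rather than $h\cdot d = d\cdot h$. The honest way is to argue exactly as in the classical covariant-calculus proofs: one checks that the commutators $[h_{(1)}\cdot(-)\wdg h_{(2)}\cdot(-) , d]$ vanish on products when $h\in H_0$, using the Leibniz rule for $d$ and the module-algebra axiom, and then that $A_l$-covariance is automatic since the $A_l$-action comes from the unit map which commutes with $d$. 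Everything else is a direct translation of the Hopf-algebra-case lemmas already proved in Section \ref{sec 3}.
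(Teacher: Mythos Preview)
Your overall strategy matches the paper's: part (i) via ``same proof as the $d=0$ case'' plus $d$-closure from the definition of $H_0$; part (ii) via Lemma \ref{lem 4.3.8} and the computation of Proposition \ref{prop 4.2.23}; part (iii) immediate; part (iv) from (ii). However, your argument for $d\w\in\O_0$ contains an error. You write $h\cdot d\w = d(h\cdot\w)$ and then justify $d(s_l\ve_l(h)\cdot\w)=s_l\ve_l(h)\cdot d\w$ by asserting that ``$d$ is a bimodule map''. This is false: the $A_l$-action on $\O$ is $a\cdot\w=\io_\O(a)\w$, and $d(\io_\O(a)\w)=d(\io_\O(a))\wdg\w+\io_\O(a)d\w$ by the Leibniz rule, so $d$ is \emph{not} $A_l$-linear unless $d\circ\io_\O=0$. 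The first equality $h\cdot d\w=d(h\cdot\w)$ is likewise not what $[h-s_l\ve_l(h),d]=0$ says.

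The correct (and shorter) argument the paper intends is: for $h\in H_0$ and $\w\in\O_0$ one has $(h-s_l\ve_l(h))\cdot\w=0$, hence
\[
(h-s_l\ve_l(h))\cdot d\w \;=\; [h-s_l\ve_l(h),d]\w + d\big((h-s_l\ve_l(h))\cdot\w\big) \;=\; 0+d(0)=0,
\]
and similarly with $t_l$ in place of $s_l$. No bimodule property of $d$ is needed. Your lengthy final paragraph about $H_0$ behaving well under $\D_l$ is a legitimate worry, but note the paper is equally terse here (``same proof as in the $d$ identically $0$ case''); you should not expect more than the standard $B_H$-is-a-subalgebra computation, which uses only the counit identity $t_l\ve_l(h_{(2)})h_{(1)}=h$ and the module-algebra axiom, not any coideal property of $H_0$.
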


\begin{proof}
i) That $\O_0$ is an algebra follows from the same proof as in $d$ identically $0$ case.
Moreover, that $d$ preserves $\O_0$ follows from the definition of $H_0$.
	
ii) Observe that for $h \in H_0$ and $\w \in \O_0$
\[
\begin{aligned}
(h\cdot \w^*)^*=S(h)^* \cdot \w&=S^{-1}(h^*) \cdot \w\\
&=t_l\ve_l(S^{-1}(h^*)) \cdot \w\\
&=t_l(\ve_l(h)^*)\cdot \w\\
&=(Ss_l\ve_l(h))^* \cdot \w
\end{aligned}
\] so that \[h \cdot \w^*=((Ss_l\ve_l(h))^*\cdot \w)^*=s_l\ve_l(h)\cdot \w^*.\] Again
\[
\begin{aligned}
(h\cdot \w^*)^*=S(h)^* \cdot \w&=S^{-1}(h^*) \cdot \w\\
&=s_l\ve_l(S^{-1}(h^*)) \cdot \w\\
&=s_l(\ve_l(h)^*)\cdot \w\\
&=(s_l\ve_l(h))^* \cdot \w
\end{aligned}
\] so that \[h \cdot \w^*=((s_l\ve_l(h))^*\cdot \w)^*=S^{-1}s_l\ve_l(h)\cdot \w^*=t_l\ve_l(h)\cdot \w^*.\]
	
iii) holds because $d$ satisfies the property.
	
iv) Follows from ii).
\end{proof}

We shall denote the differential on $\O_0$ only by $d$, assuming that it really means $d$ is restricted to $\O_0$. 

\begin{defn}\label{defn 4.3.17}\mbox{}
\begin{enumerate}[i)]
\item We say that an $H$-covariant differential calculus $(\O,\,d)$ over an $H$-module algebra $B$ has total dimension $n$ if $\O^k=0$, for all $k > n$, and $\O^n \neq 0$. 
		
\item If in addition, there exists a $(B,B)$-bimodule and an $H$-module isomorphism $\vol : \O^n \to B$, then we say that $\O$ is orientable. 
		
\item If $\O$ is a $\*$-calculus over a $\*$-algebra, then a $\*$-orientation is an 
orientation which is also $\*$-preserving, meaning $\ol{\vol} \#=\# \vol$.
		
\item A $\*$-orientable calculus is one which admits a $\*$-orientation.
		
\item Let $\t$ be a state on $B$, i.e., a unital linear functional $\t : B \to \bC$ such that 
$\t(b^*b) \geq 0$. We call the functional $\t \vol$ the integral associated to $\t$ and denote 
it by $\int_{\t}$.
		
\item We say that the integral is closed if $\int_{\t}(d\w)=0$ for all $\w \in \O^{n-1}$.
\end{enumerate}
\end{defn}




\begin{lem}\label{lem 4.3.19}
Assume that $(\O,\,d)$ is $\*$-oriented with orientation $\vol$ and of total dimension $2n$. Then $(\O_0,\,d)$ is $\*$-oriented. 
\end{lem}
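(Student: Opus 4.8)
The plan is to verify that the given orientation $\vol\colon \O^{2n}\to B$ restricts to a $\*$-orientation of the differential graded algebra $(\O_0,d)$ of Proposition \ref{prop 4.3.11}. First I would note that $\O_0$ is a graded subspace of $\O$: the conditions defining $\O_0$ in Definition \ref{defn 4.3.10} involve only the $H$-action and the operators $s_l\ve_l(h)$, $t_l\ve_l(h)$, all of which preserve the $\bN_0$-grading, so $\O_0=\bigoplus_k\O_0^k$ with $\O_0^k=\O_0\cap\O^k$; put $B_0:=\O_0^0$.

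The crux is that, because $\vol$ is at once a $(B,B)$-bimodule map and an $H$-module isomorphism, both $\vol$ and $\vol^{-1}$ send $H_0$-invariant elements to $H_0$-invariant elements. Concretely, if $\w\in\O_0^{2n}$ then for every $h\in H_0$ one has $h\cdot\vol(\w)=\vol(h\cdot\w)=\vol\big(s_l\ve_l(h)\cdot\w\big)=s_l\ve_l(h)\cdot\vol(\w)$, and the same with $t_l$ in place of $s_l$, so $\vol(\w)\in B_0$; applying the identical argument to the (automatically $H$-linear) inverse gives $\vol^{-1}(B_0)\subseteq\O_0^{2n}$. Since $\vol$ is a bijection, it therefore restricts to a bijection $\vol|\colon\O_0^{2n}\to B_0$, which is a $(B_0,B_0)$-bimodule isomorphism. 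To see that $(\O_0,d)$ has total dimension $2n$: the vanishing $\O_0^k\subseteq\O^k=0$ for $k>2n$ is immediate, while $1_B\in B_0$ because $h\cdot 1_B=s_l\ve_l(h)\cdot 1_B=t_l\ve_l(h)\cdot 1_B$ for all $h$ (Remark \ref{rem 4.2.13}, together with the fact that the unit map $\io_B$ is induced from both $s_l$ and $t_l$); hence $B_0\neq 0$ and $\O_0^{2n}=\vol^{-1}(B_0)\neq 0$.

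It remains to check the $\*$-condition. By Proposition \ref{prop 4.3.11}(ii) the (degree-zero) involution of $\O$ restricts to $\O_0$, so $\w\in\O_0^{2n}$ forces $\w^*\in\O_0^{2n}$, and by Proposition \ref{prop 4.3.11}(iv) the map $\#\colon\O_0\to\ol{\O_0}$ is the restriction of $\#\colon\O\to\ol{\O}$. Restricting the $\*$-orientation identity $\ol{\vol}\,\#=\#\,\vol$ to $\O_0^{2n}$ then yields $\ol{\vol|}\,\#=\#\,\vol|$ on $\O_0^{2n}$, which is precisely the assertion that $\vol|$ is a $\*$-orientation of $(\O_0,d)$ in the sense of Definition \ref{defn 4.3.17}. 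I do not expect a genuine obstacle here; the one point requiring a little care is that $\vol$ maps $\O_0^{2n}$ \emph{onto} $B_0$ rather than merely into some larger invariant subspace, and this is exactly where one uses that $\vol$ is an $H$-module isomorphism and not just a bimodule isomorphism.
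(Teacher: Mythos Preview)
Your proof is correct and follows essentially the same approach as the paper's: the paper's argument is simply that $H$-linearity of $\vol$ ensures it restricts to a bijection between $\O_0^{2n}$ and $B_0$ (so in particular $\O_0^{2n}\neq 0$), and then invokes Lemma \ref{lem 4.2.22} and Proposition \ref{prop 4.3.11} for the $\*$-compatibility --- exactly the skeleton you have fleshed out in detail.
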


\begin{proof}
Since $\vol$ is assumed to be $H$-linear, it restricts to $\O_0$, which in turn shows
that $\O^{2n}_H \neq 0$ so that it also has total dimension $2n$. The
lemma now follows from Lemma \ref{lem 4.2.22} and Proposition \ref{prop 4.3.11}.
\end{proof}

\subsubsection{Covariant complex structures} As mentioned before, the setup below is due to \cite{MR3720811} and we follow it closely. We
shall omit the proofs of some of the results here as they are essentially given in \cite{MR3720811}.

\begin{defn}\label{defn 4.3.21}
An $H$-covariant almost complex structure for an $H$-covariant $\*$-differential calculus $(\O,\,d)$ over an $H$-module $\*$-algebra $B$ is an $\bN_0^2$-algebra grading $\dsum_{(k,l) \in \bN_0^2}\O^{(k,l)}$ for $\O$ such that 
\begin{enumerate}[i)]
\item the $H$-action preserves the $\bN_0^2$-grading;
\item $\O^n=\dsum_{k+l=n}\O^{(k,l)}$, for all $n \in \bN_0$;
\item $\# : \O \to \ol{\O}$ preserves the $\bN_0^2$-grading, where the $\bN_0^2$-grading on $\ol{\O}$ is given by $\ol{\O}^{(k,l)}=\ol{\O^{(l,k)}}$.
\end{enumerate}
\end{defn}

Let $\del$ and $\adel$ be the unique homogeneous operators of order $(1,0)$ and $(0,1)$ respectively, defined by 
\begin{equation}\label{eq 4.3.5}\del\mid_{\O^{(k,l)}}=\proj_{\O^{(k+1,l)}}  d \qquad \adel\mid_{\O^{(k,l)}}=\proj_{\O^{(k,l+1)}}  d,
\end{equation} where $\proj_{\O^{(k,l+1)}}$ and $\proj_{\O^{(k,l+1)}}$ are the projections from $\O^{(k+l+1)}$ onto $\O^{(k+1,l)}$ and $\O^{(k,l+1)}$, respectively.

\medskip\noindent

As in \cite{MR3720811}, we have:

\begin{lem}\label{lem 4.3.22}
If $\dsum_{(k,l) \in \bN_0^2} \O^{(k,l)}$ is an $H$-covariant almost complex structure for an
$H$-covariant $\*$-differential calculus $(\O,\,d)$ over an $H$-module $\*$-algebra $B$, then the
following two conditions are equivalent:
\begin{enumerate}[i)]
\item $d=\del+\adel$;
\item the triple $(\dsum_{(k,l) \in \bN_0^2} \O^{(k,l)},\del,\adel)$ is an H-covariant double complex.
\end{enumerate}
\end{lem}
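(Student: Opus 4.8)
The statement is the $H$-covariant counterpart of Lemma~\ref{lem 3.2.2}, so the plan is to run the argument of \cite{MR3720811} (adapted from \cite{MR2093043}) on the underlying bigraded complex and then check that the extra datum---compatibility with the $H$-action, in particular condition \eqref{eq 4.3.1}---is carried along automatically. Decompose $d$ into its bidegree-homogeneous pieces, $d=\sum_{j\in\bZ}\mu_j$ with $\mu_j\mid_{\O^{(k,l)}}=\proj_{\O^{(k+1-j,\,l+j)}}d$; by \eqref{eq 4.3.5} one has $\mu_0=\del$ and $\mu_1=\adel$. Since the multiplication of $\O$ respects the $\bN_0^2$-grading and $d$ is a graded derivation, each $\mu_j$ is a graded derivation; and since the $H$-action preserves the bigrading (Definition~\ref{defn 4.3.21}(i)), for each $h\in H$ the operators $[h,\mu_j]$ are exactly the bidegree-$(1-j,j)$ components of $[h,d]$. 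Hence, whenever $d=\del+\adel$, one has $[h-s_l\ve_l(h),d]=0$ iff $[h-s_l\ve_l(h),\del]=[h-s_l\ve_l(h),\adel]=0$, and likewise with $t_l$; so the algebra $H_0$ of \eqref{eq 4.3.1n} is the same whether computed from $d$ or from $\{\del,\adel\}$, and \eqref{eq 4.3.1} passes between $(\O,d)$ and $(\O,\del,\adel)$ with no further work.

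For (i)$\Rightarrow$(ii): if $d=\del+\adel$, then $0=d^2=\del^2+(\del\adel+\adel\del)+\adel^2$, and comparing the components of bidegrees $(2,0)$, $(1,1)$ and $(0,2)$ gives $\del^2=0$, $\adel^2=0$ and $\del\adel+\adel\del=0$. The remaining requirements of Definition~\ref{defn 4.3.4}---that $\O$ be an $\bN_0^2$-graded $H$-module algebra with $\del,\adel$ homogeneous of the stated degrees, and that \eqref{eq 4.3.1} hold---are furnished respectively by the almost complex structure, by \eqref{eq 4.3.5}, and by the previous paragraph.

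For (ii)$\Rightarrow$(i): assume $\del^2=\adel^2=0$; we must show $\mu_j=0$ for $j\notin\{0,1\}$. Each such $\mu_j$ is a graded derivation vanishing on $\O^0=B$ (because $\mu_j(b)=\proj_{\O^{(1-j,j)}}(db)=0$ unless $j\in\{0,1\}$), hence is determined by its restrictions to $\del B$ and $\adel B$, since $\O$ is generated by $B$ together with $dB=\del B+\adel B$. Now $\mu_j(\del b)\in\O^{(2-j,\,j)}$ and $\mu_j(\adel b)\in\O^{(1-j,\,1+j)}$, which vanish for degree reasons unless $j\in\{0,1,2\}$, resp.\ $j\in\{-1,0,1\}$; so for $j\notin\{0,1\}$ the only possibly nonzero values on generators are $\mu_2(\del b)$ and $\mu_{-1}(\adel b)$. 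Applying $d^2=0$ to $b$ and collecting the bidegree-$(0,2)$ and bidegree-$(2,0)$ parts yields $\mu_2(\del b)=-\adel^2 b=0$ and $\mu_{-1}(\adel b)=-\del^2 b=0$. Therefore $\mu_j=0$ for all $j\notin\{0,1\}$, i.e.\ $d=\del+\adel$.

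The algebraic heart of this is exactly the computation in \cite{MR3720811}, so I do not expect a serious obstacle; the one point that needs care---and the only genuinely new ingredient---is the observation in the first paragraph that $[h,\del]$ and $[h,\adel]$ are the bidegree-homogeneous parts of $[h,d]$, which is what makes the subalgebra $H_0$, and hence condition \eqref{eq 4.3.1}, insensitive to whether one works with $d$ or with the pair $(\del,\adel)$. Once that is noted, the equivalence follows from the non-covariant argument unchanged.
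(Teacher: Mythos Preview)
Your proposal is correct and follows essentially the same line as the paper's proof: the equivalence of (i) and (ii) at the non-covariant level is exactly the computation in \cite{MR3720811} (which the paper simply cites, while you spell it out via the decomposition $d=\sum_j\mu_j$), and the only additional point---that covariance passes from $d$ to the pair $(\del,\adel)$---is handled identically, by observing that the projections $\proj_{\O^{(a,b)}}$ are $H$-linear, so that $[h-s_l\ve_l(h),\del]$ and $[h-s_l\ve_l(h),\adel]$ are just the bidegree components of $[h-s_l\ve_l(h),d]$. Your formulation of this last point (that $H_0$ is literally the same set computed from $d$ or from $\{\del,\adel\}$) is slightly sharper than what is needed, but it is the same observation.
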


\begin{proof}
The proof of the equivalence is in \cite{MR3720811}. All we have to show is the $H$-covariant part in ii). Observe that $\proj_{\O^{(k+1,l)}}$ and $\proj_{\O^{(k,l+1)}}$ are $H$-linear.
Then for $h \in H_0$, $$[h-s_l\ve_l(h),\del\mid_{\O^{(k,l)}}]=[h-t_l\ve_l(h),\del\mid_{\O^{(k,l)}}]
=0\, .$$ Thus we get \eqref{eq 4.3.1} for $\del\mid_{\O^{(k,l)}}$, and similarly for
$\adel\mid_{\O^{(k,l)}}$, hence the covariance.
\end{proof}

\begin{defn}\label{defn 4.3.23}
When the conditions in Lemma \ref{lem 4.3.22} hold for an almost complex structure, then we say that
the almost complex structure is integrable.
\end{defn}

We also call an integrable almost complex structure a complex structure and the double complex $(\dsum_{(k,l) \in \bN_0^2} \O^{(k,l)},\del,\adel)$ its Dolbeault double complex. Note that 
\begin{equation}\label{eq 4.3.6}
\del(\w^*)=(\adel \w)^*, \qquad \adel(\w^*)=(\del \w)^*, \qquad \w \in \O,
\end{equation}
as they follow from the integrability condition.

\begin{lem}\label{lem 4.3.24}
Suppose that $(\O,\,d)$ admits an $H$-covariant complex structure. Then $(\O_0,\,d)$ admits a complex structure. We call this a transverse complex structure on $B_0$.
\end{lem}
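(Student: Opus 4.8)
The plan is to show that the $\bN_0^2$-grading on $\O$ restricts to one on $\O_0$ and that the restricted grading is an (integrable) complex structure for $(\O_0,d)$. First I would verify that $\O_0$ is a graded subspace, i.e. that $\O_0 = \bigoplus_{(k,l)} (\O_0 \cap \O^{(k,l)})$. Since the $H_0$-action preserves the $\bN_0^2$-grading (Definition \ref{defn 4.3.21}, i)) and $s_l\ve_l(h)$, $t_l\ve_l(h)$ act in degree $(0,0)$, for $\w = \sum_{(k,l)} \w^{(k,l)}$ the equation $h\cdot\w = s_l\ve_l(h)\cdot\w = t_l\ve_l(h)\cdot\w$ decomposes componentwise; hence $\w \in \O_0$ iff each $\w^{(k,l)} \in \O_0$. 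Write $\O_0^{(k,l)} := \O_0 \cap \O^{(k,l)}$. This gives an $\bN_0^2$-grading on $\O_0$, and together with Proposition \ref{prop 4.3.11} (which makes $(\O_0,d)$ a differential graded $\*$-algebra) and the fact that $\# : \O \to \ol\O$ restricts to $\O_0$ (Proposition \ref{prop 4.3.11} iv) and Lemma \ref{lem 4.2.22}), conditions i)--iii) of Definition \ref{defn 3.2.1} are inherited: condition ii) of Definition \ref{defn 3.2.1}, namely $(\O_0^{(k,l)})^* = \O_0^{(l,k)}$, follows from $(\O^{(k,l)})^* = \O^{(l,k)}$ together with the fact, established in Proposition \ref{prop 4.3.11} ii), that $\O_0$ is stable under $\*$.

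Next I would check that the operators $\del, \adel$ on $\O$ restrict to $\O_0$ and that the restrictions are precisely the operators of Definition \ref{defn 3.2.1} applied to the grading $\{\O_0^{(k,l)}\}$. Since $\del\mid_{\O^{(k,l)}} = \proj_{\O^{(k+1,l)}}\, d$ and both $d$ (which preserves $\O_0$ by the definition of $H_0$, cf. Proposition \ref{prop 4.3.11} i)) and the projections $\proj_{\O^{(k+1,l)}}$ (which are $H$-linear, hence preserve $\O_0$, as noted in the proof of Lemma \ref{lem 4.3.22}) preserve $\O_0$, the operator $\del$ maps $\O_0^{(k,l)}$ into $\O_0^{(k+1,l)}$, and likewise for $\adel$. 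Thus $\del\mid_{\O_0}$, $\adel\mid_{\O_0}$ are the canonical homogeneous operators of bidegrees $(1,0)$, $(0,1)$ attached to the grading $\{\O_0^{(k,l)}\}$ on the calculus $(\O_0,d)$.

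Finally, for integrability I would invoke Lemma \ref{lem 3.2.2}: the grading $\{\O_0^{(k,l)}\}$ is a complex structure on $(\O_0,d)$ iff $d\mid_{\O_0} = \del\mid_{\O_0} + \adel\mid_{\O_0}$. But by hypothesis the $H$-covariant almost complex structure on $\O$ is integrable, so Lemma \ref{lem 4.3.22} gives $d = \del + \adel$ on all of $\O$; restricting this identity to $\O_0$ yields $d\mid_{\O_0} = \del\mid_{\O_0} + \adel\mid_{\O_0}$. Hence the grading is integrable, i.e. $(\O_0,d)$ admits a complex structure, which we name the transverse complex structure on $B_0$. I do not anticipate a serious obstacle here; the only point that needs a little care is the componentwise decomposition of the invariance condition in the first step, which rests squarely on the grading-preservation in Definition \ref{defn 4.3.21} i) and on $s_l\ve_l(H_0)$, $t_l\ve_l(H_0)$ acting in bidegree $(0,0)$ (they land in $A_l = \O^{(0,0)}$ and act by the degree-zero module structure). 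Everything else is a matter of transporting the already-established facts about $\O_0$ (Propositions \ref{prop 4.3.11}, Lemma \ref{lem 4.2.22}) and the projections (proof of Lemma \ref{lem 4.3.22}) to the graded pieces.
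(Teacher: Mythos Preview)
Your proposal is correct and follows essentially the same route as the paper's proof: use condition i) of Definition \ref{defn 4.3.21} to get the bigrading on $\O_0$, inherit the $\*$-compatibility from Proposition \ref{prop 4.3.11}, note that $\del,\adel$ restrict to $\O_0$ (via $H$-linearity of the projections, as in the proof of Lemma \ref{lem 4.3.22}), and then restrict the identity $d=\del+\adel$. Your version is considerably more explicit than the paper's terse argument, in particular the componentwise decomposition of the invariance condition, but the underlying ideas are identical.
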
 


\begin{proof}
Condition i) in Definition \ref{defn 4.3.21} implies that $(\O_0,\,d)$ admits an $\bN^2_0$-algebra grading by $(\O_0)^{(k,l)}=\O^{(k,l)}_0$, $(k,l) \in \bN^2_0$. Condition ii) follows automatically, while Condition iii) follows from that fact that $\#$ is $H$-linear. $\del$ and $\adel$ 
restrict to the space of invariant forms as in Proposition \ref{prop 4.3.11}. Finally, $d=\del+\adel$ then follows automatically.
\end{proof}

\subsubsection{Covariant hermitian and K\"ahler structures} \label{subsec 4.3.3}

We fix an $H$-covariant $\*$-differential calculus $(\O,\,d)$ over an $H$-module $\*$-algebra $B$ of total dimension $2n$. As in \cite{MR3720811}, the following is a non-commutative generalization of an almost symplectic form.

\begin{defn}\label{defn 4.3.30}
An almost symplectic form for $\O$ is a central real $H$-invariant 2-form $\s$ ($h \cdot \s=
s_l\ve_l(h) \cdot \s$ for all $h \in H$) such that, the Lefschetz operator
\[L : \O \to \O , \qquad \w \mto \s \wdg \w\] satisfies the following condition:
the maps
\begin{equation}\label{eq 4.3.8}
L^{n-k} : \O^k \to \O^{2n-k}
\end{equation}
are isomorphisms for all $0 \leq k < n$.
\end{defn}

Since $\s$ is a central real form, $L$ is a $(B,B)$-bimodule morphism and $\*$-preserving
$(\ol{L}\#=\# L)$. Moreover, the $H$-invariance condition implies that $L$ is also an $H$-module
morphism. Indeed, we have $$h \cdot (\s \wdg \w)=h_{(1)} \cdot \s \wdg h_{(2)} \cdot \w=
\io_B(\ve_l(h_{(1)})) \s \wdg h_{(2)} \cdot \w
$$
$$
=\s \wdg \io_B(\ve_l(h_{(1)}))(h_{(2)} \cdot \w))=\s \wdg (s_l\ve_l(h_{(1)})h_{(2)}) \cdot \w=\s \wdg h \cdot \w\, .$$

\begin{defn}\label{defn 4.3.31}
A symplectic form is a $d$-closed almost symplectic form.
\end{defn}

\'O Buachalla, \cite{MR3720811}, introduced hermitian structure which is an almost symplectic form 
compatible with a complex structure.

\begin{defn}\label{defn 4.3.32}
A hermitian structure for $\O$ is a pair $(\O^{(\cdot,\cdot)},\s)$, where
$\O^{(\cdot,\cdot)}$ is an $H$-covariant complex structure, and $\s$ is an almost symplectic form, called the hermitian form, such that $\s \in \O^{(1,1)}$.
\end{defn}

We have:

\begin{lem}\label{lem 4.3.33}
Suppose that $(\O^{(\cdot,\cdot)},\s)$ is a hermitian structure for $(\O,\,d)$. Then $\s$ induces a hermitian structure on $(\O_0,\,d)$.
\end{lem}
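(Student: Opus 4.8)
Proof proposal:

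The plan is to transfer the hermitian structure $(\O^{(\cdot,\cdot)},\s)$ along the restriction to invariant forms, using the machinery already set up in Proposition \ref{prop 4.3.11} and Lemma \ref{lem 4.3.24}. First I would invoke Lemma \ref{lem 4.3.24} to obtain a complex structure $(\O_0)^{(k,l)}=\O^{(k,l)}_0$ on $(\O_0,\,d)$; this takes care of the first component of the pair. The second, and only substantive, step is to check that the hermitian form $\s$ actually lies in $\O_0$ and that it remains an almost symplectic form for the sub-calculus $\O_0$.

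For membership in $\O_0$: by Definition \ref{defn 4.3.30} the form $\s$ is $H$-invariant in the strong sense $h \cdot \s = s_l\ve_l(h) \cdot \s$ for all $h \in H$; in particular this holds for $h \in H_0$, and the computation after Definition \ref{defn 4.3.30} (or a direct application of the argument in Proposition \ref{prop 4.3.11}) shows $h \cdot \s = s_l\ve_l(h)\cdot\s = t_l\ve_l(h)\cdot\s$ for $h \in H_0$, so $\s \in \O_0$. Since $\s$ is central in $\O$ it is a fortiori central in the subalgebra $\O_0$, and it is real there because $\O_0$ is a $\*$-algebra with the inherited involution (Proposition \ref{prop 4.3.11}(ii),(iii)). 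Moreover $\s \in \O^{(1,1)}$ forces $\s \in (\O_0)^{(1,1)}$ with the induced bigrading.

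The remaining point, which I expect to be the main obstacle, is the Lefschetz isomorphism condition: one must show that $L^{n-k}: (\O_0)^k \to (\O_0)^{2n-k}$ is an isomorphism for $0 \le k < n$, knowing only that $L^{n-k}:\O^k \to \O^{2n-k}$ is. The Lefschetz operator $L(\w) = \s\wdg\w$ on $\O$ is $H$-linear (shown in the paragraph after Definition \ref{defn 4.3.30}, using $H$-invariance of $\s$), hence it restricts to the invariant subspaces $\O^k_0$; injectivity of $L^{n-k}$ on $\O_0$ is immediate since it is the restriction of an injective map. Surjectivity requires that the inverse of $L^{n-k}: \O^k \to \O^{2n-k}$ also preserves invariance: given $\eta \in \O^{2n-k}_0$, the unique $\w \in \O^k$ with $L^{n-k}\w = \eta$ must be shown to lie in $\O^k_0$. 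This follows because $L^{n-k}$ is $H_0$-equivariant and invertible on all of $\O$, so its inverse is $H_0$-equivariant as well; applying $h - s_l\ve_l(h)$ (resp. $h - t_l\ve_l(h)$) for $h \in H_0$ to the relation and using that it annihilates $\eta$ shows it annihilates $L^{n-k}\w$, hence annihilates $\w$ by injectivity. Thus $L^{n-k}$ restricts to an isomorphism on $\O_0$, and $(\,(\O_0)^{(\cdot,\cdot)},\,\s)$ is a hermitian structure for $(\O_0,\,d)$.
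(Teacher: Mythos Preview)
Your proposal is correct and follows essentially the same approach as the paper; the paper's own proof is a three-sentence compression of exactly what you wrote, invoking $\s \in \O_0$ by definition, then the $H$-linearity of $L$ for the almost symplectic condition, and Lemma \ref{lem 4.3.24} for $\s \in (\O_0)^{(1,1)}$. Your expanded treatment of surjectivity of $L^{n-k}$ on $\O_0$ (via $H_0$-equivariance of the inverse) is precisely the content the paper leaves implicit in the phrase ``the $H$-linearity of $L$ shows\ldots''.
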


\begin{proof}
By definition, $\s \in \O_0$. The $H$-linearity of $L$ shows that $\s$ is an almost symplectic form for $(\O_0,\,d)$. Finally, $\s \in (\O^{(1,1)})_0=(\O_0)^{(1,1)}$, by Lemma \ref{lem 4.3.24}.
\end{proof}

We say that an almost complex structure is of diamond type if $\O^{(a,b)}=0$ whenever $a>n$ or $b>n$. Supposing $a>n$ and observing that the isomorphism $L^{a+b-n}$ maps $\O^{(n-b,n-a)}$ onto $\O^{(a,b)}$, we see that
the existence of a hermitian structure implies that the complex structure has to be of diamond type.

\begin{defn}\label{defn 4.3.34}
The Hodge map associated to a hermitian structure is the morphism uniquely defined by 
\begin{equation}\label{eq 4.3.9}
\star(L^j(\w))=(-1)^{\frac{k(k+1)}{2}}i^{a-b}\frac{[j]!}{[n-j-k]!}L^{n-j-k}(\w) \qquad \w \in P^{(a,b)} \subset P^k.
\end{equation}
\end{defn}

Recall the notion of primitive forms from Definition \ref{defn 3.2.5}. Observe that $\star$ is an $H$-module morphism. Hence it descends to $\O_0$.

\begin{lem}\label{lem 4.3.35}
We have
\begin{enumerate}[i)]
\item $\star^2(\w)=(-1)^k\w$ for all $\w \in \O^k$,
\item $\star$ is an isomorphism,
\item $\star(\O^{(a,b)})=\O^{(n-b,n-a)}$,
\item $\star$ is a $\*$-preserving.
\end{enumerate}
\end{lem}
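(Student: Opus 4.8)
The plan is to reduce everything to the analogous statement for the underlying $\*$-calculus $\O$ (without the $H$-action), which is exactly the content of the corresponding lemma in \cite{MR3720811}, since $\star$ is built only out of $L$, the primitive decomposition, and the bidegree data. First I would recall the (strong) Lefschetz decomposition: every $\w \in \O^k$ decomposes uniquely as $\w = \sum_j L^j(\w_j)$ with $\w_j \in P^{k-2j}$, and this decomposition is compatible with the bigrading, so it suffices to verify each assertion on elements of the form $L^j(\w)$ with $\w \in P^{(a,b)} \subset P^k$. The formula \eqref{eq 4.3.9} makes $\star$ well-defined on all of $\O$ by linearity, and $H$-equivariance of $\star$ (already noted before the lemma, since $L$, the projections onto primitive components, and the bigrading are all $H$-linear) guarantees it descends to $\O_0$; but the four claimed identities are purely ``pointwise'' in $\O$, so the $H$-structure plays no further role.

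For (iii), I would simply read off from \eqref{eq 4.3.9} that $\star(L^j(\w)) = cL^{n-j-k}(\w)$ with $\w \in P^{(a,b)}$; since $L$ has bidegree $(1,1)$ and $\w$ has bidegree $(a,b)$ with $a+b=k$, the image $L^{n-j-k}(\w)$ has bidegree $(a+n-j-k, b+n-j-k)$, and on the component $L^j(\w) \in \O^{(a+j,b+j)}$ this reads: if $(a',b') := (a+j,b+j)$ is the bidegree of the argument, the image has bidegree $(a'+ (n-k-2j), b'+(n-k-2j))$... here it is cleaner to check the case $j=0$, where $\star$ sends $\O^{(a,b)} \cap P^k$ into $\O^{(n-b,n-a)}$ because $n-b = a + (n-k)$ and $n-a = b+(n-k)$ as $k=a+b$; the general case follows because $L^j$ shifts bidegree by $(j,j)$ and $\star$ intertwines these shifts in the manner dictated by the formula. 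For (ii), invertibility is immediate once (i) is proved, since $\star^2 = (-1)^k \id$ on $\O^k$ exhibits an explicit two-sided inverse $(-1)^k \star$.

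The substantive computations are (i) and (iv). For (i) I would take $\eta = L^j(\w)$ with $\w \in P^{(a,b)}$, apply the defining formula twice — noting that $\star(\eta) = (-1)^{k(k+1)/2} i^{a-b} \frac{[j]!}{[n-j-k]!} L^{n-j-k}(\w)$ is again of the form $L^{n-j-k}$ applied to the \emph{same} primitive form $\w$, now with $j$ replaced by $n-j-k$ — and then collect the scalar: the product of the two prefactors is $(-1)^{k(k+1)/2} \cdot (-1)^{k(k+1)/2} \cdot i^{a-b} \cdot i^{b-a} \cdot \frac{[j]!}{[n-j-k]!} \cdot \frac{[n-j-k]!}{[j]!} = 1$, while reassembling $L^{n-(n-j-k)-k} = L^j$ returns $\eta$ up to the sign $(-1)^k$ coming from the identity $\star^2 = (-1)^k$ in the classical/algebraic Lefschetz calculus (this is where the quantum-integer bookkeeping of \cite{MR3720811} is invoked verbatim). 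For (iv), I would use \eqref{eq 4.3.6} (consequence of integrability), the fact that $\*$ interchanges $\O^{(a,b)}$ and $\O^{(b,a)}$, that $\s^* = \s$ and $\s$ central so that $L$ is $\*$-preserving hence $(L^j\w)^* = L^j(\w^*)$ with $\w^* \in P^{(b,a)}$, and then compare $\star(\eta^*)$ with $(\star\eta)^*$: the $L$-powers match since $n-j-k$ is symmetric in $a \leftrightarrow b$, the factor $(-1)^{k(k+1)/2}$ is real, and $\overline{i^{a-b}} = i^{b-a}$, which is exactly the prefactor produced by feeding the primitive $(b,a)$-form $\w^*$ into \eqref{eq 4.3.9}; hence $\star(\eta^*) = (\star\eta)^*$, i.e. $\overline{\star}\,\# = \#\,\star$. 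The main obstacle is purely organizational: making sure the quantum-integer factorials $[j]!/[n-j-k]!$ in \eqref{eq 4.3.9} recombine correctly under composition, which is precisely the point where one must cite the corresponding lemma of \cite{MR3720811} rather than re-derive it, since in our generality $B$ is noncommutative and one cannot appeal to the classical Hodge-star identities directly — but 'O Buachalla's argument is algebraic and carries over unchanged, the $H$-covariance being irrelevant to these four identities.
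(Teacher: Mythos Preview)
Your approach matches the paper's: both rely on the observation that the four identities involve only $L$, the primitive decomposition, and the bigrading --- none of the $H$-structure --- so the proof in \cite{MR3720811} applies verbatim, and indeed the paper simply omits the proof under this blanket justification.

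That said, your explicit sketch of (i) contains a slip. In the second application of $\star$ the primitive form is still the \emph{same} $\w\in P^{(a,b)}$ (as you correctly note), so the phase factor is again $i^{a-b}$, not $i^{b-a}$. The product of prefactors is therefore
\[
\bigl((-1)^{k(k+1)/2}\bigr)^2\,(i^{a-b})^2\,\frac{[j]!}{[n-j-k]!}\cdot\frac{[n-j-k]!}{[j]!}
\;=\;(-1)^{a-b}\;=\;(-1)^{a+b}\;=\;(-1)^k,
\]
since $a-b\equiv a+b\pmod 2$ and $k(k+1)$ is even. This \emph{is} the source of the sign $(-1)^k$; your appeal to ``the identity $\star^2=(-1)^k$ in the classical/algebraic Lefschetz calculus'' for that sign is circular, since that is exactly what you are proving. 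With this correction the computation closes, and your treatments of (ii)--(iv) are fine.
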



Given a hermitian structure $(\O^{(\cdot,\cdot)}, \k)$, we first recover the hermitian metric 
associated to it:

\begin{defn}\label{defn 4.3.36}
Define $g : \O \tens_B \ol{\O} \to B$ by $g(\w \tens \ol{\e})=0$ for $\w \in \O^k$, $\e \in 
\O^l$, $k\neq l$, and
\begin{equation}\label{eq 4.3.10}
g(\w \tens \ol{\e})=\vol(\w \wdg \star(\e^*))
\end{equation} 
\end{defn} for $\w, \e \in \O^k$. 

A metric on the orbit space should be an invariant one as is showed in the following lemma.

\begin{lem}\label{lem 4.3.37}
For $\w, \e \in \O^k$ and $h \in H$, it holds that
\begin{equation}\label{eq 4.3.11}
g(h_{(1)} \cdot \w \tens h_{(2)} \cdot \ol{\e})=h \cdot g(\w \tens \ol{\e}),
\end{equation} so that $g$ is $H$-covariant.
\end{lem}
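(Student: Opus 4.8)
The plan is to unwind the definition of $g$ and then push the $H$-action successively through the three operations entering it — the wedge product, the Hodge map $\star$, and $\vol$ — each of which has already been recorded as $H$-equivariant; the conjugate-module twist on $\ol{\O}$ is absorbed using the antipode identity $S\*S\*=\id_H$ from Proposition~\ref{prop 4.2.20}. Since $\w$ and $\e$ are assumed to lie in the same degree $\O^k$, the vanishing clause of Definition~\ref{defn 4.3.36} does not intervene, so it suffices to verify the identity directly on such simple tensors.

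First I would use the conjugate-module operation $h_{(2)}\cdot\ol{\e}=\ol{S(h_{(2)})^*\cdot\e}$ together with Definition~\ref{defn 4.3.36} to write
\[
g\big(h_{(1)}\cdot\w\tens h_{(2)}\cdot\ol{\e}\big)=\vol\big((h_{(1)}\cdot\w)\wdg\star\big((S(h_{(2)})^*\cdot\e)^*\big)\big).
\]
Next I would simplify the argument of $\star$. Because $\O$ is an $H$-module $\*$-algebra (Lemma~\ref{lem 4.3.9}), one has $(x\cdot\e)^*=S(x)^*\cdot\e^*$ for all $x\in H$; taking $x=S(h_{(2)})^*$ and using $S\*S\*=\id_H$ — which gives $S(S(h_{(2)})^*)^*=h_{(2)}$ — yields $(S(h_{(2)})^*\cdot\e)^*=h_{(2)}\cdot\e^*$. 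Hence the right-hand side becomes $\vol\big((h_{(1)}\cdot\w)\wdg\star(h_{(2)}\cdot\e^*)\big)$.

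It remains to carry the $H$-action outward. Since $\star$ is an $H$-module morphism (as noted after Definition~\ref{defn 4.3.34}), $\star(h_{(2)}\cdot\e^*)=h_{(2)}\cdot\star(\e^*)$; since $\O$ is an $H$-module algebra, $(h_{(1)}\cdot\w)\wdg(h_{(2)}\cdot\star(\e^*))=h\cdot(\w\wdg\star(\e^*))$; and since $\vol$ is $H$-linear (Definition~\ref{defn 4.3.17}(ii)), applying it gives $h\cdot\vol(\w\wdg\star(\e^*))=h\cdot g(\w\tens\ol{\e})$, which is precisely \eqref{eq 4.3.11}. The computation is essentially formal; the only step requiring any care is tracking the Sweedler legs through the conjugation $(-)^*$ and the antipode, since that is where the twist in the conjugate module $\ol{\O}$ must be undone — once that is handled, the rest is just an application of the equivariance properties already established.
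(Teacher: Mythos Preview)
Your proof is correct and follows essentially the same route as the paper's own proof: unwind the conjugate-module action, use $(x\cdot\e)^*=S(x)^*\cdot\e^*$ together with $S\*S\*=\id_H$ to turn $(S(h_{(2)})^*\cdot\e)^*$ into $h_{(2)}\cdot\e^*$, and then pass the action through $\star$, the wedge, and $\vol$ in turn. The only difference is that you annotate each step with the relevant references, whereas the paper presents the same chain of equalities as a bare computation.
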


\begin{proof}
We compute
\[
\begin{aligned}
g(h_{(1)} \cdot \w \tens h_{(2)} \cdot \ol{\e})&=g(h_{(1)} \cdot \w \tens \ol{S(h_{(2)})^* \cdot \e})\\
&=\vol(h_{(1)} \cdot \w \wdg \star(S(h_{(2)})^* \cdot \e)^*)\\
&=\vol(h_{(1)} \cdot \w \wdg \star((S(S(h_{(2)})^*))^*) \cdot \e^*)\\
&=\vol(h_{(1)} \cdot \w \wdg \star(h_{(2)} \cdot \e^*)\\
&=\vol(h_{(1)} \cdot \w \wdg h_{(2)} \cdot \star(\e^*))\\
&=\vol(h \cdot (\w \wdg \star(\e^*)))\\
&=h \cdot \vol(\w \wdg \star(\e^*))\\
&=h \cdot g(\w \tens \ol{\e}),
\end{aligned}
\]
\end{proof}

\begin{prop}\label{prop 4.3.38}
The following decompositions are orthogonal with respect to $\<,\>$:
\begin{enumerate}[i)]
\item The degree decomposition $\O=\dsum_{k} \O^k$;
\item The bidegree decomposition $\O^k=\dsum_{(a,b)}\O^{(a,b)}$;
\item The Lefschetz decomposition $\O^k=\dsum_{j\geq 0} L^j(P^{k-2j})$.
\end{enumerate}
\end{prop}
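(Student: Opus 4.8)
The plan is to reduce everything to the metric $g$ of Definition~\ref{defn 4.3.36}: since $\langle\cdot,\cdot\rangle$ is the form induced by $g$, we have $\langle\w,\e\rangle=0$ whenever $g(\w\tens\ol{\e})=0$, so in all three cases it suffices to show that $g(\w\tens\ol{\e})=0$ whenever $\w$ and $\e$ lie in distinct summands of the decomposition in question. Part (i) is immediate from the definition of $g$, which already sets $g(\w\tens\ol{\e})=0$ when $\w\in\O^k$ and $\e\in\O^l$ with $k\ne l$. For part (ii) I would fix $\w\in\O^{(a,b)}$ and $\e\in\O^{(a',b')}$ with $a+b=a'+b'=k$ but $(a,b)\ne(a',b')$, so that $a\ne a'$; since the involution sends $\O^{(p,q)}$ to $\O^{(q,p)}$ we get $\e^*\in\O^{(b',a')}$, hence $\star(\e^*)\in\O^{(n-a',\,n-b')}$ by Lemma~\ref{lem 4.3.35}(iii), and therefore $\w\wdg\star(\e^*)\in\O^{(n+a-a',\,n+b-b')}$ because $\O$ is $\bN_0^2$-graded as an algebra. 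This is a bidegree component of $\O^{2n}$ other than $\O^{(n,n)}$ (as $a\ne a'$), and the presence of a hermitian structure forces the complex structure to be of diamond type (see the remark preceding Definition~\ref{defn 4.3.34}), so this component is zero; hence $\w\wdg\star(\e^*)=0$ and $g(\w\tens\ol{\e})=\vol(\w\wdg\star(\e^*))=0$.

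Part (iii) is the substantial one, and I would prove it by a direct computation with the explicit Hodge-map formula \eqref{eq 4.3.9}, imitating the classical argument. Take $\a=L^j\w$ with $\w\in P^{k-2j}$ and $\b=L^{j'}\e$ with $\e\in P^{k-2j'}$, both inside $\O^k$, and suppose $j\ne j'$; the cases $\a=0$ or $\b=0$ are trivial, so assume both are nonzero. Since $\s$ is central and real, $L=\s\wdg-$ commutes with the involution, so $\b^*=L^{j'}(\e^*)$ with $\e^*$ again primitive of degree $k-2j'$. Applying \eqref{eq 4.3.9} to the primitive form $\e^*$ — whose degree is $k-2j'$, not $k$ — would give $\star(L^{j'}(\e^*))=c\,L^{\,n+j'-k}(\e^*)$ for a scalar $c$ that is nonzero since $\b\ne0$ and $\star$ is an isomorphism. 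Collecting the (central) powers of $\s$, the computation yields
\[
g(\a\tens\ol{\b})=\vol\big(L^j(\w)\wdg\star(L^{j'}(\e^*))\big)=c\,\vol\big(\s^{\,n+j+j'-k}\wdg\w\wdg\e^*\big).
\]
Finally, $n+j+j'-k\ge n-(k-2j)+1$ exactly when $j'>j$, and $n+j+j'-k\ge n-(k-2j')+1$ exactly when $j>j'$; since $j\ne j'$ one of these holds, and in the corresponding case the primitive form $\w$, respectively $\e^*$, is annihilated by $L^{\,n+j+j'-k}$ (by the definition of primitive forms, Definition~\ref{defn 3.2.5}), so $\s^{\,n+j+j'-k}\wdg\w\wdg\e^*=0$ and $g(\a\tens\ol{\b})=0$, which is (iii).

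The only genuinely fiddly part is the bookkeeping in (iii): one must feed \eqref{eq 4.3.9} the correct primitive constituent (the one of degree $k-2j'$, whose Lefschetz raise has degree $k$), track the signs produced by the involution on the differential calculus and by the conjugate-module conventions entering $g$, confirm that $c\ne0$, and peel off the degenerate cases by hand. The conceptual content is slight: centrality of the hermitian form is what lets the powers of $L$ be gathered exactly as in the commutative case, after which Definition~\ref{defn 3.2.5} finishes things, and no input beyond Lemma~\ref{lem 4.3.35}, the diamond-type observation, and the Hodge-map formula \eqref{eq 4.3.9} is required.
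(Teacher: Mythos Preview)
The paper does not give its own proof here; it falls under the blanket remark that such proofs ``are essentially given in'' \cite{MR3720811}, and your argument is precisely the standard one from that source (the classical Lefschetz--Hodge computation, pushed through using only centrality of $\s$), so the approaches coincide.

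One small point for the final write-up of (iii): formula \eqref{eq 4.3.9} is stated for primitive forms of a \emph{fixed bidegree} $P^{(a,b)}$, so the constant $c$ picks up the factor $i^{a-b}$ and is not a single scalar unless $\e^*$ has pure bidegree. This does not affect your vanishing argument --- all you actually use is that $\star(L^{j'}(\e^*))\in L^{\,n+j'-k}(\O^{k-2j'})$, after which centrality lets you gather the powers of $\s$ and the primitivity of $\w$ or $\e^*$ kills the product exactly as you say --- but the phrase ``for a scalar $c$'' should be adjusted, or (cleaner) you may first reduce to $\w$ and $\e$ of pure bidegree using part (ii).
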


The above proposition implies the following.

\begin{cor}\label{cor 4.3.39}
We have $g(\w \tens \ol{\e})=g(\e \tens \ol{\w})^*$ for $\w, \e \in \O$.
\end{cor}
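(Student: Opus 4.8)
The plan is to reduce the identity to a single Lefschetz--primitive component and then to compute both sides using the explicit Hodge map formula \eqref{eq 4.3.9}. Since $g$ is additive and $(B,B)$-balanced in each argument, it suffices to treat homogeneous $\w,\e$. By the definition of $g$ and by Proposition \ref{prop 4.3.38} (orthogonality of the degree, bidegree and Lefschetz decompositions), both $g(\w\tens\ol{\e})$ and $g(\e\tens\ol{\w})$ vanish unless $\w$ and $\e$ lie in the same summand $L^{j}(P^{(a,b)})$ of $\O$. So I would fix such a summand, write $\w=L^{j}(\w')$ and $\e=L^{j}(\e')$ with $\w',\e'\in P^{(a,b)}$, and set $k:=a+b$.

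Next I would assemble the structural facts that make \eqref{eq 4.3.9} applicable to the conjugated forms: because $\s$ is real and central, $L$ commutes with the involution, so $(L^{j}\e')^{*}=L^{j}(\e'^{*})$ and $P^{k}$ is $\*$-stable; and the involution interchanges $\O^{(a,b)}$ with $\O^{(b,a)}$ (Definition \ref{defn 4.3.21}), so $\e'^{*},\w'^{*}\in P^{(b,a)}$. Feeding $L^{j}(\e'^{*})$ into \eqref{eq 4.3.9} and using centrality of $\s$ to collect the powers of $L$ gives
\[
g(\w\tens\ol{\e})=\vol\!\big(L^{j}(\w')\wdg\star(L^{j}(\e'^{*}))\big)=c\,\vol\!\big(L^{n-k}(\w'\wdg\e'^{*})\big),\qquad c=(-1)^{k(k+1)/2}\,i^{\,b-a}\,\frac{[j]!}{[n-j-k]!},
\]
and interchanging $\w$ and $\e$ yields, in exactly the same way, $g(\e\tens\ol{\w})=c\,\vol\!\big(L^{n-k}(\e'\wdg\w'^{*})\big)$ with the \emph{same} constant, since $\w'^{*}$ also has bidegree $(b,a)$.

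Finally I would take the adjoint of the second expression. The orientation $\vol$ and $L^{n-k}$ are $\*$-preserving (Lemma \ref{lem 4.3.19} and the reality of $\s$), and the factors $(-1)^{k(k+1)/2}$, $[j]!$, $[n-j-k]!$ are real, so $\bar c=(-1)^{k(k+1)/2}i^{\,a-b}[j]!/[n-j-k]!$; together with $(\e'\wdg\w'^{*})^{*}=(-1)^{k}\,\w'\wdg\e'^{*}$ this gives $g(\e\tens\ol{\w})^{*}=\bar c\,(-1)^{k}\,\vol(L^{n-k}(\w'\wdg\e'^{*}))$. Comparing with the formula for $g(\w\tens\ol{\e})$, it only remains to check $\bar c\,(-1)^{k}=c$, i.e. $i^{\,a-b}(-1)^{k}=i^{\,b-a}$, which holds because $b-a\equiv a+b\pmod 2$. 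I expect the bookkeeping of these powers of $i$ and these signs --- after applying \eqref{eq 4.3.9} twice and conjugating --- to be the only delicate point; one also has to be sure that the quantum factorials $[j]!$ occurring in \eqref{eq 4.3.9} are real (they are, being built from a real deformation parameter), so that $\bar c$ has the stated shape. Everything else is the routine reduction to primitive forms, parallel to the classical computation and to the one in \cite{MR3720811}.
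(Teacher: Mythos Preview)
Your argument is correct and is exactly the intended one: the paper gives no explicit proof but records the corollary as a consequence of Proposition~\ref{prop 4.3.38}, and the computation you carry out --- reducing via the orthogonal decompositions to a single summand $L^{j}(P^{(a,b)})$, applying the Hodge formula \eqref{eq 4.3.9}, and matching the phase $i^{b-a}$ against $(-1)^{k}\,i^{a-b}$ --- is precisely the computation from \cite{MR3720811} that the paper is following. The only cosmetic point is that you need not worry about ``quantum'' factorials here: in this setting the $[j]!$ are ordinary integers, so their reality is automatic.
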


The hermitian structure is said to be positive definite if $g(\w \tens \ol{\w})>0$ for all nonzero $\w \in \O$. In that case, we define an inner product (positive definite, hermitian) on $\O$ by setting
\begin{equation}\label{eq 4.3.12}
\< \w,\e \>=\t g(\w \tens \ol{\e})=\int_{\t}\w \wdg \star(\e^*)
\end{equation} for $\w, \e \in \O$ and a fixed faithful state $\t$ on $B$. We denote the
corresponding norm of $\w$ by $\|\w\|$. Moreover, Lemma \ref{lem 4.3.37} shows that $g$ induces
a metric on $\O_0$ that takes values in $B_0$. Applying $\t$, we get an inner product
on $\O_0$ which is really the restriction of $\<\cdot,\cdot\>$ to $\O_0$. From now on, we
assume that the hermitian structure to be positive definite.

\begin{prop}\label{prop 4.3.45}
The Hodge map $\star$ is unitary.
\end{prop}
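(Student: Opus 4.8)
The plan is to verify the unitarity identity $\langle\star\w,\star\e\rangle=\langle\w,\e\rangle$ directly, using only the four properties of $\star$ recorded in Lemma~\ref{lem 4.3.35}, the $\*$-calculus axiom of Definition~\ref{defn 4.3.7}, the $\*$-preservation of the orientation $\vol$ (Definition~\ref{defn 4.3.17}(iii)), and Corollary~\ref{cor 4.3.39}. First I would reduce to the case of homogeneous forms of a common degree: by Definition~\ref{defn 4.3.36} the map $g$, hence also $\langle\cdot,\cdot\rangle$, vanishes on a pair of homogeneous forms of different degrees, and since $\star$ carries $\O^k$ into $\O^{2n-k}$ by Lemma~\ref{lem 4.3.35}(iii) the same holds for $\langle\star\cdot,\star\cdot\rangle$; as both sides are additive over the degree decomposition of $\O$, it suffices to treat $\w,\e\in\O^k$ for a fixed $k$.

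Next I would unwind the left-hand side. Since $\star$ is $\*$-preserving (Lemma~\ref{lem 4.3.35}(iv)) we have $(\star\e)^*=\star(\e^*)$, and as $\e^*\in\O^k$, Lemma~\ref{lem 4.3.35}(i) gives $\star\big(\star(\e^*)\big)=(-1)^k\e^*$. Therefore
\[
\langle\star\w,\star\e\rangle=\int_{\t}(\star\w)\wdg\star\big((\star\e)^*\big)=(-1)^k\int_{\t}(\star\w)\wdg\e^* .
\]
Consequently the proposition reduces to the pointwise identity
\[
\vol\big((\star\w)\wdg\e^*\big)=(-1)^k\,\vol\big(\w\wdg\star(\e^*)\big) ,
\]
since applying the state $\t$ and substituting into the previous display yields $\langle\star\w,\star\e\rangle=(-1)^k(-1)^k\langle\w,\e\rangle=\langle\w,\e\rangle$.

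To establish the pointwise identity I would compute the involution of $(\star\w)\wdg\e^*$. Applying the $\*$-calculus relation of Definition~\ref{defn 4.3.7} to $\star\w\in\O^{2n-k}$ and $\e^*\in\O^k$, using $(\star\w)^*=\star(\w^*)$ and the congruence $(2n-k)k\equiv k\pmod 2$, one obtains $\big((\star\w)\wdg\e^*\big)^*=(-1)^k\,\e\wdg\star(\w^*)$. Since $\vol$ is $\*$-preserving this gives $\vol\big((\star\w)\wdg\e^*\big)^*=(-1)^k\,\vol\big(\e\wdg\star(\w^*)\big)$. On the other hand, Corollary~\ref{cor 4.3.39} specialised to $\w,\e\in\O^k$ (where $g(\w\tens\ol\e)=\vol(\w\wdg\star(\e^*))$) reads $\vol\big(\w\wdg\star(\e^*)\big)=\vol\big(\e\wdg\star(\w^*)\big)^*$. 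Substituting the former into the latter and cancelling the involution gives exactly $\vol\big(\w\wdg\star(\e^*)\big)=(-1)^k\,\vol\big((\star\w)\wdg\e^*\big)$, as required; an identical argument (or restriction, since $\star$ is $H$-linear) also gives unitarity of $\star$ on $\O_0$.

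Every step is sign bookkeeping, so the main obstacle is really organisational: one must keep the interplay of $\star$, the involution, $\vol$ and $\t$ straight so that the two degree shifts $k\mapsto 2n-k$ each contribute only the harmless sign $(-1)^k$ and these two signs cancel in the end. No ingredient beyond the results already cited is needed.
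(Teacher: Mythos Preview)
Your proof is correct: the reduction to homogeneous degree, the use of $\star^2=(-1)^k$ and $\*$-preservation of $\star$ to simplify $\star((\star\e)^*)$, and the passage through Corollary~\ref{cor 4.3.39} together with the $\*$-orientation of $\vol$ all check out, including the parity computation $(2n-k)k\equiv k\pmod 2$. The paper itself does not give a proof but simply refers to Lemma~5.10 of \cite{MR3720811}; your argument is the natural direct verification and is presumably what that reference contains, so there is no genuine difference in approach.
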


\begin{proof}
See the proof of Lemma 5.10 in \cite{MR3720811}.
\end{proof}

We now define the Laplacians.

\begin{defn}\label{defn 4.3.46}\mbox{}
\begin{enumerate}[i)]
\item The codifferential is defined as $d^*:=-\star d\star$;
\item the holomorphic codifferential is defined as $\del^*:=-\star\adel\star$;
\item the anti-holomorphic codifferential is defined as $\adel^*=-\star\del\star$.
\end{enumerate}
\end{defn}

Observe that for $\w \in \O$, \begin{equation}\label{eq 4.3.13}d^*(\w^*)=(d^*\w)^*, \qquad \del^*(\w^*)=(\adel^*\w)^* \qquad \text{and} \qquad \adel^*(\w^*)=(\del^*\w)^*.\end{equation} Now, it is natural to define the $d$-, $\del$- and $\adel$- Laplacians, respectively as \begin{equation}\label{eq 4.3.14}\D_d:=(d+d^*)^2, \qquad \D_{\del}:=(\del+\del^*)^2, \qquad \D_{\adel}:=(\adel+\adel^*)^2.\end{equation}

\begin{prop}\label{prop 4.3.47}
The operator adjoints of $d$, $\del$ and $\adel$ are $d^*$, $\del^*$ and $\adel^*$, respectively.
\end{prop}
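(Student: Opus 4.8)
\emph{Proof proposal.} The plan is to prove first the single adjointness relation $\langle d\w,\e\rangle=\langle\w,d^*\e\rangle$ for all $\w,\e\in\O$, and then to read off the statements for $\del$ and $\adel$ from it via the bidegree decomposition. Since the degree decomposition $\O=\bdsum_k\O^k$ is orthogonal for $\langle\cdot,\cdot\rangle$ (Proposition \ref{prop 4.3.38} i)), and $d$ has degree $+1$ while $d^*=-\star d\star$ has degree $-1$ (because $\star$ interchanges $\O^k$ and $\O^{2n-k}$, which is Lemma \ref{lem 4.3.35} iii)), both sides of the identity vanish unless $\w\in\O^k$ and $\e\in\O^{k+1}$ for some $k$. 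So it suffices to treat homogeneous $\w,\e$ in these degrees, and then extend $\bC$-bilinearly.

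For such $\w,\e$ I would argue exactly as in \cite{MR3720811}. On the left, $\langle d\w,\e\rangle=\int_\t d\w\wdg\star(\e^*)$; since $\w\wdg\star(\e^*)\in\O^{2n-1}$, the graded Leibniz rule (Definition \ref{defn 4.3.5}) applied to $d(\w\wdg\star(\e^*))$ together with closedness of the integral $\int_\t$ yields
\[
\langle d\w,\e\rangle=-(-1)^k\int_\t \w\wdg d\bigl(\star(\e^*)\bigr)=(-1)^{k+1}\int_\t\w\wdg d\star(\e^*).
\]
On the right, using that $\star$ is $\*$-preserving (Lemma \ref{lem 4.3.35} iv)) and that $(d\zeta)^*=d(\zeta^*)$ in a $\*$-differential calculus (Definition \ref{defn 4.3.7}), one computes $(d^*\e)^*=-\star d\star(\e^*)$; then, since $d\star(\e^*)\in\O^{2n-k}$ and $\star^2=(-1)^j\,\id$ on $\O^j$ (Lemma \ref{lem 4.3.35} i)),
\[
\langle\w,d^*\e\rangle=\int_\t\w\wdg\star\bigl((d^*\e)^*\bigr)=-\int_\t\w\wdg\star^2\bigl(d\star(\e^*)\bigr)=(-1)^{k+1}\int_\t\w\wdg d\star(\e^*),
\]
which coincides with the previous expression. (One checks along the way that $d^*$ indeed lands in $\O^k$, using Lemma \ref{lem 4.3.35} iii) again.)

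For the holomorphic and anti-holomorphic parts, recall $\del^*=-\star\adel\star$ and $\adel^*=-\star\del\star$ (Definition \ref{defn 4.3.46}). By orthogonality of the bidegree decomposition (Proposition \ref{prop 4.3.38} ii)) it is enough to test on bidegree-homogeneous forms, say $\w\in\O^{(a,b)}$ and $\e\in\O^{(a+1,b)}$ for $\del$. Tracking bidegrees through $\star$ by Lemma \ref{lem 4.3.35} iii), one finds that in $d\w=\del\w+\adel\w$ only $\del\w$ lies in $\O^{(a+1,b)}$, so $\langle d\w,\e\rangle=\langle\del\w,\e\rangle$; likewise in $d^*\e=\del^*\e+\adel^*\e$ one has $\del^*\e\in\O^{(a,b)}$ and $\adel^*\e\in\O^{(a+1,b-1)}$, so $\langle\w,d^*\e\rangle=\langle\w,\del^*\e\rangle$. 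Combining these with the relation for $d$ just proved gives $\langle\del\w,\e\rangle=\langle\w,\del^*\e\rangle$, and the argument for $\adel$ (testing against $\e\in\O^{(a,b+1)}$) is identical.

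The sign and (bi)degree bookkeeping is routine, so the real point — and the only input not already in the excerpt — is closedness of the integral, $\int_\t(d\nu)=0$ for $\nu\in\O^{2n-1}$, without which the statement is simply false; this holds in the present setting, exactly as it does in \cite{MR3720811} for K\"ahler structures. Thus the ``hard part'' is not an obstacle to be overcome here but a hypothesis to be invoked, after which everything reduces to Lemma \ref{lem 4.3.35}, the graded Leibniz rule, and that closedness.
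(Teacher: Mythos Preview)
Your proof is correct and is precisely the standard argument from \cite{MR3720811}, to which the paper defers (``We shall omit the proofs of some of the results here as they are essentially given in \cite{MR3720811}''). You have also correctly isolated the one genuine hypothesis, closedness of the integral, and the sign and bidegree bookkeeping via Lemma \ref{lem 4.3.35} and Proposition \ref{prop 4.3.38} is accurate.
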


The following will be used later.

\begin{cor}\label{cor 4.3.48}
The Laplacians $\D_d$, $\D_{\del}$ and $\D_{\adel}$ are symmetric.
\end{cor}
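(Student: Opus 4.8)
The plan is to deduce the symmetry of all three Laplacians from the single fact, recorded in Proposition \ref{prop 4.3.47}, that $d^*$, $\del^*$ and $\adel^*$ are the adjoints of $d$, $\del$ and $\adel$ with respect to the inner product $\<\cdot,\cdot\>$ on $\O$. Indeed, that statement says precisely that each of the operators $d+d^*$, $\del+\del^*$ and $\adel+\adel^*$ is symmetric for $\<\cdot,\cdot\>$, and the square of a symmetric operator is again symmetric; the corollary is nothing more than this purely formal observation.

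Carrying this out, I would first note that for $\w,\e \in \O$ Proposition \ref{prop 4.3.47} gives $\<d\w,\e\>=\<\w,d^*\e\>$; reading the same identity with $\w$ and $\e$ interchanged and using the conjugate-symmetry of $\<\cdot,\cdot\>$ also gives $\<d^*\w,\e\>=\<\w,d\e\>$, so that, on adding, $\<(d+d^*)\w,\e\>=\<\w,(d+d^*)\e\>$. Applying this twice then yields
\[
\<\D_d\w,\e\>=\<(d+d^*)^2\w,\e\>=\<(d+d^*)\w,(d+d^*)\e\>=\<\w,(d+d^*)^2\e\>=\<\w,\D_d\e\>,
\]
which is the asserted symmetry of $\D_d$. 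The cases of $\D_{\del}$ and $\D_{\adel}$ are handled verbatim, replacing the pair $(d,d^*)$ throughout by $(\del,\del^*)$ and by $(\adel,\adel^*)$ and invoking the corresponding parts of Proposition \ref{prop 4.3.47}.

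I do not expect any genuine obstacle here. The only point that deserves a word is the two-sided reading of ``adjoint'' used above, namely that $\<d^*\w,\e\>=\<\w,d\e\>$ follows from $\<d\w,\e\>=\<\w,d^*\e\>$: this is immediate from hermitian symmetry of the inner product and requires no hypothesis beyond positive-definiteness of the hermitian structure and faithfulness of $\t$, which are already in force. In short, the proof is the one-line remark that $(T^2)^*=(T^*)^2=T^2$ whenever $T=T^*$, applied in turn to $T=d+d^*$, $T=\del+\del^*$ and $T=\adel+\adel^*$.
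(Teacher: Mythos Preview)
Your proof is correct and is exactly the argument the paper has in mind: the corollary is stated without proof in the paper, as an immediate consequence of Proposition \ref{prop 4.3.47}, and your write-up simply makes explicit the one-line observation that the square of a self-adjoint operator is self-adjoint.
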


We have:

\begin{lem}\label{lem 4.3.49}
The operator $d^*$ (respectively $\del$, $\adel$) and hence $\D_d$ (respectively $\D_{\del}$,
$\D_{\adel}$) descends to $\O_0$.
\end{lem}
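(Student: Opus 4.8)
The statement to prove is Lemma \ref{lem 4.3.49}: the operator $d^*$ (and likewise $\del^*$, $\adel^*$), and hence the Laplacians $\D_d$, $\D_\del$, $\D_\adel$, restrict to the subalgebra $\O_0$ of invariant forms. The natural route is to write $d^* = -\star d\star$, and observe that each of the three factors preserves $\O_0$. For $\star$, this was already recorded: immediately after Definition \ref{defn 4.3.34} it is observed that $\star$ is an $H$-module morphism, hence descends to $\O_0$ (and Lemma \ref{lem 4.3.35}(iii) tells us the bidegree shifts stay in the same invariant subspace). For $d$, this is built into the definition of $H_0$: since $d$ commutes with $h - s_l\ve_l(h)$ and $h - t_l\ve_l(h)$ for every $h \in H_0$, it follows (as already used in the proof of Proposition \ref{prop 4.3.11}(i)) that $d$ preserves $\O_0$. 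Therefore $d^* = -\star d\star$ is a composite of three operators each of which maps $\O_0$ into $\O_0$, so $d^*$ descends.

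For the holomorphic and anti-holomorphic codifferentials the argument is identical once one knows that $\del$ and $\adel$ preserve $\O_0$: this is exactly the content of the proof of Lemma \ref{lem 4.3.24} (and of Lemma \ref{lem 4.3.22}, where it is shown that $\del$ and $\adel$ satisfy the covariance condition \eqref{eq 4.3.1} because the bigrading projections $\proj_{\O^{(k+1,l)}}$, $\proj_{\O^{(k,l+1)}}$ are $H$-linear), so by the same reasoning $\del^* = -\star\adel\star$ and $\adel^* = -\star\del\star$ restrict to $\O_0$. Finally, once $d|_{\O_0}$ and $d^*|_{\O_0}$ are defined, the $d$-Laplacian $\D_d = (d+d^*)^2$ is manifestly an operator on $\O_0$, being built by composition and addition from operators that preserve $\O_0$; the same applies verbatim to $\D_\del = (\del+\del^*)^2$ and $\D_\adel = (\adel+\adel^*)^2$.

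I would organize the write-up as: (1) recall that $\star$ descends to $\O_0$ (cited from the remark after Definition \ref{defn 4.3.34}); (2) recall that $d$ preserves $\O_0$ from the definition of $H_0$, and that $\del$, $\adel$ preserve $\O_0$ by Lemma \ref{lem 4.3.24}; (3) conclude that the three codifferentials, being $-\star(\text{one of }d,\adel,\del)\star$, restrict; (4) conclude the Laplacian statement by closure of operators-on-$\O_0$ under composition and sum. There is essentially no obstacle here: every ingredient is already available in the preceding lemmas, and the proof is a short bookkeeping argument. The only point requiring a word of care is that $\star$ a priori shifts bidegree, so one should note — via Lemma \ref{lem 4.3.35}(iii) — that this shift lands inside $\O_0$ again because $\O_0 = \bigoplus_{(a,b)}(\O_0)^{(a,b)}$ respects the bigrading (Lemma \ref{lem 4.3.24}); but since invariance is defined degreewise-compatibly this is automatic.

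Concretely I would write:

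\begin{proof}
Recall from the discussion following Definition \ref{defn 4.3.34} that $\star$ is an $H$-module morphism, hence it maps $\O_0$ into $\O_0$; by Lemma \ref{lem 4.3.35}(iii) it respects the invariant bigrading. By the definition of $H_0$ in \eqref{eq 4.3.1n}, $d$ commutes with $h-s_l\ve_l(h)$ and $h-t_l\ve_l(h)$ for every $h \in H_0$, so $d$ preserves $\O_0$; and by Lemma \ref{lem 4.3.24} (see also Lemma \ref{lem 4.3.22}) the operators $\del$ and $\adel$ likewise preserve $\O_0$. Consequently
\[
d^*=-\star d\star, \qquad \del^*=-\star\adel\star, \qquad \adel^*=-\star\del\star
\]
each map $\O_0$ into $\O_0$, being composites of operators that do. Finally, $\D_d=(d+d^*)^2$, $\D_\del=(\del+\del^*)^2$ and $\D_\adel=(\adel+\adel^*)^2$ are obtained from these operators by addition and composition, so they too descend to $\O_0$.
\end{proof}
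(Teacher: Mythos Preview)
Your proof is correct and follows essentially the same approach as the paper. The paper's own proof is the one-line observation that since $\star$ is $H$-linear, for $h \in H_0$ one has $[h-s_l\ve_l(h),d^*]=[h-t_l\ve_l(h),d^*]=0$, hence $d^*$ descends; you unpack this same $H$-linearity fact but phrase the conclusion at the level of subspace preservation (each of $\star$, $d$, $\del$, $\adel$ maps $\O_0$ into itself, hence so do the composites) rather than at the level of commutators.
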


\begin{proof}
Since $\star$ is $H$-linear, we have for $h \in H_0$,
$$[h-s_l\ve_l(h),d^*]=[h-t_l\ve_l(h),d^*]=0\, .$$ Hence $d^*$ descends to $\O_0$.
\end{proof}




According to \cite{MR3720811}, K\"ahler structures are defined as follows.

\begin{defn}\label{defn 4.3.51}
A K\"{a}hler structure is a hermitian structure $(\O^{(\cdot,\cdot)}, \k)$ such that the hermitian form $\k$ is $d$-closed. Such a form is called a K\"{a}hler form. 
\end{defn}



		



\begin{lem}\label{lem 4.3.55}
A K\"{a}hler structure $(\O^{(\cdot,\cdot)}, \k)$ on $(\O,\,d)$ induces via $\k$ a K\"ahler structure on $(\O_0,\,d)$.
\end{lem}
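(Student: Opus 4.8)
The plan is to assemble the K\"ahler structure on $(\O_0,\,d)$ from the pieces already established, checking that each ingredient restricts. First I would invoke Lemma \ref{lem 4.3.33}: the hermitian structure $(\O^{(\cdot,\cdot)},\k)$ restricts to a hermitian structure $((\O_0)^{(\cdot,\cdot)},\k)$ on $(\O_0,\,d)$, where $(\O_0)^{(k,l)}=\O^{(k,l)}_0$ by Lemma \ref{lem 4.3.24} and $\k\in\O_0$ because $\k$ is $H$-invariant (indeed $H_0$-invariant). This already gives an almost symplectic form on $\O_0$ lying in bidegree $(1,1)$, so the only thing left to verify is that the restricted form is $d$-closed in $\O_0$, i.e.\ that $d|_{\O_0}\k=0$.

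The second step is exactly that closedness check, and it is immediate: $\k$ is $d$-closed in $\O$ by hypothesis (Definition \ref{defn 4.3.51}), and by Proposition \ref{prop 4.3.11}(i) the differential $d|_{\O_0}$ is just the restriction of $d$ to the subalgebra $\O_0$; since $d\k=0$ in $\O$ and $\k\in\O_0$, we get $d|_{\O_0}\k=0$. Hence $\k$ is a K\"ahler form for $(\O_0,\,d)$ in the sense of Definition \ref{defn 4.3.51}, applied to the differential calculus $(\O_0,\,d)$ over the $\*$-algebra $B_0$ (which is a differential $\*$-calculus by Proposition \ref{prop 4.3.11}(ii)--(iii) and Lemma \ref{lem 4.3.19} when $\O$ is $\*$-oriented).

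There is really no serious obstacle here; the work has all been done in the preceding lemmas. The one point that requires a word is that the complex-structure data on $\O_0$ used implicitly by ``hermitian structure'' is the transverse complex structure of Lemma \ref{lem 4.3.24}, and that the bidegree of $\k$ inside $\O_0$ agrees with its bidegree inside $\O$ — this is the content of $(\O^{(1,1)})_0=(\O_0)^{(1,1)}$, already noted in the proof of Lemma \ref{lem 4.3.33}. So the proof is a one-line assembly, which I would write as follows.

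\begin{proof}
By Lemma \ref{lem 4.3.33}, $\k$ induces a hermitian structure $((\O_0)^{(\cdot,\cdot)},\k)$ on $(\O_0,\,d)$, where the complex structure on $\O_0$ is the transverse one of Lemma \ref{lem 4.3.24} and $\k\in(\O^{(1,1)})_0=(\O_0)^{(1,1)}$. Since $d|_{\O_0}$ is the restriction of $d$ (Proposition \ref{prop 4.3.11}) and $d\k=0$ in $\O$ with $\k\in\O_0$, we have $d|_{\O_0}\k=0$. Thus $\k$ is a K\"ahler form for $(\O_0,\,d)$, proving the claim.
\end{proof}
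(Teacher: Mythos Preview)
Your proof is correct and follows essentially the same approach as the paper's own proof, which simply notes that $\k$ is automatically $d|_{\O_0}$-closed and then invokes Lemma \ref{lem 4.3.33}. Your version is more detailed in justifying why $d|_{\O_0}\k=0$ (via Proposition \ref{prop 4.3.11}) and in identifying the underlying complex structure (via Lemma \ref{lem 4.3.24}), but the argument is the same.
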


\begin{proof}
Since $\k$ is automatically $d|_{\O_0}$-closed, the lemma follows from Lemma \ref{lem 4.3.33}.
\end{proof}










\subsection{The main example - Transversely K\"ahler foliations}
In this subsection, we show how transversely K\"ahler foliations fit into our framework.

\medskip\noindent

According to Haefliger \cite{MR 2456103}:

\begin{defn}\label{defn 4.3.12}
A transverse structure on a foliated manifold $(M,\cF)$ is a structure on the transversal manifold $N$, invariant under the action of the holonomy pseudogroup $P$. 
\end{defn}

Since the groupoid $\G(P)$ is constructed out of $P$, it follows
that $P$ invariant structures are $\G(P)$ invariant. The normal bundle $N(M,\cF)$ of the foliation $\cF$ is isomorphic to the tangent bundle $TN$ of $N$. Thus, basic forms on the foliated manifold $(M,\cF)$ are in bijective correspondence with $\G(P)$-invariant
forms on the transverse manifold $N$ (see \cite{MR 2456103}). To see what does $\G(P)$ invariant forms correspond to, we introduce 
the following.

\begin{defn}\label{defn 4.3.13}
A local bisection of a Lie groupoid $G$ is a local section $\s : U \to G$ of $s : G \to G_0$ defined on an open subset $U \subset G_0$ such that $t\s$ is an open embedding. 
\end{defn}
If $G$ is \'etale, any arrow $g$ induces a germ of a homeomorphism $\s_g : (U,s(g)) \to (V,t(g))$ from a neighborhood $U$ of $s(g)$ to a neighborhood $V$ of $t(g)$ as follows: choosing $U$ small enough such that a bisection $\s$ exists and $t|_{\s U}$ is a homeomorphism into $V:=t(\s U)$, we set $\s_g:=t\s$. We do not distinguish between $\s_g$ and the actual germ of this map at the point $s(g)$.

\begin{lem}\label{lem 4.3.14}
Let $G$ be an \'etale groupoid, and let $E$ be a smooth complex vector bundle over $G_0$ with a $G$-representation. Then a section $u : G_0 \to E$ is $G$-invariant if and only if it is $C_c^{\infty}(G)$-invariant.
\end{lem}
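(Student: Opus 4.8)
The plan is to unwind both notions of invariance using the explicit $C_c^\infty(G)$-module structure from Proposition \ref{prop 4.2.17} and the local structure of an \'etale groupoid. Recall that for $a \in C_c^\infty(G)$ and $u \in \G^\infty(E)$ the action is $(a \cdot u)(x) = \sum_{t(g)=x} a(g)\,(g \cdot u(s(g)))$, while the relevant $s_l\ve_l$ and $t_l\ve_l$ (recall \eqref{eq 4.2.21}, \eqref{eq 4.2.24}) act on $u$ as multiplication by functions on $G_0$: explicitly $(s_l\ve_l(a) \cdot u)(x) = \big(\sum_{t(g)=x}a(g)\big)u(x)$ and $(t_l\ve_l(a)\cdot u)(x) = \big(\sum_{s(g)=x}a(g)\big)u(x)$. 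So $C_c^\infty(G)$-invariance of $u$, which by Lemma \ref{lem 4.2.22} and Definition \ref{defn 4.3.10} means $a \cdot u = s_l\ve_l(a)\cdot u = t_l\ve_l(a)\cdot u$ for all $a$, becomes the family of pointwise identities
\begin{equation}\label{eq 4.3.inv}
\sum_{t(g)=x} a(g)\,(g \cdot u(s(g))) = \Big(\sum_{t(g)=x} a(g)\Big) u(x) \qquad \text{for all } a \in C_c^\infty(G),\ x \in G_0,
\end{equation}
together with the analogous one coming from $t_l\ve_l$ (which turns out to be equivalent after relabelling by $g \mapsto g^{-1}$, since $E$ is a representation). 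The first direction is then immediate: if $u$ is $G$-invariant, meaning $g \cdot u(s(g)) = u(t(g)) = u(x)$ for every arrow with $t(g)=x$, then each summand on the left of \eqref{eq 4.3.inv} equals $a(g)u(x)$ and the identity holds trivially; hence $u$ is $C_c^\infty(G)$-invariant.

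For the converse I would argue contrapositively, or rather directly by localizing. Suppose $u$ is not $G$-invariant, so there is an arrow $x \xrightarrow{g_0} y$ with $g_0 \cdot u(x) \neq u(y)$. Since $G$ is \'etale, choose a small open $U \ni y = t(g_0)$ carrying a local bisection $\s$ through $g_0$ with $s|_{\s U}$ and $t|_{\s U}$ homeomorphisms onto their images (Definition \ref{defn 4.3.13} and the remark following it); note $g_0 = \s(y)$ in the usual identification, and the source fiber over $y$ is discrete. Pick a bump function $a \in C_c^\infty(G)$ supported in a small neighbourhood of $g_0$ inside $\s U$, with $a(g_0) = 1$ and support so small that over each $x' \in t(\s(\mathrm{supp}\,a))$ the only arrow from the support with target $x'$ is $\s(x')$. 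Then for $x'$ near $y$ the left-hand side of \eqref{eq 4.3.inv} is $a(\s(x'))\,(\s(x') \cdot u(s\s(x')))$ while the right-hand side is $a(\s(x'))\,u(x')$; evaluating at $x' = y$ gives $g_0 \cdot u(x) \neq u(y)$, contradicting \eqref{eq 4.3.inv}. Therefore $C_c^\infty(G)$-invariance forces $G$-invariance.

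The main obstacle, and the step that needs care rather than cleverness, is the bookkeeping in the localization: one must be sure the support of the test function $a$ is small enough that for every base point $x'$ in the relevant neighbourhood the sum $\sum_{t(g)=x'} a(g)(\cdots)$ collapses to a single term $\s(x')$, which uses that the target map restricted to the bisection is injective and that the source fibers are discrete (the defining property of \'etale groupoids, see Subsection \ref{subsec 4.2.2}). One should also double-check that the $t_l\ve_l$-condition genuinely adds nothing new: writing it out, $t_l\ve_l$-invariance reads $\sum_{t(g)=x}a(g)(g \cdot u(s(g))) = \sum_{s(g)=x}a(g)\,u(x)$, and substituting $a'(g) := a(g^{-1})$ together with $S$ being the groupoid inversion \eqref{eq 4.2.25} shows it is the same condition re-indexed; alternatively one invokes Proposition \ref{prop 4.3.11}(ii) style reasoning. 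Modulo these routine verifications, the equivalence is exactly the statement that a compactly supported function on an \'etale groupoid can be chosen to ``see'' an individual arrow, so testing against all such functions is the same as testing against all arrows.
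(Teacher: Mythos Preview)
Your argument is correct and follows essentially the same route as the paper: the forward implication is immediate from the formula for the action, and for the converse you localize on a bisection through the given arrow and use a bump function so that the sum $\sum_{t(g)=x}$ collapses to a single term, which is exactly what the paper does (in somewhat terser form).

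One small slip worth cleaning up: your computation of $(t_l\ve_l(a)\cdot u)(x)$ as $\big(\sum_{s(g)=x}a(g)\big)u(x)$ is incorrect, and the subsequent discussion of the $t_l\ve_l$-condition being obtained by ``relabelling via $g\mapsto g^{-1}$'' is off. In this Hopf algebroid $s_l\equiv t_l$ (see \eqref{eq 4.2.21}), so $t_l\ve_l(a)=s_l\ve_l(a)$ literally as elements of $C_c^\infty(G)$, and hence $(t_l\ve_l(a)\cdot u)(x)=\big(\sum_{t(g)=x}a(g)\big)u(x)$ as well; there is no separate condition to check. This does not affect your main argument, but you should delete the parenthetical about re-indexing and the final paragraph's discussion of the $t_l\ve_l$-condition, or simply note at the outset that $s_l=t_l$ makes the two conditions coincide. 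Relatedly, the reference to Definition~\ref{defn 4.3.10} is slightly misplaced here (that definition concerns invariance for a covariant differential calculus, quantified over $H_0$); the relevant notion is the one from Lemma~\ref{lem 4.2.22}, which is also what the paper uses.
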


\begin{proof}
Recall that a section $u$ of the bundle $E$ is $G$ invariant, if $g \cdot u(x)=u(y)$ for all arrow $x 
\xrightarrow{g} y$, while $u$ is $C_c^{\infty}(G)$ invariant if $a \cdot
u=\ve_l(a)u$ for all $a \in C_c^{\infty}(G)$. That $G$-invariance implies $C_c^{\infty}(G)$-invariance is
clear. For the converse, pick an 
arrow $x \xrightarrow{g} y$ and a bisection $(U,\s)$ such that $g \in \s(U)$ \cite{MR 2012261}. Then choose 
any function $a \in C_c^{\infty}(G)$ with support in $\s(U)$ and $a(g)=1$. Note that on a bisection $\s(U)$, we have
$a(t|_{\s(U)})^{-1}=\ve_l(a)$ and $a \cdot u=a(t|_{\s(U)})^{-1}u=\ve_l(a)u$. Hence the lemma follows.
\end{proof}

Now take $B=C^{\infty}(G_0)$ and $\O=\O(G_0)$, the $\bC$-valued smooth functions and forms on $G_0$, respectively. 

\begin{lem}\label{lem 4.3.15}
The differential $d$ on $G_0$ satisfies 
\begin{equation}\label{eq 4.3.4}
d(a \cdot \w)=d(\ve_l(a)) \wdg \w + a \cdot d(\w)
\end{equation} for $a \in C_c^{\infty}(G)$ and $\w \in \O(G_0)$. Hence $[a-\ve_l(a),d]=0$ for all $a \in C_c^{\infty}(G)$, thus
implying $H_0=C_c^{\infty}(G)$ (see \eqref{eq 4.3.1n} for $H_0$).
\end{lem}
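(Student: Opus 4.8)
The claimed identity $d(a\cdot\w)=d(\ve_l(a))\wdg\w+a\cdot d(\w)$ is essentially the infinitesimal manifestation of the $G$-invariance of $d$, localised via bisections. The strategy is to work pointwise on $G_0$ and unwind the convolution-type action formula \eqref{eq 4.2.28} (here specialised to forms, i.e.\ sections of the exterior bundle). First I would recall that the action of $a\in C_c^\infty(G)$ on $\w\in\O(G_0)$ is $(a\cdot\w)(x)=\sum_{t(g)=x}a(g)\,(g\cdot\w(s(g)))$, where $g\cdot(-)$ denotes the linear isomorphism on exterior powers induced by the germ of the local diffeomorphism $\s_g$ attached to $g$ (Definition \ref{defn 4.3.13} and the paragraph following it). Since $G$ is étale, the target fibre over $x$ is discrete, and near a fixed $x$ we may choose finitely many bisections $\s_1,\dots,\s_m$ through the arrows $g$ with $t(g)=x$ that lie in the (compact) support of $a$; this turns the sum into a finite sum of terms of the form $(a|_{\s_k(U_k)}\circ (t|_{\s_k(U_k)})^{-1})\cdot (\s_k)_*\w$ on a neighbourhood of $x$, where $(\s_k)_*$ is pullback of forms along the local diffeomorphism $t|_{\s_k}\circ\s_k^{-1}$.

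Next I would apply the ordinary exterior derivative to each such term. Write $\phi_k:=t|_{\s_k(U_k)}\circ(\text{local inverse})$ for the local diffeomorphism of $G_0$ induced by $\s_k$, and $\alpha_k:=a|_{\s_k(U_k)}\circ(t|_{\s_k(U_k)})^{-1}\in C^\infty(\text{nbhd of }x)$, a scalar function; then the $k$-th summand of $a\cdot\w$ is $\alpha_k\cdot(\phi_k^{-1})^*\w$ (I use whichever variance convention matches \eqref{eq 4.2.28}). By the Leibniz rule for the classical $d$, $d(\alpha_k\,(\phi_k^{-1})^*\w)=d\alpha_k\wdg(\phi_k^{-1})^*\w+\alpha_k\,d((\phi_k^{-1})^*\w)$, and by naturality of $d$ under the local diffeomorphism $\phi_k$ we have $d((\phi_k^{-1})^*\w)=(\phi_k^{-1})^*(d\w)$. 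Summing over $k$, the second batch of terms reassembles to $\sum_k\alpha_k\,(\phi_k^{-1})^*(d\w)=a\cdot d(\w)$ by the very same formula \eqref{eq 4.2.28} applied to $d\w$. For the first batch, I would observe that on each bisection the Lemma cited just before (the bisection identity $a(t|_{\s(U)})^{-1}=\ve_l(a)$ used in the proof of Lemma \ref{lem 4.3.14}) gives $\alpha_k=\ve_l(a)$ locally near $x$; hence $\sum_k d\alpha_k\wdg(\phi_k^{-1})^*\w$, at the point $x$, is $d(\ve_l(a))\wdg\w(x)$ once one checks that the pullback factors $(\phi_k^{-1})^*\w$ evaluate back to $\w(x)$ at the base point. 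Actually it is cleaner to note that $\ve_l(a)=\sum_{t(g)=x}a(g)$ as a function of $x$, and that $a\cdot\w$ and $\ve_l(a)\,\w$ agree to first order at any point where every $g$ in the sum is a unit — but since $d$ is natural and the action is local, it suffices to compare derivatives, which is exactly the content above. I would present the bisection computation as the rigorous version.

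Finally, the second sentence of the Lemma is immediate: specialising to $\w\in\O^0(G_0)=C^\infty(G_0)=B$, the identity reads $d(a\cdot f)=d(\ve_l(a))f+\ve_l(a)\,d(f)-\ve_l(a)\,d(f)+a\cdot d(f)$; more directly, on functions $a\cdot f=\ve_l(a)\cdot f$ pointwise when $f$ is scalar? No — rather, for $h=a\in C_c^\infty(G)=H$ the bracket $[a-\ve_l(a),d]$ applied to any $\w$ is $(a-\ve_l(a))\cdot d\w-d((a-\ve_l(a))\cdot\w)=a\cdot d\w-\ve_l(a)\cdot d\w-d(a\cdot\w)+d(\ve_l(a)\cdot\w)$, and plugging in \eqref{eq 4.3.4} together with the Leibniz rule $d(\ve_l(a)\cdot\w)=d(\ve_l(a))\wdg\w+\ve_l(a)\,d\w$ makes everything cancel. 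Recalling from Subsection \ref{subsec 4.2.2} that $s_l=t_l=1_\*$ and that $1_\*\ve_l(a)$ acts on forms exactly as the scalar $\ve_l(a)$, this says $[a-s_l\ve_l(a),d]=[a-t_l\ve_l(a),d]=0$ for every $a\in C_c^\infty(G)=H$, i.e.\ $H_0=C_c^\infty(G)$.

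\textbf{Main obstacle.} The genuinely delicate point is the first batch of terms: making precise, at the level of the \emph{value} $d(a\cdot\w)(x)$, that $\sum_k d\alpha_k\wdg(\phi_k^{-1})^*\w$ collapses to $d(\ve_l(a))\wdg\w$ at $x$. One must be careful that the $\phi_k$ need not fix $x$, that several arrows may share a source or target, and that $\ve_l(a)$ is a globally defined function while the $\alpha_k$ are only local; the bisection identity from the proof of Lemma \ref{lem 4.3.14} is the right tool, but checking that the local pieces patch to the global function $\ve_l(a)$ near $x$ and that the leftover pullback factor is just $\w(x)$ (because evaluating a pulled-back form against its value at the base point and differentiating only picks up the function part to first order) is where the ``local argument'' alluded to in Example \ref{exa 4.2.16}(vi) must be spelled out. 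Everything else is the classical Leibniz rule and naturality of $d$.
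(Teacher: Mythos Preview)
Your approach---localise via bisections, apply the classical Leibniz rule, and use naturality of $d$ under local diffeomorphisms---is precisely the paper's; its proof is the two-line version of your plan, citing the bisection identity from Lemma~\ref{lem 4.3.14} and invoking ``Leibniz rule and locality of $d$''. Your derivation of $[a-\ve_l(a),d]=0$ from \eqref{eq 4.3.4}, together with $s_l\equiv t_l$, is also what the paper does for the last sentence.

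However, the ``main obstacle'' you flag is a genuine gap, not a technicality waiting to be spelled out. If $a$ is supported on a single bisection with associated local diffeomorphism $\phi\neq\id$, then $a\cdot\w=\ve_l(a)\,(\phi^{-1})^*\w$ (not $\ve_l(a)\,\w$), so Leibniz plus naturality give $d(a\cdot\w)-a\cdot d\w=d(\ve_l(a))\wedge(\phi^{-1})^*\w$, which equals $d(\ve_l(a))\wedge\w$ only when $(\phi^{-1})^*\w=\w$. The identity ``$a\cdot u=\ve_l(a)u$'' that the paper imports from the proof of Lemma~\ref{lem 4.3.14} was valid there because $u$ was \emph{assumed} invariant; for a general $\w$ it is false. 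Concretely, take $G=\bZ/2\ltimes\bR$ with the nontrivial element acting by $x\mapsto-x$, let $a$ be supported on the nontrivial component with $a(\tau,m)=\psi(m)$, and let $\w=f\in C^\infty(\bR)$. Then $(a\cdot f)(x)=\psi(-x)f(-x)$, $\ve_l(a)(x)=\psi(-x)$, and one computes directly
\[
d(a\cdot f)-d(\ve_l(a))\,f-a\cdot df\;=\;\psi'(-x)\bigl(f(x)-f(-x)\bigr)\,dx,
\]
which is nonzero for generic $\psi,f$. Thus \eqref{eq 4.3.4}, and with it the conclusion $H_0=C_c^\infty(G)$, does not hold for arbitrary $\w\in\O(G_0)$ as stated. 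Your instinct that the ``leftover pullback factor is just $\w(x)$'' needed checking was exactly the right one: that is where the argument breaks.
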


\begin{proof}
As observed above in the proof of Lemma \ref{lem 4.3.14}, on a bisection $\s(U)$, we have $a(t|_{\s(U)})^{-1}=\ve_l(a)$ and $a \cdot u=a(t|_{\s(U)})^{-1}u=\ve_l(a)u$. Now \eqref{eq 4.3.4} follows from Leibniz rule and locality of $d$. The last statement follows from \eqref{eq 4.3.4} and the fact that $s_l\equiv t_l$.
\end{proof}

Denote by $\O(G_0)^{G}$ the $G$-invariant forms. Then forms on the ``orbit or leaf space" are captured as follows.

\begin{prop}\label{prop 4.3.16}
The pair
$(\O(G_0),\,d)$ is a $C_c^{\infty}(G)$-covariant differential calculus, and we have $(\O(G_0)^G,\,d)=(\O(G_0)_{C_c^{\infty}(G)},\,d)$ as differential graded algebras.
\end{prop}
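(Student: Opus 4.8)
The plan is to assemble the first assertion from Proposition~\ref{prop 4.2.17}, Example~\ref{exa 4.2.16}(vi) and Lemma~\ref{lem 4.3.15}, and to deduce the second from Lemma~\ref{lem 4.3.14} together with the observation that in this example $H_0$ exhausts $H$. Write $H=C_c^\infty(G)$, $B=C^\infty(G_0)$, $\O=\O(G_0)$, and let $E=\bdsum_k\Lambda^kT^*_{\bC}G_0$ be the complexified exterior bundle, carrying the natural $G$-representation of Example~\ref{exa 4.2.16}(vi). By Proposition~\ref{prop 4.2.17}, $\O=\G^\infty(E)$ is a left $H$-module whose induced $(B,B)$-bimodule structure is the pointwise one and whose restriction to $\O^0$ is the $H$-action on $B$. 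That this module is in fact an $H$-module $\*$-algebra is where the only genuine computation sits: an arrow $g$ acts on forms by pullback along the germ of a local diffeomorphism, hence by a unital $\*$-homomorphism for $\wdg$, and since $\D_l a$ is supported on the locus $g=g'$ with value $a(g)$ there (see \eqref{eq 4.2.23a}), unwinding the fiber-sum isomorphisms \eqref{eq 4.2.16} gives $a\cdot(\w\wdg\e)=(a_{(1)}\cdot\w)\wdg(a_{(2)}\cdot\e)$ and $a\cdot 1_{\O}=s_l\ve_l(a)\cdot 1_{\O}$, with the $\*$-compatibility being the analogous statement for complex conjugation of forms. The remaining requirements of Definitions~\ref{defn 4.3.6} and \ref{defn 4.3.7} are the standard de Rham facts ($d$ homogeneous of degree one, $d^2=0$, graded Leibniz, $\O^0=B$, $\O^k$ spanned by the elements $b_0\,db_1\wdg\cdots\wdg db_k$, and $(d\w)^*=d(\w^*)$, $(\w\wdg\e)^*=(-1)^{kl}\e^*\wdg\w^*$); and the one axiom that is not formal, \eqref{eq 4.3.1}, is already disposed of by Lemma~\ref{lem 4.3.15}, which gives $[a-\ve_l(a),d]=0$ for all $a$ and hence, since $s_l\equiv t_l$, forces $H_0=C_c^\infty(G)=H$, so that $A_l$ and $H_0$ trivially generate $H$. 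This proves that $(\O(G_0),d)$ is a $C_c^\infty(G)$-covariant differential calculus.

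For the second assertion I would observe that, with $H_0=H$ and $s_l=t_l=1_{\*}$, Definition~\ref{defn 4.3.10} reads $\O_0=\{\w\in\O\mid a\cdot\w=\ve_l(a)\,\w\ \text{for all}\ a\in C_c^\infty(G)\}$, since $s_l\ve_l(a)\cdot\w$ is just pointwise multiplication of $\w$ by the function $\ve_l(a)\in C^\infty(G_0)$. Applying Lemma~\ref{lem 4.3.14} in each degree $k$ to the representation $\Lambda^kT^*_{\bC}G_0$ identifies $(\O_0)^k$ with the space of $G$-invariant $k$-forms, so $\O_0=\O(G_0)^G$ as graded subspaces of $\O$. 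Both inherit the wedge product and the differential by restriction from $\O$ --- $d$ preserves $\O(G_0)^G$ by the naturality recalled in Example~\ref{exa 4.2.16}(vi) and preserves $\O_0$ by Proposition~\ref{prop 4.3.11}(i) --- so the identification is one of differential graded algebras, which is exactly the asserted equality $(\O(G_0)^G,d)=(\O(G_0)_{C_c^\infty(G)},d)$.

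The main obstacle is therefore only the module-$\*$-algebra compatibility of the first paragraph: once the fiber-sum bookkeeping of \eqref{eq 4.2.16} and \eqref{eq 4.2.23a} is carried through, the rest is either the de Rham formalism or a direct consequence of Lemmas~\ref{lem 4.3.14} and \ref{lem 4.3.15}.
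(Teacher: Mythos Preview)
Your proposal is correct and follows essentially the same route as the paper's own proof, which simply invokes Lemma~\ref{lem 4.3.14} and Lemma~\ref{lem 4.3.15} after noting that $d$ is $G$-invariant. You have filled in more of the bookkeeping---the module-$\*$-algebra compatibility via the fiber-sum isomorphisms, and the explicit references to Proposition~\ref{prop 4.2.17}, Example~\ref{exa 4.2.16}(vi), and Proposition~\ref{prop 4.3.11}(i)---but the skeleton is identical.
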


\begin{proof}
Since $G$ acts by local diffeomorphisms, it follows that $d$ is $G$-invariant. So $d$ descends to $\O(G_0)^G$. The
proposition now follows from Lemma \ref{lem 4.3.14} and Lemma \ref{lem 4.3.15}.
\end{proof}

		
		
		
		
		

\begin{defn}\label{defn 4.3.18}
An \'etale groupoid $G$ is oriented if $G_0$ is oriented in the ordinary sense and $G$ acts by orientation-preserving local diffeomorphisms.
\end{defn}

\begin{prop}\label{prop 4.3.19}
With $B=C^{\infty}(G_0)$ and $\O=\O(G_0)$, orientation in the sense of Definition \ref{defn 4.3.17} coincide with groupoid orientation on $G$.
\end{prop}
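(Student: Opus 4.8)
The plan is to unwind both notions of orientation in the concrete case $B=C^\infty(G_0)$, $\O=\O(G_0)$, and show each implies the other. Recall from Definition \ref{defn 4.3.17} that an orientation here is a $(B,B)$-bimodule and $H$-module isomorphism $\vol:\O^n\to B$ (where $n=\dim G_0$ and $H=C_c^\infty(G)$, noting $H_0=H$ by Lemma \ref{lem 4.3.15}); from Definition \ref{defn 4.3.18} a groupoid orientation of $G$ is an orientation of $G_0$ in the ordinary sense together with the hypothesis that $G$ acts by orientation-preserving local diffeomorphisms. So the content is: such $H$-linear volume isomorphisms of $\O^n(G_0)$ correspond precisely to ordinary orientations of $G_0$ that are preserved by the $G$-action.

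First I would treat the purely classical part. A $(B,B)$-bimodule (equivalently, $C^\infty(G_0)$-linear) isomorphism $\vol:\O^n(G_0)\to C^\infty(G_0)$ is the same as a nowhere-vanishing global section of the line bundle $(\Lambda^n T^*G_0)^{\vee}\cong\Lambda^n TG_0$, i.e. a nowhere-vanishing top form (up to multiplication by a nowhere-vanishing function), which is exactly an ordinary orientation of $G_0$; the existence of one is equivalent to orientability and choices correspond to orientations. This is standard and I would only cite it. The remaining, and genuinely groupoid-theoretic, point is to match the $H$-linearity of $\vol$ with the orientation-preserving property of the $G$-action.

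For that step I would argue locally using bisections, exactly as in the proofs of Lemma \ref{lem 4.3.14} and Lemma \ref{lem 4.3.15}. Given an arrow $x\xrightarrow{g}y$, choose a bisection $(U,\s)$ with $g\in\s(U)$ and a function $a\in C_c^\infty(G)$ supported in $\s(U)$ with $a(g)=1$; on $\s(U)$ one has, from \eqref{eq 4.2.28} and the computation in Lemma \ref{lem 4.3.14}, that $a\cdot\w = a\,(t|_{\s(U)})^{-1}\,\big(\s_g^{\,*}\big)^{-1}\w$ for $\w\in\O^n(G_0)$, where $\s_g$ is the induced germ of local diffeomorphism (pullback of forms along $\s_g^{-1}$). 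Then $H$-linearity of $\vol$, i.e. $\vol(a\cdot\w)=a\cdot\vol(\w)=a\,(t|_{\s(U)})^{-1}\vol(\w)$, forces $\vol\circ(\s_g^{-1})^*=(\s_g^{-1})^*\circ\vol$ as maps of sections near $x$; since $\vol$ is identified with the chosen top form, this says precisely that $\s_g$ pulls the orientation back to itself, i.e. $\s_g$ is orientation-preserving. As $g$ ranges over all arrows the $\s_g$ generate (germs of) all elements of the holonomy pseudogroup, so this is equivalent to $G$ acting by orientation-preserving local diffeomorphisms. Conversely, if $G_0$ is oriented and every $\s_g$ is orientation-preserving, the same local identity shows the classically-defined $\vol$ is $C_c^\infty(G)$-linear, since on each bisection the pullback $(\s_g^{-1})^*$ fixes the orientation form. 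Combining with the classical bimodule-isomorphism statement gives the asserted coincidence.

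The main obstacle I anticipate is purely bookkeeping: writing the $G$-action on $\O^n(G_0)$ from \eqref{eq 4.2.28} correctly in terms of the germs $\s_g$ and the local diffeomorphism pullbacks, and checking that the "$H$-linear on all of $C_c^\infty(G)$" condition really does reduce, via the bisection-supported bump functions, to a pointwise statement at every arrow. Once that reduction is in place the orientation-preserving equivalence is immediate, so I expect no serious difficulty beyond making the local picture precise.
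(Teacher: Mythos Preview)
Your proposal is correct and follows essentially the same route as the paper: the paper's proof is a one-liner citing Proposition~\ref{prop 4.3.16}, whose content (together with Lemma~\ref{lem 4.3.14}) is precisely the equivalence of $G$-invariance and $C_c^\infty(G)$-invariance that you unwind explicitly via the bisection argument. Your version spells out the classical identification of $\vol$ with a nowhere-vanishing top form and the local reduction to germs $\s_g$, but the underlying idea is identical.
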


\begin{proof}
This follows from Proposition \ref{prop 4.3.16}.
\end{proof}

As in \cite{MR1151583}, we define:

\begin{defn}\label{defn 4.3.26}
The foliation $\cF$ on a foliated manifold $(M,\cF)$ is transversely holomorphic if it carries a transverse complex structure in the sense of Definition \ref{defn 4.3.12}.
\end{defn}

If the foliation $\cF$ is transversely holomorphic, the normal bundle $N(M,\cF)$ of $\cF$ has a complex structure corresponding to the complex structure on $N$. Therefore any complex-valued basic $k$-form can be represented as a sum of the $k$-forms of pure type $(r,s)$ corresponding to the decomposition of $k$-forms on the complex manifold $N$. Let $\O^k_{\bC}(M,\cF)$ denote
the space of complex-valued basic $k$-forms on the foliated manifold $(M,\cF)$,
and denote by $\O^{(r,s)}_{\bC}(M,\cF)$ the space of complex-valued basic forms of pure
type $(r,s)$. Then we have $\O^k_{\bC}(M,\cF)=\dsum_{r+s=k}\O^{(r,s)}_{\bC}(M,\cF)$. The exterior derivative $d : \O^k_{\bC}(M,\cF) \to \O^{k+1}_{\bC}(M,\cF)$ decomposes into two components $d=\del+\adel$, where $\del$ is of bidegree $(1,0)$ and $\adel$ is of bidegree $(0,1)$, i.e., $\del : \O^{(r,s)} \to \O^{(r+1,s)}$ and $\adel : \O^{(r,s)} \to \O^{(r,s+1)}$. 

\medskip\noindent

Keeping in mind Definition \ref{defn 4.3.26} and the case for orbifolds (see \cite{MR3751961}), we make

\begin{defn}\label{defn 4.3.27}
An \'etale groupoid $G$ is holomorphic if $G_0$ is a complex manifold and $G$ acts by local biholomorphic transformations.
\end{defn}

This fits into our framework as follows:

\begin{prop}\label{prop 4.3.28}
An \'etale groupoid $G$ is holomorphic if and only if $(\O(G_0),\,d)$ admits a $C_c^{\infty}(G)$-covariant complex structure.
\end{prop}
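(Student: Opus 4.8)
The plan is to translate both conditions into classical geometry on $G_0$ and on the germs of local diffeomorphisms induced by the arrows of $G$, using the bisection device from the proofs of Lemmas \ref{lem 4.3.14} and \ref{lem 4.3.15} throughout. Recall from Lemma \ref{lem 4.3.15} that here $H_0 = C_c^{\infty}(G)$, so a ``$C_c^{\infty}(G)$-covariant complex structure'' is simply an $\bN_0^2$-algebra grading on $\O(G_0)$ satisfying conditions i)--iii) of Definition \ref{defn 4.3.21} together with $d = \del + \adel$.

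For the forward direction, assume $G$ is holomorphic, so that $G_0$ is a complex manifold and every arrow $g$ induces a germ of biholomorphism $\s_g$. Equip $\O(G_0)$ with its usual Dolbeault bigrading $\O^{(k,l)}(G_0) = \G(\Lambda^{k,l}T^*_{\bC}G_0)$. Condition ii) of Definition \ref{defn 4.3.21} and the identity $d = \del + \adel$ are classical facts about complex manifolds; condition iii) holds because the $*$-operation on $\O(G_0)$ is ordinary complex conjugation of $\bC$-valued forms, which exchanges type $(k,l)$ with type $(l,k)$. For condition i), formula \eqref{eq 4.2.28} applied to the exterior bundle (which is a $G$-representation by Example \ref{exa 4.2.16}) expresses $a \cdot \w$ in terms of the fibrewise maps $g_* = (\s_g)^*{}^{-1}$, each of which, being the cotangent action of a biholomorphic germ, preserves the bidegree splitting of the fibre; hence $a \cdot \w$ has the same bidegree as $\w$. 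Thus $(\O(G_0),\,d)$ admits a $C_c^{\infty}(G)$-covariant complex structure.

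For the converse, suppose such a grading $\O(G_0) = \bigoplus \O^{(k,l)}$ is given. Its degree-one part is a $C^{\infty}(G_0)$-module direct summand $\O^{(1,0)} \subset \O^1(G_0) = \G(T^*_{\bC}G_0)$, hence by Serre--Swan the sections of a sub-bundle $\Lambda^{1,0} \subset T^*_{\bC}G_0$; condition iii) forces $\O^{(0,1)} = \ol{\O^{(1,0)}}$, so $\Lambda^{1,0} \dsum \ol{\Lambda^{1,0}} = T^*_{\bC}G_0$, which is exactly the data of an almost complex structure $J$ on $G_0$, and the whole bigrading is the one $J$ induces. Integrability $d = \del + \adel$ is precisely the Newlander--Nirenberg condition for $J$, so $G_0$ is a complex manifold. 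It remains to show that each arrow acts biholomorphically. Fix an arrow $x \xrightarrow{g} y$ and, as in the proof of Lemma \ref{lem 4.3.14}, a bisection $(U,\s)$ with $g \in \s(U)$ and a function $a \in C_c^{\infty}(G)$ supported in $\s(U)$ with $a(g) = 1$; write $\phi := \s_g \colon U \to V := t(\s(U))$ for the induced diffeomorphism. A computation with \eqref{eq 4.2.28} exactly parallel to the one in Lemma \ref{lem 4.3.14}---over each point of $V$ only one arrow of $\mathrm{supp}(a)$ contributes, and $a \circ (t|_{\s(U)})^{-1} = \ve_l(a)$---gives $(a \cdot \w)|_V = \ve_l(a)\cdot (\phi^{-1})^*(\w|_U)$ for every $\w \in \O^1(G_0)$. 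Taking $\w$ to range over a local frame of $\Lambda^{1,0}$ near $x$ and using that $\ve_l(a)(y) = a(g) = 1 \neq 0$, condition i) forces $(\phi^{-1})^*\w$ to be of type $(1,0)$ near $y$, so $\phi^{-1}$, hence $\phi$, is biholomorphic near $y$. As $g$ was arbitrary, $G$ acts by local biholomorphic transformations and is therefore holomorphic.

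The step I expect to be the main obstacle is this last one: extracting biholomorphicity of each individual germ $\s_g$ from the \emph{global} hypothesis that the convolution algebra $C_c^{\infty}(G)$ preserves the bigrading. The bisection localisation handles it, but it requires care with the precise form of the $C_c^{\infty}(G)$-action on forms---formula \eqref{eq 4.2.28} for the exterior bundle rather than for sections of a line bundle---and with the identity $a \circ (t|_{\s(U)})^{-1} = \ve_l(a)$ valid on a bisection; the remainder is a routine transcription of the classical dictionary between (almost) complex structures and the Dolbeault bigrading.
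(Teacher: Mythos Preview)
Your proof is correct. The paper takes a shorter, more conceptual route: it encodes the almost complex structure as a single bundle endomorphism $J : T^*G_0 \to T^*G_0$ with $J^2 = -\id$, views $J$ as a section of the $\Hom$-bundle (which is itself a $G$-representation), and then invokes Lemma \ref{lem 4.3.14} once to conclude that $J$ is $G$-invariant iff it is $C_c^{\infty}(G)$-invariant---the latter being equivalent to the $C_c^{\infty}(G)$-action preserving the induced bigrading; integrability is then the same condition on both sides. Your argument unpacks this: you work directly with the bigrading, use Serre--Swan and Newlander--Nirenberg to recover the complex manifold structure on $G_0$, and then essentially reprove the relevant instance of Lemma \ref{lem 4.3.14} by hand via the bisection localisation to extract biholomorphicity of each germ $\s_g$. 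The paper's approach is more economical and shows that the ``main obstacle'' you flag is already absorbed by Lemma \ref{lem 4.3.14} applied to the $\Hom$-bundle; yours is more self-contained and makes the passage from the global algebraic hypothesis to the pointwise geometric conclusion fully explicit.
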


\begin{proof}
First observe that an almost complex structure on $G_0$ is also given by a bundle map $J : T^*(G_0) \to 
T^*(G_0)$ (and its extension to the exterior algebra bundle) such that
$J^2=-\id_{T^*(G_0)}$. The bidegree 
decomposition is a consequence of this fact. Since bundle maps are sections of the $\Hom$-bundle, $G$ is 
almost complex if and only if $(\O(G_0),\,d)$ admits a $C_c^{\infty}(G)$-covariant almost complex structure, by 
Lemma \ref{lem 4.3.14}. Since integrability is same in both sense, we have the proposition proved. 
\end{proof}

The orbit space inherits a complex structure:

\begin{cor}\label{cor 4.3.29}
If $G$ is holomorphic, then $(\O(G_0)^G,\,d)$ admits a complex structure.
\end{cor}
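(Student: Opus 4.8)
The plan is to deduce this corollary from \Cref{prop 4.3.28}, \Cref{lem 4.3.24}, and \Cref{prop 4.3.16}. First I would invoke \Cref{prop 4.3.28}: since $G$ is holomorphic, the $C_c^{\infty}(G)$-covariant $\*$-differential calculus $(\O(G_0),\,d)$ carries a $C_c^{\infty}(G)$-covariant complex structure in the sense of \Cref{defn 4.3.23}. By \Cref{lem 4.3.15} we have $H_0=C_c^{\infty}(G)$ here, so ``$H$-covariant'' and ``$C_c^{\infty}(G)$-covariant'' coincide and the hypothesis of \Cref{lem 4.3.24} is met.

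Next I would apply \Cref{lem 4.3.24} directly: it produces a (genuine, non-equivariant) complex structure on the invariant subalgebra $(\O(G_0)_{C_c^{\infty}(G)},\,d)$, with bigrading $(\O(G_0)_{C_c^{\infty}(G)})^{(k,l)}=\O(G_0)^{(k,l)}_{C_c^{\infty}(G)}$ and with $\del,\adel$ the restrictions of the global $\del,\adel$. The only remaining point is to identify this invariant differential graded algebra with $(\O(G_0)^G,\,d)$. That identification is exactly \Cref{prop 4.3.16}, which asserts $(\O(G_0)^G,\,d)=(\O(G_0)_{C_c^{\infty}(G)},\,d)$ as differential graded algebras; since the bigrading on the right-hand side is built from the ambient bigrading, transporting it along this equality gives a complex structure on $(\O(G_0)^G,\,d)$.

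I do not expect a serious obstacle here — the corollary is essentially an immediate concatenation of results already established in this subsection and the general machinery of \Cref{subsec 4.3.1}. If there is any delicate point at all, it is the bookkeeping check that the complex-structure axioms of \Cref{defn 4.3.21} (in particular condition iii), that $\#$ preserves the bigrading) descend through the identification of \Cref{prop 4.3.16}; but this is handled uniformly by \Cref{prop 4.3.11}(iv) together with \Cref{lem 4.3.24}, so no new argument is needed. Thus the proof is: combine \Cref{prop 4.3.28}, \Cref{lem 4.3.24}, and \Cref{prop 4.3.16}.
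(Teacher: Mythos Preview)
Your proof is correct and follows essentially the same approach as the paper, which simply cites Proposition~\ref{prop 4.3.28} and Lemma~\ref{lem 4.3.24}. You are merely being more explicit by invoking Proposition~\ref{prop 4.3.16} (and Lemma~\ref{lem 4.3.15}) to spell out the identification of $\O_0$ with $\O(G_0)^G$, which the paper leaves implicit.
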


\begin{proof}
This follows from Proposition \ref{prop 4.3.28} and Lemma \ref{lem 4.3.24}.
\end{proof}

We recall from \cite{MR2012261}:

\begin{defn}\label{defn 4.3.40}
The foliation $\cF$ on a foliated manifold $(M,\cF)$ is transversely Riemannian if it carries a transverse Riemannian structure in the sense of Definition \ref{defn 4.3.12}.
\end{defn}

The metric on $N(M,\cF)$ is induced from a bundle-like metric on $M$. Recall from \cite{MR1151583}:

\begin{defn}\label{defn 4.3.41}
The foliation $\cF$ on a foliated manifold $(M,\cF)$ is transversely hermitian if it carries a transverse hermitian structure in the sense of Definition \ref{defn 4.3.12}.
\end{defn}

The operator $\star : \L^k(M,\cF) \to \L^{2q-k}(M,\cF)$ defined via the transverse part of the bundle-like metric of $\cF$ extends to $\L^k_{\bC}(M,\cF) \to \L^{2q-k}_{\bC}(M,\cF)$, where $q$ is the complex codimension of $\cF$.

Being motivated by this, we make the following definition.

\begin{defn}\label{defn 4.3.42}
An \'etale groupoid $G$ is hermitian if $G_0$ admits a $G$-invariant hermitian structure.
\end{defn}

Again, algebraically we have the following proposition.

\begin{prop}\label{prop 4.3.43}
An \'etale groupoid $G$ is hermitian if and only if $(\O(G_0),\,d)$ admits a
$C_c^{\infty}(G)$-covariant hermitian structure.
\end{prop}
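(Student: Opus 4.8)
The plan is to proceed exactly as for Propositions~\ref{prop 4.3.16}, \ref{prop 4.3.19} and \ref{prop 4.3.28}: reduce the equivalence to the dictionary between $G$-invariant tensors on $G_0$ and $C_c^\infty(G)$-invariant data on $(\O(G_0),\,d)$ furnished by Lemma~\ref{lem 4.3.14}, combined with the already-established correspondence for complex structures in Proposition~\ref{prop 4.3.28}. Recall first that a hermitian structure on the complex manifold $G_0$, of complex dimension $n$ and hence of total dimension $2n$, amounts to a complex structure together with a hermitian metric; its fundamental $2$-form $\s$ is real, of bidegree $(1,1)$, central in the graded-commutative algebra $\O(G_0)$ because it has even degree, and nondegenerate at every point. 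By the classical fibrewise linear algebra (hard Lefschetz for a nondegenerate alternating $2$-form on a vector space), this nondegeneracy is equivalent to the maps $L^{n-k}\colon\O^k(G_0)\to\O^{2n-k}(G_0)$, with $L(\w)=\s\wdg\w$, being isomorphisms for $0\le k<n$; conversely, as recalled after Definition~\ref{defn 3.2.8}, every real nondegenerate $(1,1)$-form is the fundamental form of a unique hermitian metric. The groupoid $G$ is hermitian precisely when it acts preserving both the complex structure and $\s$.

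For the forward direction, suppose $G$ is hermitian. By Proposition~\ref{prop 4.3.28} the underlying $G$-invariant complex structure on $G_0$ corresponds to a $C_c^\infty(G)$-covariant complex structure on $(\O(G_0),\,d)$, necessarily of diamond type since $\O^{(a,b)}(G_0)=0$ whenever $a>n$ or $b>n$. Its fundamental form $\s\in\O^{(1,1)}(G_0)$ is central, real ($\s^*=\ol\s=\s$), and satisfies the Lefschetz isomorphism condition above. It then remains to match the invariance: $G$-invariance of $\s$ is, by Lemma~\ref{lem 4.3.14}, the same as $a\cdot\s=\ve_l(a)\s$ for all $a\in C_c^\infty(G)$, and since here $H=C_c^\infty(G)=H_0$ by Lemma~\ref{lem 4.3.15} and $s_l\equiv t_l$, this is exactly the $H$-invariance demanded of an almost symplectic form in Definition~\ref{defn 4.3.30}. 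Hence $(\O(G_0)^{(\cdot,\cdot)},\s)$ is a $C_c^\infty(G)$-covariant hermitian structure.

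Conversely, given a $C_c^\infty(G)$-covariant hermitian structure $(\O(G_0)^{(\cdot,\cdot)},\s)$, Proposition~\ref{prop 4.3.28} shows that the complex structure arises from a $G$-invariant integrable almost complex structure, making $G_0$ a complex manifold on which $G$ acts biholomorphically. The form $\s$ is a real element of $\O^{(1,1)}(G_0)$, and applying the isomorphism $L^{n}\colon\O^{0}(G_0)\to\O^{2n}(G_0)$ to the constant function $1$ shows that the top wedge power $\s^n$ is a nowhere-vanishing $2n$-form, so $\s$ is nondegenerate and is therefore the fundamental form of a hermitian metric on $G_0$. Finally $C_c^\infty(G)$-invariance of $\s$ translates back, via Lemma~\ref{lem 4.3.14}, to $G$-invariance of that metric, so $G$ is hermitian.

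The verifications of centrality, of the equivalence of realness with $\s^*=\s$, and of the collapse of the Definition~\ref{defn 4.3.30} invariance condition to the groupoid condition (using $H=H_0$ and $s_l=t_l$) are immediate. The step carrying the actual content — and the one I would take most care over — is the translation between nondegeneracy of a real $(1,1)$-form on $G_0$ and the Lefschetz isomorphism condition of Definition~\ref{defn 4.3.30}: this rests on the classical pointwise statement that the iterated wedge operators $L^{n-k}$ of a nondegenerate $2$-form on a finite-dimensional vector space are bijective, applied fibrewise to $T^*G_0$, together with the classical fact that such a $(1,1)$-form determines a unique compatible hermitian metric.
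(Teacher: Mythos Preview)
Your argument is correct, but the converse direction takes a different route from the paper's. The paper recovers the hermitian metric using the abstract machinery it has just developed: it builds the Hodge map $\star$ from the Lefschetz decomposition, defines the metric $g(\w\otimes\ol\e)=\vol(\w\wdg\star(\e^*))$ as in Definition~\ref{defn 4.3.36}, invokes Lemma~\ref{lem 4.3.37} for $H$-covariance of $g$ (hence $G$-invariance via Lemma~\ref{lem 4.3.14}), and cites Proposition~\ref{prop 4.3.38} for compatibility of $g$ with the complex structure. Your converse instead stays purely classical: from the Lefschetz isomorphism $L^n$ you extract that $\s^n$ is nowhere vanishing, hence $\s$ is pointwise nondegenerate, and then you appeal to the classical fact (the remark after Definition~\ref{defn 3.2.8}) that a real nondegenerate $(1,1)$-form is the fundamental form of a unique hermitian metric, with $G$-invariance of that metric read off directly from $G$-invariance of $\s$ and $J$.

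Both routes are sound. Yours is more elementary and avoids the need for an orientation $\vol$ in the converse, which the paper's construction of $g$ tacitly requires. The paper's route, on the other hand, illustrates that the noncommutative definition of the metric in Definition~\ref{defn 4.3.36} really does specialize to the classical one, which is arguably the point of the exercise in this subsection.
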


\begin{proof} The proof of the statement that $G$ is hermitian implies that $(\O(G_0),\,d)$ admits a 
$C_c^{\infty}(G)$-invariant hermitian structure is straightforward. For the converse, we recover the 
hermitian metric as in Definition \ref{defn 4.3.36}, and Lemma \ref{lem 4.3.37} shows that it is 
$G$-invariant. Compatibility follows from Proposition \ref{prop 4.3.38}. \end{proof}

\begin{cor}\label{cor 4.3.44}
If $G$ is hermitian, then $(\O(G_0)^G,\,d)$ admits a hermitian structure.
\end{cor}

\begin{proof}
This follows from Proposition \ref{prop 4.3.43} and Lemma \ref{lem 4.3.33}.
\end{proof}

Following \cite{MR1151583}, we have:

\begin{defn}\label{defn 4.3.56}
The foliation $\cF$ on a foliated manifold $(M,\cF)$ is transversely K\"ahler if it carries a transverse K\"ahler structure in the sense of Definition \ref{defn 4.3.12}. 
\end{defn}

The K\"ahler form of $N$ defines a basic $(1,1)$-form on $(M,\cF)$ which is called the transverse K\"ahler form of the foliation $\cF$. Motivated by this and the case for orbifolds, we define:

\begin{defn}\label{defn 4.3.57}
An \'etale groupoid $G$ is K\"ahler if $G_0$ admits a $G$-invariant K\"ahler structure.
\end{defn}

The following is routine:

\begin{prop}\label{prop 4.3.58}
An \'etale groupoid $G$ is K\"ahler if and only if $(\O(G_0),\,d)$ admits a $C_c^{\infty}(G)$-covariant K\"ahler structure.
\end{prop}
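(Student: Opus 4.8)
The plan is to mirror the structure of Proposition \ref{prop 4.3.28} and Proposition \ref{prop 4.3.43}, since a K\"ahler structure is by Definition \ref{defn 4.3.57} and Definition \ref{defn 4.3.51} just a hermitian (equivalently, almost symplectic compatible with a complex) structure whose defining form is in addition $d$-closed. So the heart of the argument is already available: by Proposition \ref{prop 4.3.43}, $G$ being hermitian is equivalent to $(\O(G_0),\,d)$ admitting a $C_c^\infty(G)$-covariant hermitian structure, and by Proposition \ref{prop 4.3.28}, the complex-structure part of the data transfers correctly. What remains is only to track the extra closedness condition on the hermitian/K\"ahler form through this correspondence in both directions.

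First, for the forward direction, suppose $G$ is K\"ahler, so $G_0$ carries a $G$-invariant K\"ahler form $\k$. Then $\k$ is in particular a $G$-invariant hermitian form, so by (the forward half of) Proposition \ref{prop 4.3.43} it gives a $C_c^\infty(G)$-covariant hermitian structure on $(\O(G_0),\,d)$; here $G$-invariance of $\k$ translates into $C_c^\infty(G)$-invariance precisely by Lemma \ref{lem 4.3.14} (applied to the bundle of $(1,1)$-forms with its natural $G$-representation from Example \ref{exa 4.2.16}), and centrality and reality of $\k$ as a $2$-form on the commutative algebra $C^\infty(G_0)$ are automatic. The only additional point is that the classical exterior derivative on $G_0$ satisfies $d\k=0$, which by definition means $\k$ is a K\"ahler form in the sense of Definition \ref{defn 4.3.51}; hence $(\O(G_0),\,d)$ admits a $C_c^\infty(G)$-covariant K\"ahler structure.

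For the converse, suppose $(\O(G_0),\,d)$ admits a $C_c^\infty(G)$-covariant K\"ahler structure with K\"ahler form $\k$. Forgetting the closedness, Proposition \ref{prop 4.3.43} (its converse half) shows $G$ is hermitian: one recovers the hermitian metric as in Definition \ref{defn 4.3.36} and Lemma \ref{lem 4.3.37} gives its $G$-invariance, while $C_c^\infty(G)$-invariance of $\k$ forces $G$-invariance of the corresponding $(1,1)$-form on $G_0$ by Lemma \ref{lem 4.3.14}. It then only remains to observe that the abstract differential in this example is genuinely the exterior derivative on $G_0$ (this is built into the identification $\O=\O(G_0)$, $B=C^\infty(G_0)$, and the fact that $d$ is $G$-invariant, used already in Proposition \ref{prop 4.3.16}), so that $d\k=0$ in the abstract sense is literally $d\k=0$ for the classical $\k$ on $G_0$. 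Therefore $G_0$ carries a $G$-invariant K\"ahler structure, i.e.\ $G$ is K\"ahler. I do not expect a genuine obstacle here --- the statement is flagged as ``routine'' in the text --- but the one place to be careful is making explicit that ``$d$-closed'' in Definition \ref{defn 4.3.51} coincides with classical closedness under the dictionary of this section; once that identification is in hand, the proof is a two-line reduction to Proposition \ref{prop 4.3.43} together with the remark that every $C_c^\infty(G)$-covariant complex structure on $(\O(G_0),\,d)$ comes from a genuine transverse holomorphic structure on $G_0$ (Proposition \ref{prop 4.3.28}).
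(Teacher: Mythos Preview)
Your proposal is correct and takes essentially the same approach as the paper: the paper's proof is the single line ``This follows from Proposition \ref{prop 4.3.43} and Proposition \ref{prop 4.3.16},'' and your argument unpacks exactly that reduction, using Proposition \ref{prop 4.3.43} for the hermitian part and Proposition \ref{prop 4.3.16} (together with the identification of $d$ with the classical exterior derivative) to transfer the $d$-closedness condition.
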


\begin{proof}
This follows from Proposition \ref{prop 4.3.43} and Proposition \ref{prop 4.3.16}.
\end{proof}

\begin{cor}\label{cor 4.3.59}
If $G$ is K\"ahler, then $(\O(G_0)^G,\,d)$ admits a K\"ahler structure.
\end{cor}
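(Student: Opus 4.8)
The plan is to deduce this directly from the two results just established, exactly as Corollaries \ref{cor 4.3.29} and \ref{cor 4.3.44} were deduced. Assuming $G$ is K\"ahler, Proposition \ref{prop 4.3.58} supplies a $C_c^{\infty}(G)$-covariant K\"ahler structure $(\O(G_0)^{(\cdot,\cdot)},\k)$ on $(\O(G_0),\,d)$, where $\k \in \O(G_0)^{(1,1)}$ is the transverse K\"ahler form on $G_0$. Lemma \ref{lem 4.3.55} then says that this very form $\k$ induces a K\"ahler structure on the invariant subcomplex $(\O(G_0)_0,\,d)$, where $\O(G_0)_0$ is the space of invariant forms of Definition \ref{defn 4.3.10} for the Hopf $\*$-algebroid $H=C_c^{\infty}(G)$.

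It remains only to match $\O(G_0)_0$ with the $G$-invariant forms $\O(G_0)^G$. For this one uses that in the foliation example $H_0 = C_c^{\infty}(G)$ --- this is Lemma \ref{lem 4.3.15}, which rests in turn on $s_l \equiv t_l$ and on the fact (Lemma \ref{lem 4.3.14}) that $C_c^{\infty}(G)$-invariance of a section is equivalent to $G$-invariance. With $H_0$ being all of $C_c^{\infty}(G)$, Definition \ref{defn 4.3.10} reduces to the invariant-subalgebra condition, and Proposition \ref{prop 4.3.16} records the resulting identification $(\O(G_0)^G,\,d)=(\O(G_0)_{C_c^{\infty}(G)},\,d)$ of differential graded algebras. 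Transporting the K\"ahler structure produced by Lemma \ref{lem 4.3.55} along this identification gives a K\"ahler structure on $(\O(G_0)^G,\,d)$, proving the corollary.

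I do not expect any genuine obstacle: all the analytic and algebraic content --- the $H$-linearity of the Hodge operator $\star$ (Lemma \ref{lem 4.3.35}), of the Lefschetz operator $L$, and of $\del$ and $\adel$ (Lemma \ref{lem 4.3.24}), so that each of these restricts to $\O(G_0)_0$ and the restricted data assembles into a K\"ahler structure in the sense of Definition \ref{defn 4.3.51} --- was handled in the preceding lemmas. The only point worth a sentence of care is the identification $H_0 = C_c^{\infty}(G)$, since without it ``invariant form'' would not mean ``$\G(P)$-invariant form''; but this is precisely the content of Lemma \ref{lem 4.3.15}.
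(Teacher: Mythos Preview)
Your proof is correct and follows the same route as the paper: invoke Proposition \ref{prop 4.3.58} to obtain a $C_c^{\infty}(G)$-covariant K\"ahler structure on $(\O(G_0),d)$, then apply Lemma \ref{lem 4.3.55} to pass to the invariant subcomplex. Your extra paragraph making explicit the identification $\O(G_0)_0 = \O(G_0)^G$ via Lemma \ref{lem 4.3.15} and Proposition \ref{prop 4.3.16} is a welcome clarification that the paper leaves implicit.
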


\begin{proof}
This follows from Proposition \ref{prop 4.3.58} and Lemma \ref{lem 4.3.55}.
\end{proof}

\subsection{Left bialgebroid covariance of universal \texorpdfstring{$1$}{}-forms}\label{subsec 5.3.3}
Let $X$ be the finite set $\{1,\dots,n\}$. 
In this subsection, we sketch the constrution of a left bialgebroid over $C(X)$ whose action on $C(X)$ lifts to the space of universal one forms on $C(X)$, in the sense of Definition \ref{defn 4.3.3}. For that, let us consider the left bialgebroid $C(X \times X \times X \times X)$ over $C(X \times X)$, constructed in the same way as above. We identify $C(X)$ as the first copy of the product $C(X) \tens C(X)=C(X \times X)$. Let $f \in C(X)$ and $h \in C(X \times X \times X \times X)$. Then
\begin{equation}\label{eq 5.3.37}
\begin{aligned}
h \cdot (f \tens 1) (x,y)&=\sum_{z,w}h \* s(f \tens 1)(z,w,x,y)\\
&=\sum_{z,w,\a,\b}h(\a,\b,x,y)s(f \tens 1)(z,w,\a,\b)\\
&=\sum_{z,w,\a,\b}h(\a,\b,x,y)\d_{z,\a}\d_{w,\b}(f \tens 1)(z,w)\\
&=\sum_{z,w}h(z,w,x,y)f(z).
\end{aligned}
\end{equation} So a sufficient condition that $h$ takes $f \tens 1$ to an element of the same form is that $\sum_wh(z,w,x,y)$ does not depend on $y$. Therefore we can write 
\begin{equation}\label{eq 5.3.38}
\sum_w h(z,w,x,y)=\sum_w h(z,w,x,x).
\end{equation} Now the space of universal one forms can be identified with functions on $X \times X$ vanishing on the diagonal. If $\sum_i f_i \tens g_i$ is such an element then 
\begin{equation}\label{eq 5.3.39}
h \cdot (\sum_if_i \tens g_i)(x,y)=\sum_{i,z,w}h(z,w,x,y)f_i(z)g_i(w).
\end{equation} Thus a sufficient condition that $h$ preserves this space is that 
\begin{equation}\label{eq 5.3.40}
h(z,w,x,x)=0
\end{equation} for $z \neq w$. Observe that, \eqref{eq 5.3.40}
together with \eqref{eq 5.3.38}
imply 
\begin{equation}\label{eq 5.3.42}
\sum_wh(z,w,x,y)=h(z,z,x,x).
\end{equation} A sufficient condition on $h$ such that $[h-s\ve(h),d]=0$ holds is
\begin{equation}\label{eq 5.3.43}
\sum_zh(z,w,x,y)=h(w,w,y,y).
\end{equation} 
Let $H_0$ be the set of all $h \in C(X \times X \times X \times X)$ such that
\begin{enumerate}[i)]
\item $\sum_wh(z,w,x,y)=h(z,z,x,x);$
\medskip\noindent
\item $h(z,w,x,x)=0$ for $z \neq w;$
\medskip\noindent
\item $\sum_zh(z,w,x,y)=h(w,w,y,y);$
\medskip\noindent
\end{enumerate}

\medskip\noindent

And let $H$ be the smallest subalgebra of $C(X \times X \times X \times X)$ containing $C(X)$ and $H_0$. 

\begin{prop}\label{prop 5.3.6}
$H$ is a left bialgebroid over $C(X)$ such that the action on $C(X)$ lifts to an action on the space of universal one forms in the sense of Definition \ref{defn 4.3.3}. 
\end{prop}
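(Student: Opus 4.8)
The plan is to realize $H$ as a left bialgebroid over $C(X)$ by restricting the structure maps of the ambient left bialgebroid $C(X\times X\times X\times X)$ over $C(X\times X)$ (the one attached to the pair groupoid of $X\times X$), and then to check that conditions i)--iii) defining $H_0$ are exactly what makes the universal differential calculus $(\O,\,d)$ of $C(X)$ into an $H$-covariant differential calculus in the sense of Definition~\ref{defn 4.3.3}. Throughout, $C(X)$ is identified with $C(X)\tens 1\subseteq C(X\times X)$, and $\O^0=C(X)$, $\O^1=\ker\big(m\colon C(X)\tens C(X)\to C(X)\big)$.

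First I would restrict the structure maps. Since $s_l=t_l$ for the pair groupoid and $C(X)\subseteq H$ by construction, the restricted source and target maps $C(X)\to H$ are defined. Writing $\ve_l(h)(x,y)=\sum_{z,w}h(z,w,x,y)$, conditions i) and ii) yield \eqref{eq 5.3.42}, so $\ve_l(h)$ is independent of $y$ for $h\in H_0$; equivalently, $C(X)\tens 1$ is stable under the restricted $H$-action on the base $C(X\times X)$, because the generators $s_l(f),t_l(f)$ act by pointwise multiplication while $H_0$ preserves it by i)--ii). Since $\ve_l(h)=h\cdot 1$, it follows that $\ve_l(H)\subseteq C(X)$. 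The remaining point is that the diagonal coproduct $\D_l$ of the pair groupoid restricts to an algebra morphism $H\to H\times_{C(X)}H$; on $s_l(C(X))$ and $t_l(C(X))$ this is immediate, while on $H_0$ it is precisely where conditions i)--iii) get used, and one has to check that a $C(X)$-balanced (rather than merely $C(X\times X)$-balanced) tensor representative exists and lies in $H\tens_{C(X)}H$. Granting this, coassociativity, counitality, the Takeuchi property, and the counit character identity \eqref{eq 4.2.2} for $H$ over $C(X)$ are all inherited from the corresponding identities for $C(X\times X\times X\times X)$ over $C(X\times X)$, so $H$ is a left bialgebroid over $C(X)$.

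For the covariance, I would first note that $C(X)$ is an $H$-module algebra, being the restriction of the base module algebra $A_l=C(X\times X)$ to the $H$-submodule $C(X)\tens 1$. Identifying $\O^1$ with the functions on $X\times X$ vanishing on the diagonal, a subspace of the base $C(X\times X)$, condition ii) is exactly the statement that the $H$-action preserves $\O^1$, namely formula \eqref{eq 5.3.39} together with \eqref{eq 5.3.40} (the generators $s_l(f),t_l(f)$ preserve $\O^1$ because they act by left, resp. right, multiplication). One then extends the action to $\O^k=\O^1\tens_{C(X)}\cdots\tens_{C(X)}\O^1$ ($k$ factors) via the coproduct and verifies the $H$-module-algebra axioms on the generators of $H$; since $\O$ is the universal calculus, $d^2=0$ and the graded Leibniz rule are automatic, so $(\O,\,d)$ is an $H$-covariant differential graded algebra, and $\O^0=C(X)$ carries the original action.

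Finally, the one remaining hypothesis in Definition~\ref{defn 4.3.3} is \eqref{eq 4.3.1}, that $A_l=C(X)$ together with the set $H_0$ of \eqref{eq 4.3.1n} generate $H$. Since $H$ is by construction generated by $C(X)$ and the set $H_0$ of i)--iii), it is enough to show that this $H_0$ sits inside the $H_0$ of \eqref{eq 4.3.1n}: let $h$ satisfy i)--iii). A direct computation with $(df)(x,y)=f(y)-f(x)$, using i) and iii) (the latter being \eqref{eq 5.3.43}), gives $h\cdot df=d(h\cdot f)$ on $\O^0$; moreover, summing i) over $z$ and iii) over $w$ yields $\sum_z h(z,z,x,x)=\sum_w h(w,w,y,y)$ for all $x,y$, so $\ve_l(h)$ is a constant function, whence $s_l\ve_l(h)=t_l\ve_l(h)$ acts as a scalar and commutes with $d$. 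Therefore $[h-s_l\ve_l(h),d]=[h-t_l\ve_l(h),d]=0$, which establishes \eqref{eq 4.3.1} and completes the argument. I expect the main obstacle to be the verification that $\D_l$ restricts to an algebra morphism $H\to H\times_{C(X)}H$, since this is the step where the three conditions defining $H_0$ must be matched against the explicit diagonal coproduct of the pair groupoid and against the change of base from $C(X\times X)$ to $C(X)$.
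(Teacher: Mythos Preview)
Your proposal is correct and follows the same route as the paper, which in fact gives no formal proof of this proposition: the subsection is explicitly a sketch, and the proposition merely summarizes the outcome of the construction in \eqref{eq 5.3.37}--\eqref{eq 5.3.43}. You reproduce that construction and add welcome detail the paper omits, notably the explicit verification that $h\cdot df=d(h\cdot f)$ from conditions i) and iii), and the observation that $\ve_l(h)$ is constant for $h\in H_0$ (by summing i) over $z$ and iii) over $w$), so that $s_l\ve_l(h)$ commutes with $d$; this is exactly what is needed to place the constructed $H_0$ inside the $H_0$ of \eqref{eq 4.3.1n}.

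Your honest flag that the restriction of $\D_l$ to a map $H\to H\times_{C(X)}H$ is the main technical point is apt: the paper does not verify this either, and it is the one genuine gap shared by both the paper's sketch and your plan. If you want to close it, the cleanest way is to use the explicit basis of $C(X^4)$ by delta functions and check that for $h\in H_0$ the diagonal coproduct $\D_l h$ can be rewritten, modulo the $C(X\times X)$-balancing relations, as a $C(X)$-balanced sum of tensors whose legs again satisfy i)--iii); since $X$ is finite this is a finite combinatorial check.
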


	
	

\begin{conj}
	$H$ is not of the form $C(X)\# Q$ for any Hopf algebra $Q$ acting on $C(X)$.
\end{conj}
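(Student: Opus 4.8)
The plan is to exploit the fact that a smash product $C(X)\#Q$ is an especially rigid left bialgebroid over $A:=C(X)$ --- it is \emph{cleft} over $A$ --- and to argue that $H$ cannot be cleft. First a reduction. Note that $H$ is finite dimensional, being a subalgebra of the convolution algebra $C(X\times X\times X\times X)$, which is isomorphic to $M_{n^2}(\bC)$ (the groupoid algebra of the pair groupoid of $X\times X$); and $C(X)\#Q\isom A\tens_{\bC}Q$ is free of rank $\dim Q$ as a left $A$-module via $s_l$. Moreover $H$ is itself free as a left $A$-module via $s_l$: the diagonal action of $\mathrm{Sym}(X)$ on $X\times X\times X\times X$ is a convolution automorphism of $H$ permuting the idempotents $s_l(e_i)$, $i\in X$, transitively, so $n\mid\dim_{\bC}H$ and $r:=\dim_{\bC}H/n$ is a positive integer. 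Consequently any Hopf algebra $Q$ with $H\isom C(X)\#Q$ must be finite dimensional of dimension $r$.

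Next, the structural obstruction. If $H\isom C(X)\#Q$ as left bialgebroids, then $H$ admits a \emph{cleaving}: a subspace $W\subseteq H$ containing $1_H$ such that $W$ is a subalgebra, $\ve_l(W)\subseteq\bC\,1_A$ and $\D_l|_W$ lifts to a $\bC$-coalgebra comultiplication $W\to W\tens_{\bC}W$ with counit $\ve_l|_W$, and the multiplication map $s_l(A)\tens_{\bC}W\to H$ is a linear isomorphism. Indeed one takes $W=1\#Q$: the module-algebra identity $q\cdot 1_A=\ve(q)1_A$ gives $(1\#q)(1\#q')=1\#qq'$, and the remaining properties are read off the formulas for $\D_l$, $\ve_l$ and $s_l$ on $A\#Q$. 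Conversely, such a $W$ reconstructs a $\bC$-bialgebra structure on $W$ with $H\isom A\#W$, which is a Hopf algebra exactly when $W$ admits an antipode. Hence it suffices to prove that $H$ admits \emph{no} cleaving subspace whatsoever.

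I would first carry this out for $n=2$. One computes $\dim_{\bC}H$ directly from the presentation of $H$ as operators on $C(X)\oplus\O^1(C(X))$ --- here $\O^1(C(X))$ is the universal one-forms, i.e.\ functions on $X\times X$ vanishing on the diagonal: the ``action on functions'' block lies in $M_n$ and is unconstrained, while the ``action on one-forms'' block is pinned down on the image of the universal differential $d$ by the functions-block through the relation $[h-s_l\ve_l(h),d]=0$ of \eqref{eq 4.3.1n} (equivalently \eqref{eq 5.3.43}) and is otherwise free. One then writes out $\D_l$ as inherited from the pair groupoid of $X\times X$, and runs through the (algebraic) family of candidate cleaving subspaces $W\ni 1_H$ of dimension $r=\dim_{\bC}H/2$ with $s_l(A)\cdot W=H$, verifying that none of them is simultaneously a subalgebra and stable under $\D_l$ in the sense above. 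This contradiction settles $n=2$, and one then attempts to promote the argument to general $n$.

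The hard part is exactly this last step, which is why the statement is recorded only as a conjecture. Two difficulties compound. First, the subalgebra $H\subseteq M_{n^2}(\bC)$ generated by $s_l(C(X))$ together with the solution space of the conditions defining $H_0$ is not transparent for general $n$; one needs a clean normal form both for $H$ and for its comultiplication before any case analysis becomes feasible. Second, and more seriously, one must exclude \emph{every} Hopf algebra $Q$ acting on $C(X)$, whereas for $n\geq 4$ the $n$-point set carries genuine quantum symmetry, so $Q$ need not factor through a group algebra and the combinatorics of possible cleavings is large. A complete proof will likely require either an obstruction-theoretic reformulation of cleftness for bialgebroids, or a direct argument that $H$, being assembled from the (very rigid) pair groupoid of $X\times X$, cannot split as $C(X)\tens(\text{Hopf algebra})$.
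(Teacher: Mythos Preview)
The statement in the paper is a \emph{conjecture}: the authors do not prove it, and there is no argument in the paper to compare against. Your submission is not a proof either, and you say so yourself --- you outline a strategy (show that $H$ admits no cleaving subspace $W$ with $s_l(A)\otimes_{\bC}W\xrightarrow{\sim}H$ that is simultaneously a subalgebra and a sub-$\bC$-coalgebra for $\D_l$), propose to check it by hand for $n=2$, and then explicitly identify the general case as ``the hard part\ldots which is why the statement is recorded only as a conjecture.''

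As a strategy your reduction is sound: if $H\cong C(X)\#Q$ as left bialgebroids then $1\#Q$ is indeed a cleaving subspace in your sense, so ruling out all such $W$ would settle the conjecture. The freeness argument via the $\mathrm{Sym}(X)$-symmetry is fine over $C(X)\cong\bC^n$. But nothing is actually verified: even for $n=2$ you only \emph{describe} the computation (``runs through the family of candidate cleaving subspaces\ldots verifying that none\ldots'') without carrying it out, and for general $n$ you concede the obstacles. So this remains a programme, not a proof --- which is consistent with the paper leaving the statement as a conjecture.
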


\begin{rem}
We remark that the space $H_0$ is at least $n^2$-dimensional and hence $H$ is at least $n^3$-dimensional, sufficiently large for our purposes. 
\end{rem}

\begin{que}
Give a nice characterization/description of $H$.
\end{que}

\noindent
Finally, we ask a question which is not directly related to this work but interesting in its own right. In \cite{MR3777415}, it is shown that a coaction of a compact quantum group on an algebra can be lifted to a differential calculus (at least in the classical situation) under some suitable (unitarity of the coaction, technically, see also \eqref{eq 4.3.10}) conditions, like one expects from a group action. So we ask

\begin{que}\label{que 4.6.9}
Is the above true for unitary action (i.e., \eqref{eq 4.3.10} is satisfied) of a Hopf algebroid?
\end{que}

\begin{rem}
We have computed the complex structures on a three-point space in \cite{three}, none of which are covariant with respect to the natural $S_3$-action. It would be interesting to investigate covariance of the Hopf algebroid constructed in this subsection.
\end{rem}

\subsection{Covariant complex structures on toral deformed manifolds}\label{toralsubsec}
In this subsection, we sketch the construction of a natural Hopf algebroid and a covariant complex structure on toral deformed manifolds a la Connes, Dubois-Violette, Landi, and Rieffel. This provides a rather large class of genuine (non-classical) examples of Hopf algebroid covriant complex structures.

\medskip
Let $(M,T)$ be a  toral manifold, by which we mean a compact manifold together with a smooth action of a torus $T$. We shall consider toral deformations of $M$ in the sense of \cites{MR1846904,MR1937657}, see also \cite{MR1184061} for the analytic aspects. Making repeated use of Lemma 7.18 and Lemma 7.19 of \cite{bgm}, we obtain

\begin{prop}
Let $H$ be a Hopf algebroid over $C^{\infty}(M)$ and let $T$ act on $H$ in such a way so that all the structure maps are $T$-equivariant. Then $H_{\theta}$ is a Hopf algebroid over $C^{\infty}(M)_{\theta}$.
\end{prop}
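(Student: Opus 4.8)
The plan is to treat the toral deformation $(-)_\theta$ as a functor on $T$-equivariant objects and to use the fact that, on this category, it is monoidal in the sense made precise by Lemmas 7.18 and 7.19 of \cite{bgm}: it carries $T$-equivariant algebras to algebras, $T$-equivariant $C^{\infty}(M)$-bimodules to $C^{\infty}(M)_\theta$-bimodules, $T$-equivariant bimodule maps to bimodule maps, and it satisfies $(V \tens_{C^{\infty}(M)} W)_\theta \isom V_\theta \tens_{C^{\infty}(M)_\theta} W_\theta$ naturally, compatibly with composition, identities and flips. By hypothesis $H$ and the base algebra(s) $A_l$, $A_r$ all carry $T$-actions and all the structure maps $s_l,t_l,s_r,t_r,\D_l,\D_r,\ve_l,\ve_r,S$ and $\m_H$ are $T$-equivariant, so the entire package deforms.

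First I would deform the ring structure: $s_l,t_l,s_r,t_r$ deform to algebra homomorphisms $C^{\infty}(M)_\theta \to H_\theta$ (using $(C^{\infty}(M)^{op})_\theta \isom (C^{\infty}(M)_\theta)^{op}$, with the usual sign convention on the deformation parameter), and the identity expressing that their images commute is preserved by the functor, so $H_\theta$ is an $(s,t)$-ring over $C^{\infty}(M)_\theta$. The two bimodule structures \eqref{eq 4.2.1} and \eqref{eq 4.2.3} are built from $s_l,t_l$ (resp.\ $s_r,t_r$) and the multiplication, all $T$-equivariant, hence deform to the corresponding structures on $H_\theta$; by the monoidality isomorphism the deformations of $H \tens_{A_l} H$ and $H \tens_{A_r} H$ are the analogous tensor products for $H_\theta$. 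Then I would transport all the axioms: each of the $A$-coalgebra identities \eqref{eq 1.1.2}, the left- and right-bialgebroid conditions of Definitions \ref{defn 4.2.3} and \ref{defn 4.2.6} (the algebra-morphism/Takeuchi property of $\D$ and the character property \eqref{eq 4.2.2}, \eqref{eq4.2.4} of the counits), and the four Hopf-algebroid conditions \eqref{eq 4.2.5}--\eqref{eq 4.2.8} is an equality of two composites of structure maps, identities, $\flip$ and tensor products; applying $(-)_\theta$ yields the same equality for the deformed maps. Invertibility of the antipode survives because $S^{-1}$ is itself $T$-equivariant and $(S S^{-1})_\theta = S_\theta (S^{-1})_\theta = \id_{H_\theta}$, so $(S^{-1})_\theta$ is a two-sided inverse of $S_\theta$.

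The one point requiring genuine care — the step I expect to be the \emph{main obstacle} — is that $\D_l^\theta$ and $\D_r^\theta$ really land in the deformed Takeuchi products and that these equal the Takeuchi products of the deformed data, that is $(H \times_{A_l} H)_\theta = H_\theta \times_{C^{\infty}(M)_\theta} H_\theta$ and likewise with $\times^{A}$. Here one uses that $H \times_{A_l} H \subset H \tens_{A_l} H$ is the equalizer of two $T$-equivariant maps (the two sides of the Takeuchi condition, assembled from $s,t$ and the multiplication) together with the left-exactness of $(-)_\theta$ that is part of the content of Lemma 7.18 of \cite{bgm}; since $\D_l$ is $T$-equivariant with image in $H \times_{A_l} H$, its deformation has image in $(H \times_{A_l} H)_\theta = H_\theta \times_{C^{\infty}(M)_\theta} H_\theta$. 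Once this identification of deformed subspaces is secured, verifying that $(H_l^\theta, H_r^\theta, S_\theta)$ meets Definition \ref{defn 4.2.8} is entirely formal, diagram by diagram. A minor additional check is that the base of the deformed right bialgebroid is $(C^{\infty}(M)_\theta)^{op}$, which is again a toral deformation of $C^{\infty}(M)^{op}$, so the phrase ``over $C^{\infty}(M)_\theta$'' is consistent.
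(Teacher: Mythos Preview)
Your proposal is correct and is exactly the approach the paper takes: the paper's entire proof is the phrase ``making repeated use of Lemma 7.18 and Lemma 7.19 of \cite{bgm}'', and you have simply spelled out what that repeated use consists of, including the one nontrivial point (compatibility of $(-)_\theta$ with the Takeuchi product). If anything, your write-up is more detailed than what appears in the paper.
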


As a concrete example, let $\mathcal{D}(M)$ be the algebra of differential operators. It is well-known (see \cite{MR1815717}) that $\mathcal{D}(M)$ is a Hopf algebroid. By the Proposition above, we obtain

\begin{prop}
$\mathcal{D}(M)_{\theta}$ is a Hopf algebroid over $C^{\infty}(M)_{\theta}$.
\end{prop}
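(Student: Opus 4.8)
The plan is to derive this statement as an immediate corollary of the general deformation principle established just above, namely that toral deformation carries Hopf algebroids over $C^\infty(M)$ to Hopf algebroids over $C^\infty(M)_\theta$, provided the torus action on the total algebra is by Hopf algebroid automorphisms (all structure maps $T$-equivariant). Thus the only real content is to exhibit the relevant $T$-action on $\mathcal{D}(M)$ and to check that it is compatible with the Hopf algebroid structure recalled from \cite{MR1815717}. I would state this up front: the proof consists of (i) recalling that $\mathcal{D}(M)$ is a Hopf algebroid over $C^\infty(M)$, with $A_l=A_r=C^\infty(M)$, source and target given by left and right multiplication by functions, comultiplication dual to composition of operators, counit the evaluation at $1$, and antipode built from the formal adjoint; and (ii) observing that the $T$-action on $M$ induces a $T$-action on $\mathcal{D}(M)$ by $(\phi_t \cdot P)(f) = (t \cdot (P(t^{-1}\cdot f)))$, i.e.\ by conjugating differential operators by the diffeomorphisms $t\in T$.

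The key steps, in order, are then as follows. First I would note that since $T$ acts by diffeomorphisms, the induced action on $\mathcal{D}(M)$ is by algebra automorphisms, and it is smooth in the appropriate sense so that the toral deformation $\mathcal{D}(M)_\theta$ is defined and the machinery of \cite{bgm} (Lemmas 7.18 and 7.19) applies. Second, I would verify $T$-equivariance of each structure map: the source and target maps are $T$-equivariant because the $T$-action on $C^\infty(M)$ is the restriction of the one on $\mathcal{D}(M)$ and $T$ acts by algebra maps; the comultiplication is $T$-equivariant because conjugation by a diffeomorphism is compatible with the composition product of operators and hence with its dual; the counit is $T$-equivariant because it picks out the ``constant term'', which is preserved by conjugation; and the antipode is $T$-equivariant because the formal adjoint with respect to a $T$-invariant density (which exists by averaging over the compact group $T$) commutes with the conjugation action. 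Third, with all structure maps $T$-equivariant, the preceding Proposition applies verbatim to $H=\mathcal{D}(M)$, yielding that $\mathcal{D}(M)_\theta$ is a Hopf algebroid over $C^\infty(M)_\theta$.

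The step I expect to be the main obstacle is the verification that the antipode is $T$-equivariant, since the antipode on $\mathcal{D}(M)$ is typically defined using a choice of smooth density (or volume form) on $M$ to form formal adjoints, and a priori such a choice need not be $T$-invariant. The fix is routine but must be stated: one replaces an arbitrary density by its average over $T$, which is $T$-invariant because $T$ is compact, and then the resulting antipode commutes with the $T$-action; one should also remark that the Hopf algebroid structure is independent of this choice up to isomorphism, or simply fix the $T$-invariant density from the outset. A secondary, more bookkeeping-type point is to make sure the deformation construction of \cite{bgm} is being applied to the \emph{algebra} $\mathcal{D}(M)$ equipped with this $T$-action and that the isomorphisms of the deformed tensor products used in \cite{bgm} are the ones entering the Takeuchi products and coproducts; but this is exactly what the general Proposition above has already absorbed, so no new argument is required.
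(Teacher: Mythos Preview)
Your proposal is correct and follows exactly the approach the paper takes: the paper presents this proposition as an immediate application of the preceding general deformation principle, citing \cite{MR1815717} for the Hopf algebroid structure on $\mathcal{D}(M)$ and saying nothing further. You have in fact supplied more detail than the paper does, since the paper does not explicitly verify the $T$-equivariance of the structure maps (in particular the antipode) but simply asserts the conclusion; your remark about choosing a $T$-invariant density via averaging is a genuine point that the paper passes over in silence.
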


Since a differential operator is given by vector fields locally, and since Lie derivative with respect to a vector field commutes with the exterior derivative $d$, we see that the canonical action of $\mathcal{D}(M)$ on $\Omega^*(M)$ satisfies our definition, where we take $H_0$ (as in Definition \ref{defn 4.3.3}, Equation \ref{eq 4.3.1n}) to be the set of $L_X$, $X$ being a vector field. 

\begin{prop}
$\Omega^*(M)_{\theta}$ is a $\mathcal{D}(M)_{\theta}$-covariant differential calculus in the sense of Definition \ref{defn 4.3.6}.
\end{prop}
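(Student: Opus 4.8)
The plan is to assemble the statement from the three preceding propositions in this subsection together with the already-established compatibility between the differential calculus on $M$ and the action of differential operators. First I would recall the setup: by the previous proposition, $\mathcal{D}(M)_\theta$ is a Hopf algebroid over $C^\infty(M)_\theta$, obtained by applying the toral deformation functor (Lemma~7.18 and Lemma~7.19 of \cite{bgm}) to the $T$-equivariant Hopf algebroid $\mathcal{D}(M)$; and by the remark just before the statement, the canonical $\mathcal{D}(M)$-action on $\Omega^*(M)$ satisfies Definition~\ref{defn 4.3.6}, with $H_0$ taken to be the span of Lie derivatives $L_X$ for $X$ a vector field, because $L_X$ commutes with $d$ and a differential operator is locally a polynomial in vector fields, so $C^\infty(M)$ and $H_0$ generate $\mathcal{D}(M)$.

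Next I would check that everything in sight is $T$-equivariant so that the deformation functor applies uniformly: the $T$-action on $M$ induces a $T$-action on $\Omega^*(M)$ (by pullback), on $\mathcal{D}(M)$ (by conjugation), on $d$ (since $d$ is natural, hence $T$-invariant), on the wedge product, and on the module structure map $\mathcal{D}(M)\otimes\Omega^*(M)\to\Omega^*(M)$. Then applying the deformation functor termwise produces: the deformed graded algebra $\Omega^*(M)_\theta$ (with deformed wedge product), the deformed differential $d_\theta$ (which still squares to zero and is still a graded derivation, since these identities are preserved by the functor by Lemma~7.19), the deformed module action of $\mathcal{D}(M)_\theta$ on $\Omega^*(M)_\theta$, and the deformed generating subspace $(H_0)_\theta$. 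One then verifies, again by functoriality, that $\Omega^*(M)_\theta$ is an $\bN_0$-graded $\mathcal{D}(M)_\theta$-module algebra, that $d_\theta$ is homogeneous of degree one with $d_\theta^2=0$ and the graded Leibniz rule, that $\Omega^0(M)_\theta = C^\infty(M)_\theta$ with matching $\mathcal{D}(M)_\theta$-actions, and that the spanning condition \eqref{eq 4.3.3} survives because the deformation functor is surjective on the relevant spaces and sends $b_0\,db_1\wedge\cdots\wedge db_k$ to the analogous deformed expression.

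The one condition requiring genuine care is \eqref{eq 4.3.1}: that $C^\infty(M)_\theta$ together with $(H_0)_\theta$ generate $\mathcal{D}(M)_\theta$ as an algebra, and that $(H_0)_\theta$ actually lands in the deformed $H_0$ of Definition~\ref{defn 4.3.3}, i.e. $[h-s_l\ve_l(h),d_\theta]=0$ for $h\in(H_0)_\theta$. For the generation statement, I would argue that the deformation functor is a filtered-colimit-preserving, essentially surjective monoidal functor, so it carries a generating set to a generating set; concretely, since $\mathcal{D}(M)$ is generated by $C^\infty(M)$ and $\{L_X\}$, the deformed products of the deformed images of these elements span $\mathcal{D}(M)_\theta$. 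For the commutation relation, the key point is that in the deformed picture the bracket $[\,\cdot\,,d_\theta]$ corresponds under the functor to $[\,\cdot\,,d]$ on the undeformed side (because $d$ is $T$-invariant, so its image under the functor is just $d$ with the deformed module structure, and Lemma~7.18/7.19 say the functor is compatible with such invariant operators); hence $[L_X - s_l\ve_l(L_X),d]=0$ on $M$ deforms to the required identity. The main obstacle is therefore the careful bookkeeping of exactly how the functor interacts with the counit $\ve_l$, the source map $s_l$, and the commutator with $d$ — making sure that ``$H_0$ deforms to $H_0$'' is not a coincidence of notation but a consequence of $T$-invariance of $d$ — but this is precisely the content one extracts from Lemma~7.18 and Lemma~7.19 of \cite{bgm}, so no new idea is needed beyond invoking them correctly.
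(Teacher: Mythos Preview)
Your proposal is correct and follows essentially the same route as the paper: both argue that the deformation functor from \cite{bgm} carries the $\mathcal{D}(M)$-covariant differential calculus structure to a $\mathcal{D}(M)_\theta$-covariant one, with the only nontrivial point being that condition~\eqref{eq 4.3.1} survives. The paper's proof is terser, isolating the single observation that the $T$-action keeps the subspace $H_0$ invariant and then invoking a totality argument as in Lemma~7.28 of \cite{bgm} (rather than 7.18/7.19), but your more explicit unpacking via $T$-invariance of $d$ and functoriality amounts to the same reasoning.
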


\begin{proof}
It follows from the general construction that $\Omega^*(M)_{\theta}$ is indeed a differential calculus over $C^{\infty}(M_{\theta})$. Now since the $T$-action keeps the space $H_0$ invariant, the result follows from a totality argument similar to Lemma 7.28 of \cite{bgm}.
\end{proof}

By considering complex conjugation, we see that $\Omega^*(M)_{\theta}$ is even a $\mathcal{D}(M)_{\theta}$-covariant $\ast$-differential calculus. Before introducing complex structure in this example, let us summarize the above facts as follows:

\begin{thm}
Let $(M,T)$ be a toral manifold and let $\mathcal{D}(M)$ be the algebra of differential operators on $M$. Then $\Omega^*(M)_{\theta}$ is a $\mathcal{D}(M)_{\theta}$-covariant $\ast$-differential calculus over $C^{\infty}(M)_{\theta}$.
\end{thm}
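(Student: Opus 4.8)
The plan is to treat this theorem as a summary statement whose proof assembles the three preceding propositions of this subsection and supplies the missing $\ast$-structure. First I would recall that $\mathcal{D}(M)$ is a Hopf algebroid over $C^{\infty}(M)$ (see \cite{MR1815717}), and then equip it with its canonical $\ast$-structure: complex conjugation on $C^{\infty}(M)$ and $X \mapsto -\ol{X}$ on a complex vector field $X$, where $\ol X$ is the complex-conjugate field (so that real vector fields are skew-adjoint). I would check that $s_l$ is $\ast$-preserving and that \eqref{eq 4.2.29} and \eqref{eq 4.2.32} hold — most cleanly by viewing $\mathcal{D}(M)$ as the enveloping Hopf algebroid of the Lie--Rinehart algebra of vector fields with its standard involution, so that $S(L_X)=-L_X$ on $H_0$ — whence $\mathcal{D}(M)$ is a Hopf $\ast$-algebroid in the sense of Definition \ref{defn 4.2.19}. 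The point to keep track of here is that all structure maps, and the involution, are $T$-equivariant.

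Next I would deform. Since Rieffel-type toral deformation is compatible with $\ast$-structures (the deformed algebra is again a $\ast$-algebra for the undeformed involution, cf.\ \cite{bgm}), the general deformation proposition above applies in the $\ast$-setting, giving that $\mathcal{D}(M)_{\theta}$ is a Hopf $\ast$-algebroid over $C^{\infty}(M)_{\theta}$. By the preceding proposition, $\Omega^{*}(M)_{\theta}$ is already a $\mathcal{D}(M)_{\theta}$-covariant differential calculus over $C^{\infty}(M)_{\theta}$, so I would equip it with the $\ast$-operation obtained by deforming complex conjugation on $\Omega^{*}(M)$. The elementary requirements of Definition \ref{defn 4.3.7} — degree zero, conjugate linearity, involutivity, $(\w\wdg\e)^{*}=(-1)^{kl}\e^{*}\wdg\w^{*}$, and $(d\w)^{*}=d(\w^{*})$ — all hold classically and concern $T$-invariant operations, hence persist for the deformed calculus by the totality/naturality arguments already used above (cf.\ Lemma 7.28 of \cite{bgm} and its proof).

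The last, and only genuinely substantive, point is that $\Omega^{*}(M)_{\theta}$ is a $\mathcal{D}(M)_{\theta}$-module $\ast$-algebra, i.e.\ $(h\cdot\w)^{*}=S(h)^{*}\cdot\w^{*}$ for $h \in \mathcal{D}(M)_{\theta}$ and $\w \in \Omega^{*}(M)_{\theta}$. By \eqref{eq 4.3.1}, $H=\mathcal{D}(M)_{\theta}$ is generated as an algebra by $A_l=C^{\infty}(M)_{\theta}$ and by $H_0$ (the span of the Lie derivatives $L_X$), and both sides of the identity are anti-multiplicative in $h$ since $S$ and $\ast$ reverse products; so it suffices to verify it on $h=s_l(f)$ and on $h=L_X$. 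The first case is just $(f\w)^{*}=\ol{f}\,\w^{*}$; the second, using $S(L_X)=-L_X$ and $L_X^{\ast}=-L_{\ol X}$, reduces to $S(L_X)^{*}\cdot\w^{*}=L_{\ol X}\w^{*}=\ol{L_{\ol X}\w}=(L_X\w)^{*}$, i.e.\ to the classical naturality of the Lie derivative under complex conjugation. Both identities hold classically and, being $T$-equivariant statements, survive the deformation. I expect the main obstacle to be organizational rather than conceptual: carrying the $\ast$-structure, the Hopf algebroid axioms, and the differential-calculus axioms through the toral deformation simultaneously, and confirming that the generator-level identities above globalize to all of $H$ after deformation — precisely the bookkeeping handled by Lemmata 7.18 and 7.19 of \cite{bgm}, applied exactly as in the proofs of the three preceding propositions.
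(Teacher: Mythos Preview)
Your proposal is correct and follows essentially the same approach as the paper: the theorem is stated there without a separate proof, explicitly as a summary of the three preceding propositions together with the one-line remark ``By considering complex conjugation, we see that $\Omega^*(M)_{\theta}$ is even a $\mathcal{D}(M)_{\theta}$-covariant $\ast$-differential calculus.'' You have simply unpacked that remark --- supplying the $\ast$-structure on $\mathcal{D}(M)$, its compatibility with deformation, and the generator-level verification of the module $\ast$-algebra identity --- far more carefully than the paper does.
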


Now let $(M,T)$ be a complex toral manifold, by which we mean that $M$ is a complex manifold (with equivariant complex structure $J$), $T$ is a complex torus and the action $T \times M \rightarrow M$ is holomorphic. Let $\mathcal{D}^{\mathrm{hol}}(M)$ be the subalgebra of $\mathcal{D}(M)$ generated by $C^{\infty}(M)$ and differential operators $D$ such that $D(J)=0$. For example, if $X$ is a holomorphic vector field, then $L_{X}$ is such an operator. It is well-known that such a $D$ preserves the $(p,q)$-decomposition (the $\bN_0^2$-grading) of $\Omega^{\ast}(M)$. 

\begin{prop}
$\mathcal{D}^{\mathrm{hol}}(M)$ is a Hopf algebroid over $C^{\infty}(M)$ and the complex structure on $M$ is a $\mathcal{D}^{\mathrm{hol}}(M)$-covariant complex structure on $\Omega^*(M)$.
\end{prop}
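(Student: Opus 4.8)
The plan is to obtain $\mathcal D^{\mathrm{hol}}(M)$ as a sub-Hopf-algebroid of the Hopf algebroid $\mathcal D(M)$ over $C^\infty(M)$ of \cite{MR1815717}, and then to read off the covariant complex structure from its action on $\Omega^*(M)$. In $\mathcal D(M)$ the source and target maps are the inclusion of $C^\infty(M)$ as order-zero operators, the counit is $D\mapsto D(1)$, the (cocommutative) comultiplications are the algebra maps determined by $\Delta(f)=f\tens 1$ on $C^\infty(M)$ and $\Delta(L_X)=L_X\tens 1+1\tens L_X$ on Lie derivatives, and the antipode is the principal anti-automorphism, fixing $C^\infty(M)$ and negating vector fields. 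Since $\mathcal D^{\mathrm{hol}}(M)\supseteq C^\infty(M)$, the source, target and counit restrict to it for free; the content is that the two comultiplications and the antipode restrict as well.

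To see this I would reduce, using that $\Delta_l,\Delta_r$ are algebra maps into the Takeuchi products and $S$ is an anti-automorphism, to the algebra generators $C^\infty(M)$ and $\{D:D(J)=0\}$; on $C^\infty(M)$ the claim is trivial, and for $D$ with $D(J)=0$ I would argue in a holomorphic chart, where such $D$ is a smooth-coefficient polynomial in the flat holomorphic and antiholomorphic coordinate vector fields, each of which is preserved, together with its coproduct and antipode, by the $(p,q)$-type bookkeeping; a sheaf-theoretic patching then gives the global statement. I expect this to be the main obstacle: making precise the normal form of the operators annihilating $J$ and controlling the passage from the local computation to membership in the Takeuchi product $\mathcal D^{\mathrm{hol}}(M)\times_{C^\infty(M)}\mathcal D^{\mathrm{hol}}(M)$; a possibly cleaner route is to work with the description of $\mathcal D^{\mathrm{hol}}(M)$ as the set of $D\in\mathcal D(M)$ whose action on $\Omega^*(M)$ commutes with the bigrading operator $J$, a centralizer condition that is visibly stable under the (geometric) Hopf algebroid operations.

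For the second assertion I would first check that $(\Omega^*(M),d)$ is a $\mathcal D^{\mathrm{hol}}(M)$-covariant $\*$-differential calculus over $C^\infty(M)$ (Definitions \ref{defn 4.3.6}, \ref{defn 4.3.7}): the $\mathcal D(M)$-action on $\Omega^*(M)$ restricts, commutes with $d$ on Lie derivatives, and is compatible with complex conjugation as in the preceding discussion, while condition \eqref{eq 4.3.1} holds because the Lie derivatives $L_X$ with $L_X(J)=0$ lie in $H_0$ of \eqref{eq 4.3.1n} (as $[L_X,d]=0$ and the counit kills $L_X$) and, together with $C^\infty(M)$, generate $\mathcal D^{\mathrm{hol}}(M)$. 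Then I would verify the three conditions of Definition \ref{defn 4.3.21} for the grading $\Omega^*(M)=\bdsum_{p,q}\Omega^{(p,q)}(M)$ induced by $J$: it is an $\bN_0^2$-algebra grading refining the degree grading; it is preserved by $\mathcal D^{\mathrm{hol}}(M)$ since its generators are (functions trivially, the operators with $D(J)=0$ by the cited fact that they respect the $(p,q)$-decomposition); and complex conjugation interchanges $\Omega^{(p,q)}$ and $\Omega^{(q,p)}$, so $\#$ respects the bigrading. Finally, since $J$ is an honest integrable complex structure on $M$ one has $d=\del+\adel$, so Lemma \ref{lem 4.3.22} upgrades this to a $\mathcal D^{\mathrm{hol}}(M)$-covariant complex structure on $\Omega^*(M)$, as claimed.
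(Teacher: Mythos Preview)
The paper states this proposition without proof; the entire subsection is explicitly a sketch, and nothing beyond the definitions and the one-line remark about the $\bN_0^2$-grading is offered. Your proposal is therefore considerably more detailed than anything in the paper, and it follows exactly the intended route: restrict the Hopf algebroid structure of $\mathcal D(M)$ from \cite{MR1815717} to the subalgebra, then verify Definition~\ref{defn 4.3.21} using that the generators preserve the $(p,q)$-decomposition.

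One point deserves care. Your verification of condition~\eqref{eq 4.3.1} rests on the claim that $C^\infty(M)$ together with the Lie derivatives $L_X$ satisfying $L_X(J)=0$ already generate $\mathcal D^{\mathrm{hol}}(M)$. The paper, however, defines $\mathcal D^{\mathrm{hol}}(M)$ as generated by $C^\infty(M)$ and \emph{all} differential operators $D$ with $D(J)=0$, giving $L_X$ for holomorphic $X$ only as an example; it never says what $D(J)=0$ means for higher-order $D$, nor that such $D$ lie in the subalgebra generated by functions and holomorphic Lie derivatives (on a compact $M$ with few global holomorphic vector fields this can genuinely fail). If one reads the definition as ``generated by $C^\infty(M)$ and holomorphic vector fields'', your argument is complete. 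Under the more literal reading you would need either a local-to-global argument that the two generating sets coincide, or a direct check that each higher-order generator already lies in $H_0$, which does not follow from $[L_X,d]=0$ alone. The same ambiguity touches closure under $\Delta$ and $S$; your centralizer reformulation handles that neatly, but, as you note, it is a reformulation rather than the paper's stated definition. This is an ambiguity in the paper, not a defect peculiar to your approach.
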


\begin{rem}
The $\bN_0^2$-grading comes from the eigenspaces of $J$ as in \cite{MR3073899}.
\end{rem}

The holomorphicity of the toral action yields

\begin{prop}
$\mathcal{D}^{\mathrm{hol}}(M)_{\theta}$ is a Hopf algebroid over $C^{\infty}(M)_{\theta}$ and $J_{\theta}$ is a $\mathcal{D}^{\mathrm{hol}}(M)_{\theta}$-covariant complex structure on $\Omega^*(M)_{\theta}$.
\end{prop}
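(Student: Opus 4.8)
The plan is to obtain both assertions by applying the toral-deformation results of this subsection to the corresponding classical objects, the key point being that holomorphicity of the $T$-action makes all the relevant structures on $M$ $T$-equivariant. For the Hopf algebroid part, I would first note that the holomorphic $T$-action on $M$ induces an action of $T$ on $\mathcal{D}(M)$ by algebra automorphisms which preserves $\mathcal{D}^{\mathrm{hol}}(M)$: since each $t \in T$ fixes the complex structure $J$, it sends a differential operator $D$ with $D(J)=0$ to another such operator, and it fixes $C^\infty(M)$. All the Hopf algebroid structure maps of $\mathcal{D}^{\mathrm{hol}}(M)$ are the (co)restrictions of the canonical ones on $\mathcal{D}(M)$ (already recalled to be a Hopf algebroid), hence $T$-equivariant, so the Proposition on toral deformation of Hopf algebroids—which rests on Lemmas 7.18 and 7.19 of \cite{bgm}—applies and yields that $\mathcal{D}^{\mathrm{hol}}(M)_\theta$ is a Hopf algebroid over $C^\infty(M)_\theta$.

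For the complex structure, recall that $\Omega^*(M)$ carries a $\mathcal{D}^{\mathrm{hol}}(M)$-covariant complex structure in the sense of Definition \ref{defn 4.3.21}, given by the $(p,q)$-decomposition coming from the eigenspaces of $J$, with $d = \del + \adel$ the usual Dolbeault decomposition. I would check that every piece of this datum is $T$-equivariant: the $T$-action preserves each $\Omega^{(k,l)}(M)$ because it preserves $J$, and since $d$ is $T$-equivariant so are the operators $\del$, $\adel$ obtained from it via the $T$-equivariant projections $\proj_{\Omega^{(k+1,l)}}$, $\proj_{\Omega^{(k,l+1)}}$. Consequently, passing to the deformation $\Omega^*(M)_\theta$—already known to be a $\mathcal{D}(M)_\theta$-covariant $\ast$-differential calculus, hence by restriction of scalars a $\mathcal{D}^{\mathrm{hol}}(M)_\theta$-covariant one (the totality condition \eqref{eq 4.3.1} being verified by the same argument as in the preceding proof, in the spirit of Lemma 7.28 of \cite{bgm})—the undeformed bidegree decomposition $\Omega^*(M)_\theta = \dsum_{(k,l) \in \bN_0^2}\Omega^{(k,l)}(M)$ is still an $\bN_0^2$-algebra grading for the deformed product (Rieffel's product respects any $T$-stable grading) on which $H_0$ acts preserving the grading, so conditions i) and ii) of Definition \ref{defn 4.3.21} hold; condition iii) holds because the deformed $\ast$-involution is still complex conjugation on the underlying space and so interchanges $\Omega^{(k,l)}$ and $\Omega^{(l,k)}$, i.e.\ $\#$ preserves the grading $\ol{\Omega}^{(k,l)} = \ol{\Omega^{(l,k)}}$. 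Finally, $d = \del + \adel$ persists verbatim since $d$, $\del$, $\adel$ and the projections are the classical ones, so Lemma \ref{lem 4.3.22} gives that $J_\theta$ is an integrable $\mathcal{D}^{\mathrm{hol}}(M)_\theta$-covariant complex structure on $\Omega^*(M)_\theta$.

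The main obstacle, such as it is, lies in the bookkeeping of the first step: one must be sure that \emph{all} the Hopf algebroid structure maps of $\mathcal{D}^{\mathrm{hol}}(M)$—the two bialgebroid source and target maps, both coproducts, both counits, and the antipode inherited from $\mathcal{D}(M)$—are genuinely $T$-equivariant, so that the deformation proposition applies without modification, together with the accompanying totality argument guaranteeing that the a priori smaller algebra $\mathcal{D}^{\mathrm{hol}}(M)_\theta$ still satisfies \eqref{eq 4.3.1} with the base. Everything downstream is a formal transport of the classical situation through the deformation, which changes only the product and leaves the grading, the differential, and the conjugation intact.
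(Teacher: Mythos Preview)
Your proposal is correct and follows exactly the approach the paper intends: the paper states this proposition without a detailed proof, merely prefacing it with ``The holomorphicity of the toral action yields,'' which is precisely the mechanism you spell out---holomorphicity makes $\mathcal{D}^{\mathrm{hol}}(M)$, its Hopf algebroid structure maps, and the $(p,q)$-decomposition all $T$-equivariant, so the earlier deformation propositions of the subsection apply. Your expansion of the bookkeeping (equivariance of each structure map, stability of the bigrading under Rieffel's product, persistence of $d=\del+\adel$) is a faithful and more explicit rendering of what the paper leaves implicit.
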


Summarizing all these,

\begin{thm}
Let $(M,T)$ be a complex toral manifold and let $\mathcal{D}^{\mathrm{hol}}(M)$ be the algebra of differential operators on $M$ defined as above. Then $\Omega^*(M)_{\theta}$ is a $\mathcal{D}^{\mathrm{hol}}(M)_{\theta}$-covariant $\ast$-differential calculus over $C^{\infty}(M)_{\theta}$ and $J_{\theta}$ is a $\mathcal{D}^{\mathrm{hol}}(M)_{\theta}$-covariant complex structure.
\end{thm}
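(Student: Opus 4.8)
The plan is to assemble the propositions already proved in this subsection and to supply the one piece of data not yet made explicit, namely the compatibility of the $\*$-involution with the toral deformation. The Hopf algebroid structure on $\mathcal{D}^{\mathrm{hol}}(M)_{\theta}$ over $C^{\infty}(M)_{\theta}$, and the fact that $J_{\theta}$ is a $\mathcal{D}^{\mathrm{hol}}(M)_{\theta}$-covariant complex structure on $\O^{*}(M)_{\theta}$, are exactly the content of the two preceding propositions, so nothing further is required for those assertions; what is genuinely new here is the word ``$\*$''.

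First I would work in the undeformed setting. Equip $\mathcal{D}(M)$ with the Hopf $\*$-algebroid structure built from complex conjugation, as in the complex-conjugation remark preceding the summarizing Theorem above, and observe that $\mathcal{D}^{\mathrm{hol}}(M)$ is a $\*$-subalgebroid: since $J$ is a real tensor, a differential operator $D$ satisfies $D(J)=0$ if and only if its conjugate does, and $C^{\infty}(M)$ is conjugation-stable, so the generators of $\mathcal{D}^{\mathrm{hol}}(M)$ are carried to generators; hence $s_l$, $s_r$, $\D_l$, $\D_r$, $\ve_l$, $\ve_r$ and $S$ all restrict from those of $\mathcal{D}(M)$ and stay $\*$-preserving, so $\mathcal{D}^{\mathrm{hol}}(M)$ is a Hopf $\*$-algebroid. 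On the calculus side, endow $\O^{*}(M)$ with complex conjugation of forms; then $(\O^{*}(M),d)$ is a $\*$-differential calculus over $C^{\infty}(M)$ because $d$ commutes with conjugation and the graded $\*$-rule is classical, the covariance identity $(h\cdot\w)^{*}=S(h)^{*}\cdot\w^{*}$ of Definition \ref{defn 4.3.7} holds for $h\in\mathcal{D}^{\mathrm{hol}}(M)$ for the same reason it holds for all of $\mathcal{D}(M)$ (the remark just cited), and condition iii) of Definition \ref{defn 4.3.21} is simply the classical identity $\ol{\O^{(p,q)}(M)}=\O^{(q,p)}(M)$. Thus, before deformation, $(\O^{*}(M),d)$ is a $\mathcal{D}^{\mathrm{hol}}(M)$-covariant $\*$-differential calculus and $J$ a $\mathcal{D}^{\mathrm{hol}}(M)$-covariant complex structure in the full sense of Definition \ref{defn 4.3.21}, $\*$-compatibility included.

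Then I would deform. Because the $T$-action is holomorphic, each translation $t\in T$ acts by a biholomorphism and therefore commutes with complex conjugation, both on $\O^{*}(M)$ and on $\mathcal{D}(M)$; hence complex conjugation is a $T$-equivariant antilinear involution. Feeding this involution into Lemma 7.18 and Lemma 7.19 of \cite{bgm}---the very lemmas that produced the deformed Hopf algebroid and the deformed calculus---the $\*$-structures pass to $\mathcal{D}^{\mathrm{hol}}(M)_{\theta}$ and $\O^{*}(M)_{\theta}$, every structure map remains $\*$-preserving, the bigrading of $\O^{*}(M)_{\theta}$ is the deformation of the $(p,q)$-bigrading of $\O^{*}(M)$ and so still obeys $\ol{\O^{(p,q)}(M)_{\theta}}=\O^{(q,p)}(M)_{\theta}$, and the covariance identity of Definition \ref{defn 4.3.7} is preserved by a totality argument as in Lemma 7.28 of \cite{bgm}. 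Combining these facts with the Hopf-algebroid and covariant-complex-structure statements already cited gives the theorem.

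I expect the only real obstacle to be precisely this transport step: one must check that the Rieffel-type deformation carries along not merely the algebra and coalgebra data but also the antilinear involution and the antipode compatibly---so that $\*$ is still antimultiplicative for the deformed convolution product and still intertwines the deformed counits and antipode as demanded by Definition \ref{defn 4.2.19}---and this is exactly where the holomorphicity of the $T$-action is consumed. Everything past that point is a routine verification that the (already deformed) identities of Definitions \ref{defn 4.2.19}, \ref{defn 4.3.7} and \ref{defn 4.3.21} continue to hold.
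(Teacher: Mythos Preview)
Your proposal is correct and follows essentially the same approach as the paper. In fact, the paper gives no proof at all for this theorem: it is introduced with the phrase ``Summarizing all these,'' and simply collects the preceding propositions (that $\mathcal{D}^{\mathrm{hol}}(M)_{\theta}$ is a Hopf algebroid, that $J_{\theta}$ is a covariant complex structure, and the one-line remark that complex conjugation upgrades the calculus to a $\*$-calculus); your write-up supplies considerably more detail on the $\*$-compatibility than the paper itself does.
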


As a concrete example, let us consider $\mathbb{T}^{2n}$ acting on itself by translation. Then

\begin{thm}
$\mathcal{D}^{\mathrm{hol}}(\mathbb{T}^{2n})_{\theta}$ is a Hopf algebroid over $\mathbb{T}^{2n}_{\theta}$ and $\Omega^*(\mathbb{T}^{2n})_{\theta}$ is a $\mathcal{D}^{\mathrm{hol}}(\mathbb{T}^{2n})_{\theta}$-covariant differential calculus over $\mathbb{T}^{2n}_{\theta}$ together with a $\mathcal{D}^{\mathrm{hol}}(\mathbb{T}^{2n})_{\theta}$-covariant complex structure.
\end{thm}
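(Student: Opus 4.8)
The plan is to obtain this theorem simply as the specialization of the preceding (summarizing) theorem to the complex toral manifold $(M,T)=(\mathbb{T}^{2n},\mathbb{T}^{2n})$, with $T$ acting on $M$ by translations. First I would verify the standing hypotheses of that theorem for this pair. Write $\mathbb{T}^{2n}=\mathbb{C}^{n}/\Lambda$ for a lattice $\Lambda$, equipped with the induced translation-invariant complex structure $J$; then $J$ is manifestly preserved by all translations, so the action $T\times M\to M$ is holomorphic, and $T=\mathbb{T}^{2n}$ is itself a complex torus. Hence $(\mathbb{T}^{2n},\mathbb{T}^{2n})$ is a complex toral manifold in the sense fixed just before the last displayed theorem, and $\mathbb{T}^{2n}_{\theta}$ is by definition $C^{\infty}(M)_{\theta}$ for this $M$.

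Next I would feed this into the two inputs already available. From the earlier proposition, $\mathcal{D}^{\mathrm{hol}}(M)$ is a Hopf algebroid over $C^{\infty}(M)$ whose canonical action on $\Omega^{*}(M)$ is a $\mathcal{D}^{\mathrm{hol}}(M)$-covariant complex structure; here one takes $H_{0}$ (as in Definition \ref{defn 4.3.3} and \eqref{eq 4.3.1n}) to be the span of the Lie derivatives $L_{X}$ along holomorphic vector fields, which on the torus are the translation-invariant $(1,0)$-fields $\partial/\partial z_{i}$, and together with $C^{\infty}(M)$ these generate $\mathcal{D}^{\mathrm{hol}}(M)$. Because the torus acts holomorphically, all structure maps of $\mathcal{D}^{\mathrm{hol}}(M)$ as well as the whole $(p,q)$-graded calculus $(\Omega^{*}(M),d,J)$ are $T$-equivariant. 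Applying the toral-deformation propositions (built on Lemmas 7.18 and 7.19 of \cite{bgm}) then yields at once that $\mathcal{D}^{\mathrm{hol}}(\mathbb{T}^{2n})_{\theta}$ is a Hopf algebroid over $\mathbb{T}^{2n}_{\theta}$, and that the deformed data $(\Omega^{*}(\mathbb{T}^{2n})_{\theta},d,J_{\theta})$ is a $\mathcal{D}^{\mathrm{hol}}(\mathbb{T}^{2n})_{\theta}$-covariant complex structure, the $\ast$-structure coming from complex conjugation as noted above.

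The only point that needs more than a citation is the verification that $\Omega^{*}(\mathbb{T}^{2n})_{\theta}$ is genuinely a $\mathcal{D}^{\mathrm{hol}}(\mathbb{T}^{2n})_{\theta}$-covariant differential calculus over $\mathbb{T}^{2n}_{\theta}$ in the sense of Definition \ref{defn 4.3.6}; concretely, that the totality condition \eqref{eq 4.3.1} — namely that $C^{\infty}(M)_{\theta}$ and $(H_{0})_{\theta}$ generate $\mathcal{D}^{\mathrm{hol}}(M)_{\theta}$ — is preserved under deformation. I expect this to be the main, though still essentially routine, step: one argues that deformation is functorial on $T$-equivariant objects, so that the subalgebra generated by $C^{\infty}(M)_{\theta}$ and $(H_{0})_{\theta}$ is the deformation of the subalgebra generated by $C^{\infty}(M)$ and $H_{0}$ before deformation, and invokes the totality argument of Lemma 7.28 of \cite{bgm} together with the $T$-invariance of $H_{0}$. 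Once this is in place, the theorem follows directly from the two propositions quoted above.
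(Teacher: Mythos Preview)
Your proposal is correct and matches the paper's approach: the paper presents this theorem as an immediate specialization of the preceding general theorem on complex toral manifolds, offering no separate proof beyond the sentence ``As a concrete example, let us consider $\mathbb{T}^{2n}$ acting on itself by translation.'' Your write-up is in fact more detailed than the paper's treatment, spelling out the verification of the hypotheses and the totality argument that the paper leaves implicit.
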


\section{Further directions and comments} \label{sec 4.6}

We end this paper by discussing some directions that we have not touched upon.

\subsection{Comparison with Connes' approach}\label{subsec 4.6.1} In \cites{MR679730, MR823176, MR866491}, the approach taken to study singular spaces, in particular, the leaf space of a foliation is as follows. One models the singular space by a groupoid $G$ and then considers the convolution algebra $C_c^{\infty}(G)$ as the function algebra of the space in question. We have considered the groupoid here also, but as symmetries. To consider noncommutative complex geometry on the singular space, we need a differential calculus on the algebra $C_c^{\infty}(G)$. Here
there are many choices and it is a priori not clear what is the correct choice to make. In fact, if one takes a discrete group and view it as a groupoid then the convolution algebra is the group algebra and we don't know what a choice of differential calculus would be (neither the universal one nor a bicovariant one), let alone the study of noncommutative complex structure and the meaning of it. So before moving onto arbitrary groupoids, one needs to answer the following question.

\begin{que}\label{que 4.6.1}
Construct (or even classify) differential calculi on the group algebra $\bC\G$ of a discrete group $\G$. Are there any complex structures on it? If so, what does it mean to have a complex structure on $\bC\G$? 
\end{que}

\subsection{Comparison with Fr\"ohlich et al.'s approach}\label{subsec 4.6.2} In \cite{MR1614993}, they study spectral data 
associated to hermitian, K\"ahler structure. \cite{MR3720811} already mentions this and it is 
being taken up by him and collaborators \cite{2019arXiv190307599D}. We sketch this in our set up. Note that $H$ is represented on $L^2(\O)$ by unbounded operators with common domain $\O$. We first show that these operators are closable, by exhibiting densely defined adjoint operators. Taking ideas from \cite{MR2817646}, we exploit the $(A_r,A_r)$-bimodule structure on $\O \tens_B \ol{\O}$ which is given by \eqref{eq 4.2.26} via $\ta^{-1} : A_r \to A_l^{op}$; explicitly, 
\begin{equation}\label{eq 4.6.1}
a_1 \cdot (\w \tens \ol{\e}) \cdot a_2=S(s_r(a_2)) \cdot \w \tens s_r(a_1) \cdot \ol{\e},
\end{equation} for $a_1,a_2 \in A_r$ and $\w, \e \in \O$. We assume that the faithful state $\t$ used to define the inner product \eqref{eq 4.3.12} is right invariant, i.e.,
\begin{equation}\label{eq 4.6.2}
\t(h \cdot b)=\t(\ve_r(h) \cdot b),
\end{equation} for $h \in H$ and $b \in B$. We have the following lemma.

\begin{lem}\label{lem 4.6.2}
For $\w, \e \in \O$ and $h \in H$,
\begin{equation}\label{eq 4.6.3}
\t g(\w \tens S(h) \cdot \ol{\e})=\t g (h \cdot \w \tens \ol{\e})
\end{equation} holds, where $g$ is as in Definition \ref{defn 4.3.36}. Thus $\<h \cdot \w, \e\>=\<\w, (S^2(h))^* \cdot \e\>.$
\end{lem}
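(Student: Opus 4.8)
Here is how I would approach the proof.

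The plan is to reduce the second assertion to the first, and then to prove \eqref{eq 4.6.3} by transporting the action of $h$ from the first to the second tensorand, using the $H$-covariance of $g$ from Lemma \ref{lem 4.3.37} together with the antipode and counit axioms of the Hopf $\*$-algebroid. For the reduction, note that by the definition of the conjugate module $S(h) \cdot \ol{\e} = \ol{S(S(h))^* \cdot \e} = \ol{(S^2(h))^* \cdot \e}$, so once \eqref{eq 4.6.3} is known, the definition \eqref{eq 4.3.12} of $\<\cdot,\cdot\>$ gives
\[
\<h \cdot \w, \e\> \,=\, \t g(h \cdot \w \tens_B \ol{\e}) \,=\, \t g(\w \tens_B S(h) \cdot \ol{\e}) \,=\, \<\w, (S^2(h))^* \cdot \e\>.
\]

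The crux will be the pointwise identity in $B$,
\[
g(h \cdot \w \tens_B \ol{\e}) \,=\, h^{(1)} \cdot g\big(\w \tens_B S(h^{(2)}) \cdot \ol{\e}\big),
\]
and I would establish it by expanding the right-hand side. Applying Lemma \ref{lem 4.3.37} in the form $k \cdot g(\w \tens_B \ol{\m}) = g(k_{(1)} \cdot \w \tens_B k_{(2)} \cdot \ol{\m})$ with $k = h^{(1)}$ and $\ol{\m} = S(h^{(2)}) \cdot \ol{\e}$ produces the iterated coproduct $(h^{(1)})_{(1)} \tens (h^{(1)})_{(2)} \tens h^{(2)}$, which mixed coassociativity \eqref{eq 4.2.6} rewrites as $h_{(1)} \tens (h_{(2)})^{(1)} \tens (h_{(2)})^{(2)}$; the antipode axiom $\m_H(\id_H \tens S)\D_r = s_l\ve_l$ from \eqref{eq 4.2.8} then collapses the last two legs, leaving $g(h_{(1)} \cdot \w \tens_B s_l\ve_l(h_{(2)}) \cdot \ol{\e})$. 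Finally the residual scalar $s_l\ve_l(h_{(2)})$ gets absorbed across the balanced product — using that the induced $(A_l,A_l)$-bimodule structures on $\O$ and $\ol{\O}$ are the ones coming from the unit maps, so that $\x \tens_B s_l(a) \cdot \ol{\m} = t_l(a) \cdot \x \tens_B \ol{\m}$ — and then cancelled by the left counit identity $t_l\ve_l(h_{(2)}) h_{(1)} = h$. I expect this last step to be the main obstacle: it is where the left coproduct (forced by the covariance of $g$) and the right coproduct (forced by the only available antipode identity of the needed shape) have to be reconciled via \eqref{eq 4.2.6}, and where one must verify that the leftover $A_l$-scalar truly cancels.

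Granting the pointwise identity, I would finish by applying $\t$ and invoking the right-invariance \eqref{eq 4.6.2} in the form $\t(h^{(1)} \cdot b) = \t(s_r\ve_r(h^{(1)}) \cdot b)$. Since $s_r = t_l \circ (\ve_l s_r)$ by \eqref{eq 4.2.5} and $\D_l t_l(c) = 1 \tens_{A_l} t_l(c)$ (a standard consequence of the bialgebroid axioms), Lemma \ref{lem 4.3.37} lets me push $s_r\ve_r(h^{(1)})$ back into the second argument of $g$, leaving $\t g\big(\w \tens_B (s_r\ve_r(h^{(1)}) S(h^{(2)})) \cdot \ol{\e}\big)$. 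It remains to check the algebraic identity $s_r\ve_r(h^{(1)}) S(h^{(2)}) = S\big(h^{(2)} t_r\ve_r(h^{(1)})\big) = S(h)$, which follows from the special case $S(x\, t_r(a)) = s_r(a) S(x)$ of \eqref{eq 4.2.7} together with the right counit identity $h^{(2)} t_r\ve_r(h^{(1)}) = h$; this gives $\t g(\w \tens_B S(h) \cdot \ol{\e}) = \t g(h \cdot \w \tens_B \ol{\e})$, i.e.\ \eqref{eq 4.6.3}, and with the reduction above also the final assertion $\<h \cdot \w, \e\> = \<\w, (S^2(h))^* \cdot \e\>$.
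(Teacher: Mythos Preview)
Your proof is correct and is essentially the paper's own argument run in the opposite direction. The paper starts from $\t g(\w \tens S(h)\cdot\ol{\e})$, expands $S(h)=s_r\ve_r(h^{(1)})S(h^{(2)})$ via \eqref{eq 4.2.12}, pulls $s_r\ve_r(h^{(1)})$ out by covariance, replaces it by $h^{(1)}$ via right-invariance, and then uses mixed coassociativity, the antipode axiom and the counit identity to arrive at $\t g(h\cdot\w\tens\ol{\e})$; your route establishes the pointwise identity $g(h\cdot\w\tens\ol{\e})=h^{(1)}\cdot g(\w\tens S(h^{(2)})\cdot\ol{\e})$ first (this is exactly lines 4--7 of the paper's display) and then undoes the first three lines, using \eqref{eq 4.2.7} and the right counit identity where the paper invokes \eqref{eq 4.2.12} directly.
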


\begin{proof}
The proof is essentially contained in \cite{MR2817646}. We compute
\[
\begin{aligned}
\t g(\w \tens S(h) \cdot \ol{\e})&=\t g(\w \tens s_r\ve_r(h^{(1)})S(h^{(2)}) \ol{\e}) \quad \eqref{eq 4.2.12}\\
&=\t(\ve_r(h^{(1)}) \cdot g(\w \tens S(h^{(2)}) \cdot \ol{\e})) \quad \eqref{eq 4.3.11}\\
&=\t(h^{(1)} \cdot g(\w \tens S(h^{(2)}) \cdot \ol{\e})) \quad \eqref{eq 4.6.2}\\
&=\t g(h_{(1)} \cdot \w \tens h^{(1)}_{(2)}S(h^{(2)}_{(2)}) \cdot \ol{\e}) \quad \eqref{eq 4.2.6}\\
&=\t g(h_{(1)} \cdot \w \tens \ve_l(h_{(2)}) \cdot \ol{\e})\\
&=\t g(t_l\ve_l(h_{(2)})h_{(1)} \cdot \w \tens \ol{\e})\\
&=\t g(h \cdot \w \tens \ol{\e}).
\end{aligned}
\] The last statement follows from the definition of $H$-action on $\ol{\O}$.
\end{proof}

Thus $H$ is represented by closable operators having a common dense domain. We denote the adjoint of $h \in H$ by $h^{\dagger}$ so that $h^{\dagger}=(S^2(h))^*$ on $\O$. From now on, let us allow a notational abuse of denoting by $h$ both the operator on $\O$ and its closure in $L^2(\O)$. At this point, we make an additional regularity assumption: 

\subsubsection*{\bf Assumption} Given $h \in H$, $\cD_h=\{\w \in \O \mid \sum_{0}^{\infty}\frac{\|h^n\w\|}{n!} < \infty\}$ is dense in $L^2(\O)$.

\begin{lem}\label{lem 4.6.3}
For $h \in H$ with $h=h^{\dagger}$ and $\w \in \cD_h$, define $U_h$ by \[U_h(\w)=\sum_n \frac{i^n}{n!}h^n\w,\] which is well-defined by the above $\mathbf{Assumption}$. Then $U_h$ extends to a unitary operator on $L^2(\O)$ denoted by $e^{ih}$.
\end{lem}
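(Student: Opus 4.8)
The plan is to recognize $U_h$ as (the restriction to $\cD_h$ of) the exponential $\exp(ih)$ of a symmetric operator and to verify that $h$ is in fact essentially self-adjoint on $\O$, so that the classical functional calculus applies. First I would note that $h = h^{\dagger}$ means that $h$, viewed as an operator on $\O \subset L^2(\O)$ with $h^{\dagger} = (S^2(h))^*$ by Lemma \ref{lem 4.6.2}, is symmetric: $\<h\w,\e\> = \<\w, h\e\>$ for $\w,\e \in \O$. Since $\O$ is a common core and $h$ is closable, let me continue to write $h$ for its closure. The $\mathbf{Assumption}$ says precisely that the analytic vectors of $h$ are dense in $L^2(\O)$. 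By Nelson's analytic vector theorem, a symmetric operator with a dense set of analytic vectors is essentially self-adjoint; hence the closure of $h$ is self-adjoint.

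Once $h$ is self-adjoint, the Borel functional calculus gives a strongly continuous one-parameter unitary group $t \mapsto e^{ith}$, and for an analytic vector $\w \in \cD_h$ the power series $\sum_n \frac{(it)^n}{n!}h^n\w$ converges in $L^2(\O)$ to $e^{ith}\w$ — this is again part of Nelson's analysis of analytic vectors (the orbit $t\mapsto e^{ith}\w$ extends to an entire vector-valued function whose Taylor coefficients at $0$ are $\frac{(ih)^n}{n!}\w$). Specializing to $t=1$ gives that $U_h(\w) = \sum_n \frac{i^n}{n!}h^n\w = e^{ih}\w$ for all $\w \in \cD_h$. Since $\cD_h$ is dense and $e^{ih}$ is bounded (unitary), $U_h$ extends uniquely by continuity to the unitary operator $e^{ih}$ on $L^2(\O)$. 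In particular $U_h$ is well-defined (finiteness of the series on $\cD_h$ is exactly the definition of $\cD_h$) and isometric on $\cD_h$, which already forces the continuous extension to be unitary; the identification with $e^{ih}$ then pins down the notation.

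The main obstacle is the passage from ``symmetric with dense analytic vectors'' to ``essentially self-adjoint,'' i.e.\ invoking Nelson's theorem correctly: one must check that the elements of $\cD_h$ are genuinely \emph{analytic} vectors for the closure of $h$, not merely vectors on which the formal exponential series converges, and that $\cD_h$ is invariant under $h$ (so that $h^n\w$ makes sense in $\O$ for all $n$, which holds because $\O$ is an $H$-module and $h\in H$). Given the $\mathbf{Assumption}$ this is essentially bookkeeping, so the proof should read: $h$ is symmetric by Lemma \ref{lem 4.6.2}; $\cD_h$ consists of analytic vectors and is dense by the $\mathbf{Assumption}$; hence by Nelson's theorem $\overline{h}$ is self-adjoint; therefore $e^{ih}$ is unitary and agrees with $U_h$ on the dense subspace $\cD_h$, so $U_h$ extends to it.
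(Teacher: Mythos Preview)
Your proposal is correct. The paper, however, takes a more elementary and direct route: rather than invoking Nelson's analytic vector theorem and the spectral calculus, it simply observes that $\cD_h = \cD_{-h}$ (the defining series involves only $\|h^n\w\|$) and that $U_hU_{-h}=U_{-h}U_h=\id$ on this common dense domain; from this one reads off that $U_h$ is isometric with isometric inverse $U_{-h}$, hence extends to a unitary. Your approach has the advantage of yielding essential self-adjointness of $h$ as a byproduct and of making the identification with the spectral exponential $e^{ih}$ genuine rather than merely notational; the paper's approach is shorter and self-contained, avoiding any appeal to Nelson, but leaves ``$e^{ih}$'' as a name for the extension rather than a functional-calculus object.
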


\begin{proof}
The result follows from the observations that for such an $h$, $\cD_h=\cD_{-h}$ and that $U_hU_{-h}=U_{-h}U_h=\id$.
\end{proof}



\begin{lem}\label{lem 4.6.4}
If the commutator $[h,d+d^*]$ extends to a bounded operator on $L^2(\O)$, then so does $[e^{ih},d+d^*]$.
\end{lem}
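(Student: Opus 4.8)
The plan is to derive a Duhamel-type formula expressing $[e^{ih},d+d^*]$ as a norm-convergent integral of unitary conjugates of $[h,d+d^*]$, and then to read off the bound directly from the fact that the conjugating operators are unitary.

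First I would note that Lemma~\ref{lem 4.6.3} applies verbatim to $th$ for each $t\in[0,1]$: since $t$ is real we have $(th)^{\dagger}=th$, and the $\mathbf{Assumption}$ preceding Lemma~\ref{lem 4.6.3}, applied to $th$, is implied by the one for $h$ because $|t|^n\le 1$ forces $\cD_{th}\supseteq\cD_h$, which is dense. Thus $e^{ith}:=e^{i(th)}$ is a well-defined family of unitaries on $L^2(\O)$; writing it out as a power series on the common core shows that $t\mapsto e^{ith}\w$ is continuous for $\w$ in a dense domain, and uniform boundedness then promotes this to strong continuity of $t\mapsto e^{ith}$ on all of $L^2(\O)$.

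Next, working on vectors $\w$ in the dense domain on which the exponentials act by their power series, I would differentiate $t\mapsto e^{-ith}(d+d^*)e^{ith}\w$ in $t$. Using $\tfrac{d}{dt}e^{\pm ith}\w=\pm ih\,e^{\pm ith}\w$ together with $h(d+d^*)-(d+d^*)h=[h,d+d^*]$, one gets
\[
\frac{d}{dt}\big(e^{-ith}(d+d^*)e^{ith}\w\big)=-i\,e^{-ith}\,[h,d+d^*]\,e^{ith}\w .
\]
Integrating over $t\in[0,1]$ and composing with $e^{ih}$ on the left yields the identity
\[
[e^{ih},d+d^*]\,\w=i\,e^{ih}\!\left(\int_0^1 e^{-ith}\,[h,d+d^*]\,e^{ith}\,dt\right)\!\w .
\]
By hypothesis $[h,d+d^*]$ is bounded, and each $e^{\pm ith}$ is unitary, so the integrand is a strongly continuous family of bounded operators of norm at most $\|[h,d+d^*]\|$; hence the right-hand side defines a bounded operator of norm $\le\|[h,d+d^*]\|$. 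Since this operator agrees with $[e^{ih},d+d^*]$ on a dense domain, $[e^{ih},d+d^*]$ extends to a bounded operator on $L^2(\O)$, and in fact $\|[e^{ih},d+d^*]\|\le\|[h,d+d^*]\|$.

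The step I expect to be the genuine obstacle is the one glossed as "working on vectors on which the exponentials act by their power series": one must check that $e^{ith}$ carries a suitable core of $d+d^*$ into the domain of the closure of $d+d^*$, and that the $t$-differentiation above is valid in $L^2(\O)$. This is exactly where the boundedness of $[h,d+d^*]$ enters in an essential way --- through the commutator expansion $[d+d^*,h^{n}]=\sum_{k=0}^{n-1}h^{k}[d+d^*,h]h^{n-1-k}$, which controls $(d+d^*)h^{n}\w$ in terms of $h$-translates of $\w$ --- in combination with the $\mathbf{Assumption}$, which makes the resulting series converge on a dense set. Once these convergence issues are settled, the remainder of the argument is just the elementary unitary-conjugation estimate above.
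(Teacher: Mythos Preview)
Your proof is correct and is essentially the same Duhamel argument as the paper's: the paper differentiates $s\mapsto e^{ish}(d+d^*)e^{i(1-s)h}$ directly, while you differentiate $t\mapsto e^{-ith}(d+d^*)e^{ith}$ and then left-multiply by $e^{ih}$, but after the substitution $s=1-t$ the two integrands coincide. In fact you are more careful than the paper about the domain issues underlying the differentiation step, and your bound $\|[e^{ih},d+d^*]\|\le\|[h,d+d^*]\|$ is a nice bonus the paper does not state explicitly.
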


\begin{proof}
Observe that
\[
\begin{aligned}
e^{ih}(d+d^*)-(d+d^*)e^{ih}&=\int_{0}^{1}\frac{d}{ds}(e^{ish}(d+d^*)e^{i(1-s)h})ds\\
&=\int_{0}^{1}(ihe^{ish}(d+d^*)e^{i(1-s)h}-ie^{ish}(d+d^*)e^{i(1-s)h}h)ds\\
&=\int_{0}^{1}i(e^{ish}h(d+d^*)e^{i(1-s)h}-e^{ish}(d+d^*)he^{i(1-s)h})ds\\
&=\int_{0}^{1}i(e^{ish}[h,d+d^*]e^{i(1-s)h})ds.
\end{aligned}
\] 
As $e^{ith}$ is unitary, the integrand is bounded and the result follows.
\end{proof}

Combining Lemma \ref{lem 4.6.3} and Lemma \ref{lem 4.6.4}, we get the following proposition.

\begin{prop}\label{prop 4.6.5}
Let $\cA$ be the $\*$-algebra generated by operators of the form $ae^{i(h+h^{\dagger})}b$ with $a,b \in A_l$ and $h \in H_0$ in $B(L^2(\O))$. Then $(\cA,L^2(\O),d+d^*)$ forms a spectral triple. 
\end{prop}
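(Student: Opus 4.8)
The plan is to verify the three defining properties of a spectral triple for the triple $(\cA, L^2(\O), d+d^*)$: that $\cA$ acts by bounded operators on $L^2(\O)$, that $D:=d+d^*$ is self-adjoint on a suitable domain, and that $[D, T]$ extends to a bounded operator for every $T \in \cA$. The boundedness of the $\cA$-action is immediate, since by construction $\cA$ sits inside $B(L^2(\O))$: its generators $a e^{i(h+h^\dagger)} b$ are products of multiplication operators by elements of $A_l$ (bounded because $A_l$ acts on $\O$ as bounded multiplication operators, $\O$ being a $C^*$-completable module in the positive-definite case) and the unitaries $e^{i(h+h^\dagger)}$ furnished by Lemma \ref{lem 4.6.3}. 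So the first condition is built into the definition of $\cA$.

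Next I would address self-adjointness of $D = d+d^*$. By Corollary \ref{cor 4.3.48} the Laplacian $\D_d = (d+d^*)^2 = D^2$ is a symmetric operator on $\O$, and $D$ itself is symmetric since $d^*$ is the formal adjoint of $d$ by Proposition \ref{prop 4.3.47}. To promote symmetry to (essential) self-adjointness one argues in the standard way: $\O = \bigoplus_k \O^k$ is finite-dimensional in degree (total dimension $2n$), so $D$ maps the algebraic direct sum into itself, and one checks that $\O$ is a core using the fact that $D^2 = \D_d$ together with a standard Nelson-type argument; alternatively, since $D$ shifts degree by $\pm 1$ on a $\bZ$-graded Hilbert space with finitely many nonzero degrees, the graph-closure is self-adjoint by a routine bounded-perturbation-of-a-block-tridiagonal-operator argument. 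This is routine and parallel to the classical Hodge–de Rham situation.

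The substantive point is the bounded-commutator condition, and this is where Lemma \ref{lem 4.6.4} does the work. It suffices to check $[D, T]$ bounded on the algebraic generators $T = a e^{i(h+h^\dagger)} b$, and then use that the bounded-commutator condition is stable under sums, products (Leibniz: $[D,T_1T_2] = [D,T_1]T_2 + T_1[D,T_2]$) and adjoints, so it passes to the $\*$-algebra they generate. For a single generator, write $[D, a e^{i(h+h^\dagger)} b] = [D,a]\, e^{i(h+h^\dagger)} b + a\,[D, e^{i(h+h^\dagger)}]\, b + a\, e^{i(h+h^\dagger)}\,[D,b]$. The outer two terms are bounded because $[d, a] = d(a)$ is multiplication by a $1$-form (bounded), and likewise for $[d^*, a]$ — this is the usual noncommutative-geometry fact that $[D, a]$ is bounded for $a$ in the smooth subalgebra, here $A_l$. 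The middle term is bounded precisely by Lemma \ref{lem 4.6.4}, provided one knows $[h+h^\dagger, d+d^*]$ is bounded for $h \in H_0$. This last fact is what I expect to be the main obstacle: one needs that elements of $H_0$ act on $L^2(\O)$ with commutators with $D$ that are bounded. The key is the defining property of $H_0$ in \eqref{eq 4.3.1n}, namely $[h - s_l\ve_l(h), d] = [h - t_l\ve_l(h), d] = 0$; since $s_l\ve_l(h)$ and $t_l\ve_l(h)$ act as bounded multiplication operators (elements of $A_l$) whose commutators with $d$ are multiplication by $1$-forms, it follows that $[h,d]$ differs from a bounded operator by $[s_l\ve_l(h), d]$, hence $[h,d]$ is bounded; dualizing via $d^* = -\star d \star$ with $\star$ unitary (Proposition \ref{prop 4.3.45}) and $H$-linear (Lemma \ref{lem 4.3.49}), $[h, d^*]$ is bounded too, and taking $h^\dagger = (S^2(h))^*$ — which lies in $H_0$ by the closure property of $H_0$ under $S$ and $\*$ implicit in Lemma \ref{lem 4.3.8} — one gets $[h + h^\dagger, d+d^*]$ bounded. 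Assembling these pieces, every generator of $\cA$ has bounded commutator with $D$, and the proposition follows.
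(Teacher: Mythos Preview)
Your overall strategy matches the paper's: reduce to generators via the Leibniz rule, use the defining property of $H_0$ to rewrite $[h,d]$ as $[s_l\ve_l(h),d]$, and then invoke Lemma~\ref{lem 4.6.4} for the exponential factor. That skeleton is correct.

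However, you treat as ``standard'' two facts that in this abstract setting genuinely require proof, and this is precisely where the paper invests its effort. First, you assert that multiplication by $a\in A_l$ is bounded on $L^2(\O)$; the paper actually proves this by the estimate $\<b\w,b\w\>=\t(b\,g(\w,\ol{\w})\,b^*)\leq \|\w\|^2\,\t(bb^*)$, using positivity of $g(\w,\ol{\w})$ in $B$. Second, and more seriously, you write that $[d,a]=d(a)$ is ``multiplication by a $1$-form (bounded)'' and call this ``the usual noncommutative-geometry fact.'' But in the spectral-triple axioms that boundedness is an \emph{assumption}; here we are constructing the triple, so it must be verified. The paper does this by expanding $\<db\wdg\w,\,db\wdg\w\>=\<db\wdg\w,\,d(b\w)\>-\<db\wdg\w,\,b\,d\w\>$ and estimating each piece via Cauchy--Schwarz, then passes to $[d^*,b]$ by noting that $s_l(a)\cdot 1_B$ is adjointable with adjoint again in $B$, so that the adjoint of $[d,b]$ is $-[d^*,b^\dagger]$. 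Without these computations your argument for the ``outer'' terms in the Leibniz expansion has no content.

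A smaller remark: your discussion of essential self-adjointness of $D$ is more than the paper offers (the paper does not address it at all), but your justification is loose --- ``finite-dimensional in degree'' only means finitely many nonzero graded pieces, not that each $\O^k$ is finite-dimensional, so the block-tridiagonal remark needs more care.
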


\begin{proof}
We first observe that the representation of $A_l$ on $L^2(\O)$ is induced by restricting through $A_l \to B$, $a \mto s_l(a)\cdot 1_B$. For $b \in B$ and $\w \in \O$, \[\<b\w,b\w\>=\int_{\t}b\w \wdg \star(\w^*b^*)=\t(bg(\w,\w)b^*)\leq \|\w\|^2\t(bb^*),\] implying that left multiplication by $b$ extends to a bounded operator. In the above estimate, the inequality comes from the fact that $g(\w,\w)$ is positive in $B$. 

\medskip\noindent

Next, observing that $[s_l(a),d]$ is left multiplication by $d(s_l(a)\cdot 1_B)$, we prove that left multiplication by $d(s_l(a)\cdot 1_B)$ is bounded on $L^2(\O)$. Again we do this for $b \in B$. Note that
\[
\begin{aligned}
    \<db \wdg \w, db \wdg \w\>&=\<db \wdg \w, d(b\w)-bd\w\>\\
    &=\<db \wdg \w, d(b\w)\>-\<db \wdg \w, bd\w\>.
\end{aligned}    
\]
The first term in the above expression is estimated as follows:
\begin{equation}
    |\<db \wdg \w, d(b\w)\>|=|\<d^*(db \wdg \w),b\w\>| \leq \|d^*(db \wdg \w)\|\|bw\| \leq \text{ const. }\|\w\|,
\end{equation} where we used Cauchy-Schwarz inequality and that left multiplication by $b$ is bounded. The second term is estimated as follows:
\begin{equation}
    |\<db \wdg \w, bd\w\>| \leq \|db \wdg \w\|\|bdw\| \leq \text{ const. } \|d\w\|,
\end{equation} where we again used the boundedness of left multiplication by $b \in B$. Combining the two, we conclude that left multiplication by $db$ is bounded on $L^2(\O)$. Now observe that $s_l(a)\cdot 1_B$ is adjointable (see the discussion after Lemma \ref{lem 4.6.2}) with adjoint again an element of $B$ (use Eq. \eqref{eq 4.2.11} and compute: $(S^2(s_l(a)))^*=(Ss_r\ve_rs_l(a))^*=(s_l\ve_ls_r\ve_rs_l(a))^*=s_l((\ve_ls_r\ve_rs_l(a))^*)$) and for such an element the adjoint of $[d,b]$ is precisely $-[d^*,b^{\dagger}]$, $b^{\dagger}$ is the adjoint of $b$. Since adjoint of $[d,b]$ is bounded, we can conclude that $[b,d+d^*]$ extends to a bounded operator on $L^2(\O)$.

\medskip\noindent

Finally, $[s_l(a),d+d^*]$ extends to a bounded operator yields that $[h,d+d^*]$ too extends to a bounded operator for $h \in H_0$. The result follows from \ref{lem 4.6.4}. 
\end{proof}

If we assume that $\D_d$ has purely discrete spectrum then we get a spectral triple of compact type. We also note that \cite{MR3720811} computes the spectrum for the concrete examples. In our abstract setup, we propose a way of doing it generally. 
It would be interesting to know the
answer of the following:

\begin{que}\label{que 4.6.6}
If we assume an analogue of Relich's lemma $H^k \hookrightarrow H^{k+2}$ is compact then does it follow that $\D_d$ has purely discrete spectrum? See \cite{MR2277669} for the setup and more on abstract pseudo-differential calculi which has motivated this question.
\end{que} 
This would give a uniform way of proving that the Laplacian $\D_d$ has purely discrete spectrum in the setting of noncommutative differential calculi.

\subsection{Further examples}\label{pointer} As examples for our framework, we have mentioned \'etale groupoids, Hopf algebras, weak Hopf algebras and the Connes-Moscovici Hopf algebroid. There is another class of examples coming from Lie-Rinehart algebras and associated jet spaces; see \cite{MR2817646}. It would be interesting to 
know the answer of the following

\begin{que}\label{que 4.6.7}
Investigate if these examples fit into our framework. If so, what is the meaning of having a complex structure on a Lie-Rinehart algebra?
\end{que} 

As alluded to in the Introduction, Hopf algebroids appear naturally as symmetries of subfactors. A well-explored  rich source of subfactors is the theory of conformal nets (\citelist{\cite{MR3966864}\cite{MR1491122}\cite{MR3796433}}). Moreover, there has been some work (\cite{MR2585992}) on construction of spectral triples, i.e., noncommutative manifold structures, on ($C^*$-) algebras associated with conformal nets. Under certain conditions, the corresponding von Neumann algebras give rise to subfactors and as Hopf algebroids occur naturally as symmetries (\citelist{\cite{MR1913440}\cite{galois}}) of such subfactors, it is plausible that one may be able to construct noncommutative complex/Hermitian/K\"ahler structures on the module of one-forms of such spectral triples which will have equivariance with respect to some nontrivial and interesting Hopf algebroids. We have plans to explore this idea in our future work. 

\medskip\noindent

Furthermore, Hopf algebroid equivariance is not unexpected from another related viewpoint of \cite{MR1815717}, in which the author argues as follows. At the space level, one knows quantization of a Poisson manifold yields a noncommutative space and at the group level, one obtains quantum groups. If one agrees to view a groupoid as the joint generalization of a space and a group, then what should a quantization of a groupoid yield? The answer is a quantum groupoid or dually a Hopf algebroid, as shown in \cite{MR1815717}. In fact, it is shown therein that the algebra of differential operators on a Poisson manifold quantize to a canonical Hopf algebroid over the deformed function algebra. We also have plans to pursue this harmony between Hopf algebroid equivariance and deformation quantization.

\medskip

Let us end this article by noting that we only consider a fixed pseudogroup of local biholomorphisms of a given complex structure. It is indeed interesting and difficult to classify the complex structures, given a fixed set of local biholomorphisms. In the noncommutative setting, this amounts to the same thing as fixing a Hopf algebroid and investigating the moduli of all covariant complex structures. For quantum groups and spectral triples, these investigations have already been taken up, see \cites{MR2036597,MR1995873,MR2174423}. Our modest approach does not answer these questions and it would be very interesting to answer them.  







\section*{Acknowledgments}

The first author is grateful to Aritra Bhowmick for several discussions on foliations, to Yuri Kordyukov for answering many questions, to Edwin Beggs and Shahn Majid for helpful comments, and finally to R\'eamonn \'O Buachalla for countless discussions and his interest in this work. He also thanks the second author for answering many questions on complex geometry and the third author for introducing him to the theory of foliations. The second author is partially supported by J.C. Bose National Fellowship. The third author is partially supported by J.C. Bose National Fellowship and Research Grant awarded by D.S.T. (Govt. of India). 

\begin{bibdiv}
\begin{biblist}
	
\bib{MR3428362}{article}{
	author={\'{O} Buachalla, R\'{e}amonn},
	title={Noncommutative complex structures on quantum homogeneous spaces},
	journal={J. Geom. Phys.},
	volume={99},
	date={2016},
	pages={154--173},
	issn={0393-0440},
	review={\MR{3428362}},
	doi={10.1016/j.geomphys.2015.10.003},
}

\bib{MR3720811}{article}{
	author={\'{O} Buachalla, R\'{e}amonn},
	title={Noncommutative K\"{a}hler structures on quantum homogeneous spaces},
	journal={Adv. Math.},
	volume={322},
	date={2017},
	pages={892--939},
	issn={0001-8708},
	review={\MR{3720811}},
	doi={10.1016/j.aim.2017.09.031},
}

\bib{MR3751961}{article}{
	author={Bazzoni, Giovanni},
	author={Biswas, Indranil},
	author={Fern\'{a}ndez, Marisa},
	author={Mu\~{n}oz, Vicente},
	author={Tralle, Aleksy},
	title={Homotopic properties of K\"{a}hler orbifolds},
	conference={
		title={Special metrics and group actions in geometry},
	},
	book={
		series={Springer INdAM Ser.},
		volume={23},
		publisher={Springer, Cham},
	},
	date={2017},
	pages={23--57},
	review={\MR{3751961}},
}

\bib{2019arXiv190307599D}{article}{
	author={\'{O} Buachalla, R\'{e}amonn},
	author={Das, Biswarup},
	author={Somberg, Petr},
	title={A Dolbeault-Dirac Spectral Triple for Quantum Projective Space},
	journal={arXiv e-prints},
	year={2019},
	pages={arXiv:1903.07599},
	eprint={1903.07599},
}

\bib{three}{article}{
   author={Bhattacharjee, Suvrajit},
   author={Goswami, Debashish},
   title={Complex structures on three-point space},
   journal={to appear in Proceedings of QP38},
   date={2019},
}

\bib{galois}{article}{
       author={Bhattacharjee, Suvrajit},
	   author={Goswami, Debashish},
        title={Quantum Galois Group of Subfactors},
      journal={arXiv e-prints},
         year={2021},
        pages={arXiv:2101.05575},
}

\bib{bgm}{article}{
   author={Bhowmick, Jyotishman},
   author={Goswami, Debashish},
   author={Mukhopadhyay, Sugato},
   title={Levi-Civita connections for a class of spectral triples},
   journal={Letters in Mathematical Physics},
   volume={110},
   date={2019},
   number={4},
   pages={835–884},
   ISSN={1573-0530},
   doi={10.1007/s11005-019-01241-4},
}

\bib{MR2648899}{article}{
	author={Block, Jonathan},
	title={Duality and equivalence of module categories in noncommutative
		geometry},
	conference={
		title={A celebration of the mathematical legacy of Raoul Bott},
	},
	book={
		series={CRM Proc. Lecture Notes},
		volume={50},
		publisher={Amer. Math. Soc., Providence, RI},
	},
	date={2010},
	pages={311--339},
	review={\MR{2648899}},
}

\bib{MR2501177}{article}{
	author={Beggs, E. J.},
	author={Majid, S.},
	title={Bar categories and star operations},
	journal={Algebr. Represent. Theory},
	volume={12},
	date={2009},
	number={2-5},
	pages={103--152},
	issn={1386-923X},
	review={\MR{2501177}},
	doi={10.1007/s10468-009-9141-x},
}

\bib{MR2553659}{article}{
	author={B\"{o}hm, Gabriella},
	title={Hopf algebroids},
	conference={
		title={Handbook of algebra. Vol. 6},
	},
	book={
		series={Handb. Algebr.},
		volume={6},
		publisher={Elsevier/North-Holland, Amsterdam},
	},
	date={2009},
	pages={173--235},
	review={\MR{2553659}},
	doi={10.1016/S1570-7954(08)00205-2},
}

\bib{MR3073899}{article}{
	author={Beggs, Edwin},
	author={Paul Smith, S.},
	title={Non-commutative complex differential geometry},
	journal={J. Geom. Phys.},
	volume={72},
	date={2013},
	pages={7--33},
	issn={0393-0440},
	review={\MR{3073899}},
	doi={10.1016/j.geomphys.2013.03.018},
}

\bib{MR2043373}{article}{
	author={B\"{o}hm, Gabriella},
	author={Szlach\'{a}nyi, Korn\'{e}l},
	title={Hopf algebroids with bijective antipodes: axioms, integrals, and
		duals},
	journal={J. Algebra},
	volume={274},
	date={2004},
	number={2},
	pages={708--750},
	issn={0021-8693},
	review={\MR{2043373}},
	doi={10.1016/j.jalgebra.2003.09.005},
}

\bib{MR1726707}{article}{
	author={B\"{o}hm, Gabriella},
	author={Nill, Florian},
	author={Szlach\'{a}nyi, Korn\'{e}l},
	title={Weak Hopf algebras. I. Integral theory and $C^*$-structure},
	journal={J. Algebra},
	volume={221},
	date={1999},
	number={2},
	pages={385--438},
	issn={0021-8693},
	review={\MR{1726707}},
	doi={10.1006/jabr.1999.7984},
}

\bib{2018arXiv180108125B}{article}{
	author={\'{O} Buachalla, R\'{e}amonn},
	author={Stovicek, Jan},
	author={van Roosmalen, Adam-Christiaan},
	title={A Kodaira Vanishing Theorem for Noncommutative Kahler Structures},
	journal={arXiv e-prints},
	year={2018},
	pages={arXiv:1801.08125},
	eprint={1801.08125},
}

\bib{MR2585992}{article}{
   author={Carpi, Sebastiano},
   author={Hillier, Robin},
   author={Kawahigashi, Yasuyuki},
   author={Longo, Roberto},
   title={Spectral triples and the super-Virasoro algebra},
   journal={Comm. Math. Phys.},
   volume={295},
   date={2010},
   number={1},
   pages={71--97},
   issn={0010-3616},
   review={\MR{2585992}},
   doi={10.1007/s00220-009-0982-2},
}
\bib{MR3796433}{article}{
   author={Carpi, Sebastiano},
   author={Kawahigashi, Yasuyuki},
   author={Longo, Roberto},
   author={Weiner, Mih\'{a}ly},
   title={From vertex operator algebras to conformal nets and back},
   journal={Mem. Amer. Math. Soc.},
   volume={254},
   date={2018},
   number={1213},
   pages={vi+85},
   issn={0065-9266},
   isbn={978-1-4704-2858-7},
   isbn={978-1-4704-4742-7},
   review={\MR{3796433}},
   doi={10.1090/memo/1213},
}

\bib{MR824240}{book}{
	author={Camacho, C\'{e}sar},
	author={Lins Neto, Alcides},
	title={Geometric theory of foliations},
	note={Translated from the Portuguese by Sue E. Goodman},
	publisher={Birkh\"{a}user Boston, Inc., Boston, MA},
	date={1985},
	pages={vi+205},
	isbn={0-8176-3139-9},
	review={\MR{824240}},
	doi={10.1007/978-1-4612-5292-4},
}

\bib{MR1846904}{article}{
   author={Connes, Alain},
   author={Landi, Giovanni},
   title={Noncommutative manifolds, the instanton algebra and isospectral
   deformations},
   journal={Comm. Math. Phys.},
   volume={221},
   date={2001},
   number={1},
   pages={141--159},
   issn={0010-3616},
   review={\MR{1846904}},
   doi={10.1007/PL00005571},
}

\bib{MR1813430}{article}{
	author={Crainic, Marius},
	author={Moerdijk, Ieke},
	title={Foliation groupoids and their cyclic homology},
	journal={Adv. Math.},
	volume={157},
	date={2001},
	number={2},
	pages={177--197},
	issn={0001-8708},
	review={\MR{1813430}},
	doi={10.1006/aima.2000.1944},
}

\bib{MR1937657}{article}{
   author={Connes, Alain},
   author={Dubois-Violette, Michel},
   title={Noncommutative finite-dimensional manifolds. I. Spherical
   manifolds and related examples},
   journal={Comm. Math. Phys.},
   volume={230},
   date={2002},
   number={3},
   pages={539--579},
   issn={0010-3616},
   review={\MR{1937657}},
   doi={10.1007/s00220-002-0715-2},
}

\bib{MR679730}{article}{
	author={Connes, A.},
	title={A survey of foliations and operator algebras},
	conference={
		title={Operator algebras and applications, Part I},
		address={Kingston, Ont.},
		date={1980},
	},
	book={
		series={Proc. Sympos. Pure Math.},
		volume={38},
		publisher={Amer. Math. Soc., Providence, R.I.},
	},
	date={1982},
	pages={521--628},
	review={\MR{679730}},
}

\bib{MR823176}{article}{
	author={Connes, Alain},
	title={Noncommutative differential geometry},
	journal={Inst. Hautes \'{E}tudes Sci. Publ. Math.},
	number={62},
	date={1985},
	pages={257--360},
	issn={0073-8301},
	review={\MR{823176}},
}

\bib{MR866491}{article}{
	author={Connes, A.},
	title={Cyclic cohomology and the transverse fundamental class of a
		foliation},
	conference={
		title={Geometric methods in operator algebras},
		address={Kyoto},
		date={1983},
	},
	book={
		series={Pitman Res. Notes Math. Ser.},
		volume={123},
		publisher={Longman Sci. Tech., Harlow},
	},
	date={1986},
	pages={52--144},
	review={\MR{866491}},
}

\bib{MR1303779}{book}{
	author={Connes, Alain},
	title={Noncommutative geometry},
	publisher={Academic Press, Inc., San Diego, CA},
	date={1994},
	pages={xiv+661},
	isbn={0-12-185860-X},
	review={\MR{1303779}},
}

\bib{MR2036597}{article}{
   author={Connes, Alain},
   title={Cyclic cohomology, quantum group symmetries and the local index
   formula for ${\rm SU}_q(2)$},
   journal={J. Inst. Math. Jussieu},
   volume={3},
   date={2004},
   number={1},
   pages={17--68},
   issn={1474-7480},
   review={\MR{2036597}},
   doi={10.1017/S1474748004000027},
}

\bib{MR1995873}{article}{
   author={Chakraborty, Partha Sarathi},
   author={Pal, Arupkumar},
   title={Equivariant spectral triples on the quantum ${\rm SU}(2)$ group},
   journal={$K$-Theory},
   volume={28},
   date={2003},
   number={2},
   pages={107--126},
   issn={0920-3036},
   review={\MR{1995873}},
   doi={10.1023/A:1024571719032},
}

\bib{MR1151583}{article}{
	author={Cordero, Luis A.},
	author={Wolak, Robert A.},
	title={Properties of the basic cohomology of transversely K\"{a}hler
		foliations},
	journal={Rend. Circ. Mat. Palermo (2)},
	volume={40},
	date={1991},
	number={2},
	pages={177--188},
	issn={0009-725X},
	review={\MR{1151583}},
	doi={10.1007/BF02844686},
}

\bib{MR1310296}{article}{
	author={Dijkhuizen, Mathijs S.},
	author={Koornwinder, Tom H.},
	title={CQG algebras: a direct algebraic approach to compact quantum
		groups},
	journal={Lett. Math. Phys.},
	volume={32},
	date={1994},
	number={4},
	pages={315--330},
	issn={0377-9017},
	review={\MR{1310296}},
	doi={10.1007/BF00761142},
}

\bib{MR2174423}{article}{
   author={D\c{a}browski, Ludwik},
   author={Landi, Giovanni},
   author={Sitarz, Andrzej},
   author={van Suijlekom, Walter},
   author={V\'{a}rilly, Joseph C.},
   title={The Dirac operator on ${\rm SU}_q(2)$},
   journal={Comm. Math. Phys.},
   volume={259},
   date={2005},
   number={3},
   pages={729--759},
   issn={0010-3616},
   review={\MR{2174423}},
   doi={10.1007/s00220-005-1383-9},
}

\bib{MR1042454}{article}{
	author={El Kacimi-Alaoui, Aziz},
	title={Op\'{e}rateurs transversalement elliptiques sur un feuilletage
		riemannien et applications},
	language={French, with English summary},
	journal={Compositio Math.},
	volume={73},
	date={1990},
	number={1},
	pages={57--106},
	issn={0010-437X},
	review={\MR{1042454}},
}

\bib{MR1642584}{book}{
   author={Evans, David E.},
   author={Kawahigashi, Yasuyuki},
   title={Quantum symmetries on operator algebras},
   series={Oxford Mathematical Monographs},
   note={Oxford Science Publications},
   publisher={The Clarendon Press, Oxford University Press, New York},
   date={1998},
   pages={xvi+829},
   isbn={0-19-851175-2},
   review={\MR{1642584}},
}

\bib{MR1614993}{article}{
	author={Fr\"{o}hlich, J.},
	author={Grandjean, O.},
	author={Recknagel, A.},
	title={Supersymmetry and non-commutative geometry},
	conference={
		title={Quantum fields and quantum space time},
		address={Carg\`ese},
		date={1996},
	},
	book={
		series={NATO Adv. Sci. Inst. Ser. B Phys.},
		volume={364},
		publisher={Plenum, New York},
	},
	date={1997},
	pages={93--121},
	review={\MR{1614993}},
}

\bib{2018arXiv180505765G}{article}{
	author={Goswami, Debashish},
	title={Non-existence of genuine (compact) quantum symmetries of compact, connected smooth manifolds},
	journal={arXiv e-prints},
	year={2018},
	pages={arXiv:1805.05765},
	eprint={1805.05765},
}

\bib{MR1288523}{book}{
	author={Griffiths, Phillip},
	author={Harris, Joseph},
	title={Principles of algebraic geometry},
	series={Wiley Classics Library},
	note={Reprint of the 1978 original},
	publisher={John Wiley \& Sons, Inc., New York},
	date={1994},
	pages={xiv+813},
	isbn={0-471-05059-8},
	review={\MR{1288523}},
	doi={10.1002/9781118032527},
}

\bib{MR3777415}{article}{
	author={Goswami, Debashish},
	author={Joardar, Soumalya},
	title={Non-existence of faithful isometric action of compact quantum
		groups on compact, connected Riemannian manifolds},
	journal={Geom. Funct. Anal.},
	volume={28},
	date={2018},
	number={1},
	pages={146--178},
	issn={1016-443X},
	review={\MR{3777415}},
	doi={10.1007/s00039-018-0437-z},
}

\bib{MR614370}{article}{
	author={Haefliger, Andr\'{e}},
	title={Some remarks on foliations with minimal leaves},
	journal={J. Differential Geom.},
	volume={15},
	date={1980},
	number={2},
	pages={269--284 (1981)},
	issn={0022-040X},
	review={\MR{614370}},
}

\bib{MR3448330}{article}{
	author={Harju, Antti J.},
	title={Spectral triples on proper \'{e}tale groupoids},
	journal={J. Noncommut. Geom.},
	volume={9},
	date={2015},
	number={4},
	pages={1113--1136},
	issn={1661-6952},
	review={\MR{3448330}},
	doi={10.4171/JNCG/217},
}

\bib{MR2277669}{article}{
	author={Higson, Nigel},
	title={The residue index theorem of Connes and Moscovici},
	conference={
		title={Surveys in noncommutative geometry},
	},
	book={
		series={Clay Math. Proc.},
		volume={6},
		publisher={Amer. Math. Soc., Providence, RI},
	},
	date={2006},
	pages={71--126},
	review={\MR{2277669}},
}

\bib{MR2093043}{book}{
	author={Huybrechts, Daniel},
	title={Complex geometry},
	series={Universitext},
	note={An introduction},
	publisher={Springer-Verlag, Berlin},
	date={2005},
	pages={xii+309},
	isbn={3-540-21290-6},
	review={\MR{2093043}},
}

\bib{MR2853081}{article}{
	author={Kali\v{s}nik, Jure},
	title={Representations of \'{e}tale Lie groupoids and modules over Hopf
		algebroids},
	journal={Czechoslovak Math. J.},
	volume={61(136)},
	date={2011},
	number={3},
	pages={653--672},
	issn={0011-4642},
	review={\MR{2853081}},
	doi={10.1007/s10587-011-0037-7},
}

\bib{MR3966864}{article}{
   author={Kawahigashi, Yasuyuki},
   title={Conformal field theory, vertex operator algebras and operator
   algebras},
   conference={
      title={Proceedings of the International Congress of
      Mathematicians---Rio de Janeiro 2018. Vol. III. Invited lectures},
   },
   book={
      publisher={World Sci. Publ., Hackensack, NJ},
   },
   date={2018},
   pages={2597--2616},
   review={\MR{3966864}},
}

\bib{MR1491122}{article}{
   author={Kawahigashi, Yasuyuki},
   title={Quantum doubles and orbifold subfactors},
   conference={
      title={Operator algebras and quantum field theory},
      address={Rome},
      date={1996},
   },
   book={
      publisher={Int. Press, Cambridge, MA},
   },
   date={1997},
   pages={271--283},
   review={\MR{1491122}},
   doi={10.1007/s002080050083},
}

\bib{MR2773332}{article}{
	author={Khalkhali, Masoud},
	author={Landi, Giovanni},
	author={van Suijlekom, Walter Dani\"{e}l},
	title={Holomorphic structures on the quantum projective line},
	journal={Int. Math. Res. Not. IMRN},
	date={2011},
	number={4},
	pages={851--884},
	issn={1073-7928},
	review={\MR{2773332}},
	doi={10.1093/imrn/rnq097},
}

\bib{MR2456103}{article}{
	author={Kordyukov, Yuri A.},
	title={Noncommutative geometry of foliations},
	journal={J. K-Theory},
	volume={2},
	date={2008},
	number={2, Special issue in memory of Yurii Petrovich Solovyev.},
	pages={219--327},
	issn={1865-2433},
	review={\MR{2456103}},
	doi={10.1017/is008001006jkt029},
}

\bib{MR2530918}{article}{
	author={Kordyukov, Yu. A.},
	title={Index theory and noncommutative geometry on manifolds with
		foliations},
	language={Russian, with Russian summary},
	journal={Uspekhi Mat. Nauk},
	volume={64},
	date={2009},
	number={2(386)},
	pages={73--202},
	issn={0042-1316},
	translation={
		journal={Russian Math. Surveys},
		volume={64},
		date={2009},
		number={2},
		pages={273--391},
		issn={0036-0279},
	},
	review={\MR{2530918}},
	doi={10.1070/RM2009v064n02ABEH004616},
}

\bib{MR2817646}{article}{
	author={Kowalzig, Niels},
	author={Posthuma, Hessel},
	title={The cyclic theory of Hopf algebroids},
	journal={J. Noncommut. Geom.},
	volume={5},
	date={2011},
	number={3},
	pages={423--476},
	issn={1661-6952},
	review={\MR{2817646}},
	doi={10.4171/JNCG/82},
}

\bib{MR2004729}{article}{
   author={Kadison, Lars},
   author={Szlach\'{a}nyi, Korn\'{e}l},
   title={Bialgebroid actions on depth two extensions and duality},
   journal={Adv. Math.},
   volume={179},
   date={2003},
   number={1},
   pages={75--121},
   issn={0001-8708},
   review={\MR{2004729}},
   doi={10.1016/S0001-8708(02)00028-2},
}

\bib{MR2457034}{article}{
	author={Kaminker, Jerome},
	author={Tang, Xiang},
	title={Hopf algebroids and secondary characteristic classes},
	journal={J. Noncommut. Geom.},
	volume={3},
	date={2009},
	number={1},
	pages={1--25},
	issn={1661-6952},
	review={\MR{2457034}},
	doi={10.4171/JNCG/28},
}

\bib{MR891583}{article}{
	author={Kamber, Franz W.},
	author={Tondeur, Philippe},
	title={de Rham-Hodge theory for Riemannian foliations},
	journal={Math. Ann.},
	volume={277},
	date={1987},
	number={3},
	pages={415--431},
	issn={0025-5831},
	review={\MR{891583}},
	doi={10.1007/BF01458323},
}

\bib{MR1482228}{book}{
	author={Landi, Giovanni},
	title={An introduction to noncommutative spaces and their geometries},
	series={Lecture Notes in Physics. New Series m: Monographs},
	volume={51},
	publisher={Springer-Verlag, Berlin},
	date={1997},
	pages={xii+200},
	isbn={3-540-63509-2},
	review={\MR{1482228}},
}

\bib{MR3825195}{article}{
   author={Liu, Yang},
   title={Modular curvature for toric noncommutative manifolds},
   journal={J. Noncommut. Geom.},
   volume={12},
   date={2018},
   number={2},
   pages={511--575},
   issn={1661-6952},
   review={\MR{3825195}},
   doi={10.4171/JNCG/285},
}

\bib{MR2012261}{book}{
	author={Moerdijk, I.},
	author={Mr\v{c}un, J.},
	title={Introduction to foliations and Lie groupoids},
	series={Cambridge Studies in Advanced Mathematics},
	volume={91},
	publisher={Cambridge University Press, Cambridge},
	date={2003},
	pages={x+173},
	isbn={0-521-83197-0},
	review={\MR{2012261}},
	doi={10.1017/CBO9780511615450},
}

\bib{MR1722796}{article}{
	author={Mr\v{c}un, Janez},
	title={Functoriality of the bimodule associated to a Hilsum-Skandalis
		map},
	journal={$K$-Theory},
	volume={18},
	date={1999},
	number={3},
	pages={235--253},
	issn={0920-3036},
	review={\MR{1722796}},
	doi={10.1023/A:1007773511327},
}

\bib{MR2311185}{article}{
	author={Mr\v{c}un, Janez},
	title={On duality between \'{e}tale groupoids and Hopf algebroids},
	journal={J. Pure Appl. Algebra},
	volume={210},
	date={2007},
	number={1},
	pages={267--282},
	issn={0022-4049},
	review={\MR{2311185}},
	doi={10.1016/j.jpaa.2006.09.006},
}

\bib{MR1745634}{article}{
   author={Nikshych, Dmitri},
   author={Vainerman, Leonid},
   title={A characterization of depth 2 subfactors of ${\rm II}_1$ factors},
   journal={J. Funct. Anal.},
   volume={171},
   date={2000},
   number={2},
   pages={278--307},
   issn={0022-1236},
   review={\MR{1745634}},
   doi={10.1006/jfan.1999.3522},
}
\bib{MR1800792}{article}{
   author={Nikshych, Dmitri},
   author={Vainerman, Leonid},
   title={A Galois correspondence for ${\rm II}_1$ factors and quantum
   groupoids},
   journal={J. Funct. Anal.},
   volume={178},
   date={2000},
   number={1},
   pages={113--142},
   issn={0022-1236},
   review={\MR{1800792}},
   doi={10.1006/jfan.2000.3650},
}

\bib{MR1913440}{article}{
   author={Nikshych, Dmitri},
   author={Vainerman, Leonid},
   title={Finite quantum groupoids and their applications},
   conference={
      title={New directions in Hopf algebras},
   },
   book={
      series={Math. Sci. Res. Inst. Publ.},
      volume={43},
      publisher={Cambridge Univ. Press, Cambridge},
   },
   date={2002},
   pages={211--262},
   review={\MR{1913440}},
}

\bib{MR2448024}{article}{
   author={Ph\`ung H\^{o} Hai},
   title={Tannaka-Krein duality for Hopf algebroids},
   journal={Israel J. Math.},
   volume={167},
   date={2008},
   pages={193--225},
   issn={0021-2172},
   review={\MR{2448024}},
   doi={10.1007/s11856-008-1047-5},
}

\bib{MR1420923}{article}{
	author={Park, Efton},
	author={Richardson, Ken},
	title={The basic Laplacian of a Riemannian foliation},
	journal={Amer. J. Math.},
	volume={118},
	date={1996},
	number={6},
	pages={1249--1275},
	issn={0002-9327},
	review={\MR{1420923}},
}

\bib{MR1977884}{article}{
	author={Polishchuk, A.},
	author={Schwarz, A.},
	title={Categories of holomorphic vector bundles on noncommutative
		two-tori},
	journal={Comm. Math. Phys.},
	volume={236},
	date={2003},
	number={1},
	pages={135--159},
	issn={0010-3616},
	review={\MR{1977884}},
	doi={10.1007/s00220-003-0813-9},
}

\bib{MR1184061}{article}{
   author={Rieffel, Marc A.},
   title={Deformation quantization for actions of ${\bf R}^d$},
   journal={Mem. Amer. Math. Soc.},
   volume={106},
   date={1993},
   number={506},
   pages={x+93},
   issn={0065-9266},
   review={\MR{1184061}},
   doi={10.1090/memo/0506},
}

\bib{MR2359489}{book}{
	author={Wells, Raymond O., Jr.},
	title={Differential analysis on complex manifolds},
	series={Graduate Texts in Mathematics},
	volume={65},
	edition={3},
	note={With a new appendix by Oscar Garcia-Prada},
	publisher={Springer, New York},
	date={2008},
	pages={xiv+299},
	isbn={978-0-387-73891-8},
	review={\MR{2359489}},
	doi={10.1007/978-0-387-73892-5},
}

\bib{MR994499}{article}{
	author={Woronowicz, S. L.},
	title={Differential calculus on compact matrix pseudogroups (quantum
		groups)},
	journal={Comm. Math. Phys.},
	volume={122},
	date={1989},
	number={1},
	pages={125--170},
	issn={0010-3616},
	review={\MR{994499}},
}

\bib{MR2497461}{article}{
	author={Wang, Z. Z.},
	author={Zaffran, D.},
	title={A remark on the hard Lefschetz theorem for K\"{a}hler orbifolds},
	journal={Proc. Amer. Math. Soc.},
	volume={137},
	date={2009},
	number={8},
	pages={2497--2501},
	issn={0002-9939},
	review={\MR{2497461}},
	doi={10.1090/S0002-9939-09-09848-7},
}

\bib{MR1815717}{article}{
	author={Xu, Ping},
	title={Quantum groupoids},
	journal={Comm. Math. Phys.},
	volume={216},
	date={2001},
	number={3},
	pages={539--581},
	issn={0010-3616},
	review={\MR{1815717}},
	doi={10.1007/s002200000334},
}

\end{biblist}
\end{bibdiv}

\end{document}